\title{\uppercase{\textbf{\large{Motivic cohomology of mixed characteristic schemes}}}}
\author{\textsc{tess bouis}}
\date{}
\definecolor{imperialred}{RGB}{237, 41, 57}
\definecolor{royalblue}{RGB}{64, 106, 212}
\definecolor{link}{RGB}{11,0,128}
\definecolor{gren}{RGB}{32,130,63}
\newlength{\outermargin} \setlength{\outermargin}{2.5cm}
\newlength{\mar} \setlength{\mar}{1cm}
\newlength{\len}
\newlength{\temp}\setlength{\temp}{\paperwidth}
\newtheorem{theorem}{Theorem}[section]
\newtheorem{lemma}[theorem]{Lemma}
\newtheorem{corollary}[theorem]{Corollary}
\newtheorem{proposition}[theorem]{Proposition}
\newtheorem{theoremintro}{Theorem}
\newtheorem{corollaryintro}[theoremintro]{Corollary}
\theoremstyle{definition}
\newtheorem{remark}[theorem]{Remark}
\newtheorem{example}[theorem]{Example}
\newtheorem{construction}[theorem]{Construction}
\newtheorem{definition}[theorem]{Definition}
\newtheorem{notation}[theorem]{Notation}
\newtheorem{question}[theorem]{Question}
\DeclareMathOperator{\Z}{\mathbb{Z}}
\DeclareMathOperator{\Q}{\mathbb{Q}}
\DeclareMathOperator{\N}{\mathbb{N}}
\DeclareMathOperator{\F}{\mathbb{F}}
\DeclareMathOperator{\C}{\mathbb{C}}
\DeclareMathOperator{\E}{\mathbb{E}}
\renewcommand{\epsilon}{\varepsilon}
\DeclareFontFamily{U}{MnSymbolC}{}
\DeclareFontShape{U}{MnSymbolC}{m}{n}{
	<-5.5> MnSymbolC5
	<5.5-6.5> MnSymbolC6
	<6.5-7.5> MnSymbolC7
	<7.5-8.5> MnSymbolC8
	<8.5-9.5> MnSymbolC9
	<9.5-11.5> MnSymbolC10
	<11.5-> MnSymbolCb12
}{}
\DeclareSymbolFontAlphabet{\mathbb}{AMSb}
\DeclareSymbolFontAlphabet{\mathbbl}{bbold}
\newcommand{\Prism}{{\mathlarger{\mathbbl{\Delta}}}}
\numberwithin{equation}{theorem}
\begin{document}
	
	\maketitle
	
	\pagestyle{fancy}
	\fancyhead[EC]{TESS BOUIS}
	\fancyhead[OC]{\uppercase{MOTIVIC COHOMOLOGY OF MIXED CHARACTERISTIC SCHEMES}}
	\fancyfoot[C]{\thepage}
	
	\begin{abstract}
		We introduce a theory of motivic cohomology for quasi-compact quasi-separated schemes, which generalises the construction of Elmanto--Morrow in the case of schemes over a field. Our construction is non-$\mathbb{A}^1$-invariant in general, but it uses the classical $\mathbb{A}^1$-invariant motivic cohomology of smooth $\mathbb{Z}$-schemes as an input. The main new input of our construction is a global filtration on topological cyclic homology, whose graded pieces provide an integral refinement of derived de Rham cohomology and Bhatt--Morrow--Scholze’s syntomic cohomology. Our theory satisfies various expected properties of motivic cohomology, including relations to étale cohomology and to non-connective algebraic $K$-theory. 
	\end{abstract}

	{
		\hypersetup{linkcolor=black}
		\tableofcontents
	}
	
	\section{Introduction}
	
	\vspace{-\parindent}
	\hspace{\parindent}
	
	Motivic cohomology is an analogue in algebraic geometry of singular cohomology. It was first envisioned to exist for schemes $X$ of finite type over $\Z$ by Beilinson and Lichtenbaum \cite{lichtenbaum_values_1973,lichtenbaum_values_1984,beilinson_notes_1986,beilinson_height_1987,beilinson_notes_1987}, as a way to better understand the special values of their $L$-functions. Motivic cohomology, in the form of complexes of abelian groups $\Z(i)^{\text{mot}}(X)$ indexed by integers $i \geq 0$, should be an integral interpolation between étale cohomology, and Adams eigenspaces on rationalised algebraic $K$-theory. That is, there should be a natural filtration $\text{Fil}^\star_{\text{mot}} \text{K}(X)$ on the non-connective algebraic $K$-theory $\text{K}(X)$, which splits rationally, and whose shifted graded pieces
	$$\Z(i)^{\text{mot}}(X) \simeq \text{gr}^i_{\text{mot}} \text{K}(X)[-2i]$$
	are given mod $p$, when $p$ is invertible in $X$, and in degrees less than or equal to $i$, by the étale cohomology $R\Gamma_{\text{ét}}(X,\mu_p^{\otimes i})$:
	$$\tau^{\leq i} \F_p(i)^{\text{mot}}(X) \simeq \tau^{\leq i} R\Gamma_{\text{ét}}(X,\mu_p^{\otimes i}).$$
	
	Such a theory was first developed in the smooth case at the initiative of Bloch and Voevodsky \cite{bloch_algebraic_1986,voevodsky_cycles_2000}, using algebraic cycles and $\mathbb{A}^1$-homotopy theory. In this generality, the use of $\mathbb{A}^1$\nobreakdash-invariant techniques is permitted by Quillen's fundamental theorem of algebraic $K$-theory \cite{quillen_higher_1973}, stating that algebraic $K$-theory is $\mathbb{A}^1$-invariant on regular schemes. On more general schemes, algebraic $K$-theory fails to be $\mathbb{A}^1$-invariant, so motivic cohomology itself needs to be non-$\mathbb{A}^1$-invariant in general. The first non-$\mathbb{A}^1$-invariant theory of motivic cohomology was recently introduced by Elmanto and Morrow \cite{elmanto_motivic_2023}, using recent advances in algebraic $K$-theory and topological cyclic homology. Their theory is developed in the generality of quasi-compact quasi-separated (qcqs) schemes over an arbitrary field, and recovers on smooth varieties the classical $\mathbb{A}^1$-invariant theory.
	
	In this article, we extend the work of Elmanto--Morrow to mixed characteristic, thus producing a theory of motivic cohomology in the originally expected generality of Beilinson and Lichtenbaum. Our theory relies on recent progress in integral $p$-adic Hodge theory \cite{bhatt_topological_2019,bhatt_prisms_2022,bhatt_absolute_2022}, and offers in return a complete description of mod $p$ motivic cohomology, even when $p$ is not invertible in the qcqs scheme~$X$. In addition to the results established in this article, we also refer to \cite{bouis_weibel_2025,bouis_beilinson-lichtenbaum_2025} for further properties satisfied by our theory of motivic cohomology, such as the projective bundle formula, a motivic refinement of Weibel's vanishing in algebraic $K$-theory, and a comparison to the classical theory of motivic cohomology in the smooth case.
    
	
	
	
	
	The starting point of our construction is the following result, due to Kerz--Strunk--Tamme \cite{kerz_algebraic_2018} (who prove that homotopy $K$-theory is the cdh sheafification of algebraic $K$-theory) and Land--Tamme \cite{land_k-theory_2019} (who prove that the fibre $\text{K}^{\text{inf}}$ of the cyclotomic trace map satisfies cdh descent).
	
	\begin{theorem}[\cite{kerz_algebraic_2018,land_k-theory_2019}]\label{theoremKST+LT}
		Let $X$ be a qcqs scheme. Then the natural commutative diagram
		$$\begin{tikzcd}
			\emph{K}(X) \arrow{r} \arrow{d} & \emph{TC}(X) \ar[d] \\
			\emph{KH}(X) \arrow[r] & \big(L_{\emph{cdh}} \emph{TC}\big)(X)
		\end{tikzcd}$$
		is a cartesian square of spectra, where $\emph{KH}(X)$ is the homotopy $K$-theory of $X$, $\emph{TC}(X)$ is the topological cyclic homology of $X$, $L_{\emph{cdh}}$ is the cdh sheafification functor, the top horizontal map is the cyclotomic trace map, and the bottom horizontal map is the cdh sheafified cyclotomic trace map.
	\end{theorem}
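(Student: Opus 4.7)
The plan is to reduce the cartesianness to a comparison of horizontal fibres and then apply the two cited results in sequence. A commutative square of spectra is cartesian if and only if the induced map on horizontal fibres is an equivalence, so I will analyse $\operatorname{fib}(K(X)\to TC(X))$ and $\operatorname{fib}(KH(X)\to (L_{\text{cdh}}TC)(X))$ and identify them with each other via the canonical comparison map. Denote by $K^{\text{inf}}:=\operatorname{fib}(K\to TC)$ the fibre of the cyclotomic trace, regarded as a functor on qcqs schemes valued in spectra.

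First I would record two input facts. (i) By Kerz--Strunk--Tamme, the natural map $K\to KH$ exhibits $KH$ as the cdh sheafification of $K$, i.e.\ $L_{\text{cdh}}K\simeq KH$. (ii) By Land--Tamme, the presheaf $K^{\text{inf}}$ already satisfies cdh descent on qcqs schemes, so the unit map $K^{\text{inf}}\to L_{\text{cdh}}K^{\text{inf}}$ is an equivalence.

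Next I would apply $L_{\text{cdh}}$ to the defining fibre sequence
\[
K^{\text{inf}}\longrightarrow K\longrightarrow TC
\]
of presheaves of spectra. Because cdh sheafification of spectrum-valued presheaves is the left adjoint in a topological (in particular left exact) localisation of a stable $\infty$-category, it preserves finite limits; hence the resulting sequence
\[
L_{\text{cdh}}K^{\text{inf}}\longrightarrow L_{\text{cdh}}K\longrightarrow L_{\text{cdh}}TC
\]
is again a fibre sequence. Substituting (i) and (ii) into this sequence gives a fibre sequence
\[
K^{\text{inf}}\longrightarrow KH\longrightarrow L_{\text{cdh}}TC,
\]
which identifies the bottom horizontal fibre $\operatorname{fib}(KH\to L_{\text{cdh}}TC)$ with $K^{\text{inf}}$.

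Finally I would observe that, under these identifications, the map from the top horizontal fibre to the bottom horizontal fibre induced by the square is precisely the unit $K^{\text{inf}}\to L_{\text{cdh}}K^{\text{inf}}$, which is an equivalence by Land--Tamme. Hence the square is cartesian. The only non-formal input is the two deep theorems already cited; the main technical point one has to keep in mind is the left-exactness of cdh sheafification for spectrum-valued presheaves, which is where I expect the closest attention to be needed but which is standard once one works in the stable $\infty$-category of cdh sheaves of spectra.
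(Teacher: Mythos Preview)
Your proposal is correct and matches the paper's own explanation. The paper does not give a detailed proof of this theorem; it simply attributes the result to the two cited references, noting that Kerz--Strunk--Tamme prove $L_{\text{cdh}}\text{K}\simeq\text{KH}$ and that Land--Tamme prove the fibre $\text{K}^{\text{inf}}$ of the cyclotomic trace satisfies cdh descent---precisely your inputs (i) and (ii)---and your argument is the standard way to combine these into the cartesian square.
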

	
	Theorem~\ref{theoremKST+LT} states that algebraic $K$-theory of schemes can be reconstructed purely in terms of homotopy $K$-theory ({\it i.e.}, information coming from $\mathbb{A}^1$-homotopy theory) and topological cyclic homology ({\it i.e.}, information coming from trace methods). The cdh topology is a Grothendieck topology introduced by Voevodsky \cite{suslin_bloch-kato_2000,voevodsky_unstable_2010}, as a way to apply topos theoretic techniques to the study of resolution of singularities. In particular, assuming resolution of singularities, any qcqs scheme would be locally regular in the cdh topology. While homotopy $K$-theory and topological cyclic homology were originally introduced as tools to approximate the existing algebraic $K$-theory, we construct the motivic cohomology of schemes using refinements of homotopy $K$-theory and topological cyclic homology. More precisely, our motivic filtration on algebraic $K$-theory is defined as a pullback of appropriate filtrations on homotopy $K$-theory, topological cyclic homology, and cdh sheafified topological cyclic homology.  
	
	On homotopy $K$-theory, we use the recent work of Bachmann--Elmanto--Morrow \cite{bachmann_A^1-invariant_2024}, who construct a functorial multiplicative $\N$-indexed filtration $\text{Fil}^\star_{\text{cdh}} \text{KH}(X)$ on the homotopy $K$-theory of qcqs schemes $X$. The shifted graded pieces of this filtration, that we will denote by $\Z(i)^{\text{cdh}}(X)$, provide a good theory of {\it cdh-local motivic cohomology} for qcqs schemes. Note that we use here only the construction and some of the basic properties of the filtration $\text{Fil}^\star_{\text{cdh}} \text{KH}(X)$, which we review in Section~\ref{subsectioncdhlocalmotivicfiltration}. In particular, we do not rely on the main theorems of \cite{bachmann_A^1-invariant_2024} on $\mathbb{A}^1$-invariant motivic cohomology, such as the projective bundle formula or the comparison with the slice filtration in stable motivic homotopy theory.
	
	Our first construction is that of a functorial multiplicative $\Z$-indexed filtration $\text{Fil}^\star_{\text{mot}} \text{TC}(X)$ for qcqs schemes $X$. This filtration recovers the HKR filtration on $\text{HC}^{-}(X/\Q)$ in characteristic zero \cite{antieau_periodic_2019,moulinos_universal_2022,raksit_hochschild_2020}, and the motivic filtration on $\text{TC}(X;\Z_p)$ after $p$-completion \cite{bhatt_topological_2019,morin_topological_2021,bhatt_absolute_2022,hahn_motivic_2022}. We describe the shifted graded pieces $$\Z(i)^{\text{TC}}(X) \simeq \text{gr}^i_{\text{mot}} \text{TC}(X)[-2i]$$ of this filtration in the following definition.
	
	\needspace{5\baselineskip}
	
	\begin{definition}[See Definition~\ref{definitionmotivicfiltrationonTC}]\label{definitionintroZ(i)^TC}
		For every qcqs scheme $X$ and every integer $i \in \Z$, the complex $\Z(i)^{\text{TC}}(X) \in \mathcal{D}(\Z)$ is defined by a natural cartesian square
		$$\begin{tikzcd}
			\Z(i)^{\text{TC}}(X) \ar[r] \ar[d] & R\Gamma_{\text{Zar}}\big(X,\widehat{\mathbb{L}\Omega}^{\geq i}_{-/\Z}\big) \ar[d] \\
			\prod_{p \in \mathbb{P}} \Z_p(i)^{\text{BMS}}(X) \ar[r] & \prod_{p \in \mathbb{P}} R\Gamma_{\text{Zar}}\big(X,(\widehat{\mathbb{L}\Omega}^{\geq i}_{-/\Z})^\wedge_p\big),
		\end{tikzcd}$$
		where $\Z_p(i)^{\text{BMS}}(X)$ is the syntomic cohomology of the $p$-adic formal scheme associated to $X$.
	\end{definition}
	
	It is straightforward from this definition that the presheaf $\Z(i)^{\text{TC}}$ is naturally identified with Bhatt--Morrow--Scholze's syntomic complex $\Z_p(i)^{\text{BMS}}$ in characteristic $p$, and with the Hodge-completed derived de Rham complex $R\Gamma_{\text{Zar}}(-,\widehat{\mathbb{L}\Omega}^{\geq i}_{-/\Q})$ in characteristic zero. Following \cite{elmanto_motivic_2023}, the motivic complex $\Z(i)^{\text{mot}}$ is defined in characteristic $p$ and zero respectively by cartesian squares
	$$\begin{tikzcd}
		\Z(i)^{\text{mot}}(X) \ar[r] \ar[d] & \Z_p(i)^{\text{BMS}}(X) \ar[d] & \Z(i)^{\text{mot}}(X) \ar[r] \ar[d] & R\Gamma_{\text{Zar}}\big(X,\widehat{\mathbb{L}\Omega}^{\geq i}_{-/\Q}\big) \ar[d] \\
		\Z(i)^{\text{cdh}}(X) \ar[r] & \big(L_{\text{cdh}} \Z_p(i)^{\text{BMS}}\big)(X) & \Z(i)^{\text{cdh}}(X) \ar[r] & R\Gamma_{\text{cdh}}\big(X,\widehat{\mathbb{L}\Omega}^{\geq i}_{-/\Q}\big).
	\end{tikzcd}$$
	The following definition is then a natural mixed characteristic generalisation of Elmanto--Morrow's definition over a field.
	
	\begin{definition}[Motivic cohomology; see Section~\ref{subsectiondefinitionmotiviccohomology}]\label{definitionintromotiviccohomology}
		For every qcqs scheme $X$ and every integer $i \in \Z$, the {\it weight-$i$ motivic complex}
		$$\Z(i)^{\text{mot}}(X) \in \mathcal{D}(\Z)$$ of $X$ is defined by a natural cartesian square
		$$\begin{tikzcd}
			\Z(i)^{\text{mot}}(X) \ar[r] \ar[d] & \Z(i)^{\text{TC}}(X) \ar[d] \\
			\Z(i)^{\text{cdh}}(X) \ar[r] & \big(L_{\text{cdh}} \Z(i)^{\text{TC}}\big)(X),
		\end{tikzcd}$$
		where the bottom horizontal map is induced by a filtered refinement of the cdh sheafified cyclotomic trace map.
	\end{definition}
	
	However, proving the expected relation between the motivic complexes $\Z(i)^{\text{mot}}$ and algebraic $K$\nobreakdash-theory (Theorem~\ref{theoremintroAHSSandAdams} below) requires more efforts in mixed characteristic than over a field. The main foundational obstacle is to prove that the presheaves $\Z(i)^{\text{mot}}$ vanish in weights~$i<0$, as we explain now. Note first that, by construction, the presheaves $\Z(i)^{\text{cdh}}$ do vanish on all qcqs schemes in weights $i<0$ (Section~\ref{subsectioncdhlocalmotivicfiltration}).

	In characteristic $p$, the presheaves $\Z_p(i)^{\text{BMS}}$ vanish in weights $i<0$, thus so do the presheaves $\Z(i)^{\text{mot}}$, and the zeroth step of the associated motivic filtration $\text{Fil}^\star_{\text{mot}} \text{K}$ recovers algebraic $K$-theory. Ignoring for a moment the completeness issues for this motivic filtration, this means that the presheaves $\Z(i)^{\text{mot}}$ provide a natural cohomological refinement of algebraic $K$-theory on arbitrary characteristic~$p$ schemes.
	
	In characteristic zero, the presheaves $R\Gamma_{\text{Zar}}(-,\widehat{\mathbb{L}\Omega}^{\geq i}_{-/\Q})$ do not vanish in weights $i<0$. Instead, they are equal to the presheaf $R\Gamma_{\text{Zar}}(-,\widehat{\mathbb{L}\Omega}_{-/\Q})$, which happens to be a cdh sheaf on qcqs $\Q$-schemes, by results of Corti\~{n}as--Haesemeyer--Schlichting--Weibel \cite{cortinas_cyclic_2008}, Antieau \cite{antieau_periodic_2019}, and Elmanto--Morrow \cite{elmanto_motivic_2023}. That is, the right vertical map of the previous diagram is an equivalence in weights $i<0$, so the presheaves $\Z(i)^{\text{mot}}$ vanish in weights $i<0$, and the zeroth step of the associated motivic filtration $\text{Fil}^\star_{\text{mot}} \text{K}$ recovers algebraic $K$-theory. This means that the presheaves $\Z(i)^{\text{mot}}$ provide a natural cohomological refinement of algebraic $K$-theory on arbitrary characteristic zero schemes.
	
	In mixed characteristic, we prove similarly that in weights $i<0$, the presheaves $\Z(i)^{\text{TC}}$ are cdh sheaves on qcqs schemes, {\it i.e.}, that the presheaves $\Z(i)^{\text{mot}}$ vanish. The result modulo a prime number $p$ is a consequence, as in characteristic $p$, of the fact that the presheaves $\Z_p(i)^{\text{BMS}}$ vanish in weights~$i<0$. The difficulty is to then prove that the presheaves $\Q(i)^{\text{TC}}$ are cdh sheaves in weight $i<0$.
	
	The main cdh descent result used in characteristic zero does not hold in mixed characteristic. That is, the presheaf $R\Gamma_{\text{Zar}}(-,\widehat{\mathbb{L}\Omega}_{-/\Z})$ (or its rationalisation) is not a cdh sheaf on qcqs schemes. We avoid this difficulty by proving a rigid-analytic analogue of this cdh descent result. To formulate this result, denote by $$R\Gamma_{\text{Zar}}\big(X,\underline{\widehat{\mathbb{L}\Omega}}^{\geq i}_{-_{\Q_p}/\Q_p}\big)$$ the {\it rigid-analytic derived de Rham cohomology} of a qcqs $\Z_{(p)}$-scheme $X$, which we define as the pushout of the diagram
	$$R\Gamma_{\text{Zar}}\big(X,(\widehat{\mathbb{L}\Omega}^{\geq i}_{-/\Z})^\wedge_p\big) \longleftarrow R\Gamma_{\text{Zar}}\big(X,\widehat{\mathbb{L}\Omega}^{\geq i}_{-/\Z}) \longrightarrow R\Gamma_{\text{Zar}}(X,\widehat{\mathbb{L}\Omega}^{\geq i}_{-_{\Q}/\Q})$$
	in the derived category $\mathcal{D}(\Z)$. Here, we restrict our attention to qcqs $\Z_{(p)}$-schemes for simplicity, and refer to Section~\ref{sectionratonialstructure} for the relevant statements over $\Z$. As a consequence of Definition~\ref{definitionintroZ(i)^TC}, there is a natural cartesian square
	$$\begin{tikzcd}
		\Q(i)^{\text{TC}}(X) \ar[r] \ar[d] & R\Gamma_{\text{Zar}}(X,\widehat{\mathbb{L}\Omega}^{\geq i}_{-_{\Q}/\Q}) \ar[d] \\
		\Q_p(i)^{\text{BMS}}(X) \ar[r] & R\Gamma_{\text{Zar}}(X,\underline{\widehat{\mathbb{L}\Omega}}^{\geq i}_{-_{\Q_p}/\Q_p})
	\end{tikzcd}$$
	in the derived category $\mathcal{D}(\Q)$. In weights $i<0$, the presheaves $\Q_p(i)^{\text{BMS}}$ vanish. As already mentioned, the presheaf $R\Gamma_{\text{Zar}}(-,\widehat{\mathbb{L}\Omega}_{-_{\Q}/\Q})$ is moreover a cdh sheaf on qcqs schemes. So the fact that the presheaves $\Q(i)^{\text{TC}}$ are cdh sheaves in weights $i<0$ reduces to the following result, which can be seen as a rigid-analytic analogue of the latter cdh descent over $\Q$.
	
	\begin{theoremintro}[Cdh descent for rigid-analytic derived de Rham cohomology; see Corollary~\ref{corollaryHPsolidallprimespisacdhsheafongradedpieces}]\label{theoremintrocdhdescentrigidanalytic}
		For every prime number $p$, the presheaf $R\Gamma_{\emph{Zar}}\big(-,\underline{\widehat{\mathbb{L}\Omega}}_{-_{\Q_p}/\Q_p}\big)$ satisfies cdh descent on qcqs $\Z_{(p)}$\nobreakdash-schemes.
	\end{theoremintro}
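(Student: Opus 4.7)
My strategy reduces the statement, via the arithmetic fracture square, to cdh descent for a rationalized $p$-completed Hodge-complete derived de Rham cohomology, which I then identify with a graded piece of a ``solid'' periodic cyclic homology over $\Q_p$; cdh descent for the latter is imported from Land--Tamme's descent theorem for $K^{\text{inf}}$.

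\textbf{Step 1: Arithmetic fracture.} In any stable $\infty$-category, the fracture square for $p$-completion and rationalization is both cartesian and cocartesian, so the pushout of $M^\wedge_p \leftarrow M \to M \otimes_\Z \Q$ is canonically equivalent to $M^\wedge_p \otimes_\Z \Q$. Applied sectionwise to $M = \widehat{\mathbb{L}\Omega}_{X/\Z}$ and sheafified in the Zariski topology, this yields
\[ R\Gamma_{\text{Zar}}\bigl(-,\underline{\widehat{\mathbb{L}\Omega}}_{-_{\Q_p}/\Q_p}\bigr) \;\simeq\; R\Gamma_{\text{Zar}}\bigl(-,(\widehat{\mathbb{L}\Omega}_{-/\Z})^\wedge_p\bigr) \otimes_\Z \Q \]
as presheaves on qcqs $\Z_{(p)}$-schemes. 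So it suffices to show that this rationalized, $p$-complete, Hodge-complete derived de Rham is a cdh sheaf.

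\textbf{Step 2: Reinterpretation as solid HP.} The HKR filtration on $p$-completed periodic cyclic homology $HP(-/\Z)^\wedge_p$ has graded pieces given, up to shifts, by $(\widehat{\mathbb{L}\Omega}_{-/\Z})^\wedge_p$; after rationalization, its graded pieces compute the presheaf of Step~1. Over $\Z_{(p)}$-schemes, this rationalized $p$-complete $HP$ is naturally a ``solid'' periodic cyclic homology of the generic fiber $X_{\Q_p}$ over $\Q_p$, in the spirit of the Clausen--Scholze analytic stacks formalism; this is the object of the cited Corollary~\ref{corollaryHPsolidallprimespisacdhsheafongradedpieces}.

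\textbf{Step 3: cdh descent via Land--Tamme.} To obtain cdh descent for solid $HP$, I would combine two inputs. First, Theorem~\ref{theoremKST+LT} (Land--Tamme) gives cdh descent for $K^{\text{inf}} = \text{fib}(K \to TC)$, hence — after $p$-completion and rationalization — for $p$-complete rationalized $TC$. Second, in this rationalized $p$-complete regime, the cyclotomic trace $K \to TC$ fits into the comparison $HC^- \to HP$ via HKR, transporting cdh descent from $p$-complete rationalized $TC$ to solid $HP$. Finally, cdh descent passes to the HKR graded pieces, which is the required statement. The main technical obstacle is precisely this last transfer: ensuring that cdh sheafification commutes with extraction of HKR graded pieces in the rationalized $p$-complete setting, which requires the HKR filtration to be complete and bounded below weight-by-weight. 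This is where the rigid-analytic/solid viewpoint over $\Q_p$ — providing the correct completed home for the rationalized $p$-complete invariants — is essential.
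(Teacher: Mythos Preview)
Your Step~3 has a genuine gap. Cdh descent for $K^{\text{inf}} = \text{fib}(K \to \text{TC})$ does \emph{not} give cdh descent for $\text{TC}$, its $p$-completed rationalization, or any variant of $\text{HP}$. Theorem~\ref{theoremKST+LT} only says that $K$ and $\text{TC}$ have the \emph{same} obstruction to cdh descent; neither is itself a cdh sheaf. The inference ``$K^{\text{inf}}$ is a cdh sheaf $\Rightarrow$ $p$-complete rationalized $\text{TC}$ is a cdh sheaf'' is therefore unjustified, and the subsequent ``transfer to $\text{HP}$ via $\text{HC}^- \to \text{HP}$'' is not a valid deduction either: the graded pieces of $\text{TC}(-;\Z_p)$ are syntomic complexes, not Hodge-complete de Rham, and no comparison lets you import cdh descent from one to the other. (Steps~1--2 also silently conflate $(\text{HP}^\wedge_p)_{\Q}$ with $\big((\text{HH}^\wedge_p)_{\Q}\big)^{t\text{S}^1}$, and $\widehat{\mathbb{L}\Omega}_{-_{\Q}/\Q}$ with $(\widehat{\mathbb{L}\Omega}_{-/\Z})_{\Q}$; neither the Tate construction nor Hodge completion commutes with rationalization in general.)

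The paper proceeds differently. It proves directly that rigid-analytic periodic cyclic homology $\big(\prod'_{p \in \mathbb{P}} \text{HH}(-;\Q_p)\big)^{t\text{S}^1}$ is a \emph{truncating invariant} (Theorem~\ref{theoremtruncatinginvariantHPsolid}). This requires a solid analogue of Goodwillie's nilpotence theorem for relative cyclic homology (Theorem~\ref{theoremGoodwillieIV26solid}, Corollary~\ref{corollaryadaptationoftheoremGoodwillieIV26}), and that is precisely where condensed mathematics enters: Goodwillie's argument must be run in the abelian category of solid abelian groups to control the map $n!s^n$ on $\underline{\text{HC}}$ of $p$-completed $\mathbb{E}_1$-rings uniformly in $p$. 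Land--Tamme's Theorem~E then gives cdh descent for the unfiltered rigid-analytic $\text{HP}$ (Corollary~\ref{corollaryHPsolidallprimespisacdhsheafwithoutfiltration}), bypassing $K$-theory and $\text{TC}$ entirely. Finally, extracting the de Rham graded piece needs more than completeness of the $\Z$-indexed HKR filtration: it uses the actual splitting of Proposition~\ref{propositionsplittingofHPsolid}, adapted from \cite{bals_periodic_2023}. The Goodwillie-type truncation input is the key idea absent from your proposal.
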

	
	The modern proof of the analogous result over $\Q$ relies on the theory of truncating invariants of Land--Tamme \cite{land_k-theory_2019} and on a theorem of Goodwillie \cite{goodwillie_cyclic_1985}, who prove respectively that every truncating invariant is a cdh sheaf on qcqs schemes and that periodic cyclic homology over $\Q$ is a truncating invariant. By definition, a truncating invariant is a localizing invariant $E$ such that for every connective $\mathbb{E}_1$-ring $R$, the natural map $E(R) \rightarrow E(\pi_0(R))$ is an equivalence. To prove Theorem~\ref{theoremintrocdhdescentrigidanalytic}, we then use the condensed mathematics of Clausen--Scholze \cite{clausen_condensed_2019}, and prove that a suitable rigid-analytic variant of periodic cyclic homology is a truncating invariant. In particular, the proof of Theorem~\ref{theoremintrocdhdescentrigidanalytic} relies on a result on associative rings (actually, on general solid connective $\mathbb{E}_1$-rings).
	
	As a consequence of Theorem~\ref{theoremintrocdhdescentrigidanalytic}, we obtain the following cohomological description of rational motivic cohomology.

	\begin{theoremintro}[$p$-adic and rational motivic cohomology; see Corollaries~\ref{corollarymainpadicstructureongradeds}\label{theoremintromaincartesiansquares} and~\ref{corollarycartesiansquarerational}]
		Let $X$ be a qcqs scheme, and $p$ be a prime number. Then for any integers $i \in \Z$ and $k \geq 1$, the natural commutative diagrams
		$$\begin{tikzcd}
			\Z/p^k(i)^{\emph{mot}}(X) \ar[r] \ar[d] & \Z/p^k(i)^{\emph{BMS}}(X) \ar[d] & \Q(i)^{\emph{mot}}(X) \ar[r] \ar[d] & R\Gamma_{\emph{Zar}}\big(X,\widehat{\mathbb{L}\Omega}^{\geq i}_{-_{\Q}/\Q}\big) \ar[d] \\
			\Z/p^k(i)^{\emph{cdh}}(X) \ar[r] & \big(L_{\emph{cdh}} \Z/p^k(i)^{\emph{BMS}}\big)(X) & \Q(i)^{\emph{cdh}}(X) \ar[r] & R\Gamma_{\emph{cdh}}\big(X,\widehat{\mathbb{L}\Omega}^{\geq i}_{-_{\Q}/\Q}\big)
		\end{tikzcd}$$
		are cartesian squares in the derived category $\mathcal{D}(\Z)$.
	\end{theoremintro}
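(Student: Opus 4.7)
The plan is to deduce both cartesian squares from the defining pullback of $\Z(i)^{\text{mot}}$ in Definition~\ref{definitionintromotiviccohomology} via a pasting argument. Since that defining square is cartesian, each conclusion of the theorem follows once one shows that the respective ``right-hand'' squares
\[
\begin{tikzcd}[column sep=small]
\Z(i)^{\text{TC}}(X)/p^k \ar[r] \ar[d] & \Z/p^k(i)^{\text{BMS}}(X) \ar[d] \\
L_{\text{cdh}}\big(\Z(i)^{\text{TC}}/p^k\big)(X) \ar[r] & L_{\text{cdh}}\big(\Z/p^k(i)^{\text{BMS}}\big)(X)
\end{tikzcd}
\qquad\text{resp.}\qquad
\begin{tikzcd}[column sep=small]
\Q(i)^{\text{TC}}(X) \ar[r] \ar[d] & R\Gamma_{\text{Zar}}\big(X,\widehat{\mathbb{L}\Omega}^{\geq i}_{-_\Q/\Q}\big) \ar[d] \\
L_{\text{cdh}}\Q(i)^{\text{TC}}(X) \ar[r] & R\Gamma_{\text{cdh}}\big(X,\widehat{\mathbb{L}\Omega}^{\geq i}_{-_\Q/\Q}\big)
\end{tikzcd}
\]
are cartesian. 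Throughout, I would exploit that $L_{\text{cdh}}$ is an exact left Bousfield localisation on presheaves of spectra, and hence commutes with cofibres $(-)/p^k$ and with rationalisation.

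For the $p$-adic square, the crucial step is to show that the natural map $\Z(i)^{\text{TC}}(X)/p^k \to \Z/p^k(i)^{\text{BMS}}(X)$ is an equivalence. Reducing the defining cartesian square for $\Z(i)^{\text{TC}}$ (Definition~\ref{definitionintroZ(i)^TC}) modulo $p^k$, the right vertical map becomes an equivalence: the factor at $\ell=p$ is unaffected since $p$-completion is trivial modulo $p^k$, while the factors at $\ell\neq p$ vanish since derived $\ell$-complete objects are annihilated modulo $p^k$. The left vertical map of that square is therefore also an equivalence, which is the desired identification; applying $L_{\text{cdh}}$ by functoriality produces the analogous equivalence on the bottom row of the auxiliary square. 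Both horizontal maps of the $p$-adic right-hand square are then invertible, and the square is trivially cartesian.

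For the rational square, I would start from the cartesian description of $\Q(i)^{\text{TC}}$ obtained by rationalising Definition~\ref{definitionintroZ(i)^TC}, in which the middle vertex is (a product of) rigid-analytic derived de Rham cohomologies $R\Gamma_{\text{Zar}}(-,\underline{\widehat{\mathbb{L}\Omega}}^{\geq i}_{-_{\Q_p}/\Q_p})$, indexed by primes~$p$. The decisive input is Theorem~\ref{theoremintrocdhdescentrigidanalytic}: combined with the closure of cdh sheaves under limits, it implies that this rigid-analytic middle vertex (and each step of its Hodge filtration) is already a cdh sheaf and hence unchanged by $L_{\text{cdh}}$. Applying $L_{\text{cdh}}$ to the rationalised pullback and comparing fibres of the top horizontal maps of the two resulting pullback squares then reduces the cartesian-ness of the rational right-hand square to controlling the rationalised BMS factor under $L_{\text{cdh}}$, a compatibility that one extracts from the $p$-adic identification of the previous paragraph applied uniformly in~$p$.

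The principal obstacle is the rational case, where Theorem~\ref{theoremintrocdhdescentrigidanalytic} plays an indispensable role: without cdh descent for rigid-analytic derived de Rham cohomology, the middle term of the rationalised pullback would not simplify after sheafification, and the fibre comparison that isolates the Hodge-completed characteristic zero piece on the right of the desired square would break down.
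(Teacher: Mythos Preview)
Your argument for the $p$-adic square is correct and essentially matches the paper's proof (Proposition~\ref{propositionpcompletionofmotiviconTCisBMS} and Corollary~\ref{corollarymainpadicstructureongradeds}): reducing the defining square of $\Z(i)^{\text{TC}}$ modulo $p^k$ collapses the right vertical map, identifying $\Z(i)^{\text{TC}}/p^k$ with $\Z/p^k(i)^{\text{BMS}}$, and the result follows by pasting.

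The rational argument has a genuine gap. You assert that Theorem~\ref{theoremintrocdhdescentrigidanalytic} implies that \emph{each step of the Hodge filtration} on rigid-analytic derived de Rham cohomology is a cdh sheaf. This is not what Theorem~\ref{theoremintrocdhdescentrigidanalytic} says: it gives cdh descent only for the full $R\Gamma_{\text{Zar}}\big(-,\prod'_p \underline{\widehat{\mathbb{L}\Omega}}_{-_{\Q_p}/\Q_p}\big)$, not for the Hodge-truncated pieces $\underline{\widehat{\mathbb{L}\Omega}}^{\geq i}$ with $i>0$. The difference between the two is $\big(\prod_p (\mathbb{L}\Omega^{<i}_{-/\Z})^\wedge_p\big)_{\Q}$ (Remark~\ref{remark17fibreseuquenceforsolidderiveddeRhamcohomology}), which has no reason to be a cdh sheaf. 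Your final step then tries to absorb this discrepancy into ``controlling the rationalised BMS factor under $L_{\text{cdh}}$'' via the mod-$p^k$ identifications ``applied uniformly in $p$'', but those identifications give no information about $\big(\prod_p \Z_p(i)^{\text{BMS}}\big)_{\Q}$ under cdh sheafification; indeed, whether closely related rationalised products are cdh sheaves is left open in the paper (see the Question following Theorem~\ref{theoremmainconsequenceBFSwithfiltrations}).

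The paper's proof of the rational square (Corollary~\ref{corollarycartesiansquarerational}) follows a different route and does not attempt to cdh-sheafify the rationalised $\Z(i)^{\text{TC}}$ square directly. It first works at the level of $K$-theory: Weibel's argument on $NK$-groups (Lemma~\ref{lemma2WeibelargumentrationalKtheory}) shows that the square with vertices $\text{K}(X;\Q)$, $\text{K}(X_{\Q};\Q)$, $\text{KH}(X;\Q)$, $\text{KH}(X_{\Q};\Q)$ is cartesian. The rational splitting coming from Adams operations (Corollary~\ref{corollaryKtheorysplitsrationally}) --- whose proof does use Theorem~\ref{theoremintrocdhdescentrigidanalytic}, but only to establish that $\text{Fil}^\star_{\text{mot}}\text{K}$ is $\N$-indexed --- upgrades this to a filtered statement (Corollary~\ref{corollaryfilteredLemma2onKandKHwithrational}). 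Finally, on $\Q$-schemes one identifies $\text{Fil}^\star_{\text{mot}}\text{TC}$ with $\text{Fil}^\star_{\text{HKR}}\text{HC}^-(-/\Q)$ (Remark~\ref{remarkcomparisontoEMfiltrationonTCoverafield}), yielding Theorem~\ref{theoremTCrationalwithcdhandHC-filtered} and hence the rational square on graded pieces.
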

	
	Together, these two cartesian squares recover the cartesian squares of Elmanto--Morrow that define the motivic complexes $\Z(i)^{\text{mot}}$ over a field, and are thus natural mixed characteristic analogues of these. The $p$-adic part of Theorem~\ref{theoremintromaincartesiansquares} is a formal consequence of Definitions~\ref{definitionintroZ(i)^TC} and~\ref{definitionintromotiviccohomology}. The rational part of Theorem~\ref{theoremintromaincartesiansquares} implies that the difference between $\Q(i)^{\text{mot}}(X)$ and $\Q(i)^{\text{cdh}}(X)$ depends only on the rationalisation $X_{\Q}$ of the scheme $X$. If $X$ is regular, this difference should vanish, and this is then more interesting in the presence of singularities. More precisely, Theorem~\ref{theoremintromaincartesiansquares} can be used to capture interesting information about the singularities of an arbitrary commutative ring $R$: cdh sheaves are typically insensitive to singularities, so the singular information in the motivic complex $\Z(i)^{\text{mot}}(R)$ is controlled by the complexes $\Z/p^k(i)^{\text{BMS}}(R)$ and $\mathbb{L}\Omega^{<i}_{(R\otimes_{\Z} \Q)/\Q}$, which are accessible to computation.
	
	Theorem~\ref{theoremintromaincartesiansquares} also implies that the presheaves $\Q(i)^{\text{mot}}$ vanish in weights~$i<0$, which was the essential missing part to establish the following fundamental properties of motivic cohomology.
	
	\begin{theoremintro}[Relation to algebraic $K$-theory]\label{theoremintroAHSSandAdams}
		There exists a finitary Nisnevich sheaf of filtered spectra
		$$\emph{Fil}^\star_{\emph{mot}} \emph{K}(-) : \emph{Sch}^{\emph{qcqs,op}} \longrightarrow \emph{FilSp}$$
		with the following properties:
		\begin{enumerate}
			\item \emph{(Atiyah--Hirzebruch spectral sequence; see Section~\ref{subsectionAHSS})} For every qcqs scheme $X$, the filtration $\emph{Fil}^\star_{\emph{mot}} \emph{K}(X)$ is a multiplicative $\N$-indexed filtration on the non-connective algebraic $K$-theory $\emph{K}(X)$, whose graded pieces are naturally given by
			$$\emph{gr}^i_{\emph{mot}} \emph{K}(X) \simeq \Z(i)^{\emph{mot}}(X)[2i], \quad i \geq 0.$$
			In particular, writing $\emph{H}^j_{\emph{mot}}(X,\Z(i)) := \emph{H}^j(\Z(i)^{\emph{mot}}(X))$ for the corresponding \emph{motivic cohomology groups}, there exists an Atiyah--Hirzebruch spectral sequence
			$$E_2^{i,j} = \emph{H}^{i-j}_{\emph{mot}}(X,\Z(-j)) \Longrightarrow \emph{K}_{-i-j}(X).$$
			If $X$ has finite valuative dimension,\footnote{The valuative dimension of a commutative ring, defined in terms of the ranks of certain valuation rings, was introduced by Jaffard in \cite[Chapter IV]{jaffard_theorie_1960}, and generalised to schemes in \cite[Section~$2.3$]{elmanto_cdh_2021}. The valuative dimension of a scheme is always at least equal to its Krull dimension, and both notions agree on noetherian schemes. For our purposes, the valuative dimension of a qcqs scheme $X$ will be used as an upper bound on the cohomological dimension of the cdh topos of $X$ (\cite[Theorem~$2.4.15$]{elmanto_cdh_2021}).} then the filtration $\emph{Fil}^\star_{\emph{mot}} \emph{K}(X)$ is complete, and the Atiyah--Hirzebruch spectral sequence is convergent.
			\item \emph{(Adams decomposition; see Corollary~\ref{corollaryKtheorysplitsrationally2})} For every qcqs scheme $X$, the Atiyah--Hirzebruch spectral sequence degenerates rationally and, for every integer $n \in \Z$, there is a natural isomorphism of abelian groups
			$$\emph{K}_n(X) \otimes_{\Z} \Q \cong \bigoplus_{i \geq 0} \big(\emph{H}^{2i-n}_{\emph{mot}}(X,\Z(i)) \otimes_{\Z} \Q\big)$$
			induced by the Adams operations on rationalised algebraic $K$-theory.
		\end{enumerate} 
	\end{theoremintro}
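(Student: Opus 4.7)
The plan is to construct $\text{Fil}^\star_{\text{mot}} \text{K}$ by applying the square of Theorem~\ref{theoremKST+LT} termwise to filtered refinements of each vertex. Explicitly, I would define
\[
\text{Fil}^\star_{\text{mot}} \text{K}(X) := \text{Fil}^\star_{\text{cdh}} \text{KH}(X) \times_{(L_{\text{cdh}} \text{Fil}^\star_{\text{mot}} \text{TC})(X)} \text{Fil}^\star_{\text{mot}} \text{TC}(X)
\]
as a pullback of functors $\text{Sch}^{\text{qcqs,op}} \to \text{FilSp}$, using the motivic filtration on $\text{TC}$ from Definition~\ref{definitionmotivicfiltrationonTC} on the top right, the Bachmann--Elmanto--Morrow cdh-local motivic filtration on $\text{KH}$ on the bottom left, and a filtered lift of the cdh-sheafified cyclotomic trace map on the bottom. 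Taking the colimit over $\star \in \Z$ recovers the KST+LT square, so the underlying spectrum of $\text{Fil}^\star_{\text{mot}} \text{K}(X)$ is $\text{K}(X)$; and the finitary Nisnevich sheaf property transfers through this pullback because each of the three inputs is finitary Nisnevich by construction.

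Since associated graded commutes with finite limits of filtered spectra, the graded pieces of this pullback fit into the cartesian square of Definition~\ref{definitionintromotiviccohomology}, yielding $\text{gr}^i_{\text{mot}} \text{K}(X) \simeq \Z(i)^{\text{mot}}(X)[2i]$. For the filtration to be $\N$-indexed, i.e.\ for $\text{Fil}^0_{\text{mot}} \text{K}(X) \simeq \text{K}(X)$, it suffices to show $\Z(i)^{\text{mot}}(X) = 0$ in weights $i<0$. This is the main obstacle, and it is handled by Theorem~\ref{theoremintromaincartesiansquares}: modulo $p^k$, the first cartesian square combined with the vanishing $\Z/p^k(i)^{\text{BMS}} = 0$ in negative weights forces $\Z/p^k(i)^{\text{mot}} \simeq \Z/p^k(i)^{\text{cdh}} = 0$; rationally, the second cartesian square reduces the claim to cdh descent for the presheaf $R\Gamma_{\text{Zar}}(-,\widehat{\mathbb{L}\Omega}_{-_{\Q}/\Q})$, which holds by the results of Corti\~{n}as--Haesemeyer--Schlichting--Weibel, Antieau, and Elmanto--Morrow recalled in the introduction, and which ultimately rests on the rigid-analytic cdh descent of Theorem~\ref{theoremintrocdhdescentrigidanalytic}.

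Once the filtration is in place, the Atiyah--Hirzebruch spectral sequence of part~(1) is the spectral sequence of a filtered spectrum. For the convergence clause, if $X$ has valuative dimension $\leq d$, then its cdh cohomological dimension is $\leq d$ by \cite[Theorem~2.4.15]{elmanto_cdh_2021}; the cartesian squares of Theorem~\ref{theoremintromaincartesiansquares} then bound the cohomological amplitude of $\Z(i)^{\text{mot}}(X)$ in terms of $i$ and $d$, which is enough for completeness of $\text{Fil}^\star_{\text{mot}} \text{K}(X)$ and hence convergence of the spectral sequence. For part~(2), Adams operations $\psi^k$ are available on $\text{KH}(X)_{\Q}$ from the Bachmann--Elmanto--Morrow construction and on $\text{TC}(X)_{\Q}$ from the HKR weight decomposition, and they act on the weight-$i$ graded piece $\Z(i)^{\text{mot}}(X) \otimes_{\Z} \Q$ by multiplication by $k^i$. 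The resulting eigenspace decomposition of $\text{K}(X)_{\Q}$ then splits the rational motivic filtration, which both degenerates the spectral sequence rationally and supplies the Adams decomposition.
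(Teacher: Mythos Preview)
Your overall strategy matches the paper's construction (Definition~\ref{definitionmotivicfiltrationonKtheoryofschemes}), but there are two genuine gaps.

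\textbf{Circularity in the $\N$-indexedness argument.} You invoke the rational cartesian square of Theorem~\ref{theoremintromaincartesiansquares} to deduce that $\Q(i)^{\text{mot}}$ vanishes for $i<0$. In the paper, however, that rational cartesian square (Corollary~\ref{corollarycartesiansquarerational}) is proved \emph{after} $\N$-indexedness: it comes via Theorem~\ref{theoremTCrationalwithcdhandHC-filtered}, which uses Corollary~\ref{corollaryfilteredLemma2onKandKHwithrational}, which in turn uses the Adams splitting (Corollary~\ref{corollaryKtheorysplitsrationally}), and that splitting requires Proposition~\ref{propositionmotivicfiltrationisexhaustive}. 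The paper's actual proof of $\N$-indexedness (Proposition~\ref{propositionmotivicfiltrationisexhaustive}) does not go through Theorem~\ref{theoremintromaincartesiansquares}. Instead it uses the arithmetic fracture decomposition of $\text{Fil}^\star_{\text{mot}}\text{TC}$ (Proposition~\ref{propositionmainconsequenceBFSwithfiltrations}) to reduce to three pieces: the BMS piece is $\N$-indexed directly; the $\text{HC}^-(-_{\Q}/\Q)$ piece is handled by cdh descent for $\text{HP}(-_{\Q}/\Q)$ (Proposition~\ref{propositionfilteredHPisacdhsheafinchar0}) plus $\N$-indexedness of $\text{HC}$; and the rigid-analytic piece requires Corollary~\ref{corollaryHPsolidallprimespisacdhsheafwithfiltration}, which is where Theorem~\ref{theoremintrocdhdescentrigidanalytic} enters. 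Your remark that the argument ``ultimately rests on'' Theorem~\ref{theoremintrocdhdescentrigidanalytic} is correct in spirit, but it enters at this earlier stage, not via Theorem~\ref{theoremintromaincartesiansquares}. Relatedly, your claim that taking the colimit over $\star$ recovers the KST+LT square is not quite right: the filtrations $\text{Fil}^\star_{\text{mot}}\text{TC}$ and $\text{Fil}^\star_{\text{mot}} L_{\text{cdh}}\text{TC}$ are not exhaustive (Remarks~\ref{remarkexhaustivitymotivicfiltrationonTC} and after Definition~\ref{definitionmotivicfiltrationonLcdhTC}); what the paper shows is that their \emph{fibre} is $\N$-indexed with step $0$ equal to the fibre of $\text{TC}\to L_{\text{cdh}}\text{TC}$.

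\textbf{Finitariness does not follow termwise.} You assert that finitariness ``transfers through this pullback because each of the three inputs is finitary Nisnevich by construction.'' But $\text{Fil}^\star_{\text{mot}}\text{TC}$ involves the Hodge-completed $\text{Fil}^\star_{\text{HKR}}\text{HC}^-$, which is not finitary. The paper's proof of finitariness (Proposition~\ref{propositionCmotivicfiltrationfinitaryNisnevichsheaf}) is instead indirect: modulo $p$ it uses the BMS cartesian square (Proposition~\ref{propositionpadicstructuremain}), and rationally it uses the Adams splitting (Corollary~\ref{corollaryKtheorysplitsrationally}) to exhibit $\text{Fil}^i_{\text{mot}}\text{K}(-;\Q)$ as a natural summand of $\text{K}(-;\Q)$, inheriting finitariness from algebraic $K$-theory. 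So the Adams operations are needed not only for part~(2) but already to establish the finitary Nisnevich sheaf property in part~(1). Your sketch of part~(2) also omits the completeness hypothesis needed to run the eigenspace argument (Lemma~\ref{lemma29howtouseAdamsoperations} hypothesis~$(i)$, supplied by Proposition~\ref{propositionmotivicfiltrationiscompleteonqcqsschemesoffinitevaluativedimension}), and the compatibility checks between the various Adams operations (Section~\ref{subsectionAdamsoperations}).
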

	
	One of the main historical motivations for developing motivic cohomology was to apply cohomological techniques to the study of algebraic $K$-theory \cite{beilinson_notes_1987}. The following theorem summarizes our results on the relations between motivic cohomology and previously studied cohomological invariants. When $X$ is smooth over $\Z$, we denote by
	$$\Z(i)^{\text{cla}}(X) := z^i(X,\bullet)[-2i]$$
	the {\it weight-$i$ classical motivic complex}, where $z^i(X,\bullet)$ is Bloch's cycle complex (and $\bullet$ is the cohomological index).
	
	\needspace{5\baselineskip}
	
	\begin{theoremintro}\label{theoremintrocohomology}
		Let $X$ be a qcqs scheme, and $i \geq 0$ be an integer.
		\begin{enumerate}
			\item \emph{(Weight zero; see Example~\ref{exampleweightzeromotiviccohomology})} There is a natural equivalence
			$$\Z(0)^{\emph{mot}}(X) \xlongrightarrow{\sim} R\Gamma_{\emph{cdh}}(X,\Z)$$
			in the derived category $\mathcal{D}(\Z)$.
			\item \emph{(\'Etale cohomology; see Corollary~\ref{corollaryladicmotivcohomologylowdegreesisétalecohomology})} For every prime number $\ell$ which is invertible in $X$ and every integer $k \geq 1$, there is a natural map
			$$\Z/\ell^k(i)^{\emph{mot}}(X) \longrightarrow R\Gamma_{\emph{ét}}(X,\mu_{\ell^k}^{\otimes i})$$
			in the derived category $\mathcal{D}(\Z/\ell^k)$ which is an isomorphism in degrees less than or equal to $i$.
			\item \emph{(Syntomic cohomology; see Corollary~\ref{corollarypadiccomparisoninsmalldegreessyntomiccoho})} For every prime number $p$ and every integer $k \geq 1$, there is a natural map
			$$\Z/p^k(i)^{\emph{mot}}(X) \longrightarrow \Z/p^k(i)^{\emph{syn}}(X)$$
			in the derived category $\mathcal{D}(\Z/p^k)$ which is an isomorphism in degrees less than or equal to $i$, where $\Z/p^k(i)^{\emph{syn}}(X)$ denotes the weight-$i$ syntomic cohomology of $X$ in the sense of \cite{bhatt_absolute_2022}.
			\item \emph{($\mathbb{A}^1$-invariant motivic cohomology; see Theorem~\ref{theoremA1localmotiviccohomologymain})} There is a natural equivalence 
			$$\big(L_{\mathbb{A}^1} \Z(i)^{\emph{mot}}\big)(X) \xlongrightarrow{\sim} \Z(i)^{\mathbb{A}^1}(X)$$
			in the derived category $\mathcal{D}(\Z)$, where $\Z(i)^{\mathbb{A}^1}(X)$ denotes the cohomology
			represented by the slices of the $K$-theory motivic spectrum $\emph{KGL}_X \in \emph{SH}(X)$.
		\end{enumerate}
	\end{theoremintro}
	
	The following result is a consequence of Theorem~\ref{theoremintrocohomology}\,$(4)$ and the fact that the $\mathbb{A}^1$-invariant motivic complexes $\Z(i)^{\mathbb{A}^1}$ recover the classical motivic complexes $\Z(i)^{\text{cla}}$ on smooth $\Z$-schemes. In particular, although we expect our motivic complexes $\Z(i)^{\text{mot}}$ to actually coincide with the classical motivic complexes $\Z(i)^{\text{cla}}$ on smooth $\Z$-schemes,\footnote{This is now one of the main results of \cite{bouis_beilinson-lichtenbaum_2025}.} this means that the former at least recover the latter after enforcing $\mathbb{A}^1$-invariance.
	
	\begin{corollaryintro}[See Corollary~\ref{corollary26A1localmotiviccohomologyisclassicalmotiviccohomology}]
		Let $X$ be a smooth scheme over $\Z$. Then for every integer $i \geq 0$, there is a natural equivalence
		$$\Z(i)^{\emph{cla}}(X) \simeq \big(L_{\mathbb{A}^1} \Z(i)^{\emph{mot}}\big)(X)$$
		in the derived category $\mathcal{D}(\Z)$.
	\end{corollaryintro}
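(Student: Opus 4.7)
The plan is to deduce this corollary as a direct composition of two already-established results: Theorem~\ref{theoremintrocohomology}\,(8) of this paper, and the classical identification (due to Voevodsky over a perfect field, and extended by Levine to smooth schemes over a Dedekind domain such as $\Z$) of the slices of the $K$-theory motivic spectrum $\text{KGL}$ with Bloch's cycle complex. No new construction is required; the argument is essentially formal once these two inputs are in place.

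First, I would apply Theorem~\ref{theoremintrocohomology}\,(8) to the qcqs scheme $X$. This yields a natural equivalence
$$\big(L_{\mathbb{A}^1} \Z(i)^{\text{mot}}\big)(X) \xlongrightarrow{\sim} \Z(i)^{\mathbb{A}^1}(X)$$
in the derived category $\mathcal{D}(\Z)$, where, by definition, $\Z(i)^{\mathbb{A}^1}(X)$ is the cohomology represented by the slices of the motivic spectrum $\text{KGL}_X \in \text{SH}(X)$. This step uses nothing about smoothness.

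Next, I would invoke Levine's slice theorem for smooth $\Z$-schemes, which provides a natural equivalence
$$\Z(i)^{\mathbb{A}^1}(X) \simeq \Z(i)^{\text{cla}}(X)$$
in $\mathcal{D}(\Z)$ for every smooth $\Z$-scheme $X$ and every integer $i \geq 0$. Composing this with the equivalence of the previous paragraph gives the desired natural equivalence.

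The main substantive work has already been carried out in the proof of Theorem~\ref{theoremintrocohomology}\,(8) and is external to this corollary. The only potential obstacle is bookkeeping: one must check that the model of $\Z(i)^{\mathbb{A}^1}(X)$ appearing in Theorem~\ref{theoremintrocohomology}\,(8) really coincides, as a presheaf of complexes on smooth $\Z$-schemes, with the one for which Levine's comparison is stated. Since both are defined via the slice filtration on $\text{KGL}$, this is a matter of unwinding conventions rather than a genuine difficulty, and the corollary follows.
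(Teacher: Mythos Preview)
Your proposal is correct and matches the approach described in the paper's introduction, where the corollary is explicitly stated to follow from Theorem~\ref{theoremintrocohomology}\,(8) together with the identification $\Z(i)^{\mathbb{A}^1}(X) \simeq \Z(i)^{\text{cla}}(X)$ on smooth $\Z$-schemes. The body of the paper (Corollary~\ref{corollary26A1localmotiviccohomologyisclassicalmotiviccohomology}) instead deduces the result from the filtered statement Theorem~\ref{theorem25A1localmotivicfiltrationisclassicalfiltration}, but on graded pieces this unwinds to the same two ingredients, so the routes are essentially equivalent.

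One small correction: the identification of the slices of $\text{KGL}$ with Bloch's cycle complexes over a mixed characteristic Dedekind domain such as $\Z$ is attributed in this paper to Bachmann \cite{bachmann_very_2022} (see Remark~\ref{remarkclassicalBlochrelatedtoslice} and the opening of Section~\ref{sectionA1invariantmotiviccohomology}), not to Levine. Levine's work \cite{levine_techniques_2001} concerns the moving lemma and localisation for Bloch's complexes, not the slice comparison you need here.
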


    Recall that on smooth $\F_p$\nobreakdash-schemes, the Beilinson--Lichtenbaum conjecture, proved by Suslin and Voevodsky as a consequence of the Bloch--Kato conjecture \cite{suslin_bloch-kato_2000}, computes the $\ell$\nobreakdash-adic part of motivic cohomology in terms of the cohomology of the étale sheaf $\mu_{\ell^k}$ of $\ell^k$\nobreakdash-roots of unity. To describe the $p$\nobreakdash-adic part of motivic cohomology, one needs to replace $\mu_{\ell^k}^{\otimes i}$ (which is zero on smooth varieties when $\ell=p$ and~$i>0$) by the logarithmic de Rham--Witt sheaves $W_k\Omega^i_{-,\text{log}}$ \cite{geisser_k-theory_2000}. The corresponding description of $p$\nobreakdash-adic algebraic $K$\nobreakdash-theory, in terms of the logarithmic de Rham--Witt sheaves, is generalised in \cite{kelly_k-theory_2021} to all Cartier smooth $\F_p$\nobreakdash-algebras, and in particular to all characteristic $p$ valuation rings.
	
	On smooth schemes over a mixed characteristic Dedekind domain, the $p$-adic part of classical motivic cohomology is similarly described in low degrees by the étale cohomology of the generic fibre \cite{geisser_motivic_2004}. This result is a consequence of the Gersten conjecture proved by Geisser \cite{geisser_motivic_2004}, and is unknown for general regular schemes. Combined with Theorem~\ref{theoremintrocohomology}\,$(3)$, the following result extends this description of classical motivic cohomology to the regular case. More precisely, the notion of $F$-smoothness was introduced by Bhatt--Mathew \cite{bhatt_syntomic_2023} as a non-noetherian generalisation of regular schemes, and our result naturally applies to general $p$-torsionfree $F$-smooth schemes.
	
	\begin{theoremintro}[Beilinson--Lichtenbaum conjecture for $F$-smooth schemes; see Corollary~\ref{corollaryFsmoothnessBeilinsonLichtenbaumcomparison}]\label{theoremintroBeilinsonLichtenbaumconjectureforFsmoothschemes}
		Let $X$ be a $p$-torsionfree $F$-smooth scheme ({\it e.g.}, a regular scheme flat over $\Z$). Then for any integers $i \geq 0$ and $k \geq 1$, the Beilinson--Lichtenbaum comparison map
		$$\Z/p^k(i)^{\emph{mot}}(X) \longrightarrow R\Gamma_{\emph{ét}}(X[\tfrac{1}{p}],\mu_{p^k}^{\otimes i})$$
		is an isomorphism in degrees less than or equal to $i-1$, and is injective in degree $i$.
	\end{theoremintro}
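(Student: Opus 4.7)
The plan is to factor the Beilinson--Lichtenbaum comparison map through syntomic cohomology, so that the statement splits into two independent pieces: a motivic-to-syntomic comparison already available from Theorem~\ref{theoremintrocohomology}\,$(4)$, and a syntomic-to-étale comparison that is specific to $F$-smooth schemes.

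First, I would invoke Theorem~\ref{theoremintrocohomology}\,$(4)$, which provides a natural map
\[
\Z/p^k(i)^{\mathrm{mot}}(X) \longrightarrow \Z/p^k(i)^{\mathrm{syn}}(X)
\]
that is an isomorphism in degrees $\leq i$. Since the Beilinson--Lichtenbaum comparison map we wish to study is natural, it must (by construction of the syntomic-to-étale specialisation map on $X[\tfrac{1}{p}]$, where syntomic cohomology coincides with $R\Gamma_{\mathrm{\acute{e}t}}(-,\mu_{p^k}^{\otimes i})$) factor as
\[
\Z/p^k(i)^{\mathrm{mot}}(X) \longrightarrow \Z/p^k(i)^{\mathrm{syn}}(X) \longrightarrow R\Gamma_{\mathrm{\acute{e}t}}\big(X[\tfrac{1}{p}], \mu_{p^k}^{\otimes i}\big).
\]
The first arrow is an isomorphism in degrees $\leq i$; in particular it is an isomorphism in degrees $\leq i-1$ and an isomorphism (hence an injection) in degree $i$. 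So it suffices to prove the analogous statement for the second arrow, i.e., that on a $p$-torsionfree $F$-smooth scheme the syntomic-to-étale specialisation map is an isomorphism in degrees $\leq i-1$ and injective in degree $i$.

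For the second arrow, the key input is the syntomic--étale comparison of Bhatt--Mathew for $F$-smooth schemes. The main point is that for a $p$-torsionfree $F$-smooth scheme $X$, the definition of syntomic cohomology as the fibre of a Frobenius operator on the Nygaard-filtered (absolute) prismatic cohomology produces a fibre sequence in which, after inverting $p$, one recovers precisely $R\Gamma_{\mathrm{\acute{e}t}}(X[\tfrac{1}{p}], \mu_{p^k}^{\otimes i})$; the obstruction to the syntomic-to-étale map being an equivalence is controlled by the Nygaard-graded piece $\mathrm{gr}^i_{\mathrm{N}}\widehat{\Prism}_X\{i\}/p^k$. The content of $F$-smoothness, as introduced in \cite{bhatt_syntomic_2023}, is precisely that this graded piece is concentrated in cohomological degrees $\geq i$ (and $p$-torsionfreeness guarantees that passing mod $p^k$ does not shift this bound). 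Tracing through the resulting long exact sequence then shows that the fibre of $\Z/p^k(i)^{\mathrm{syn}}(X) \to R\Gamma_{\mathrm{\acute{e}t}}(X[\tfrac{1}{p}], \mu_{p^k}^{\otimes i})$ lies in cohomological degrees $\geq i+1$, which is equivalent to the desired isomorphism in degrees $\leq i-1$ and injectivity in degree $i$.

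The hardest step is the second one, namely locating and applying the syntomic--étale comparison for $F$-smooth schemes in the precise form needed here. One has to be careful that the statement involves $X[\tfrac{1}{p}]$, not $X$ itself, and that the $p$-torsionfreeness hypothesis is used to control the interaction between the Nygaard filtration and reduction mod $p^k$; the combinatorics of the resulting spectral sequence is the only place where the degree bound $i-1$ (with mere injectivity in degree $i$) is not automatic. Once the Bhatt--Mathew input is in place, combining it with Theorem~\ref{theoremintrocohomology}\,$(4)$ as above completes the proof.
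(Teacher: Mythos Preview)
Your proposal is correct and follows essentially the same route as the paper. The paper's proof of Corollary~\ref{corollaryFsmoothnessBeilinsonLichtenbaumcomparison} factors the comparison map through syntomic cohomology, invokes Theorem~\ref{theorempadicmotiviccohomologyintermsofsyntomicohomology} (whose consequence is exactly Theorem~\ref{theoremintrocohomology}\,$(4)$) for the motivic-to-syntomic step, and then cites \cite[Theorem~$1.8$]{bhatt_syntomic_2023} directly for the syntomic-to-\'etale step on $p$-torsionfree $F$-smooth schemes; your unpacking of the latter in terms of Nygaard-graded pieces is more detailed than needed but consistent with what is proved there.
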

	
	

	The proof of Theorem~\ref{theoremintroBeilinsonLichtenbaumconjectureforFsmoothschemes} relies on a syntomic-étale comparison theorem of Bhatt--Mathew \cite{bhatt_syntomic_2023} (see also \cite[Theorem~B]{bouis_cartier_2023} for a proof over perfectoid bases using relative prismatic cohomology).

	\subsection*{Notation}
	
	\vspace{-\parindent}
	\hspace{\parindent}

    By default, algebraic $K$-theory means non-connective algebraic $K$-theory, as introduced by Thoma\-son--Trobaugh \cite{thomason_higher_1990}. By a theorem of Blumberg--Gepner--Tabuada \cite{blumberg_universal_2013}, non\nobreakdash-connective algebraic $K$-theory is the universal localizing invariant.

    A presheaf $F(-)$ on schemes is called {\it $\mathbb{A}^1$-invariant} if for every scheme $X$ and every integer~\hbox{$m \geq 0$}, the natural map $F(X) \rightarrow F(\mathbb{A}^m_X)$ is an equivalence. Given a presheaf $F(-)$ on schemes, the {\it $\mathbb{A}^1$\nobreakdash-locali\-sation} $L_{\mathbb{A}^1} F(-)$ of $F(-)$ is the initial $\mathbb{A}^1$-invariant presheaf with a map from $F(-)$. The $\mathbb{A}^1$-localisation functor $L_{\mathbb{A}^1}$ commutes with colimits.

    Given a commutative ring $R$, an $R$-algebra $S$, and a scheme $X$ over $\text{Spec}(R)$, denote by~$X_{S}$ the base change $X \times_{\text{Spec}(R)} \text{Spec}(S)$ of $X$ from $R$ to $S$. If $X$ is a derived scheme, this base change is implicitly the derived base change from $R$ to $S$. We sometimes use the derived base even on classical schemes, and say explicitly when we do so.

    An abelian group $A$ is said to have {\it bounded torsion} if there exists an integer $N \geq 1$ such that the multiplication by $N$ of every element of $A$ is zero. Given a commutative ring $R$ and an element $d$ of $R$, an $R$-module $M$ is said to have {\it bounded $d$-power torsion} if there exists an integer $n\geq 1$ such that $M[d^m]=M[d^n]$ for all $m \geq n$; this assumption guarantees that the derived $d$\nobreakdash-completion of $M$ is in degree zero, given by the classical $d$\nobreakdash-completion of $M$.

    The cdh topology is a Grothendieck topology introduced by Voevodsky \cite{suslin_bloch-kato_2000,voevodsky_unstable_2010}; see \cite{elmanto_cdh_2021} for the definition and properties of the cdh topology in the generality of qcqs schemes. It is a completely decomposed version of the topology generated by Deligne's hypercoverings. The cdh sheafification functor $L_{\text{cdh}}$ preserves multiplicative structures.

    Given a functor $F(-)$ (resp. $\text{Fil}^\star F(-)$) taking values in spectra (resp. filtered spectra) and a prime number $p$, we denote the rationalisation of $F(-)$ by $F(-;\Q)$, its reduction modulo~$p$ by $F(-;\F_p)$, its derived reduction modulo powers of $p$ by $F(-;\Z/p^k)$, its $p$-completion by $F(-;\Z_p)$, and the rationalisation of its $p$-completion by $F(-;\Q_p)$. We adopt a similar notation for a functor $\text{Fil}^\star F(-)$ taking values in filtered spectra, {\it e.g.}, we denote its rationalisation by $\text{Fil}^{\star} F(-;\Q)$. Similarly, if $\Z(i)^F(-)$ is a functor taking values in the derived category $\mathcal{D}(\Z)$, we denote the rationalisation of $\Z(i)^F(-)$ by $\Q(i)^F(-)$, its derived reduction modulo $p$ by $\F_p(i)^F(-)$, etc. Following the same notation, we also write
	$$\prod_{p \in \mathbb{P}}{}^{'} F(-;\Q_p) := \Big(\prod_{p \in \mathbb{P}} F(-;\Z_p)\Big)_{\Q} \quad \Big(\text{resp.} \prod_{p \in \mathbb{P}}{}^{'} \text{Fil}^\star F(-;\Q_p) := \Big(\prod_{p \in \mathbb{P}} \text{Fil}^\star F(-;\Z_p)\Big)_{\Q}\Big).$$

    Denote by $\text{Sp}$ the category of spectra. Given a commutative ring $R$, denote by $\mathcal{D}(R)$ the derived category of $R$-modules; it is implicitly the derived $\infty$-category of $R$-modules, and is in particular naturally identified with the category of $R$-linear spectra. Our convention for degrees is by default cohomological. In this context, the notions of fibre and cofibre sequences agree, and the fibre and cofibre of a given map satisfy the relation $\text{fib} \simeq \text{cofib}[-1]$. Given an element $d$ of $R$, also denote by $(-)^\wedge_d$ the $d$-adic completion functor in the derived category~$\mathcal{D}(R)$.


    By default, a filtration with values in a category $\mathcal{C}$ is a $\Z$-indexed decreasing filtered object in the category $\mathcal{C}$, {\it i.e.}, a functor from the category $(\Z,\geq)^\text{op}$ to the category $\mathcal{C}$. A filtration is called {\it $\N$-indexed} if it is constant in non-positive degrees. Given a filtered object $\text{Fil}^\star\, C$ and for each integer $n \in \Z$, let $\text{gr}^n\, C \in \mathcal{D}(R)$ denote the cofibre of the transition map $\text{Fil}^{n+1}\,C \rightarrow \text{Fil}^n\,C$. A filtered object $\text{Fil}^\star\, C$ is said to be {\it complete} if the limit $\text{lim}_n\, \text{Fil}^n\, C$ vanishes. For instance, The Hodge filtration on the de Rham complex is given for each $n \in \Z$ by $\text{Fil}_\text{Hod}^n\, \Omega_{-/R} := \Omega_{-/R}^{\geq n}$; the Hodge filtration $\mathbb{L}\Omega^{\geq \star}_{-/R}$ on the derived de Rham complex $\mathbb{L}\Omega_{-/R}$ is defined as the left Kan extension of this filtration. It is $\N$-indexed, but not always complete. Its completion, the Hodge-completed derived de Rham complex, is denoted by $\widehat{\mathbb{L}\Omega}^{\geq \star}_{-/R}$.
	
	Given a commutative ring $R$, denote by $\mathcal{DF}(R) := \text{Fun}((\Z,\geq)^\text{op},\mathcal{D}(R))$ the filtered derived category of $R$-modules. Also denote by $\text{FilSp}$ the category of filtered spectra, and by $\text{biFilSp}$ the category of bifiltered spectra ({\it i.e.}, the category of filtered objects in the category of filtered spectra).

    Given a commutative ring $R$ and an ideal $I$ of $R$, the pair $(R,I)$ is called {\it henselian} if it satisfies Hensel's lemma. A local ring $R$ is called {\it henselian} if the pair $(R,\mathfrak{m})$ is henselian, where $\mathfrak{m}$ is the maximal ideal of $R$. Henselian local rings are the local rings for the Nisnevich topology. A commutative ring $R$ is called {\it $d$-henselian}, for $d$ an element of $R$, if the pair $(R,(d))$ is henselian.

	
    Given a commutative ring $R$, an $\infty$-category $\mathcal{D}$ which admits sifted colimits ({\it e.g.}, $\mathcal{D}(R)$ or $\mathcal{DF}(R)$), and a functor $$F : \text{Sm}_R := \{\text{smooth }R\text{-algebras}\} \longrightarrow \mathcal{D},$$ define
	$$\begin{array}{ll} \mathbb{L}F : &R\text{-Alg} \longrightarrow \mathcal{D} \\ &S \longmapsto \underset{P\rightarrow S}{\text{colim}} \text{ } F(P),\end{array}$$
	where the colimit is taken over all free $R$-algebras $P$ mapping to $S$. The functor $\mathbb{L}F$ is called the left Kan extension from polynomial $R$-algebras of $F$. For instance, the cotangent complex $\mathbb{L}_{-/R} := \mathbb{L}\Omega^1_{-/R}$ is the left Kan extension from polynomial $R$-algebras of the module of Kähler differentials $\Omega^1_{-/R}$, and the derived de Rham complex $\mathbb{L}\Omega_{-/R}$ is the left Kan extension from polynomial $R$-algebras of the de Rham complex $\Omega_{-/R}$. We also consider more general left Kan extensions ({\it e.g.}, from smooth $R$-algebras), which are defined similarly --see \cite[Section~$2.3$ and Remark~$3.4$]{elmanto_motivic_2023} for a quick review of this formalism. The left Kan extension from a category~$\mathcal{C}$ to a category $\mathcal{C'}$, when this makes sense, is denoted by $L_{\mathcal{C}'/\mathcal{C}}$.
	
    A morphism $R \rightarrow S$ of commutative rings is called {\it $p$-discrete}, for $p$ a prime number, if the derived tensor product $S \otimes_{R}^{\mathbb{L}} R/p \in \mathcal{D}(R/p)$ is concentrated in degree zero, where it is given by $S/p$. It is called {\it $p$-flat} if it is $p$-discrete and if its reduction $R/p \rightarrow S/p$ modulo $p$ is flat. It is called {\it $p$-quasisyntomic} if it is $p$-flat and if the cotangent complex $\mathbb{L}_{(S/p)/(R/p)} \in \mathcal{D}(S/p)$ has Tor-amplitude in $[-1;0]$. 
	
	A commutative ring $R$ is called {\it $p$-quasisyntomic} if it has bounded $p$-power torsion and if the complex $\mathbb{L}_{R/\Z} \otimes_{R}^{\mathbb{L}} R/p \in \mathcal{D}(R/p)$ has Tor-amplitude in $[-1;0]$. Beware that $p$-quasisyntomic $\Z$-algebras are $p$-quasisyntomic rings, but the converse is not true: for instance, $\F_p$ is a $p$\nobreakdash-quasisyntomic ring, but the morphism $\Z \rightarrow \F_p$ is not $p$-discrete. We refer to \cite{bhatt_topological_2019} for the definition of the associated $p$-quasisyntomic topology on $p$-quasisyntomic rings.
    
    A functor $F(-)$ on commutative rings is called {\it rigid} if for every henselian pair $(R,I)$, the natural map $F(R) \rightarrow F(R/I)$ is an equivalence.
	
    Quasi-compact quasi-separated (derived) schemes are called qcqs (derived) schemes. These include all affine (derived) schemes, {\it i.e.}, (animated) commutative rings. Denote by $\text{Sch}^{\text{qcqs}}$ the category of qcqs schemes, $\text{dSch}^{\text{qcqs}}$ the category of qcqs derived schemes, $\text{Rings}$ the category of commutative rings, $\text{AniRings}$ the category of animated commutative rings. Schemes, resp. commutative rings, are sometimes called {\it classical} to emphasize that we are not working in the generality of derived schemes, resp. animated commutative rings. Given a commutative base ring $R$, also denote by $\text{Sm}_R$ the category of smooth schemes over $\text{Spec}(R)$, $\text{Sch}^{\text{fp}}$ the category of finitely presented schemes over $\text{Spec}(R)$, $\text{Poly}_R$ the category of polynomial $R$-algebras, and $\mathbb{E}_1\text{-Rings}_{R}$ the category of associative $R$-linear ring spectra.

    We use several Grothendieck topologies, including the Zariski, Nisnevich, and cdh topologies. Denote by $L_{\text{Zar}}$, $L_{\text{Nis}}$, and $L_{\text{cdh}}$ the sheafification functors for these topologies.

	\subsection*{Acknowledgements}
	
	\vspace{-\parindent}
	\hspace{\parindent}

    I am very grateful to Matthew Morrow for sharing many insights on motivic cohomology, and for careful readings of this manuscript.	I would also like to thank Jacob Lurie and Georg Tamme for many helpful comments and corrections on a preliminary version of this paper, and Ben Antieau, Denis\nobreakdash-Charles Cisinski, Dustin Clausen, Frédéric Déglise, Elden Elmanto, Quentin Gazda, Marc \hbox{Hoyois}, Ryomei Iwasa, Shane Kelly, Niklas Kipp, Arnab Kundu, Shuji Saito, and Georg Tamme for helpful discussions. This project has received funding from the European Research Council (ERC) under the European Union’s Horizon 2020 research and innovation programme (grant agreement No. 101001474).
	
	
	\needspace{4\baselineskip}

	\section{The motivic filtration on topological cyclic homology}\label{sectionmotivicfiltrationonTC}
	
	\vspace{-\parindent}
	\hspace{\parindent}
	
	In this section, we introduce a motivic filtration on the topological cyclic homology of qcqs derived schemes (Definition~\ref{definitionmotivicfiltrationonTC}), whose shifted graded pieces $\Z(i)^{\text{TC}}$ will serve as a building block for the definition of the motivic complexes $\Z(i)^{\text{mot}}$ (Remark~\ref{remarkmaincartesiansquareformotiviccohomology}). 
	
	We first explain how to express topological cyclic homology in terms of its profinite completion and of negative cyclic homology. Following \cite{nikolaus_topological_2018}, and given a qcqs derived scheme~$X$ and a prime number~$p$, the $p$-completed topological cyclic homology $\text{TC}(X;\Z_p)$ of $X$ is constructed from its $p$-completed topological negative cyclic homology $\text{TC}^-(X;\Z_p)$ and its $p$-completed topological periodic cyclic homology $\text{TP}(X;\Z_p)$ (see Section~\ref{subsectionBMSfiltrations}). Following \cite[Lemma~$6.4.3.2$]{dundas_local_2013} and \cite[Section~II.$4$]{nikolaus_topological_2018}, the topological cyclic homology $\text{TC}(X)$ of $X$ is then defined by a natural cartesian square of spectra
	$$\begin{tikzcd}
		\text{TC}(X) \ar[r] \ar[d] & \text{TC}^-(X) \ar[d] \\
		\prod_{p \in \mathbb{P}} \text{TC}(X;\Z_p) \ar[r] & \prod_{p \in \mathbb{P}} \text{TC}^-(X;\Z_p).
	\end{tikzcd}$$
	The comparison map $\text{THH}(X) \rightarrow \text{HH}(X)$, induced by extension of scalars along the map of $\mathbb{E}_{\infty}$-rings $\text{THH}(\Z) \rightarrow \Z$, is $\text{S}^1$-equivariant, and for every integer $n \in \Z$, the kernel and cokernel of the induced map on homotopy groups $\text{THH}_n(X) \rightarrow \text{HH}_n(X)$ are killed by an integer depending only on $n$. In particular, the natural commutative diagram
	$$\begin{tikzcd}
		\text{THH}(X) \ar[r] \ar[d] & \text{HH}(X) \ar[d] \\
		\prod_{p \in \mathbb{P}} \text{THH}(X;\Z_p) \ar[r] & \prod_{p \in \mathbb{P}} \text{HH}(X;\Z_p),
	\end{tikzcd}$$
	is a cartesian square of spectra, which in turn defines a natural cartesian square of spectra
	$$\begin{tikzcd}
		\text{TC}^-(X) \ar[r] \ar[d] & \text{HC}^-(X) \ar[d] \\
		\prod_{p \in \mathbb{P}} \text{TC}^-(X;\Z_p) \ar[r] & \prod_{p \in \mathbb{P}} \text{HC}^-(X;\Z_p)
	\end{tikzcd}$$
	by taking homotopy fixed points $(-)^{h\text{S}^1}$. Composing this cartesian square with the cartesian square defining topological cyclic homology then induces a natural cartesian square
	$$\begin{tikzcd}
		\text{TC}(X) \ar[r] \ar[d] & \text{HC}^-(X) \ar[d] \\
		\prod_{p \in \mathbb{P}} \text{TC}(X;\Z_p) \ar[r] & \prod_{p \in \mathbb{P}} \text{HC}^-(X;\Z_p).
	\end{tikzcd}$$
	We will use this cartesian square to define the motivic filtration on $\text{TC}(X)$ (Definition~\ref{definitionmotivicfiltrationonTC}), by glueing existing filtrations on the three other terms; namely, the HKR and BMS filtrations.
	
	\subsection{The HKR filtrations}\label{subsectionHKRfiltrations}
	
	\vspace{-\parindent}
	\hspace{\parindent}
	
	In this subsection, we review the HKR filtrations on Hochschild homology and its variants, as defined, in the generality of qcqs derived schemes, by \cite{antieau_periodic_2019} and \cite[Section~$6.3$]{bhatt_absolute_2022}. Only the HKR filtration on negative cyclic homology $\text{HC}^-(-)$ (Definition~\ref{definitionHKRfiltrationonHC-}) will be used to define the motivic filtration on topological cyclic homology $\text{TC}(-)$ (Definition~\ref{definitionmotivicfiltrationonTC}). We will use the other HKR filtrations of this section in Section~\ref{sectionratonialstructure}.
	
	The following result is \cite[Example~$6.1.3$ and Remarks~$6.1.4$ and $6.1.5$]{bhatt_absolute_2022}.
	
	\begin{proposition}[Tate filtration]\label{propositionTatefiltration}
		Let $X$ be a spectrum equipped with an $\emph{S}^1$-action. Then the Tate construction $X^{t\emph{S}^1} \in \emph{Sp}$ is naturally equipped with a $\Z$-indexed filtration $$\emph{Fil}^\star_{\emph{T}} X^{t\emph{S}^1} \in \emph{FilSp}.$$ This filtration is called the \emph{Tate filtration} on $X^{t\emph{S}^1}$, and satisfies the following properties:
		\begin{enumerate}
			\item The filtration $\emph{Fil}^\star_{\emph{T}} X^{t\emph{S}^1} \in \emph{FilSp}$ is complete.
			\item $\emph{Fil}^0_{\emph{T}} X^{t\emph{S}^1}$ is the homotopy fixed point spectrum $X^{h\emph{S}^1}$, which is thus also equipped with an $\N$\nobreakdash-indexed complete filtration $\emph{Fil}^\star_{\emph{T}} X^{h\emph{S}^1}$, which we call the {\it Tate filtration} on $X^{h\emph{S}^1}$.
			\item For every integer $n \in \Z$, the graded piece $\emph{gr}^n_{\emph{T}} X^{t\emph{S}^1}$ is naturally identified with the spectrum~$X[-2n]$.
		\end{enumerate}
	\end{proposition}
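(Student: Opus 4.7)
The plan is to construct the Tate filtration from the cellular filtration of $BS^1 \simeq \mathbb{CP}^\infty$ and extend it across the norm map to the Tate construction. First I would build an $\mathbb{N}$-indexed filtration on the homotopy fixed points $X^{hS^1} \simeq F(BS^1_+, X)$. Using the skeletal filtration $\mathbb{CP}^{-1} := \emptyset \subset \mathbb{CP}^0 \subset \mathbb{CP}^1 \subset \cdots \subset \mathbb{CP}^\infty = BS^1$, whose successive quotients satisfy $\mathbb{CP}^n/\mathbb{CP}^{n-1} \simeq S^{2n}$, I would set
\[
\mathrm{Fil}^n_{\mathrm{T}}\, X^{hS^1} := F\bigl(BS^1/\mathbb{CP}^{n-1},\, X\bigr), \qquad n \geq 0.
\]
Then $\mathrm{Fil}^0_{\mathrm{T}}\, X^{hS^1} \simeq F(BS^1_+, X) \simeq X^{hS^1}$, and the cofiber sequence $\mathbb{CP}^n/\mathbb{CP}^{n-1} \to BS^1/\mathbb{CP}^{n-1} \to BS^1/\mathbb{CP}^n$ yields graded pieces $\mathrm{gr}^n \simeq F(S^{2n}, X) \simeq X[-2n]$. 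Completeness reduces to $\lim_n F(BS^1/\mathbb{CP}^{n-1}, X) \simeq F(\mathrm{colim}_n BS^1/\mathbb{CP}^{n-1}, X) \simeq 0$, since the colimit of the quotients is contractible.

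Next, I would extend this to a $\mathbb{Z}$-indexed filtration on $X^{tS^1}$ via the Greenlees model of the Tate construction. Here the negative-twist Thom spectra $(\mathbb{CP}^\infty)^{-kL}$ of the tautological line bundle $L$ on $\mathbb{CP}^\infty$ assemble into a pro-system whose limit (after smashing with $X$ and taking $S^1$-fixed points) computes $X^{tS^1}$. The Thom isomorphism then supplies graded pieces $X[-2n]$ for all $n \in \mathbb{Z}$, including the negative degrees. Compatibility with the $hS^1$ filtration in non-negative degrees is enforced by the norm map $X^{hS^1} \to X^{tS^1}$, which gives $\mathrm{Fil}^0_{\mathrm{T}}\, X^{tS^1} \simeq X^{hS^1}$ (the zero twist being trivial). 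Completeness of the resulting $\mathbb{Z}$-indexed filtration, meaning $\lim_{n \to +\infty} \mathrm{Fil}^n_{\mathrm{T}}\, X^{tS^1} = 0$, is then inherited from the homotopy fixed point case above.

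The main obstacle is the functorial, $\infty$-categorical realization of this construction: one must exhibit $\mathrm{Fil}^\star_{\mathrm{T}}(-)^{tS^1}$ as a functor from $S^1$-equivariant spectra to complete filtered spectra, compatible with the norm map and with multiplicative structures (so that, in particular, filtered refinements of $\text{HC}^-$ and $\text{TC}$ can be assembled later). Reconciling the skeletal and Thom-spectrum descriptions at degree $0$ is where the technical care concentrates, requiring coherent Thom-spectrum functoriality and a universal-property characterization of the Tate construction. The full coherent construction in this generality is carried out in \cite[Example~$6.1.3$ and Remarks~$6.1.4$ and $6.1.5$]{bhatt_absolute_2022}, and I would simply appeal to that reference.
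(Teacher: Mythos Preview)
Your proposal is correct and in fact aligns exactly with what the paper does: the paper gives no proof at all for this proposition, simply introducing it with ``The following result is \cite[Example~$6.1.3$ and Remarks~$6.1.4$ and $6.1.5$]{bhatt_absolute_2022},'' which is the very reference you appeal to at the end. Your additional sketch of the skeletal/Thom-spectrum construction is more than the paper provides, but it is accurate and lands on the same citation.
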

	
	Following \cite[Section~$5$]{bhatt_topological_2019}, a filtered spectrum $\text{Fil}^\star X$ is called {\it connective for the Beilinson $t$\nobreakdash-structure} if for every integer $i \in \Z$, the graded piece $\text{gr}^i X \in \text{Sp}$ is in cohomological degrees at most~$i$. For every integer $i \in \Z$, also denote by $\tau_{\geq i}^{\text{B}}$ the truncation functor for the Beilinson $t$-structure on filtered spectra.
	
	\begin{definition}[Décalage filtration]\label{definitiondécalagefiltration}
		Let $\text{Fil}^\star X \in \text{FilSp}$ be a filtered spectrum. The {\it décalage filtration} on $\text{Fil}^\star X$ is the bifiltered spectrum
		$$\text{Fil}^\star_{\text{B}} \text{Fil}^\star X \in \text{biFilSp}$$
		where, for every integer $i \in \Z$, $\text{Fil}^i_{\text{B}} \text{Fil}^\star X$ is the $i$-connective cover of $\text{Fil}^\star X \in \text{FilSp}$ with respect to the Beilinson $t$-structure on the category of filtered spectra:
		$$\text{Fil}^i_{\text{B}} \text{Fil}^\star X := \tau^{\text{B}}_{\geq i} \text{Fil}^\star X.$$
	\end{definition}
	
	\begin{construction}[HKR filtration on HP]\label{constructionHKRfiltrationonHP}
		For every integer $i \in \Z$, let
		$$\text{Fil}^i_{\text{HKR}} \text{Fil}^\star_{\text{T}}\text{HP}(-) := L_{\text{Zar}} L_{\text{dSch}^{\text{qcqs,op}}/\text{Poly}_{\Z}^{\text{op}}} \text{Fil}^i_{\text{B}} \text{Fil}^\star_{\text{T}} \text{HP}(-),$$
		where $\text{Fil}^\star_{\text{T}} \text{HP}(-)$ is the Tate filtration on periodic cyclic homology of qcqs derived schemes, $\text{Fil}^\star_{\text{B}}$ is the décalage filtration of Definition~\ref{definitiondécalagefiltration}, and the left Kan extension $L_{\text{dSch}^{\text{qcqs,op}}/\text{Poly}_{\Z}^{\text{op}}}$ is taken in the category of filtration-complete filtered spectra.
		The {\it HKR filtration on periodic cyclic homology} of qcqs derived schemes is the functor
		$$\text{Fil}^\star_{\text{HKR}} \text{HP}(-) : \text{dSch}^{\text{qcqs,op}} \longrightarrow \text{FilSp}$$
		defined as the underlying filtered object of the bifiltered functor $\text{Fil}^\star_{\text{HKR}} \text{Fil}^\star_{\text{T}} \text{HP}(-)$:
		$$\text{Fil}^\star_{\text{HKR}} \text{HP}(-) := {\lim\limits_{\text{ }\longrightarrow n}} \text{Fil}^\star_{\text{HKR}} \text{Fil}^n_{\text{T}} \text{HP}(-).$$ 
	\end{construction}
	
	The following definition is the one which will appear explicitly in the definition of the motivic filtration on topological cyclic homology (Definition~\ref{definitionmotivicfiltrationonTC}).
	
	\begin{definition}[HKR filtration on $\text{HC}^-$]\label{definitionHKRfiltrationonHC-}
		The {\it HKR filtration on negative cyclic homology} of qcqs derived schemes is the functor
		$$\text{Fil}^\star_{\text{HKR}} \text{HC}^-(-) : \text{dSch}^{\text{qcqs,op}} \longrightarrow \text{FilSp}$$
		defined as
		$$\text{Fil}^\star_{\text{HKR}} \text{HC}^-(-) := \text{Fil}^\star_{\text{HKR}} \text{Fil}^0_{\text{T}} \text{HP}(-).$$
	\end{definition}
	
	The following definition is motivated by Proposition~\ref{propositionTatefiltration}$\,(3)$.
	
	\begin{definition}[HKR filtration on HH]\label{definitionHKRfiltrationonHH}
		The {\it HKR filtration on Hochschild homology} of qcqs derived schemes is the functor
		$$\text{Fil}^\star_{\text{HKR}} \text{HH}(-) : \text{dSch}^{\text{qcqs,op}} \longrightarrow \text{FilSp}$$
		defined as
		$$\text{Fil}^\star_{\text{HKR}} \text{HH}(-) := \text{Fil}^\star_{\text{HKR}} \text{gr}^0_{\text{T}} \text{HP}(-).$$
	\end{definition}
	
	Cyclic homology $\text{HC}(-)$ is defined as the homotopy orbits $\text{HH}(-)_{h\text{S}^1}$ of the $\text{S}^1$-action on Hochschild homology $\text{HH}(-)$, and is related to negative cyclic homology $\text{HC}^-(-)$ and periodic cyclic homology $\text{HP}(-)$ by a natural fibre sequence
	$$\text{HC}^-(-) \longrightarrow \text{HP}(-) \longrightarrow \text{HC}(-)[2].$$
	
	\begin{definition}[HKR filtration on HC]\label{definitionHKRfiltrationonHC}
		The {\it HKR filtration on cyclic homology} of qcqs derived schemes is the functor
		$$\text{Fil}^\star_{\text{HKR}} \text{HC}(-) : \text{dSch}^{\text{qcqs,op}} \longrightarrow \text{FilSp}$$
		defined, for every integer $i \in \Z$, by
		$$\text{Fil}^i_{\text{HKR}} \text{HC}(-) := \text{cofib}\Big(\text{Fil}^{i+1}_{\text{HKR}} \text{HC}^-(-) \longrightarrow \text{Fil}^{i+1}_{\text{HKR}} \text{HP}(-)\Big)[-2],$$
		where the map on the right hand side is induced by Construction~\ref{constructionHKRfiltrationonHP} and Definition~\ref{definitionHKRfiltrationonHC-}.
	\end{definition}
	
	\begin{remark}[Graded pieces of the HKR filtrations]\label{remarkgradedpiecesoftheHKRfiltrations}
		Let $X$ be a qcqs derived scheme. The main result of \cite{antieau_periodic_2019} describes the graded pieces of the HKR filtrations on $\text{HC}^-(X)$, $\text{HP}(X)$, and $\text{HC}(X)$ in terms of the Hodge-completed derived de Rham cohomology of $X$. In particular,  Definition~\ref{definitionHKRfiltrationonHC} provides a filtered refinement of the fibre sequence
		$$\text{HC}^-(-) \longrightarrow \text{HP}(-) \longrightarrow \text{HC}(-)[2],$$
		which induces on graded pieces, for every integer~$i \in \Z$, a natural fibre sequence
		$$R\Gamma_{\text{Zar}}\big(X,\widehat{\mathbb{L}\Omega}^{\geq i}_{-/\Z}\big)[2i] \longrightarrow R\Gamma_{\text{Zar}}\big(X,\widehat{\mathbb{L}\Omega}_{-/\Z}\big)[2i] \longrightarrow R\Gamma_{\text{Zar}}\big(X,\mathbb{L}\Omega^{<i}_{-/\Z}\big)[2i]$$
		in the derived category $\mathcal{D}(\Z)$.
	\end{remark}
	
	\begin{proposition}[\cite{antieau_periodic_2019,bhatt_absolute_2022}]\label{propositionHKRfiltrationonHCrationalisfinitary}
		For every integer $i \in \Z$, the functor $\emph{Fil}^i_{\emph{HKR}} \emph{HC}(-)$, from animated commutative rings to spectra, is left Kan extended from polynomial $\Z$-algebras, commutes with filtered colimits, and its values are in cohomological degrees at most $-i$.
	\end{proposition}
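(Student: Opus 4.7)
My plan is to reduce the three assertions to an explicit computation of $\text{Fil}^i_{\text{HKR}} \text{HC}(P)$ on polynomial $\Z$-algebras $P$, using the fact that Construction~\ref{constructionHKRfiltrationonHP} and Definition~\ref{definitionHKRfiltrationonHC} build $\text{Fil}^i_{\text{HKR}} \text{HC}(-)$ via left Kan extension from $\text{Poly}_\Z$ (followed by a Zariski sheafification that is inert on affines).

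On a polynomial $\Z$-algebra $P$, the classical HKR theorem computes $\text{Fil}^\star_{\text{HKR}} \text{HC}^-(P)$ and $\text{Fil}^\star_{\text{HKR}} \text{HP}(P)$ in terms of the Hodge-filtered de Rham complex: concretely, $\text{Fil}^{i+1}_{\text{HKR}} \text{HC}^-(P) \simeq \Omega^{\geq i+1}_{P/\Z}[2(i+1)]$ and $\text{Fil}^{i+1}_{\text{HKR}} \text{HP}(P) \simeq \Omega_{P/\Z}[2(i+1)]$, with the natural map being the inclusion of the stupidly truncated de Rham complex. Taking the cofibre and shifting by $[-2]$ as in Definition~\ref{definitionHKRfiltrationonHC} yields
$$\text{Fil}^i_{\text{HKR}} \text{HC}(P) \simeq \Omega^{<i+1}_{P/\Z}[2i],$$
a finite complex of free $P$-modules concentrated in cohomological degrees $[-2i,-i]$, hence in particular in degrees $\leq -i$.

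To transfer this to animated commutative rings, I would observe that on affines the Zariski sheafification $L_{\text{Zar}}$ from Construction~\ref{constructionHKRfiltrationonHP} is inert, since the presheaf $P \mapsto \Omega^{<i+1}_{P/\Z}[2i]$ on $\text{Poly}_\Z$ is already a Zariski sheaf on affine opens of $\text{Spec}(P)$. This gives
$$\text{Fil}^i_{\text{HKR}} \text{HC}(R) \simeq \underset{P \to R}{\text{colim}}\; \Omega^{<i+1}_{P/\Z}[2i]$$
for every animated commutative ring $R$, where the colimit is taken over polynomial $\Z$-algebras with a map to $R$. From this formula, the three assertions follow: the left Kan extension property is immediate; commutation with filtered colimits follows because $P \mapsto \Omega^{<i+1}_{P/\Z}[2i]$ is finitary on $\text{Poly}_\Z$ and left Kan extensions of finitary functors are finitary; and the cohomological bound in degrees $\leq -i$ is preserved by the sifted, hence filtered, colimits computing the left Kan extension.

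The main obstacle is establishing the explicit formula on polynomial $\Z$-algebras, that is, identifying the filtered pieces (not merely the graded pieces given by Remark~\ref{remarkgradedpiecesoftheHKRfiltrations}) of $\text{Fil}^\star_{\text{HKR}} \text{HC}^-(P)$ and $\text{Fil}^\star_{\text{HKR}} \text{HP}(P)$ with the Hodge filtration on the de Rham complex. This requires unpacking the interaction between the Tate filtration on $\text{HP}$, the décalage along the Beilinson $t$-structure, the colimit over the Tate index used to pass from $\text{HC}^-$ to $\text{HP}$, and the left Kan extension taken in the category of filtration-complete filtered spectra. Once this identification is in place, all three properties follow formally.
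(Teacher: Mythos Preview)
Your overall strategy---reducing to polynomial $\Z$-algebras via the left Kan extension built into Construction~\ref{constructionHKRfiltrationonHP}---is the right one, and is essentially what the paper does. But your explicit formulas for the filtered pieces are wrong: you have written down the \emph{graded} pieces, not the filtered ones. The identification $\text{gr}^{i+1}_{\text{HKR}} \text{HC}^-(P) \simeq \Omega^{\geq i+1}_{P/\Z}[2(i+1)]$ is correct (Remark~\ref{remarkgradedpiecesoftheHKRfiltrations}), but $\text{Fil}^{i+1}_{\text{HKR}} \text{HC}^-(P)$ is an iterated extension of all $\text{gr}^j$ for $j \geq i+1$, hence a much larger object; likewise for $\text{HP}$ and for your final formula $\text{Fil}^i_{\text{HKR}} \text{HC}(P) \simeq \Omega^{\leq i}_{P/\Z}[2i]$. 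Since the whole argument rests on these identifications, the proof as written does not go through.

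The paper sidesteps this by never computing the filtered pieces. For the left Kan extension property, it observes that the bifiltered object $\text{Fil}^i_{\text{HKR}} \text{Fil}^\star_{\text{T}} \text{HC}(-)$ is left Kan extended from $\text{Poly}_\Z$ as a complete filtered object (by construction), and that its Tate filtration is \emph{finite}---this finiteness is the key point you are missing, and it is what allows one to pass from the complete-filtered left Kan extension to a left Kan extension of the underlying object $\text{Fil}^i_{\text{HKR}} \text{HC}(-)$. Commutation with filtered colimits then follows formally. For the connectivity bound, the paper uses that the filtration $\text{Fil}^\star_{\text{HKR}} \text{HC}(-)$ is complete on animated rings (\cite[Remark~$6.3.5$]{bhatt_absolute_2022}) together with the known connectivity of the graded pieces $\mathbb{L}\Omega^{\leq j}_{-/\Z}[2j]$. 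Both arguments use only the graded pieces and structural properties of the construction, not an explicit description of the filtered pieces.
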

	
	\begin{proof}
		On animated commutative rings, the Tate filtrations $$\text{Fil}^{i+1}_{\text{HKR}} \text{Fil}^\star_{\text{T}} \text{HC}^-(-) \quad \text{and} \quad \text{Fil}^{i+1}_{\text{HKR}} \text{Fil}^\star_{\text{T}} \text{HP}(-)$$ are by definition left Kan extended, as complete filtered objects, from polynomial $\Z$-algebras, thus so is the Tate filtration $\text{Fil}^i_{\text{HKR}} \text{Fil}^\star_{\text{T}} \text{HC}(-)$. The Tate filtration $\text{Fil}^i_{\text{HKR}} \text{Fil}^\star_{\text{T}} \text{HC}(-)$ is also finite by construction, so the functor $\text{Fil}^i_{\text{HKR}} \text{HC}(-)$ is left Kan extended on animated commutative rings from polynomial $\Z$-algebras. In particular, the functor $$\text{Fil}^i_{\text{HKR}} \text{HC}(-)$$ commutes with filtered colimits of animated commutative rings. For every integer $j \in \Z$, the $j^{\text{th}}$ graded piece of the filtration $\text{Fil}^\star_{\text{HKR}} \text{HC}(-)$ is naturally identified with the functor $R\Gamma_{\text{Zar}}\big(-,\mathbb{L}\Omega^{\leq j}_{-/\Z}\big)[2j]$ (\cite{antieau_periodic_2019}), whose values are in degrees at most $-j$ on animated commutative rings. The filtration $\text{Fil}^\star_{\text{HKR}} \text{HC}(-)$ is moreover complete on animated commutative rings (\cite[Remark~$6.3.5$]{bhatt_absolute_2022}), hence the desired connectivity result.
	\end{proof}
	
	\begin{lemma}[Completeness of the HKR filtrations, after \cite{bhatt_absolute_2022}]\label{lemmaHKRfiltrationonHC-isalwayscomplete}
		Let $X$ be a qcqs derived scheme. Then the HKR filtrations $$\emph{Fil}^\star_{\emph{HKR}} \emph{HP}(X), \text{ }\emph{Fil}^\star_{\emph{HKR}} \emph{HC}^-(X), \text{ } \emph{Fil}^\star_{\emph{HKR}} \emph{HH}(X), \text{ and } \emph{Fil}^\star_{\emph{HKR}} \emph{HC}(X)$$ 
		are complete.
	\end{lemma}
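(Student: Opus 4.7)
The plan is to reduce the four completeness claims to a single one for $\text{HP}$, and then verify it through the three stages of Construction~\ref{constructionHKRfiltrationonHP}.

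By Definitions~\ref{definitionHKRfiltrationonHC-}, \ref{definitionHKRfiltrationonHH}, and~\ref{definitionHKRfiltrationonHC}, the HKR filtrations on $\text{HC}^-$, $\text{HH}$, and $\text{HC}$ are obtained from the bifiltered object $\text{Fil}^\star_{\text{HKR}} \text{Fil}^\star_T \text{HP}$ by taking, respectively, a single Tate level, a Tate graded piece, and a shifted cofibre of a map between the HKR filtrations on $\text{HC}^-$ and $\text{HP}$. Since the limit $\lim_i \text{Fil}^i$ testing completeness commutes with all finite (co)limits in the stable $\infty$-category of filtered spectra, all four completeness claims reduce to completeness of $\text{Fil}^\star_{\text{HKR}} \text{HP}(X)$ in the Beilinson direction $\star$.

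I would verify this in three stages. On a polynomial $\Z$-algebra $P$ of finite type, the classical décalage theorem identifies $\text{Fil}^\star_{\text{HKR}} \text{HP}(P)$ with (a variant of) the Hodge filtration on the de Rham complex of $P$, which vanishes in weights above $\dim P$; hence the filtration is eventually zero, and in particular complete. Since the left Kan extension $L_{\text{dSch}^{\text{qcqs,op}}/\text{Poly}_{\Z}^{\text{op}}}$ in Construction~\ref{constructionHKRfiltrationonHP} is performed in the category of filtration-complete filtered spectra, this completeness transfers to all animated commutative rings.

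The main obstacle is the Zariski sheafification step: one must show that $L_{\text{Zar}}$ preserves Beilinson-direction completeness when passing from animated commutative rings to qcqs derived schemes. I would analyse the graded pieces using the identification $\text{gr}^i_{\text{HKR}} \text{HP}(X) \simeq R\Gamma_{\text{Zar}}(X, \widehat{\mathbb{L}\Omega}_{-/\Z})[2i]$ from Remark~\ref{remarkgradedpiecesoftheHKRfiltrations}: the shift by $[2i]$ pushes the cohomological amplitude of the graded pieces uniformly towards $-\infty$ as $i \to \infty$, and combining this with the left-exactness of Zariski sheafification, the commutation of finite limits with $\lim_i$, and a descent spectral sequence argument controlled by the bounded cohomological amplitude of the Hodge-completed derived de Rham complex, one concludes that $\lim_i \text{Fil}^i_{\text{HKR}} \text{HP}(X) = 0$. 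The key technical subtlety is in the bookkeeping: ensuring that the interaction between the Zariski sheafification functor, the colimit defining $\text{Fil}^\star_{\text{HKR}} \text{HP}$ as the underlying filtered object of the bifiltered one, and the Beilinson-direction limit is controlled enough for the completeness statement to hold.
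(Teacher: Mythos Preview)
Your reduction goes in the wrong direction. The HKR filtration on $\text{HC}^-$ is the single Tate level $\text{Fil}^\star_{\text{HKR}}\text{Fil}^0_{\text{T}}\text{HP}$, whereas $\text{Fil}^\star_{\text{HKR}}\text{HP}$ is the \emph{colimit} over all Tate levels (Construction~\ref{constructionHKRfiltrationonHP}); there is no way to recover a single level from this colimit by a finite (co)limit, so completeness of $\text{HP}$ does not imply completeness of $\text{HC}^-$. Likewise $\text{HC}$ is a cofibre involving both $\text{HC}^-$ and $\text{HP}$, so it needs both. The paper runs the implication the other way: it cites \cite[Remark~6.3.5]{bhatt_absolute_2022} for $\text{HC}^-$ and $\text{HH}$, proves $\text{HC}$ directly via the connectivity bound of Proposition~\ref{propositionHKRfiltrationonHCrationalisfinitary} (the graded pieces $\mathbb{L}\Omega^{<i}[2i]$ lie in degrees $\leq -i$, forcing $\varprojlim_i \text{Fil}^i_{\text{HKR}}\text{HC}=0$), and then deduces $\text{HP}$ from $\text{HC}^-$ and $\text{HC}$ via the defining fibre sequence of Definition~\ref{definitionHKRfiltrationonHC}.

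Your claim that the HKR filtration on $\text{HP}$ is eventually zero on a polynomial $\Z$-algebra $P$ is also false: by Remark~\ref{remarkgradedpiecesoftheHKRfiltrations} one has $\text{gr}^i_{\text{HKR}}\text{HP}(P)\simeq \Omega^\bullet_{P/\Z}[2i]$, which never vanishes. (What does eventually vanish is $\text{gr}^i_{\text{HKR}}\text{HC}^-(P)\simeq \Omega^{\geq i}_{P/\Z}[2i]$ for $i>\dim P$; you may have conflated the two.) The connectivity intuition you invoke---the $[2i]$ shift pushing amplitude to $-\infty$---is exactly what drives the paper's argument for $\text{HC}$, but applying it to $\text{HP}$ after Zariski sheafification on a general qcqs derived scheme runs into the problem you yourself flag and do not resolve: $L_{\text{Zar}}$ need not preserve completeness, and the colimit over Tate levels defining $\text{HP}$ makes the interchange of limits genuinely delicate. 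The paper sidesteps this entirely by never attacking $\text{HP}$ directly.
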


	\begin{proof}
		The result for $\text{HC}^-$ and $\text{HH}$ is a direct consequence of \cite[Remark~$6.3.5$]{bhatt_absolute_2022}. The result for $\text{HC}$ is a consequence of the connectivity result of Proposition~\ref{propositionHKRfiltrationonHCrationalisfinitary}. By Definition~\ref{definitionHKRfiltrationonHC}, the result for $\text{HP}$ is then a consequence of the result for $\text{HC}^-$ and $\text{HC}$.
	\end{proof}
	
	\begin{remark}[Variant over $\Q$]\label{remarkHHandvariantsrelativetoQ}
		Let $X$ be a qcqs derived scheme. By the base change property for Hochschild homology, the natural map
		$$\text{HH}(X) \otimes_{\Z} \Q \longrightarrow \text{HH}(X_{\Q}/\Q)$$
		is an equivalence in the derived category $\mathcal{D}(\Q)$, where $\text{HH}(-/\Q)$ is Hochschild homology relative to~$\Q$. Applying the functors $(-)^{h\text{S}^1}$, $(-)^{t\text{S}^1}$, and $(-)_{h\text{S}^1}$ to this Hochschild homology relative to $\Q$ induces relative variants $\text{HC}^-(-_{\Q}/\Q)$ of negative cyclic homology, $\text{HP}(-_{\Q}/\Q)$ of periodic cyclic homology, and $\text{HC}(-_{\Q}/\Q)$ of cylic homology. One can then define similar HKR filtrations 
		$$\text{Fil}^\star_{\text{HKR}} \text{HH}(X_{\Q}/\Q), \text{ } \text{Fil}^\star_{\text{HKR}} \text{HC}^-(X_{\Q}/\Q), \text{ } \text{Fil}^\star_{\text{HKR}} \text{HP}(-_{\Q}/\Q), \text{ and } \text{Fil}^\star_{\text{HKR}} \text{HC}(-_{\Q}/\Q),$$
		on these functors, whose graded pieces are versions of derived de Rham cohomology relative to $\Q$.
	\end{remark}
	
	To introduce and study the motivic filtration on topological cyclic homology (Definition~\ref{definitionmotivicfiltrationonTC}), we will need some $p$-complete variants of the previous HKR filtrations.
	
	\begin{definition}[HKR filtration on $\text{HC}^-(-;\Z_p)$]\label{definitionHKRfiltrationHC-pcompleted}
		Let $p$ be a prime number. The {\it HKR filtration on $p$-completed negative cyclic homology} of qcqs derived schemes is the functor
		$$\text{Fil}^\star_{\text{HKR}} \text{HC}^-(-;\Z_p) : \text{dSch}^{\text{qcqs,op}} \longrightarrow \text{FilSp}$$
		defined as
		$$\text{Fil}^\star_{\text{HKR}} \text{HC}^-(-;\Z_p) := \big( \text{Fil}^\star_{\text{HKR}} \text{HC}^-(-) \big)^\wedge_p.$$
	\end{definition}
	
	\begin{remark}
		The HKR filtrations on $\text{HP}(-;\Z_p)$, $\text{HC}(-;\Z_p)$, and $\text{HH}(-;\Z_p)$ of qcqs derived schemes are defined as in Definition~\ref{definitionHKRfiltrationHC-pcompleted}, where $\text{HC}^-(-;\Z_p)$ is replaced by $\text{HP}(-)$, $\text{HC}(-)$, or $\text{HH}(-)$. In particular, for every qcqs derived scheme $X$, Definition~\ref{definitionHKRfiltrationonHC} induces a fibre sequence of filtered spectra
		$$\text{Fil}^\star_{\text{HKR}} \text{HC}^-(X;\Z_p) \longrightarrow \text{Fil}^\star_{\text{HKR}} \text{HP}(X;\Z_p) \longrightarrow \text{Fil}^{\star-1}_{\text{HKR}} \text{HC}(X;\Z_p)[2].$$
	\end{remark}
	
	\begin{lemma}\label{lemmaHKRfiltrationproductpcompletionsiscomplete}
		Let $X$ be a qcqs derived scheme. Then the filtrations $$\prod_{p \in \mathbb{P}} \emph{Fil}^\star_{\emph{HKR}} \emph{HP}(X;\Z_p), \quad \prod_{p \in \mathbb{P}} \emph{Fil}^\star_{\emph{HKR}} \emph{HC}^-(X;\Z_p), \quad \prod_{p \in \mathbb{P}} \emph{Fil}^\star_{\emph{HKR}} \emph{HH}(X;\Z_p),$$ $$\emph{ and } \prod_{p \in \mathbb{P}} \emph{Fil}^\star_{\emph{HKR}} \emph{HC}(X;\Z_p)$$ are complete.
	\end{lemma}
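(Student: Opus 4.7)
The plan is to deduce the result formally from Lemma~\ref{lemmaHKRfiltrationonHC-isalwayscomplete}, using only that both $p$-completion (in the derived sense) and arbitrary products commute with limits of spectra. Completeness of a filtered spectrum $\text{Fil}^\star Y$ means $\lim_n \text{Fil}^n Y \simeq 0$, so preservation of completeness reduces to checking that a given operation commutes with the limit over the filtration index $n$.

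First, I would observe that the $p$-completion of a complete filtration is complete. Indeed, the $p$-completion functor $(-)^\wedge_p : \text{Sp} \to \text{Sp}$ is itself defined as a limit (of the Postnikov-type diagram $Y \mapsto \lim_k Y/p^k$), hence commutes with all limits. For each $F \in \{\text{HP}, \text{HC}^-, \text{HH}, \text{HC}\}$, Definition~\ref{definitionHKRfiltrationHC-pcompleted} (and the analogous definitions) gives $\text{Fil}^n_{\text{HKR}} F(X;\Z_p) = (\text{Fil}^n_{\text{HKR}} F(X))^\wedge_p$ levelwise, so
\[
\lim_n \text{Fil}^n_{\text{HKR}} F(X;\Z_p) \simeq \lim_n \big(\text{Fil}^n_{\text{HKR}} F(X)\big)^\wedge_p \simeq \Big(\lim_n \text{Fil}^n_{\text{HKR}} F(X)\Big)^\wedge_p \simeq 0^\wedge_p \simeq 0,
\]
where the third identification uses Lemma~\ref{lemmaHKRfiltrationonHC-isalwayscomplete}.

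Second, I would note that arbitrary products of complete filtered spectra are complete, since products commute with limits: for a family $\{\text{Fil}^\star Y_p\}_{p \in \mathbb{P}}$ of complete filtered spectra,
\[
\lim_n \prod_{p \in \mathbb{P}} \text{Fil}^n Y_p \simeq \prod_{p \in \mathbb{P}} \lim_n \text{Fil}^n Y_p \simeq \prod_{p \in \mathbb{P}} 0 \simeq 0.
\]
Applying this with $Y_p = F(X;\Z_p)$ and using the previous step finishes the proof for each of the four functors.

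There is no real obstacle here; the only minor point worth flagging is the consistency of notational conventions, namely that $(-)^\wedge_p$ in the filtered derived category means levelwise derived $p$-completion of the underlying spectra (and not, for instance, completion with respect to the filtration itself). Once this is fixed, the argument is formal and is essentially a consequence of the fact that both $(-)^\wedge_p$ and $\prod_{p \in \mathbb{P}}$ are limits.
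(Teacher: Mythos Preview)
Your proof is correct and follows exactly the same approach as the paper: both argue that completeness is preserved under $p$-completion and under products because complete filtered spectra are closed under limits, then invoke Lemma~\ref{lemmaHKRfiltrationonHC-isalwayscomplete}. The paper's proof is simply a one-line version of what you wrote out in detail.
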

	
	\begin{proof}
		The collection of complete filtered spectra is closed under limits in the category of filtered spectra, so this is a consequence of Lemma~\ref{lemmaHKRfiltrationonHC-isalwayscomplete}.
	\end{proof}
	
	\begin{remark}[Exhaustivity of the HKR filtrations]\label{remarkexhaustivityoftheHKRfiltrations}
		The HKR filtrations $\text{Fil}^\star_{\text{HKR}} \text{HC}^-$ and $\text{Fil}^\star_{\text{HKR}} \text{HP}$ are not exhaustive on general qcqs derived schemes (\cite[Remark~$6.3.6$]{bhatt_absolute_2022}). For the purpose of the motivic filtration on algebraic $K$-theory (Definition~\ref{definitionmotivicfiltrationonKtheoryofschemes}), we will however only need the fact that the HKR filtration $\text{Fil}^\star_{\text{HKR}} \text{HC}$ is always $\N$-indexed, and in particular exhaustive.
	\end{remark}
	
	\subsection{The BMS filtrations}\label{subsectionBMSfiltrations}
	
	\vspace{-\parindent}
	\hspace{\parindent}
	
	Let $p$ be a prime number. In this subsection, we review the BMS filtrations on $p$-completed topological Hochschild homology $\text{THH}(-;\Z_p)$ and its variants, as defined in \cite{bhatt_topological_2019} for $p$\nobreakdash-complete $p$-quasisyntomic rings, and generalised in \cite{antieau_beilinson_2020} to $p$-complete rings and in \cite[Section~$6.2$]{bhatt_absolute_2022} to animated commutative rings. Only the BMS filtration on $p$-completed topological cyclic homology $\text{TC}(-;\Z_p)$ (Definition~\ref{definitionBMSfiltrationonTCpcompletedofqcqsderivedschemes}) will appear in the definition of the motivic filtration on topological cyclic homology $\text{TC}(-)$ (Definition~\ref{definitionmotivicfiltrationonTC}). The other BMS filtrations are necessary to construct the BMS filtration on $p$-completed topological cyclic homology $\text{TC}(-;\Z_p)$.
	
	\begin{construction}[BMS filtration on $\text{Fil}^\star_{\text{T}} \text{TP}(-;\Z_p)$]\label{constructionBMSfiltrationonTPpcompleted}
		Topological Hochschild homology $\text{THH}(-)$ of qcqs derived schemes admits a natural $\text{S}^1$-action, inducing a natural Tate filtration $\text{Fil}^\star_{\text{T}} \text{TP}(-)$ on topological periodic cyclic homology $\text{TP}(-) := \text{THH}(-)^{t\text{S}^1}$ (Proposition~\ref{propositionTatefiltration}). The Tate filtration 
		$$\text{Fil}^\star_{\text{T}} \text{TP}(-;\Z_p) : \text{dSch}^{\text{qcqs,op}} \longrightarrow \text{FilSp}$$
		is then defined as the $p$-completion of the Tate filtration $\text{Fil}^\star_{\text{T}} \text{TP}(-)$. For every quasiregular semiperfectoid ring $R$ and every integer $i \in \Z$, define the filtered spectrum
		$$\text{Fil}^i_{\text{BMS}} \text{Fil}^\star_{\text{T}} \text{TP}(R;\Z_p) := \tau_{\geq 2i} \text{Fil}^\star_{\text{T}} \text{TP}(R;\Z_p).$$
		The filtered object
		$$\text{Fil}^i_{\text{BMS}} \text{Fil}^\star_{\text{T}} \text{TP}(-;\Z_p) : \text{dSch}^{\text{qcqs,op}} \longrightarrow \text{FilSp}$$
		is then first defined on $p$-quasisyntomic rings as the unique such functor satisfying $p$-complete faithfully flat descent (the existence and unicity of such a functor is \cite[Proposition~$4.31$]{bhatt_topological_2019}). In general, polynomial $\Z$-algebras are $p$-quasisyntomic rings, and this filtered object is defined as the Zariski sheafification of its left Kan extension from polynomial $\Z$-algebras
		$$\text{Fil}^i_{\text{BMS}} \text{Fil}^\star_{\text{T}} \text{TP}(-;\Z_p) := L_{\text{Zar}} L_{\text{AniRings}/\text{Poly}_{\Z}} \text{Fil}^i_{\text{BMS}} \text{Fil}^\star_{\text{T}} \text{TP}(-;\Z_p),$$
		where the left Kan extension is taken in the category of $p$-complete filtration-complete spectra. By \cite[Theorem~$6.2.4$]{bhatt_absolute_2022}, the resulting functor is still given by the double-speed Postnikov filtration on quasiregular semiperfectoid rings and, as a functor from animated commutative rings to $p$-complete filtration-complete spectra, commutes with sifted colimits and satisfies $p$-complete faithfully flat descent.
	\end{construction}

	\begin{remark}\label{remarkderivedpcompletionwithBMS}
		The BMS filtrations were first defined in \cite{bhatt_topological_2019} in the generality of $p$\nobreakdash-com\-plete $p$\nobreakdash-quasisyntomic rings. On general animated commutative rings $R$, the BMS filtrations, by construction, depend only on the $p$-completion of $R$ --and in particular vanish on animated commutative $\Z[\tfrac{1}{p}]$-algebras. Here the $p$-completion is the derived $p$-completion, even on classical commutative rings. On commutative rings with bounded $p$-power torsion ({\it e.g.}, on $p$-quasisyntomic rings), the derived and classical $p$-completions naturally coincide, and there is no conflict between the two definitions.
	\end{remark}
	
	\begin{definition}[BMS filtration on $\text{TP}(-;\Z_p)$]\label{definitionBMSfiltrationonTPpcompletedofqcqsderivedschemes}
		The {\it BMS filtration on $p$-completed topological periodic cyclic homology} of qcqs derived schemes is the functor
		$$\text{Fil}^\star_{\text{BMS}} \text{TP}(-;\Z_p) : \text{dSch}^{\text{qcqs,op}} \longrightarrow \text{FilSp}$$
		defined as the underlying filtered object of the bifiltered functor $\text{Fil}^\star_{\text{BMS}} \text{Fil}^\star_{\text{T}} \text{TP}(-;\Z_p)$ of Construction~\ref{constructionBMSfiltrationonTPpcompleted}:
		$$\text{Fil}^\star_{\text{BMS}} \text{TP}(-;\Z_p) := {\lim\limits_{\text{ }\longrightarrow n}} \text{Fil}^\star_{\text{BMS}} \text{Fil}^n_{\text{T}} \text{TP}(-;\Z_p).$$
	\end{definition}
	
	Topological negative cyclic homology is to topological periodic cyclic homology what negative cyclic homology is to periodic cyclic homology. Given Definition~\ref{definitionBMSfiltrationonTPpcompletedofqcqsderivedschemes}, the following definition then mimics Definition~\ref{definitionHKRfiltrationonHC-}.
	
	\begin{definition}[BMS filtration on $\text{TC}^-(-;\Z_p)$]\label{definitionBMSfiltrationonTC-pcompleted}
		The {\it BMS filtration on $p$-completed topological negative cyclic homology} of qcqs derived schemes is the functor
		$$\text{Fil}^\star_{\text{BMS}} \text{TC}^-(-;\Z_p) : \text{dSch}^{\text{qcqs,op}} \longrightarrow \text{FilSp}$$
		defined as
		$$\text{Fil}^\star_{\text{BMS}} \text{TC}^-(-;\Z_p) := \text{Fil}^\star_{\text{BMS}} \text{Fil}^0_{\text{T}} \text{TP}(-;\Z_p).$$
	\end{definition}
	
	Similarly, topological Hochschild homology is to topological periodic and topological negative cyclic homologies what Hochschild homology is to periodic and negative cyclic homologies, and the following definition mimics Definition~\ref{definitionHKRfiltrationonHH}. 
	
	\begin{definition}[BMS filtration on $\text{THH}(-;\Z_p)$]\label{definitionBMSfiltrationonTHHpcompleted}
		The {\it BMS filtration on $p$-completed topological Hochschild homology} of qcqs derived schemes is the functor
		$$\text{Fil}^\star_{\text{BMS}} \text{THH}(-;\Z_p) : \text{dSch}^{\text{qcqs,op}} \longrightarrow \text{FilSp}$$
		defined as
		$$\text{Fil}^\star_{\text{BMS}} \text{THH}(-;\Z_p) := \text{Fil}^\star_{\text{BMS}} \text{gr}^0_{\text{T}} \text{TP}(-;\Z_p).$$
	\end{definition}
	
	Topological cyclic homology is however not to topological periodic cyclic homology what cyclic homology is to periodic cyclic homology. Following \cite{nikolaus_topological_2018}, it is rather defined, after $p$-completion, by a fibre sequence
	$$\text{TC}(-;\Z_p) \longrightarrow \text{TC}^-(-;\Z_p) \xlongrightarrow{\phi_p - \text{can}} \text{TP}(-;\Z_p).$$
	Unwinding the previous definitions, the map $\phi_p - \text{can} : \text{TC}^-(-;\Z_p) \rightarrow \text{TP}(-;\Z_p)$
	admits a unique refinement as a filtered map
	$$\phi_p - \text{can} : \text{Fil}^\star_{\text{BMS}} \text{TC}^-(-;\Z_p) \longrightarrow \text{Fil}^\star_{\text{BMS}} \text{TP}(-;\Z_p).$$
	
	\begin{definition}[BMS filtration on $\text{TC}(-;\Z_p)$]\label{definitionBMSfiltrationonTCpcompletedofqcqsderivedschemes}
		The {\it BMS filtration on $p$-completed topological cyclic homology} of qcqs derived schemes is the functor
		$$\text{Fil}^\star_{\text{BMS}} \text{TC}(-;\Z_p) : \text{dSch}^{\text{qcqs,op}} \longrightarrow \text{FilSp}$$
		defined as
		$$\text{Fil}^\star_{\text{BMS}} \text{TC}(-;\Z_p) := \text{fib}\Big(\phi_p - \text{can} : \text{Fil}^\star_{\text{BMS}} \text{TC}^-(-;\Z_p) \longrightarrow \text{Fil}^\star_{\text{BMS}} \text{TP}(-;\Z_p)\Big).$$
	\end{definition}
	
	The BMS filtration on $p$-completed topological cyclic homology is always complete, as a consequence of a connectivity result of \cite{antieau_beilinson_2020}. We will need the following slightly more precise result when studying the completeness of the motivic filtration on algebraic $K$-theory.
	
	\begin{lemma}\label{lemmaBMSfiltrationproductallprimesiscomplete}
		Let $X$ be a qcqs derived scheme. Then the filtrations $\prod_{p \in \mathbb{P}} \emph{Fil}^\star_{\emph{BMS}} \emph{TC}(X;\Z_p)$ and $\big(\prod_{p \in \mathbb{P}} \emph{Fil}^\star_{\emph{BMS}} \emph{TC}(X;\Z_p)\big)_{\Q}$ are complete. More precisely, for every integer $i \in \Z$, the values of the presheaves $\prod_{p \in \mathbb{P}} \emph{Fil}^i_{\emph{BMS}} \emph{TC}(-;\Z_p)$ and $\big(\prod_{p \in \mathbb{P}} \emph{Fil}^i_{\emph{BMS}} \emph{TC}(-;\Z_p)\big)_{\Q}$ are in cohomological degrees at most $-i+1$ on affine derived schemes.
	\end{lemma}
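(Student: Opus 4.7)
I would first establish the \emph{more precisely} statement, that is, the pointwise connectivity bound on affine derived schemes, and then deduce completeness as a formal consequence.

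The key input is the connectivity of the syntomic complex on affine derived schemes: for every animated commutative ring $R$, every prime number $p$, and every integer $i \in \Z$, the complex $\Z_p(i)^{\text{BMS}}(R)$ is concentrated in cohomological degrees at most $i+1$. This is due to Antieau--Mathew--Morrow--Nikolaus \cite{antieau_beilinson_2020}, and reduces, via the $p$-complete quasisyntomic descent built into Construction~\ref{constructionBMSfiltrationonTPpcompleted}, to the case of quasiregular semiperfectoid rings handled in \cite{bhatt_topological_2019}. Shifting by $[2i]$, the graded piece $\text{gr}^i_{\text{BMS}} \text{TC}(R;\Z_p) \simeq \Z_p(i)^{\text{BMS}}(R)[2i]$ is in cohomological degrees at most $-i+1$.

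I would then propagate this bound from the graded pieces to the filtration. Since $\text{Fil}^\star_{\text{BMS}} \text{TC}(R;\Z_p)$ is complete by construction (Construction~\ref{constructionBMSfiltrationonTPpcompleted} takes the left Kan extension in the category of $p$-complete filtration-complete spectra), one has
$$\text{Fil}^i_{\text{BMS}} \text{TC}(R;\Z_p) \simeq \lim_j \text{Fil}^i_{\text{BMS}} \text{TC}(R;\Z_p)/\text{Fil}^j_{\text{BMS}} \text{TC}(R;\Z_p).$$
Fix a cohomological degree $n$: for $k > 1-n$ the graded piece $\text{gr}^k_{\text{BMS}} \text{TC}(R;\Z_p)$ vanishes in degree $n$, so the tower $\{H^n(\text{Fil}^i/\text{Fil}^j)\}_j$ is eventually constant and the Milnor $\lim^1$ contribution vanishes. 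If additionally $i > 1-n$, every relevant $\text{gr}^k$ vanishes in degree $n$, whence $H^n(\text{Fil}^i_{\text{BMS}} \text{TC}(R;\Z_p)) = 0$. This gives the desired bound on affine derived schemes. Products preserve the bound because $H^n$ commutes with products of spectra, and rationalization preserves it because $\Q$ is $\Z$-flat, so $-\otimes_{\Z} \Q$ is exact. This yields the connectivity claim for both presheaves.

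Completeness then follows formally. For $\prod_{p} \text{Fil}^\star_{\text{BMS}} \text{TC}(X;\Z_p)$ on a qcqs derived scheme $X$, each factor is complete by construction and products of complete filtered spectra are complete. For the rationalized product, the affine connectivity bound gives completeness on affines: fixing $n$, the tower $\{H^n(\text{Fil}^i)\}_i$ is eventually zero, so both its limit and its $\lim^1$ vanish, forcing $H^n(\lim_i \text{Fil}^i) = 0$ in every degree. The general qcqs case is then obtained by Zariski descent, using that the filtrations in question are Zariski sheaves of filtered spectra. The genuine obstacle in this proof is the input connectivity bound on $\Z_p(i)^{\text{BMS}}$ on animated commutative rings, which rests on a delicate analysis of the Nygaard filtration on prismatic cohomology in the quasiregular semiperfectoid case; everything else is a formal chase through Milnor exact sequences and preservation of connectivity under products and flat base change.
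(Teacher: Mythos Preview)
Your proof is correct and follows essentially the same strategy as the paper: reduce to affines via Zariski descent, invoke the connectivity bound from \cite[Theorem~5.1]{antieau_beilinson_2020}, and observe that products and rationalisation preserve it, whence completeness. The paper is slightly more direct in one place: \cite[Theorem~5.1]{antieau_beilinson_2020} already gives the connectivity of the filtration step $\text{Fil}^i_{\text{BMS}}\text{TC}(R;\Z_p)$ itself (not just of the graded piece $\Z_p(i)^{\text{BMS}}(R)$), so your propagation argument through a Milnor $\lim^1$ is unnecessary. Relatedly, your justification that the single-prime filtration is ``complete by construction (Construction~\ref{constructionBMSfiltrationonTPpcompleted})'' is imprecise: that construction concerns $\text{TP}$, and the filtration-completeness there refers to the Tate direction, not the BMS direction; completeness of $\text{Fil}^\star_{\text{BMS}}\text{TC}(R;\Z_p)$ is rather a consequence of the very connectivity bound you are citing (or of the completeness of the filtrations on $\text{TC}^-$ and $\text{TP}$). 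This is a cosmetic issue and does not affect the validity of your argument.
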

	
	\begin{proof}
		The presheaves $$\prod_{p \in \mathbb{P}} \text{Fil}^\star_{\text{BMS}} \text{TC}(-;\Z_p) \quad \text{and} \quad \Big(\prod_{p \in \mathbb{P}} \text{Fil}^\star_{\text{BMS}} \text{TC}(-;\Z_p)\Big)_{\Q}$$ are Zariski sheaves by construction, so it suffices to prove the result for affine derived schemes $X$. Let $R$ be an animated commutative ring, and $i \in \Z$ be an integer. The spectrum $\text{Fil}^i_{\text{BMS}} \text{TC}(R;\Z_p)$ is in cohomological degrees at most $-i+1$ for every prime number~$p$ (\cite[Theorem~$5.1$]{antieau_beilinson_2020}). Taking the product over all primes $p$ and rationalisation, this implies that the spectra $\prod_{p \in \mathbb{P}} \text{Fil}^i_{\text{BMS}} \text{TC}(R;\Z_p)$ and $\Big(\prod_{p \in \mathbb{P}} \text{Fil}^i_{\text{BMS}} \text{TC}(R;\Z_p)\Big)_{\Q}$ are also in cohomological degrees at most $-i+1$, which implies that the associated filtrations are complete. 
	\end{proof}
	
	\begin{remark}[Exhaustivity of the BMS filtrations]\label{remarkexhaustivityofBMSfiltrations}
		The BMS filtrations $\text{Fil}^\star_{\text{BMS}} \text{TP}(-;\Z_p)$ and $\text{Fil}^{\star}_{\text{BMS}} \text{TC}^-(-;\Z_p)$ are not exhaustive on general qcqs derived schemes (\cite[Warning~$6.2.7$]{bhatt_absolute_2022}). For the purpose of the motivic filtration on algebraic $K$-theory (Definition~\ref{definitionmotivicfiltrationonKtheoryofschemes}), we will however only need the fact that the BMS filtration $\text{Fil}^{\star}_{\text{BMS}} \text{TC}(-;\Z_p)$ is always $\N$\nobreakdash-indexed (\cite[proof of Proposition~$7.16$]{bhatt_topological_2019}), and in particular exhaustive.
	\end{remark}
	
	We refer to \cite{bhatt_topological_2019,bhatt_prisms_2022,bhatt_absolute_2022} for the relation between prismatic cohomology and the graded pieces of the BMS filtrations on $\text{TP}(-;\Z_p)$, $\text{TC}^-(-;\Z_p)$, and $\text{THH}(-;\Z_p)$. We only define here the shifted graded pieces of the BMS filtration on $\text{TC}(-;\Z_p)$, which are a version of syntomic cohomology (see Remark~\ref{remarksyntomiccohomologyBMSandBL}), and which will serve as a building block for the $p$-adic motivic complexes (Corollary~\ref{corollarymainpadicstructureongradeds}).
	
	\begin{definition}[BMS syntomic cohomology]\label{definitionsyntomiccohomologyintermsofTC}
		For every integer $i \in \Z$, the {\it syntomic complex}
		$$\Z_p(i)^{\text{BMS}}(-) : \text{dSch}^{\text{qcqs,op}} \longrightarrow \mathcal{D}(\Z)$$
		is the functor defined as the shifted graded piece of the BMS filtration on $\text{TC}(-;\Z_p)$:
		$$\Z_p(i)^{\text{BMS}}(-) := \text{gr}^i_{\text{BMS}} \text{TC}(-;\Z_p)[-2i].$$
	\end{definition}
	
	\begin{remark}\label{remarksyntomiccohomologyBMSandBL}
		Syntomic cohomology $\Z_p(i)^{\text{syn}}(X)$ of qcqs derived (formal) schemes $X$ is defined in \cite[Section~$8.4$]{bhatt_absolute_2022} (see also Notation~\ref{notationsyntomiccohomology}), in terms of the syntomic complexes of Definition~\ref{definitionsyntomiccohomologyintermsofTC} and of étale cohomology. From this perspective, the syntomic complexes $\Z_p(i)^{\text{BMS}}(X)$ of Definition~\ref{definitionsyntomiccohomologyintermsofTC} correspond to the syntomic cohomology $\Z_p(i)^{\text{syn}}(\mathfrak{X})$ of the derived $p$-adic formal scheme $\mathfrak{X}$ associated to $X$.
	\end{remark}
	
	\begin{theorem}\label{theoremBMSfiltrationonTCpcompletedsatisfiesquasisyntomicdescentandisLKEfrompolynomialalgebras}
		\begin{enumerate}
			\item (\cite{bhatt_topological_2019,bhatt_absolute_2022}) The functor $\emph{Fil}^\star_{\emph{BMS}} \emph{TC}(-;\Z_p)$, viewed as a functor from $p$\nobreakdash-quasisyntomic rings to $p$-complete filtered spectra, satisfies descent for the $p$\nobreakdash-quasisyntomic topology.
			\item (\cite{antieau_beilinson_2020,bhatt_absolute_2022}) The functor $\emph{Fil}^\star_{\emph{BMS}} \emph{TC}(-;\Z_p)$, viewed as a functor from animated commutative rings to $p$-complete filtered spectra, is left Kan extended from polynomial $\Z$-algebras.
		\end{enumerate}
	\end{theorem}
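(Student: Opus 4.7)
By Construction~\ref{constructionBMSfiltrationonTPpcompleted}, the bifiltered functor $\text{Fil}^\star_{\text{BMS}} \text{Fil}^\star_{\text{T}} \text{TP}(-;\Z_p)$ on $p$-quasisyntomic rings is defined via \cite[Proposition~4.31]{bhatt_topological_2019} as the unique $p$-complete fpqc sheaf of filtered spectra extending the double-speed Postnikov filtration on quasiregular semiperfectoid rings. It therefore satisfies $p$-complete fpqc descent, and a fortiori descent for the coarser $p$-quasisyntomic topology. Restriction to the $0$-th Tate piece gives quasisyntomic descent for $\text{Fil}^\star_{\text{BMS}} \text{TC}^-(-;\Z_p)$, and the corresponding statement for $\text{Fil}^\star_{\text{BMS}} \text{TP}(-;\Z_p)$ follows by taking the underlying filtered colimit $\varinjlim_n$ over Tate indices, since limits appearing in the descent spectral sequence commute with this sequential colimit in view of the connectivity estimates used in Lemma~\ref{lemmaBMSfiltrationproductallprimesiscomplete}. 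Finally, by Definition~\ref{definitionBMSfiltrationonTCpcompletedofqcqsderivedschemes}, $\text{Fil}^\star_{\text{BMS}} \text{TC}(-;\Z_p)$ is the fibre of a filtered map between the two quasisyntomic sheaves just considered, and fibres commute with the totalisations in the descent condition.

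\textbf{Plan for part (2).} On animated commutative rings, by the definition in Construction~\ref{constructionBMSfiltrationonTPpcompleted}, $\text{Fil}^\star_{\text{BMS}} \text{Fil}^\star_{\text{T}} \text{TP}(-;\Z_p)$ is constructed as the Zariski sheafification of the left Kan extension from $\text{Poly}_{\Z}$, computed in the category of $p$-complete \emph{filtration-complete} filtered spectra; the same formula then passes to $\text{TC}^-$ and, via the defining fibre sequence, to $\text{TC}$. I would reduce the claim to two verifications. First, the LKE in filtration-complete spectra coincides with the ordinary LKE in filtered spectra; equivalently, the ordinary LKE is already filtration-complete. For $\text{TC}(-;\Z_p)$ specifically, the connectivity bound of \cite[Theorem~5.1]{antieau_beilinson_2020} (recorded in Lemma~\ref{lemmaBMSfiltrationproductallprimesiscomplete}) states that $\text{Fil}^i_{\text{BMS}} \text{TC}(R;\Z_p)$ lies in cohomological degrees $\leq -i+1$ for every animated commutative $\Z$-algebra $R$. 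Since sifted colimits of spectra preserve such upper bounds, the LKE inherits the same estimate, which forces $\varprojlim_i \text{Fil}^i \simeq 0$ and hence completeness. Second, on affine derived schemes the Zariski sheafification step is redundant: by \cite[Theorem~6.2.4]{bhatt_absolute_2022}, the LKE already satisfies $p$-complete fpqc (and thus Zariski) descent on animated commutative rings, because its graded pieces are identified with the expected (Nygaard-filtered) prismatic cohomology invariants, which are fpqc sheaves.

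\textbf{Main obstacle.} The genuinely subtle point is the interaction between the LKE and the filtration-completion step. For $\text{TP}(-;\Z_p)$ and $\text{TC}^-(-;\Z_p)$ individually, the uncompleted LKE is \emph{not} filtration-complete---this is precisely the content of Remark~\ref{remarkexhaustivityofBMSfiltrations} and \cite[Warning~6.2.7]{bhatt_absolute_2022}---so the argument cannot be carried out separately for the two terms of the fibre sequence. One must crucially use that the cancellation in forming $\text{TC}(-;\Z_p) = \text{fib}(\varphi_p-\text{can})$ improves connectivity by one degree, which is exactly what the Antieau--Nikolaus bound provides. Once that bound is in place for TC, both the filtration-completeness and the Zariski-sheafiness are automatic consequences, and the rest of the proof is formal bookkeeping with sifted colimits and Zariski descent.
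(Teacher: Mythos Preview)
Your plan for part~(1) goes through $\text{Fil}^\star_{\text{BMS}}\text{TC}^-(-;\Z_p)$ and $\text{Fil}^\star_{\text{BMS}}\text{TP}(-;\Z_p)$ separately, but the step for $\text{TP}$ is not justified: you must commute the sequential colimit $\varinjlim_n \text{Fil}^i_{\text{BMS}}\text{Fil}^n_{\text{T}}\text{TP}$ with the cosimplicial totalisation appearing in the descent condition, and your appeal to ``the connectivity estimates used in Lemma~\ref{lemmaBMSfiltrationproductallprimesiscomplete}'' is misplaced---that lemma gives a bound for $\text{Fil}^i_{\text{BMS}}\text{TC}$, not for $\text{Fil}^i_{\text{BMS}}\text{TP}$ (indeed, Remark~\ref{remarkexhaustivityofBMSfiltrations} warns that the BMS filtrations on $\text{TP}$ and $\text{TC}^-$ behave badly). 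The paper's proof avoids this entirely: since $\text{Fil}^\star_{\text{BMS}}\text{TC}(-;\Z_p)$ is complete on $p$-quasisyntomic rings (Lemma~\ref{lemmaBMSfiltrationproductallprimesiscomplete}), one reduces immediately to descent for the graded pieces $\Z_p(i)^{\text{BMS}}$, which is \cite[Proposition~7.4.7]{bhatt_absolute_2022}.

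For part~(2) there is a confusion between the two filtrations in play. The completion step in Construction~\ref{constructionBMSfiltrationonTPpcompleted} is taken with respect to the \emph{Tate} filtration on the bifiltered object $\text{Fil}^i_{\text{BMS}}\text{Fil}^\star_{\text{T}}\text{TP}$, whereas your connectivity argument (``$\text{Fil}^i$ in degrees $\leq -i+1$, hence $\varprojlim_i \simeq 0$'') addresses completeness in the \emph{BMS} direction; these are different questions. Even granting both, you have not shown that the ordinary left Kan extension of $\text{Fil}^\star_{\text{BMS}}\text{TC}|_{\text{Poly}_\Z}$ agrees with the functor as actually defined, since the latter is built as a fibre of two functors neither of which is itself left Kan extended (as you correctly note under ``Main obstacle''). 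The paper instead cites \cite[Theorem~5.1(2)]{antieau_beilinson_2020} directly for the left Kan extension property on $p$-complete animated rings, and then shows that $\text{Fil}^\star_{\text{BMS}}\text{TC}(R;\Z_p)\to\text{Fil}^\star_{\text{BMS}}\text{TC}(R^\wedge_p;\Z_p)$ is an equivalence by reducing (via $\N$-indexedness and completeness) to graded pieces, where it is \cite[Corollary~7.4.11]{bhatt_absolute_2022}.
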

	
	\begin{proof}
		$(1)$ The filtration $\text{Fil}^\star_{\text{BMS}} \text{TC}(-;\Z_p)$ is complete on $p$-quasisyntomic rings (Lemma~\ref{lemmaBMSfiltrationproductallprimesiscomplete}), so it suffices to prove the result on graded pieces. The result on graded pieces is a special case of \cite[Proposition~$7.4.7$]{bhatt_absolute_2022}.
		
		$(2)$ By \cite[Theorem~$5.1\,(2)$]{antieau_beilinson_2020}, the functor $\text{Fil}^\star_{\text{BMS}} \text{TC}(-;\Z_p)$, viewed as a functor from $p$\nobreakdash-complete animated commutative rings to $p$-complete filtered spectra, is left Kan extended from $p$-complete polynomial $\Z$-algebras.\footnote{More precisely, it is proved to be left Kan extended from $p$\nobreakdash-complete polynomial $\Z$-algebras to $p$\nobreakdash-complete $p$\nobreakdash-quasisyntomic rings. By definition, $p$\nobreakdash-quasisyntomic rings have bounded $p$-power torsion. Hence, their derived and classical $p$-completions are naturally identified, and the left Kan extension to $p$-complete animated commutative rings agrees with the left Kan extension to $p$-complete classical rings on $p$-complete $p$-quasisyntomic rings.} Let $R$ be an animated commutative ring, and $R^\wedge_p$ be its (derived) $p$-completion. The natural map
		$$\text{Fil}^\star_{\text{BMS}} \text{TC}(R;\Z_p) \longrightarrow \text{Fil}^\star_{\text{BMS}} \text{TC}(R^\wedge_p;\Z_p)$$
		is an equivalence of filtered spectra. Indeed, the filtrations $$\text{Fil}^\star_{\text{BMS}} \text{TC}(R;\Z_p) \quad \text{and} \quad \text{Fil}^\star_{\text{BMS}} \text{TC}(R^\wedge_p;\Z_p)$$ are $\N$-indexed and complete (Lemma~\ref{lemmaBMSfiltrationproductallprimesiscomplete} and Remark~\ref{remarkexhaustivityofBMSfiltrations}), so it suffices to prove the result on graded pieces, where this is a direct consequence of \cite[Corollary~$7.4.11$]{bhatt_absolute_2022}. This implies the desired left Kan extension property.
	\end{proof}
	
	\begin{corollary}\label{corollaryBMSsyntomiccohomologyhasquasisyntomicdescentandLKEfrompolynomialZalgebras}
		Let $i \in \Z$ be an integer.
		\begin{enumerate}
			\item The functor $\Z_p(i)^{\emph{BMS}}(-)$, viewed as a functor from $p$-quasisyntomic rings to $p$-complete objects in the derived category $\mathcal{D}(\Z)$, satisfies descent for the $p$\nobreakdash-quasisyntomic topology.
			\item The functor $\Z_p(i)^{\emph{BMS}}(-)$, viewed as a functor from animated commutative rings to $p$\nobreakdash-complete objects in the derived category $\mathcal{D}(\Z)$, is left Kan extended from polynomial $\Z$-algebras.
		\end{enumerate}
	\end{corollary}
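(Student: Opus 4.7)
The plan is to deduce both statements directly from Theorem~\ref{theoremBMSfiltrationonTCpcompletedsatisfiesquasisyntomicdescentandisLKEfrompolynomialalgebras} by passing to shifted graded pieces. By Definition~\ref{definitionsyntomiccohomologyintermsofTC}, the syntomic complex is $\Z_p(i)^{\text{BMS}}(-) = \text{gr}^i_{\text{BMS}} \text{TC}(-;\Z_p)[-2i]$, i.e., it is obtained from the filtered object $\text{Fil}^\star_{\text{BMS}} \text{TC}(-;\Z_p)$ by a cofibre construction (the cofibre of $\text{Fil}^{i+1} \rightarrow \text{Fil}^i$) followed by a shift. Both operations are colimit-preserving and interact well with descent and left Kan extension, so the two properties established for the filtration transfer to the graded pieces.

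For~$(1)$, Theorem~\ref{theoremBMSfiltrationonTCpcompletedsatisfiesquasisyntomicdescentandisLKEfrompolynomialalgebras}\,$(1)$ states that the $p$-complete filtered spectrum $\text{Fil}^\star_{\text{BMS}} \text{TC}(-;\Z_p)$ satisfies $p$-quasisyntomic descent on $p$-quasisyntomic rings. Since evaluation at each filtration degree preserves limits, each functor $\text{Fil}^i_{\text{BMS}} \text{TC}(-;\Z_p)$ is a $p$-quasisyntomic sheaf with values in $p$-complete spectra. The cofibre of a map of sheaves of $p$-complete spectra is again a sheaf of $p$-complete spectra, so $\text{gr}^i_{\text{BMS}} \text{TC}(-;\Z_p)$, and hence its shift $\Z_p(i)^{\text{BMS}}(-)[2i]$, satisfies $p$-quasisyntomic descent.

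For~$(2)$, Theorem~\ref{theoremBMSfiltrationonTCpcompletedsatisfiesquasisyntomicdescentandisLKEfrompolynomialalgebras}\,$(2)$ states that $\text{Fil}^\star_{\text{BMS}} \text{TC}(-;\Z_p)$ is left Kan extended from polynomial $\Z$-algebras as a functor to $p$-complete filtered spectra. Evaluation at each integer $i \in \Z$ commutes with sifted colimits (colimits in filtered spectra are computed levelwise), and hence commutes with the left Kan extension functor $L_{\text{AniRings}/\text{Poly}_{\Z}}$; therefore each $\text{Fil}^i_{\text{BMS}} \text{TC}(-;\Z_p)$ is left Kan extended from polynomial $\Z$-algebras as a functor to $p$-complete spectra. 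Taking the cofibre of the transition map, which again commutes with sifted colimits and preserves $p$-completeness, yields that $\text{gr}^i_{\text{BMS}} \text{TC}(-;\Z_p)$ is left Kan extended from polynomial $\Z$-algebras, and the shift by $[-2i]$ preserves this.

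There is no substantial obstacle in this argument; the only thing to track carefully is that all constructions are performed within the category of $p$-complete objects, so that the $p$-completion is not disturbed when passing to cofibres. This is automatic because the cofibre of a map between $p$-complete objects in $\mathcal{D}(\Z)$ is computed in $\mathcal{D}(\Z)$ and is again $p$-complete, as the $p$-completion functor preserves cofibre sequences when both source and target are already $p$-complete.
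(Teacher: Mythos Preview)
Your proof is correct and for part $(2)$ follows exactly the paper's approach. For part $(1)$ there is a minor difference worth noting: the paper observes that descent for the graded pieces was actually proved \emph{first} (as a citation to \cite[Proposition~$7.4.7$]{bhatt_absolute_2022}) and then used, via completeness of the filtration, to deduce Theorem~\ref{theoremBMSfiltrationonTCpcompletedsatisfiesquasisyntomicdescentandisLKEfrompolynomialalgebras}\,$(1)$; so in the paper's logical order, part $(1)$ of the corollary is an input to, not a consequence of, the theorem. Your argument instead runs in the reverse direction, deducing descent for $\text{gr}^i$ from descent for $\text{Fil}^\star$ using that in a stable setting cofibres of sheaves are sheaves. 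Both directions are valid, and yours has the mild advantage of not requiring completeness of the filtration.
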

	
	\begin{proof}
		$(1)$ was already part of the proof of Theorem~\ref{theoremBMSfiltrationonTCpcompletedsatisfiesquasisyntomicdescentandisLKEfrompolynomialalgebras}\,$(1)$. 
		
		$(2)$ is a direct consequence of Theorem~\ref{theoremBMSfiltrationonTCpcompletedsatisfiesquasisyntomicdescentandisLKEfrompolynomialalgebras}\,$(2)$.
	\end{proof}
	
	\begin{theorem}[\cite{antieau_beilinson_2020}]\label{theoremAMMNrigidity}
		Let $(A,I)$ be a henselian pair of commutative rings. Then for any integers $i \geq 0$ and $k \geq 1$, the fibre of the natural map
		$$\Z/p^k(i)^{\emph{BMS}}(A) \longrightarrow \Z/p^k(i)^{\emph{BMS}}(A/I)$$
		in the derived category $\mathcal{D}(\Z/p^k)$ is in degrees at most $i$.
	\end{theorem}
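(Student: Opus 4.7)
The plan is to reduce the statement to the corresponding rigidity result for topological cyclic homology proved in \cite{antieau_beilinson_2020}. By Definition~\ref{definitionsyntomiccohomologyintermsofTC}, the fibre of $\Z/p^k(i)^{\text{BMS}}(A) \to \Z/p^k(i)^{\text{BMS}}(A/I)$ is obtained, up to the shift by $[-2i]$, as the fibre of $\text{gr}^i_{\text{BMS}} \text{TC}(A;\Z/p^k) \to \text{gr}^i_{\text{BMS}} \text{TC}(A/I;\Z/p^k)$. The claim thus becomes that this fibre on graded pieces is concentrated in cohomological degrees at most $-i$, i.e.\ an improvement by one degree over the unconditional bound $-i+1$ provided by Lemma~\ref{lemmaBMSfiltrationproductallprimesiscomplete}.

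Next, I would produce the required connectivity improvement from rigidity of $\text{TC}(-;\Z/p^k)$ on henselian pairs. By the Dundas--Goodwillie--McCarthy theorem, extended from nilpotent to henselian pairs by Clausen--Mathew--Morrow, the fibre of $\text{TC}(A;\Z/p^k) \to \text{TC}(A/I;\Z/p^k)$ identifies with the fibre of $\text{K}(A;\Z/p^k) \to \text{K}(A/I;\Z/p^k)$. The latter is connective by Gabber--Suslin rigidity together with its Clausen--Mathew--Morrow extension to henselian pairs in mixed characteristic. Combined with the completeness of the BMS filtration in Lemma~\ref{lemmaBMSfiltrationproductallprimesiscomplete} and the cofibre sequence $\text{Fil}^{j+1}_{\text{BMS}} \text{TC}(-;\Z/p^k) \to \text{Fil}^j_{\text{BMS}} \text{TC}(-;\Z/p^k) \to \text{gr}^j_{\text{BMS}} \text{TC}(-;\Z/p^k)$, I would propagate this one-degree gain through the filtration to reach the desired bound on each graded piece.

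The hard part is this propagation step. Naive descending induction on the filtration level does not give the single-degree improvement directly, since the fibres of $\text{Fil}^{j+1}_{\text{BMS}}$ and $\text{gr}^j_{\text{BMS}}$ interact non-trivially, and Lemma~\ref{lemmaBMSfiltrationproductallprimesiscomplete} alone only yields amplitude $-j+1$ on each filtration step. The AMMN argument threads this needle by analysing the Beilinson fibre square together with the Nygaard filtration on prismatic cohomology, working for instance on quasiregular semiperfectoid bases (to which one reduces via the quasisyntomic descent of Theorem~\ref{theoremBMSfiltrationonTCpcompletedsatisfiesquasisyntomicdescentandisLKEfrompolynomialalgebras}\,$(1)$) where the graded pieces admit explicit descriptions in terms of the Nygaard-filtered prism. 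For the present paper, the cleanest route is to invoke the corresponding statement from \cite{antieau_beilinson_2020} directly and verify that our indexing conventions match.
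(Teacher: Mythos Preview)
Your final suggestion—to invoke the result from \cite{antieau_beilinson_2020} directly—is exactly what the paper does, and the first two paragraphs of your sketch are essentially re-deriving the content of that reference rather than proving the theorem at hand. However, the actual work in the paper's proof is not ``checking indexing conventions'' as you suggest, and this is where your proposal has a genuine gap.

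The issue is that \cite[Theorem~5.2]{antieau_beilinson_2020} is stated only for henselian pairs $(A,I)$ in which both $A$ and $A/I$ are \emph{classically} $p$-complete, whereas the theorem here is for arbitrary henselian pairs. The paper bridges this in two steps. First, it observes that the proof in \cite{antieau_beilinson_2020} in fact establishes the connectivity bound for the fibre of
\[
\Z/p^k(i)^{\text{BMS}}(A^\wedge_p) \longrightarrow \Z/p^k(i)^{\text{BMS}}((A/I)^\wedge_p)
\]
where $(-)^\wedge_p$ denotes the \emph{derived} $p$-completion, even when $A$ and $A/I$ are not themselves $p$-complete. Second, it invokes \cite[Corollary~7.4.11]{bhatt_absolute_2022} (cf.\ the proof of Theorem~\ref{theoremBMSfiltrationonTCpcompletedsatisfiesquasisyntomicdescentandisLKEfrompolynomialalgebras}\,$(2)$) to conclude that the natural map $\Z/p^k(i)^{\text{BMS}}(R) \to \Z/p^k(i)^{\text{BMS}}(R^\wedge_p)$ is an equivalence for every animated commutative ring $R$, which transports the bound back to $A$ and $A/I$ themselves. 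Your proposal does not address this reduction, and without it the citation to \cite{antieau_beilinson_2020} does not literally apply.
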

	
	\begin{proof}
		By \cite[Theorem~$5.2$]{antieau_beilinson_2020}, for every henselian pair $(A,I)$ such that the commutative rings $A$ and $A/I$ are (classically) $p$-complete, the fibre of the natural map $$\Z/p^k(i)^{\text{BMS}}(A) \longrightarrow \Z/p^k(i)^{\text{BMS}}(A/I)$$ is in degrees at most $i$. The proof of \cite[Theorem~$5.2$]{antieau_beilinson_2020} proves more generally that for $(A,I)$ a general henselian pair of commutative rings, the fibre of the natural map $$\Z/p^k(i)^{\text{BMS}}(A^\wedge_p) \longrightarrow \Z/p^k(i)^{\text{BMS}}((A/I)^\wedge_p)$$ where $(-)^\wedge_p$ is the derived $p$\nobreakdash-completion, is in degrees at most $i$. By \cite[Corollary~$7.4.11$]{bhatt_absolute_2022} (see also the proof of Theorem~\ref{theoremBMSfiltrationonTCpcompletedsatisfiesquasisyntomicdescentandisLKEfrompolynomialalgebras}\,$(2)$), the natural map $\Z/p^k(i)^{\text{BMS}}(-) \rightarrow \Z/p^k(i)^{\text{BMS}}((-)^\wedge_p)$ is an equivalence on animated commutative rings, hence for every henselian pair $(A,I)$ of commutative rings, the fibre of the natural map $$\Z/p^k(i)^{\text{BMS}}(A) \longrightarrow \Z/p^k(i)^{\text{BMS}}(A/I)$$ is in degrees at most $i$.
	\end{proof}
	
	\subsection{The motivic filtration on TC}
	
	\vspace{-\parindent}
	\hspace{\parindent}
	
	In this subsection, we introduce the motivic filtration on topological cyclic homology $\text{TC}(-)$ of general qcqs derived schemes (Definition~\ref{definitionmotivicfiltrationonTC}).
	
	The following proposition is \cite[Proposition~$6.4.1$]{bhatt_absolute_2022}.
	
	\begin{proposition}[\cite{bhatt_absolute_2022}]\label{propositionfilteredmapTC-pcompletedtoHC-pcompleted}
		Let $p$ be a prime number. The map $$\emph{Fil}^\star_{\emph{T}} \emph{TP}(-;\Z_p) \longrightarrow \emph{Fil}^\star_{\emph{T}} \emph{HP}(-;\Z_p),$$ viewed as a map of filtered spectra-valued presheaves on the category of qcqs derived schemes, admits a unique, multiplicative extension to a map of bifiltered presheaves of spectra
		$$\emph{Fil}^\star_{\emph{BMS}} \emph{Fil}^\star_{\emph{T}} \emph{TP}(-;\Z_p) \longrightarrow \emph{Fil}^\star_{\emph{HKR}} \emph{Fil}^\star_{\emph{T}} \emph{HP}(-;\Z_p).$$
	\end{proposition}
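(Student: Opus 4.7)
The plan is to construct the bifiltered map first on quasiregular semiperfectoid (qrsp) rings, propagate it by $p$-complete faithfully flat descent to $p$-quasisyntomic rings, and finally to all qcqs derived schemes by the universal properties of the left Kan extension from polynomial $\Z$-algebras and Zariski sheafification that define both sides. The fundamental observation driving the construction is that both $\text{Fil}^\star_{\text{BMS}} \text{Fil}^\star_{\text{T}} \text{TP}(-;\Z_p)$ and $\text{Fil}^\star_{\text{HKR}} \text{Fil}^\star_{\text{T}} \text{HP}(-;\Z_p)$ are, on the defining building blocks, instances of applying the Beilinson-connective cover functor $\tau^{\text{B}}_{\geq i}$ to a Tate-filtered spectrum, and the map $\text{THH}(-) \to \text{HH}(-)$ induced by extension of scalars along the $\mathbb{E}_\infty$-ring map $\text{THH}(\Z) \to \Z$ respects both the $\text{S}^1$-action and the multiplicative structure, so that it refines to a map of Tate-filtered spectra $\text{Fil}^\star_{\text{T}} \text{TP}(-;\Z_p) \to \text{Fil}^\star_{\text{T}} \text{HP}(-;\Z_p)$.

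The key technical step is to identify, on a qrsp ring $R$, the double-speed Postnikov truncation $\tau_{\geq 2i} \text{Fil}^\star_{\text{T}} \text{TP}(R;\Z_p)$ (which, by Construction~\ref{constructionBMSfiltrationonTPpcompleted}, defines the BMS bifiltration on such $R$) with the Beilinson-connective cover $\tau^{\text{B}}_{\geq i}$ of the same Tate-filtered spectrum. For this, recall that $\text{THH}(R;\Z_p)$ is concentrated in cohomological degree zero on qrsp rings (by the theorem of Bhatt--Morrow--Scholze), so the $n$-th graded piece $\text{THH}(R;\Z_p)[-2n]$ of the Tate filtration sits in cohomological degree exactly $2n$. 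This places the Tate-filtered spectrum in the Beilinson heart, and on such filtered spectra the double-speed Postnikov truncation $\tau_{\geq 2i}$ coincides with $\tau^{\text{B}}_{\geq i}$. Since $\tau^{\text{B}}_{\geq i}$ is a functorial (and lax symmetric monoidal) truncation, applying it to the Tate-level map produces the desired bifiltered multiplicative map on qrsp rings.

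Having the map on qrsp rings, one extends it to $p$-quasisyntomic rings by $p$-complete faithfully flat descent (which defines the BMS bifiltration on these rings), then to animated commutative rings by left Kan extension from polynomial $\Z$-algebras (which are $p$-quasisyntomic), and finally to qcqs derived schemes by Zariski sheafification. Uniqueness of the multiplicative extension is then a formal consequence of these universal properties: any multiplicative bifiltered lift of the given map on Tate-filtered spectra is determined by its values on qrsp rings via descent, where it is forced to coincide with $\tau^{\text{B}}_{\geq i}$ applied to the Tate-level map by the identification above; multiplicativity propagates through both constructions since left Kan extension and Zariski sheafification preserve multiplicative structures. The main obstacle will be to carefully verify that the identification $\text{Fil}^i_{\text{BMS}} = \tau^{\text{B}}_{\geq i}$ survives the $p$-complete faithfully flat descent step defining the BMS bifiltration on $p$-quasisyntomic rings, which amounts to bounding the Beilinson-cohomological amplitude of $\text{Fil}^\star_{\text{T}} \text{TP}(-;\Z_p)$ on qrsp covers in order to interchange $\tau^{\text{B}}_{\geq i}$ with the totalisation computing BMS on $p$-quasisyntomic rings; once secured, the propagation to qcqs derived schemes is a formal manipulation.
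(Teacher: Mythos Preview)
The paper does not prove this proposition; it cites \cite[Proposition~6.4.1]{bhatt_absolute_2022}. Your overall strategy---relate the BMS bifiltration to a Beilinson truncation and propagate via left Kan extension and sheafification---matches the approach there. But your key technical step contains a genuine error: the claim that $\text{THH}(R;\Z_p)$ is concentrated in cohomological degree zero on quasiregular semiperfectoid rings is false (for $R$ perfectoid, $\pi_\ast\text{THH}(R;\Z_p)$ is a polynomial algebra on a degree-$2$ class). Consequently $\text{Fil}^\star_{\text{T}}\text{TP}(R;\Z_p)$ is not in the Beilinson heart, and the double-speed Postnikov truncation $\tau_{\geq 2i}$ does \emph{not} coincide with $\tau^{\text{B}}_{\geq i}$ on qrsp rings; so your proposed identification $\text{Fil}^i_{\text{BMS}}=\tau^{\text{B}}_{\geq i}$ fails already at the local level, and there is nothing to propagate.

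The fix is to use a weaker input. Evenness of $\pi_\ast\text{THH}(R;\Z_p)$ on qrsp rings forces the Tate spectral sequence to degenerate, from which one computes that each $\text{gr}^n$ of $\tau_{\geq 2i}\text{Fil}^\star_{\text{T}}\text{TP}(R;\Z_p)$ lies in cohomological degrees at most $n-i$. Thus $\text{Fil}^i_{\text{BMS}}$ is Beilinson-$(\geq i)$-connective on qrsp rings (though not equal to $\tau^{\text{B}}_{\geq i}$), and this connectivity is preserved under totalization along qrsp covers, hence holds on polynomial $\Z$-algebras. Since $\text{Fil}^i_{\text{HKR}}=\tau^{\text{B}}_{\geq i}$ there by construction, the universal property of the Beilinson connective cover gives a unique factorization of the Tate-level map through $\text{Fil}^i_{\text{HKR}}$; multiplicativity follows from lax monoidality of $\tau^{\text{B}}_{\geq \bullet}$, and your propagation via left Kan extension and Zariski sheafification then goes through.
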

	
	\begin{construction}[BMS-HKR comparison map]\label{constructionfilteredmapTCpcompletedtoHC-pcompleted}
		Let $p$ be a prime number. The {\it BMS-HKR comparison map} is the map
		$$\text{Fil}^\star_{\text{BMS}} \text{TC}(-;\Z_p) \longrightarrow \text{Fil}^\star_{\text{HKR}} \text{HC}^{-}(-;\Z_p)$$
		of functors from (the opposite category of) qcqs derived schemes to the category of filtered spectra defined as the composite
		$$\text{Fil}^\star_{\text{BMS}} \text{TC}(-;\Z_p) \longrightarrow \text{Fil}^\star_{\text{BMS}} \text{TC}^{-}(-;\Z_p) \longrightarrow \text{Fil}^\star_{\text{HKR}} \text{HC}^{-}(-;\Z_p)$$
		of the maps given by Definition~\ref{definitionBMSfiltrationonTCpcompletedofqcqsderivedschemes}, and Proposition~\ref{propositionfilteredmapTC-pcompletedtoHC-pcompleted} after restricting to the zeroth step of the Tate filtration.
	\end{construction}
	
	\begin{definition}[Motivic filtration on TC]\label{definitionmotivicfiltrationonTC}
		The {\it motivic filtration on topological cyclic homology} of qcqs derived schemes
		$$\text{Fil}^\star_{\text{mot}} \text{TC}(-) : \text{dSch}^{\text{qcqs},\text{op}} \longrightarrow \text{FilSp}$$
		is the functor defined by the cartesian square
		$$\begin{tikzcd}
			\text{Fil}^\star_{\text{mot}} \text{TC}(-) \ar[r] \ar[d] & \text{Fil}^\star_{\text{HKR}} \text{HC}^{-}(-) \ar[d] \\
			\prod_{p \in \mathbb{P}} \text{Fil}^\star_{\text{BMS}} \text{TC}(-;\Z_p) \ar[r] & \prod_{p \in \mathbb{P}} \text{Fil}^\star_{\text{HKR}} \text{HC}^{-}(-;\Z_p),
		\end{tikzcd}$$
		where the bottom horizontal map is the map of Construction~\ref{constructionfilteredmapTCpcompletedtoHC-pcompleted}, and the right vertical map is profinite completion. For every integer $i \in \Z$, also define the functor
		$$\Z(i)^{\text{TC}}(-) : \text{dSch}^{\text{qcqs},\text{op}} \longrightarrow \mathcal{D}(\Z)\footnote{Every value of the functor $\Z(i)^{\text{TC}}(-)$ has a natural module structure over the $\mathbb{E}_{\infty}$-ring $\Z(0)^{\text{TC}}(\Z)$, which, by unwinding the definition, is naturally identified with the $\mathbb{E}_{\infty}$-ring $\text{H}\!\Z$. This implies that the spectra-valued functor $\Z(i)^{\text{TC}}(-)$ takes values in $\text{H}\!\Z$-linear spectra, {\it i.e.}, in the derived category $\mathcal{D}(\Z)$.}$$
		as the shifted graded piece of this motivic filtration:
		$$\Z(i)^{\text{TC}}(-) := \text{gr}^i_{\text{mot}} \text{TC}(-)[-2i].$$
	\end{definition}

	\begin{remark}[Comparison to \cite{elmanto_motivic_2023}]\label{remarkcomparisontoEMfiltrationonTCoverafield}
		For every qcqs derived scheme $X$ over $\Q$, the filtered spectrum $\text{Fil}^\star_{\text{HKR}} \text{HC}^-(X)$ is $\Q$-linear by construction, so its profinite completion vanishes. The filtration $\prod_{p \in \mathbb{P}} \text{Fil}^\star_{\text{BMS}} \text{TC}(X;\Z_p)$ also vanishes (Remark~\ref{remarkderivedpcompletionwithBMS}), and the natural map
		$$\text{Fil}^\star_{\text{mot}} \text{TC}(X) \longrightarrow \text{Fil}^\star_{\text{HKR}} \text{HC}^-(X/\Q)$$
		is then an equivalence of filtered spectra.
		
		Similarly, for every prime number $p$ and every qcqs derived scheme $X$ over $\F_p$, the filtered spectrum $\text{Fil}^\star_{\text{HKR}} \text{HC}^-(X)$ is $\Z$-linear and $p$-complete, so it is naturally identified with its profinite completion. Again using Remark~\ref{remarkderivedpcompletionwithBMS}, the natural map
		$$\text{Fil}^\star_{\text{mot}} \text{TC}(X) \longrightarrow \text{Fil}^\star_{\text{BMS}} \text{TC}(X;\Z_p)$$
		is then an equivalence of filtered spectra.
	\end{remark}

	\begin{remark}[Comparison to \cite{bhatt_absolute_2022}]
		In \cite[Section~$6.4$]{bhatt_absolute_2022}, Bhatt--Lurie define filtered spectra $\text{Fil}^\star_{\text{mot}} \text{TP}(X)$ and $\text{Fil}^\star_{\text{mot}} \text{TC}^-(X)$ for qcqs derived schemes $X$, with shifted graded pieces called the global prismatic complexes $\widehat{\Prism}^{\text{gl}}_X\{i\}$ and $\mathcal{N}^{\geq i} \widehat{\Prism}^{\text{gl}}_X\{i\}$ respectively. These filtrations can be used to obtain an alternative definition of the $\text{Fil}^\star_{\text{mot}} \text{TC}(X)$ of Definition~\ref{definitionmotivicfiltrationonTC}. More precisely, for every prime number $p$ the $p$-completion of Bhatt--Lurie's filtration $\text{Fil}^\star_{\text{mot}} \text{TP}(X)$ is the filtration $\text{Fil}^\star_{\text{BMS}} \text{TP}(X;\Z_p)$ of Definition~\ref{definitionBMSfiltrationonTPpcompletedofqcqsderivedschemes}, and there is a natural fibre sequence
		$$\text{Fil}^\star_{\text{mot}} \text{TC}(X) \longrightarrow \text{Fil}^\star_{\text{mot}} \text{TC}^-(X) \longrightarrow \prod_{p \in \mathbb{P}} \text{Fil}^\star_{\text{BMS}} \text{TP}(X;\Z_p)$$
		of filtered spectra. In particular, for every integer $i \in \Z$, this induces a natural fibre sequence
		$$\Z(i)^{\text{TC}}(X) \longrightarrow \mathcal{N}^{\geq i} \widehat{\Prism}^{\text{gl}}_X\{i\} \longrightarrow \prod_{p \in \mathbb{P}} \widehat{\Prism}_{X,p}\{i\}$$
		in the derived category $\mathcal{D}(\Z)$, where $\widehat{\Prism}_{X,p}$ denotes the $p$-adic absolute prismatic cohomology of $X$. 
	\end{remark}
	
	\begin{proposition}\label{propositionpcompletionofmotiviconTCisBMS}
		Let $X$ be a qcqs derived scheme, and $p$ be a prime number. Then the natural map
		$$\emph{Fil}^\star_{\emph{mot}} \emph{TC}(X;\Z_p) \longrightarrow \emph{Fil}^\star_{\emph{BMS}} \emph{TC}(X;\Z_p)$$
		is an equivalence of filtered spectra.
	\end{proposition}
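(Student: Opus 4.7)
The plan is to apply $p$-completion to the cartesian square of Definition~\ref{definitionmotivicfiltrationonTC} that defines $\text{Fil}^\star_{\text{mot}} \text{TC}(X)$. Since $p$-completion is an exact functor on spectra, it preserves pullback squares, so the $p$-completed square is again cartesian. It will then suffice to identify its four corners and to verify that the right vertical map becomes an equivalence; the left vertical map, which is precisely the map of the statement, will follow automatically.

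The top-right corner is identified with $\text{Fil}^\star_{\text{HKR}} \text{HC}^{-}(X;\Z_p)$ by Definition~\ref{definitionHKRfiltrationHC-pcompleted}. For the two bottom corners, the key input is the standard fact that, for distinct primes $p$ and $q$, any $q$-complete spectrum has vanishing $p$-completion. This follows from $\gcd(p^k,q^n)=1$, which forces the finite spectrum $\mathbb{S}/(p^k,q^n)$ to be null and hence $p^k$ to act invertibly on each $\mathbb{S}/q^n$-module; writing a $q$-complete spectrum $M$ as $\lim_n M \otimes \mathbb{S}/q^n$ and commuting this limit past the smash with the perfect spectrum $\mathbb{S}/p^k$ then gives $M^\wedge_p = 0$. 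Since $p$-completion also commutes with products (as a limit), this identifies the $p$-completions of the bottom-left and bottom-right corners $\prod_{q \in \mathbb{P}} \text{Fil}^\star_{\text{BMS}} \text{TC}(X;\Z_q)$ and $\prod_{q \in \mathbb{P}} \text{Fil}^\star_{\text{HKR}} \text{HC}^{-}(X;\Z_q)$ with their respective $q=p$ factors $\text{Fil}^\star_{\text{BMS}} \text{TC}(X;\Z_p)$ and $\text{Fil}^\star_{\text{HKR}} \text{HC}^{-}(X;\Z_p)$.

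With these identifications, the $p$-completed square takes the form
$$\begin{tikzcd}
	\text{Fil}^\star_{\text{mot}} \text{TC}(X;\Z_p) \ar[r] \ar[d] & \text{Fil}^\star_{\text{HKR}} \text{HC}^{-}(X;\Z_p) \ar[d] \\
	\text{Fil}^\star_{\text{BMS}} \text{TC}(X;\Z_p) \ar[r] & \text{Fil}^\star_{\text{HKR}} \text{HC}^{-}(X;\Z_p),
\end{tikzcd}$$
and the right vertical map, arising from $p$-completing the profinite completion map $M \to \prod_{q} M^\wedge_q$, is — after the $q \neq p$ factors are annihilated — naturally equivalent to the identity of $\text{Fil}^\star_{\text{HKR}} \text{HC}^{-}(X;\Z_p)$, hence an equivalence. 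The cartesian property of the square then yields the claim. The argument is essentially formal, and I do not foresee a genuine obstacle: the only point requiring care is the bookkeeping of the various completion operations, and in particular the two facts that $p$-completion commutes both with the defining pullback (being exact) and with the products, and that it annihilates $q$-complete spectra for $q \neq p$.
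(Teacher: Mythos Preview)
Your proposal is correct and follows essentially the same approach as the paper. The paper's proof is a one-line version of your argument: it observes that the right vertical map in the defining cartesian square is profinite completion, so its fibre (which by cartesianness equals the fibre of the left vertical map) vanishes after $p$-completion; the identification of $\big(\prod_{\l} \text{Fil}^\star_{\text{BMS}} \text{TC}(X;\Z_\l)\big)^\wedge_p$ with the single factor $\text{Fil}^\star_{\text{BMS}} \text{TC}(X;\Z_p)$ is left implicit, but is exactly the bookkeeping you spell out.
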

	
	\begin{proof}
		By definition, the natural map
		$$\text{Fil}^\star_{\text{HKR}} \text{HC}^-(X) \longrightarrow \prod_{\ell \in \mathbb{P}} \text{Fil}^\star_{\text{HKR}}\text{HC}^-(X;\Z_{\ell})$$
		in Definition~\ref{definitionmotivicfiltrationonTC} is profinite completion, so its fibre becomes zero after $p$-completion.
	\end{proof}
	
	\begin{corollary}
		Let $X$ be a qcqs derived scheme, and $p$ be a prime number. Then the natural map
		$$\Z_p(i)^{\emph{TC}}(X) \longrightarrow \Z_p(i)^{\emph{BMS}}(X)$$
		is an equivalence in the derived category $\mathcal{D}(\Z_p)$.
	\end{corollary}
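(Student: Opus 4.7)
The plan is to obtain the corollary as an immediate consequence of the preceding Proposition by passing to graded pieces. Recall that the two objects in question are defined as shifted graded pieces of filtered spectra: $\Z(i)^{\text{TC}}(X) := \text{gr}^i_{\text{mot}} \text{TC}(X)[-2i]$ by Definition~\ref{definitionmotivicfiltrationonTC}, and $\Z_p(i)^{\text{BMS}}(X) := \text{gr}^i_{\text{BMS}} \text{TC}(X;\Z_p)[-2i]$ by Definition~\ref{definitionsyntomiccohomologyintermsofTC}. So it suffices to identify $\Z_p(i)^{\text{TC}}(X)$ with the $p$-completed shifted graded piece $\text{gr}^i_{\text{mot}} \text{TC}(X;\Z_p)[-2i]$, and then to apply the equivalence of filtered spectra from the Proposition.

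More precisely, I would first observe that the $p$-completion functor commutes with cofibers of maps of spectra, hence with the formation of graded pieces of a filtered spectrum. Therefore, for each integer $i \in \Z$, the natural map
$$\big(\text{gr}^i_{\text{mot}} \text{TC}(X)\big)^\wedge_p \longrightarrow \text{gr}^i\big(\text{Fil}^\star_{\text{mot}} \text{TC}(X;\Z_p)\big)$$
is an equivalence. Shifting by $[-2i]$, this identifies $\Z_p(i)^{\text{TC}}(X)$ with the shifted $i$-th graded piece of the filtered spectrum $\text{Fil}^\star_{\text{mot}} \text{TC}(X;\Z_p)$.

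Now I apply Proposition~\ref{propositionpcompletionofmotiviconTCisBMS}, which asserts that the natural map $\text{Fil}^\star_{\text{mot}} \text{TC}(X;\Z_p) \to \text{Fil}^\star_{\text{BMS}} \text{TC}(X;\Z_p)$ is an equivalence of filtered spectra. Taking $i$-th graded pieces and shifting by $[-2i]$ then yields an equivalence
$$\Z_p(i)^{\text{TC}}(X) \xlongrightarrow{\sim} \text{gr}^i_{\text{BMS}} \text{TC}(X;\Z_p)[-2i] = \Z_p(i)^{\text{BMS}}(X)$$
in the derived category $\mathcal{D}(\Z_p)$, which is the claim. Since the whole argument is purely formal, given the Proposition, there is no real obstacle; the only point to verify is that the map on graded pieces induced by the Proposition agrees with the BMS--HKR comparison map of Construction~\ref{constructionfilteredmapTCpcompletedtoHC-pcompleted} restricted to $p$-completions, and this is automatic from the definitions.
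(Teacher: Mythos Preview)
Your proof is correct and follows essentially the same approach as the paper, which simply states that the corollary is a direct consequence of Proposition~\ref{propositionpcompletionofmotiviconTCisBMS}. You have spelled out the formal step (that $p$-completion commutes with graded pieces) which the paper leaves implicit; your final remark about compatibility with Construction~\ref{constructionfilteredmapTCpcompletedtoHC-pcompleted} is unnecessary, since the map in the corollary is by definition the one induced on graded pieces by the filtered map of the Proposition.
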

	
	\begin{proof}
		This is a direct consequence of Proposition~\ref{propositionpcompletionofmotiviconTCisBMS}.
	\end{proof}

	\begin{proposition}\label{propositionfilteredTCandrationalfilteredTCarecomplete}
		Let $X$ be a qcqs derived scheme. Then the filtrations $$\emph{Fil}^\star_{\emph{mot}} \emph{TC}(X) \text{ and } \emph{Fil}^\star_{\emph{mot}} \emph{TC}(X;\Q)$$ are complete.
	\end{proposition}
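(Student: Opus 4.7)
The plan is to reduce completeness to the completeness of the other three corners in the defining cartesian square, using that completeness of filtered spectra is preserved under limits.

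For $\text{Fil}^\star_{\text{mot}} \text{TC}(X)$, I would first observe that Definition~\ref{definitionmotivicfiltrationonTC} exhibits it as the pullback of a diagram of filtered spectra. Since limits preserve completeness, it suffices that the other three corners be complete. Each of these is covered by an existing lemma: the top right by Lemma~\ref{lemmaHKRfiltrationonHC-isalwayscomplete}, the bottom left by Lemma~\ref{lemmaBMSfiltrationproductallprimesiscomplete}, and the bottom right by Lemma~\ref{lemmaHKRfiltrationproductpcompletionsiscomplete}.

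For the rational statement, I would apply exact rationalisation to the cartesian square and reduce to completeness of the three rationalised corners. The bottom-left corner is handled directly by the second sentence of Lemma~\ref{lemmaBMSfiltrationproductallprimesiscomplete}. The top-right $\text{Fil}^\star_{\text{HKR}} \text{HC}^-(X;\Q)$ is identified, via the base-change equivalence recalled in Remark~\ref{remarkHHandvariantsrelativetoQ}, with $\text{Fil}^\star_{\text{HKR}} \text{HC}^-(X_{\Q}/\Q)$; its completeness then follows from the $\Q$-linear analogue of Lemma~\ref{lemmaHKRfiltrationonHC-isalwayscomplete} applied to $X_{\Q}$.

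The subtle point is the rationalised bottom-right
$\bigl(\prod_{p} \text{Fil}^\star_{\text{HKR}} \text{HC}^-(X;\Z_{p})\bigr)_{\Q}$, because rationalisation (a filtered colimit) does not a priori preserve the inverse limit defining completeness. I would argue instead as follows. The map $\text{Fil}^\star_{\text{HKR}} \text{HC}^-(X) \to \prod_{p} \text{Fil}^\star_{\text{HKR}} \text{HC}^-(X;\Z_{p})$ appearing in Definition~\ref{definitionmotivicfiltrationonTC} is by construction the profinite completion map, so each level of its fibre $F^\star$ is rational; hence $F^\star \simeq F^\star_{\Q}$. Moreover $F^\star$ is complete as the fibre of two complete filtrations (Lemmas~\ref{lemmaHKRfiltrationonHC-isalwayscomplete} and~\ref{lemmaHKRfiltrationproductpcompletionsiscomplete}). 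Rationalising the resulting fibre sequence and taking levelwise limits, the fibre sequence
\[
\lim_{n} F^{n} \;\longrightarrow\; \lim_{n} \text{Fil}^{n}_{\text{HKR}} \text{HC}^-(X;\Q) \;\longrightarrow\; \lim_{n} \Bigl(\prod_{p} \text{Fil}^{n}_{\text{HKR}} \text{HC}^-(X;\Z_{p})\Bigr)_{\Q}
\]
has its first two terms equal to zero by the above, forcing the third to vanish, which is completeness of the bottom-right rationalised corner.

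The main obstacle is precisely this last step: controlling the interaction between rationalisation and the completion limit, which is resolved by exploiting the rational nature of the fibre of the profinite completion map to sidestep the failure of rationalisation to commute with inverse limits in general.
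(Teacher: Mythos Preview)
Your argument for the integral filtration is correct and identical to the paper's. For the rational filtration, your treatment of the bottom-left corner and of the fibre $F^\star$ is also correct, and in fact that fibre argument is precisely the paper's key observation.

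The gap is in your handling of the top-right corner. Remark~\ref{remarkHHandvariantsrelativetoQ} only asserts that $\text{HH}(X)\otimes_{\Z}\Q \simeq \text{HH}(X_{\Q}/\Q)$; it does \emph{not} provide a filtered equivalence $\text{Fil}^\star_{\text{HKR}}\text{HC}^-(X;\Q) \simeq \text{Fil}^\star_{\text{HKR}}\text{HC}^-(X_{\Q}/\Q)$. Indeed already at the underlying level $(-)^{h\text{S}^1}$ is a limit and need not commute with rationalisation, so $\text{HC}^-(X)_{\Q}$ is not obviously $\text{HC}^-(X_{\Q}/\Q)$; the same issue arises for the Hodge-completed graded pieces. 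Thus your deduction that $\text{Fil}^\star_{\text{HKR}}\text{HC}^-(X;\Q)$ is complete is unjustified, and consequently your final fibre-sequence argument for the bottom-right corner collapses.

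The paper circumvents this entirely: it never proves completeness of the rationalised top-right or bottom-right corners individually. Instead it uses that in the rationalised cartesian square the fibre of the \emph{left} vertical map agrees with the fibre of the right vertical map, which is your $F^\star_{\Q}=F^\star$ and is complete. Since the bottom-left corner $\bigl(\prod_{p}\text{Fil}^\star_{\text{BMS}}\text{TC}(X;\Z_p)\bigr)_{\Q}$ is complete by Lemma~\ref{lemmaBMSfiltrationproductallprimesiscomplete}, the extension $\text{Fil}^\star_{\text{mot}}\text{TC}(X;\Q)$ is complete. You already have every ingredient for this; you just need to rearrange the final step to avoid appealing to completeness of the top-right rationalised corner.
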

	
	\begin{proof}
		The filtrations $\text{Fil}^\star_{\text{HKR}} \text{HC}^-(X)$, $\prod_{p \in \mathbb{P}} \text{Fil}^\star_{\text{HKR}} \text{HC}^-(X;\Z_p)$, and $\prod_{p \in \mathbb{P}} \text{Fil}^\star_{\text{BMS}} \text{TC}(X;\Z_p)$ are complete by Lemmas~\ref{lemmaHKRfiltrationonHC-isalwayscomplete}, \ref{lemmaHKRfiltrationproductpcompletionsiscomplete}, and \ref{lemmaBMSfiltrationproductallprimesiscomplete} respectively. By Definition~\ref{definitionmotivicfiltrationonTC}, the filtration $\text{Fil}^\star_{\text{mot}}\text{TC}(X)$ is then complete, as a pullback of three complete filtrations. To prove that the filtration $\text{Fil}^\star_{\text{mot}}\text{TC}(X;\Q)$ is complete, consider the cartesian square of filtered spectra
		$$\begin{tikzcd}
			\text{Fil}^\star_{\text{mot}} \text{TC}(X;\Q) \ar[r] \ar[d] & \text{Fil}^\star_{\text{HKR}} \text{HC}^-(X;\Q) \ar[d] \\
			\Big(\prod_{p \in \mathbb{P}} \text{Fil}^\star_{\text{BMS}} \text{TC}(X;\Z_p)\Big)_{\Q} \ar[r] & \Big(\prod_{p \in \mathbb{P}} \text{Fil}^\star_{\text{HKR}} \text{HC}^-(X;\Z_p)\Big)_{\Q}
		\end{tikzcd}$$
		induced by taking the rationalisation of Definition~\ref{definitionmotivicfiltrationonTC}. The filtration $$\big(\prod_{p \in \mathbb{P}} \text{Fil}^\star_{\text{BMS}} \text{TC}(X;\Z_p)\big)_{\Q}$$ is complete by Lemma~\ref{lemmaBMSfiltrationproductallprimesiscomplete}. The fibre of the natural map 
		$$\text{Fil}^\star_{\text{HKR}} \text{HC}^-(X) \longrightarrow \prod_{p \in \mathbb{P}} \text{Fil}^\star_{\text{HKR}} \text{HC}^-(X;\Z_p)$$
		is complete as an object of the filtered derived category $\mathcal{DF}(\Z)$ (Lemmas~\ref{lemmaHKRfiltrationonHC-isalwayscomplete} and \ref{lemmaHKRfiltrationHC-solidproductallprimesiscomplete}), and is zero modulo $p$ for every prime number $p$ by construction. In particular, it is naturally identified with the fibre of the natural map
		$$\text{Fil}^\star_{\text{HKR}} \text{HC}^-(X;\Q) \longrightarrow \Big(\prod_{p \in \mathbb{P}} \text{Fil}^\star_{\text{HKR}} \text{HC}^-(X;\Z_p)\Big)_{\Q},$$
		which is thus complete as an object of the filtered derived category $\mathcal{DF}(\Q)$. This implies, by the previous cartesian square, that the filtration $\text{Fil}^\star_{\text{mot}} \text{TC}(X;\Q)$ is also complete. 
	\end{proof}
	
	\begin{remark}[Exhaustivity of the motivic filtration on TC]\label{remarkexhaustivitymotivicfiltrationonTC}
		The motivic filtration $\text{Fil}^\star_{\text{mot}} \text{TC}$ is not exhaustive on general qcqs derived scheme. Although this will not be necessary to prove that the motivic filtration on algebraic $K$-theory is exhaustive (Proposition~\ref{propositionmotivicfiltrationisexhaustive}), one can prove, using \cite[Lemma~$4.10$]{antieau_periodic_2019} and its proof, that if $X$ is a quasi-lci $\Z$-scheme,\footnote{By this, we mean that Zariski-locally on the qcqs scheme $X$, the cotangent complex $\mathbb{L}_{-/\Z}$ has Tor-amplitude in~$[-1;0]$.} then the motivic filtration $\text{Fil}^\star_{\text{mot}} \text{TC}(X)$ is exhaustive.
	\end{remark}

	\begin{proposition}\label{propositionétaledescentfiltrationonTC}
		For every integer $i \in \Z$, the presheaf
		$$\emph{Fil}^i_{\emph{mot}} \emph{TC}(-) : \emph{dSch}^{\emph{qcqs,op}} \longrightarrow \emph{Sp}$$
		is an étale sheaf.
	\end{proposition}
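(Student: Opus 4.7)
The plan is to use the defining cartesian square of Definition~\ref{definitionmotivicfiltrationonTC}, together with completeness of the constituent filtrations, to reduce to \'etale descent on graded pieces, and then to verify that descent using known results. Taking the $i^{\text{th}}$ step of the cartesian square yields a cartesian square whose upper-left corner is $\text{Fil}^i_{\text{mot}} \text{TC}(-)$. Since \'etale sheaves form a full subcategory of presheaves of spectra closed under limits, it suffices to prove \'etale descent for each of the three other corners. For each of these filtered presheaves, the underlying filtration is complete by Lemmas~\ref{lemmaHKRfiltrationonHC-isalwayscomplete}, \ref{lemmaHKRfiltrationproductpcompletionsiscomplete}, and \ref{lemmaBMSfiltrationproductallprimesiscomplete}, so $\text{Fil}^i \simeq \lim_{n \geq 0} \text{Fil}^i/\text{Fil}^{i+n}$, where each quotient is a finite iterated extension of graded pieces $\text{gr}^j$ for $i \leq j < i+n$. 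Closure of \'etale sheaves under limits and finite extensions then reduces us further to \'etale descent on each individual graded piece.

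It then remains to establish \'etale descent for the Zariski cohomology of the (possibly $p$-completed) Hodge-completed derived de Rham complex $\widehat{\mathbb{L}\Omega}^{\geq j}_{-/\Z}$ and for the BMS syntomic complex $\Z_p(j)^{\text{BMS}}(-)$. For the former, \'etale descent is a standard consequence of the vanishing of the cotangent complex $\mathbb{L}_{B/A}$ along \'etale morphisms $A \to B$: this yields \'etale base change for each Hodge graded piece $\mathbb{L}\Omega^j_{-/\Z}[-j]$, which by induction on Hodge truncations and passage to the limit gives \'etale descent for the Hodge-completed complex (and its $p$-completion), using again that \'etale sheaves are closed under limits. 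For the BMS syntomic complex, \'etale descent on $p$-quasisyntomic rings follows from the $p$-quasisyntomic descent of Corollary~\ref{corollaryBMSsyntomiccohomologyhasquasisyntomicdescentandLKEfrompolynomialZalgebras}, since \'etale morphisms are $p$-quasisyntomic.

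The main technical step will be to extend the \'etale descent for the BMS syntomic complex from $p$-quasisyntomic rings to all qcqs derived schemes, passing through the Zariski-sheafified left Kan extension of Construction~\ref{constructionBMSfiltrationonTPpcompleted}. This should follow from the work of Bhatt--Morrow--Scholze and Bhatt--Lurie identifying the BMS syntomic complex with shifts of graded pieces of absolute prismatic cohomology, whose \'etale descent on qcqs derived schemes is known; alternatively, one argues directly that left Kan extension from polynomial $\Z$-algebras followed by Zariski sheafification preserves \'etale descent, exploiting that the \'etale Čech nerve of an \'etale cover of a qcqs derived scheme can be computed affine-locally where all rings involved are in the image of the left Kan extension procedure.
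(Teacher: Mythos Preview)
Your proof is correct and follows essentially the same route as the paper: reduce via the defining cartesian square and completeness (Lemmas~\ref{lemmaHKRfiltrationonHC-isalwayscomplete}, \ref{lemmaHKRfiltrationproductpcompletionsiscomplete}, \ref{lemmaBMSfiltrationproductallprimesiscomplete}) to graded pieces, then invoke fpqc descent for powers of the cotangent complex and \'etale descent for the syntomic complexes. The paper dispatches your flagged ``main technical step'' for $\Z_p(i)^{\text{BMS}}$ on general qcqs derived schemes in one stroke by citing the $p$-fpqc descent of \cite[Proposition~$7.4.7$]{bhatt_absolute_2022}, which is exactly the Bhatt--Lurie result you allude to, so no separate extension argument from $p$-quasisyntomic rings is needed.
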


	\begin{proof}
		By Definition~\ref{definitionmotivicfiltrationonTC}, it suffices to prove that the presheaves $$\prod_{p \in \mathbb{P}} \text{Fil}^i_{\text{BMS}} \text{TC}(-;\Z_p), \text{ } \text{Fil}^i_{\text{HKR}} \text{HC}^-(-), \text{ and } \prod_{p \in \mathbb{P}} \text{Fil}^i_{\text{HKR}} \text{HC}^-(-;\Z_p)$$ are étale sheaves. A product of sheaves is a sheaf, and these BMS and HKR filtrations are complete (Lemmas~\ref{lemmaBMSfiltrationproductallprimesiscomplete}, \ref{lemmaHKRfiltrationonHC-isalwayscomplete}, and~\ref{lemmaHKRfiltrationproductpcompletionsiscomplete}). It then suffices to prove that for every prime number $p$, the presheaves $$\Z_p(i)^{\text{BMS}}(-), \text{ } R\Gamma_{\text{Zar}}\big(-,\widehat{\mathbb{L}\Omega}^{\geq i}_{-/\Z}\big), \text{ and } R\Gamma_{\text{Zar}}\big(-,(\widehat{\mathbb{L}\Omega}^{\geq i}_{-/\Z})^\wedge_p\big)$$ are étale sheaves. The statement for $\Z_p(i)^{\text{BMS}}(-)$ is a consequence of $p$-fpqc descent (\cite[Proposition~$7.4.7$]{bhatt_absolute_2022}). The statement for the other two presheaves reduces to the fpqc descent for the powers of the cotangent complex (\cite[Theorem~$3.1$]{bhatt_topological_2019}).
	\end{proof}

	\begin{corollary}\label{corollaryétaledescentforZ(i)TC}
		For every integer $i \in \Z$, the presheaf
		$$\Z(i)^{\emph{TC}}(-) : \emph{dSch}^{\emph{qcqs,op}} \longrightarrow \mathcal{D}(\Z)$$
		is an étale sheaf.
	\end{corollary}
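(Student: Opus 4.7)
The plan is to deduce this immediately from Proposition~\ref{propositionétaledescentfiltrationonTC}. By Definition~\ref{definitionmotivicfiltrationonTC}, we have
$$\Z(i)^{\text{TC}}(-) := \text{gr}^i_{\text{mot}} \text{TC}(-)[-2i] = \text{cofib}\bigl(\text{Fil}^{i+1}_{\text{mot}}\text{TC}(-) \longrightarrow \text{Fil}^i_{\text{mot}}\text{TC}(-)\bigr)[-2i].$$
Proposition~\ref{propositionétaledescentfiltrationonTC} asserts that each of the presheaves $\text{Fil}^{i+1}_{\text{mot}}\text{TC}(-)$ and $\text{Fil}^i_{\text{mot}}\text{TC}(-)$ is an étale sheaf of spectra.

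The category of étale sheaves valued in spectra (or equivalently in $\mathcal{D}(\Z)$, once we remember the natural $H\Z$\nobreakdash-linear structure inherited from $\Z(0)^{\text{TC}}(\Z) \simeq H\Z$) is stable under colimits computed in presheaves provided we sheafify; however, the cofibre of a map between sheaves is itself already a sheaf, because finite colimits commute with finite limits in a stable $\infty$-category, and sheafiness is tested by finite limit conditions (equalisers of Čech nerves of étale covers). Similarly, a shift of an étale sheaf of spectra is still an étale sheaf. Therefore the cofibre $\text{gr}^i_{\text{mot}}\text{TC}(-)$, and hence its shift $\Z(i)^{\text{TC}}(-)$, is an étale sheaf valued in $\mathcal{D}(\Z)$.

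There is no substantive obstacle here; the only thing to verify is that taking cofibres of maps of étale sheaves of spectra produces an étale sheaf, which follows from stability of the target and the fact that the sheaf condition is a limit condition. All the real work was done in Proposition~\ref{propositionétaledescentfiltrationonTC}, which in turn reduced to $p$-fpqc descent for $\Z_p(i)^{\text{BMS}}$ (\cite[Proposition~7.4.7]{bhatt_absolute_2022}) and fpqc descent for the powers of the cotangent complex (\cite[Theorem~3.1]{bhatt_topological_2019}).
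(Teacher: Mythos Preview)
Your proposal is correct and takes essentially the same approach as the paper, which simply records ``This is a direct consequence of Proposition~\ref{propositionétaledescentfiltrationonTC}.'' One small quibble: the sheaf condition is a (generally infinite) totalization, not a finite limit; the reason cofibres of étale sheaves of spectra remain étale sheaves is that in a stable $\infty$-category a cofibre is a shifted fibre, and fibres commute with arbitrary limits.
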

	
	\begin{proof}
		This is a direct consequence of Proposition~\ref{propositionétaledescentfiltrationonTC}.
	\end{proof}

	\subsection{The motivic filtration on \texorpdfstring{$L_{\text{cdh}} \text{TC}$}{TEXT}}
	
	\vspace{-\parindent}
	\hspace{\parindent}
	
	In this subsection, we introduce the motivic filtration on the cdh sheafification of topological cyclic homology of general qcqs schemes (Definition~\ref{definitionmotivicfiltrationonLcdhTC}).
	
	\begin{definition}[Motivic filtration on $L_{\text{cdh}} \text{TC}$]\label{definitionmotivicfiltrationonLcdhTC}
		The {\it motivic filtration on cdh sheafified topological cyclic homology} of qcqs schemes
		$$\text{Fil}^\star_{\text{mot}} L_{\text{cdh}} \text{TC}(-) : \text{Sch}^{\text{qcqs},\text{op}} \longrightarrow \text{FilSp}$$
		is the functor defined as the cdh sheafification of the motivic filtration on topological cyclic homology (Definition~\ref{definitionmotivicfiltrationonTC})
		$$\text{Fil}^\star_{\text{mot}} L_{\text{cdh}} \text{TC}(-) := \big(L_{\text{cdh}} \text{Fil}^\star_{\text{mot}} \text{TC}\big)(-).$$    
	\end{definition}
	
	\begin{remark}[Graded pieces of $\text{Fil}^\star_{\text{mot}} L_{\text{cdh}} \text{TC}$]
		Let $X$ be a qcqs scheme. For every integer $i \in \Z$, the canonical map
		$$\big(L_{\text{cdh}} \Z(i)^{\text{TC}}\big)(X) \longrightarrow \text{gr}^i_{\text{mot}} L_{\text{cdh}} \text{TC}(R)[-2i]$$
		is an equivalence in the derived category $\mathcal{D}(\Z)$. We will usually refer to these shifted graded pieces by the complexes $\big(L_{\text{cdh}} \Z(i)^{\text{TC}}\big)(X)$.
	\end{remark} 
	
	\begin{remark}[Completeness of $\text{Fil}^\star_{\text{mot}} L_{\text{cdh}} \text{TC}$]
		It is not clear {\it a priori} that the filtered spectrum $\text{Fil}^\star_{\text{mot}} L_{\text{cdh}} \text{TC}(X)$ is complete, even for qcqs schemes of finite valuative dimension. Modulo any prime number $p$, this is a consequence of the connectivity bound \cite[Theorem~$5.1\,(1)$]{antieau_beilinson_2020} and \cite[Theorem~$2.4.15$]{elmanto_cdh_2021}. The integral statement will be a consequence of certain cdh descent results in Section~\ref{sectionratonialstructure}.
	\end{remark}
	
	\begin{remark}[Exhaustivity of $\text{Fil}^\star_{\text{mot}} L_{\text{cdh}} \text{TC}$]
		The filtration $\text{Fil}^\star_{\text{mot}} L_{\text{cdh}} \text{TC}$ is not exhaustive on general qcqs derived schemes. We will prove however, in Section~\ref{sectionratonialstructure}, that the fibre of the natural map $\text{Fil}^\star_{\text{mot}} \text{TC} \rightarrow \text{Fil}^\star_{\text{mot}} L_{\text{cdh}} \text{TC}$ is $\N$-indexed, and in particular exhaustive.
	\end{remark}

	\section{Definition of motivic cohomology}
	
	\vspace{-\parindent}
	\hspace{\parindent}
	
	In this section, we introduce motivic cohomology of general quasi-compact quasi-separated derived schemes (Definition~\ref{definitionmotiviccohomologyofderivedschemes}) and establish some of the fundamental properties of the associated motivic filtration.
	
	\subsection{Classical motivic cohomology}
	
	\vspace{-\parindent}
	\hspace{\parindent}
	
	In this subsection, we review the classical definition of motivic cohomology of smooth schemes in mixed characteristic. Following \cite{bloch_algebraic_1986,levine_techniques_2001,geisser_motivic_2004}, the motivic cohomology of smooth $\Z$-schemes $X$ is classically defined in terms of Bloch's cycle complexes $z^i(X,\bullet)$. Recall that Bloch's cycle complex is a simplicial abelian group defined in terms of algebraic cycles. The homotopy groups of Bloch's cycle complexes, called Bloch's higher Chow groups, are a generalisation of Chow groups that are designed to generalise the relation between the $\text{K}_0$ and the Chow groups of a quasi-projective variety to higher $K$-groups. Via the Dold--Kan correspondence, we view Bloch's cycle complexes as objects of the derived category $\mathcal{D}(\Z)$.
	
	\begin{definition}[Classical motivic cohomology of smooth schemes]\label{definitionclassicalmotiviccohomology}
		Let $B$ be a field or a mixed characteristic Dedekind domain ({\it e.g.}, $B=\Z$), and $X$ be a smooth $B$-scheme. For any integer $i \in \Z$, the {\it classical motivic complex}
		$$\Z(i)^{\text{cla}}(X) \in \mathcal{D}(\Z)$$
		is the shift of Bloch's cycle complex $z^i(X,\bullet)$:
		$$\Z(i)^{\text{cla}}(X) := z^i(X,\bullet)[-2i],$$
		where $\bullet$ is the cohomological index.
	\end{definition}

	Note that, by construction, the classical motivic complexes $\Z(i)^{\text{cla}}$ vanish in degrees more than $2i$, and in all degrees for weights $i<0$. 
	
	In the following definition, we use the slice filtration in stable homotopy theory, as introduced by Voevodsky \cite{voevodsky_open_2022,voevodsky_possible_2002,bachmann_norms_2021}.
	
	\begin{definition}[Motivic filtration on $K$-theory of smooth schemes]\label{definitionclassicalmotivicfiltration}
		Let $B$ be a field or a mixed characteristic Dedekind domain. The {\it classical motivic filtration} on algebraic $K$-theory of smooth $B$-schemes is the functor
		$$\text{Fil}^\star_{\text{cla}} \text{K}(-) : \text{Sm}_{B}^{\text{op}} \longrightarrow \text{FilSp}$$
		defined as the image, via the mapping spectrum construction $\omega^{\infty} : \text{SH}(B) \rightarrow \text{PSh}(\text{Sm}_{B}, \text{Sp})$, of the slice filtration $f^\star \text{KGL}$ on the $K$-theory motivic spectrum $\text{KGL} \in \text{SH}(B)$.
	\end{definition}

	\begin{remark}\label{remarkclassicalBlochrelatedtoslice}
		The pullback of algebraic cycles being well-defined only along flat maps, it is not straightforward to prove that the classical motivic complexes $\Z(i)^{\text{cla}}$ of Definition~\ref{definitionclassicalmotiviccohomology} are functorial. Over a field, Voevodsky overcomes this technicality by proving that Bloch's cycle complexes are represented in $\text{SH}$ by the zeroth slice of the $K$-theory motivic spectrum $\text{KGL}$. Over a mixed characteristic Dedekind domain, this identification is proved by Bachmann \cite{bachmann_very_2022}. In particular, this means that Bloch's cycle complexes $z^i(-,\bullet)$, when seen as a construction taking values in the derived category $\mathcal{D}(\Z)$, is indeed functorial, and multiplicative. In terms of Definitions~\ref{definitionclassicalmotiviccohomology} and~\ref{definitionclassicalmotivicfiltration}, this implies that for every integer $i \in \Z$, there is an equivalence of $\mathcal{D}(\Z)$-valued\footnote{Every value of the functor $\text{gr}^i_{\text{cla}} \text{K}(-)[-2i]$ has a natural module structure over the $\mathbb{E}_{\infty}$-ring $\text{gr}^0_{\text{cla}} \text{K}(\Z)$, which, by \cite[Proposition~$6.1$]{spitzweck_commutative_2018} and \cite{bachmann_very_2022}, is naturally identified with the $\mathbb{E}_{\infty}$-ring $\text{H}\!\Z$. This implies that the spectra-valued functor $\text{gr}^i_{\text{cla}}\text{K}(-)[-2i]$ takes values in $\text{H}\!\Z$-linear spectra, {\it i.e.}, in the derived category $\mathcal{D}(\Z)$.} functors
		$$\Z(i)^{\text{cla}}(-) := \text{gr}^i_{\text{cla}} \text{K}(-) [-2i].$$
	\end{remark}
	
	\begin{example}[Weight zero classical motivic cohomology]\label{exampleweightzeroclassicalmotiviccohomology}
		For every smooth scheme $X$ over a field or a mixed characteristic Dedekind domain, there is a natural equivalence
		$$\Z(0)^{\text{cla}}(X) \simeq R\Gamma_{\text{Zar}}(X,\Z)$$
		in the derived category $\mathcal{D}(\Z)$ (\cite[Proposition~$6.1$]{spitzweck_commutative_2018}).
	\end{example}
	
	\begin{example}[Weight one classical motivic cohomology]\label{exampleweight1classicalmotiviccohomology}
		For every smooth scheme $X$ over a field or a mixed characteristic Dedekind domain, there is a natural equivalence
		$$\Z(1)^{\text{cla}}(X) \simeq R\Gamma_{\text{Zar}}(X,\mathbb{G}_m)[-1]$$
		in the derived category $\mathcal{D}(\Z)$ (\cite[Theorem~$7.10$]{spitzweck_commutative_2018}). In particular, the complex $\Z(1)^{\text{cla}}(X)$ is concentrated in degrees one and two, where it is given by
		$$\text{H}^1(\Z(1)^{\text{cla}}(X)) \cong \mathcal{O}(X)^{\times} \quad \text{ and } \quad \text{H}^2(\Z(1)^{\text{cla}}(X)) \cong \text{Pic}(X).$$
	\end{example}

	\subsection{Bachmann--Elmanto--Morrow's cdh-local motivic filtration}\label{subsectioncdhlocalmotivicfiltration}
	
	\vspace{-\parindent}
	\hspace{\parindent}
	
	The constructions and results of this subsection are due to Bachmann--Elmanto--Morrow. More precisely, following \cite{bachmann_A^1-invariant_2024}, we review the cdh-local motivic filtration on homotopy $K$\nobreakdash-theory $\text{KH}(-)$ of qcqs schemes (Definition~\ref{definitionmotivicfiltrationonKHtheoryofschemes}), whose shifted graded pieces $\Z(i)^{\text{cdh}}$ will serve as a building block for the definition of the motivic complexes $\Z(i)^{\text{mot}}$ (Remark~\ref{remarkmaincartesiansquareformotiviccohomology}). We first define the lisse motivic complexes $\Z(i)^{\text{lisse}}$ as an intermediate construction, and as a practical tool for later sections.
	
	The following definition is motivated by the observation of Bhatt--Lurie that connective algebraic $K$-theory $\text{K}^{\text{conn}}(-)$ is left Kan extended on animated commutative rings from smooth $\Z$-algebras \cite[Proposition~A$.0.4$]{elmanto_modules_2020}.
	
	\begin{definition}[Motivic filtration on connective $K$-theory of animated rings]\label{definitionlissemotivicfiltrationconnectiveKtheory}
		The {\it motivic filtration on connective algebraic $K$-theory} of animated commutative rings is the functor
		$$\text{Fil}^\star_{\text{lisse}} \text{K}^{\text{conn}}(-) : \text{AniRings} \longrightarrow \text{FilSp}$$
		defined as the left Kan extension of the classical motivic filtration on algebraic $K$-theory of smooth $\Z$-algebras
		$$\text{Fil}^\star_{\text{lisse}} \text{K}^{\text{conn}}(-) := \big(L_{\text{AniRings}/\text{Sm}_{\Z}} \text{Fil}^\star_{\text{cla}} \text{K}\big)(-).$$
	\end{definition}

	Note that connective algebraic $K$-theory is not a Zariski sheaf on commutative rings. Most of our results on the following lisse motivic complexes $\Z(i)^{\text{lisse}}$ will be formulated in the generality of local rings.
	
	\begin{definition}[Lisse motivic cohomology of animated rings]\label{definitionlissemotiviccohomology}
		For any integer $i \in \Z$, the {\it lisse motivic complex}
		$$\Z(i)^{\text{lisse}}(-) : \text{AniRings} \longrightarrow \mathcal{D}(\Z)$$
		is the shifted graded piece of the motivic filtration of Definition~\ref{definitionlissemotivicfiltrationconnectiveKtheory}:
		$$\Z(i)^{\text{lisse}}(-) := \text{gr}^i_{\text{lisse}} \text{K}^{\text{conn}}(-) [-2i].$$
	\end{definition}
	
	Note that the lisse motivic complexes $\Z(i)^{\text{lisse}}$ are the left Kan extension of the classical motivic complexes $\Z(i)^{\text{cla}}$, and in particular vanish in weights $i<0$.
	
	\begin{example}[Weight zero lisse motivic cohomology]\label{exampleweightzerolissemotiviccohomology}
		For every local ring $R$, there is a natural equivalence
		$$\Z(0)^{\text{lisse}}(R) \simeq \Z[0]$$
		in the derived category $\mathcal{D}(\Z)$. This is a consequence of Example~\ref{exampleweightzeroclassicalmotiviccohomology}, by using that the functor $\Z(0)^{\text{lisse}}(-)$ is left Kan extended on local rings from its restriction to local essentially smooth $\Z$\nobreakdash-algebras.
	\end{example}
	
	\begin{example}[Weight one lisse motivic cohomology]\label{exampleweight1lissemotiviccohomology}
		For every commutative ring $R$, there is a natural equivalence
		$$\Z(1)^{\text{lisse}}(R) \simeq \big(\tau^{\leq 1} R\Gamma_{\text{Zar}}(R,\mathbb{G}_m)\big)[-1]$$
		in the derived category $\mathcal{D}(\Z)$. In particular, the complex $\Z(1)^{\text{lisse}}(R) \in \mathcal{D}(\Z)$ is concentrated in degrees one and two, where it is given by
		$$\text{H}^1(\Z(1)^{\text{lisse}}(R)) \cong \mathcal{O}(R)^{\times} \quad \text{ and } \quad \text{H}^2(\Z(1)^{\text{lisse}}(R)) \cong \text{Pic}(R).$$
		This is a consequence of Example~\ref{exampleweight1classicalmotiviccohomology}, by using that the left Kan extension of a functor taking values in degrees at most two takes values in degrees at most two, and that the functor $\tau^{\leq 1} R\Gamma_{\text{Zar}}(-,\mathbb{G}_m)$ on commutative rings is left Kan extended from its restriction to smooth $\Z$-algebras. Here the latter left Kan extension property is a consequence of the same left Kan extension property for the functors $\mathbb{G}_m(-)$ (which is a special case of Mathew's criterion \cite[Proposition~A$.0.4$]{elmanto_modules_2020}) and $\text{Pic}(-)$ (which is a consequence of rigidity, see \cite[Lemma~$7.6$]{elmanto_motivic_2023}).
	\end{example}
	
	\begin{definition}[Cdh-local motivic filtration on $KH$-theory of schemes]\label{definitionmotivicfiltrationonKHtheoryofschemes}
		The {\it cdh-local motivic filtration on homotopy $K$-theory} of qcqs schemes is the functor
		$$\text{Fil}^\star_{\text{cdh}} \text{KH}(-) : \text{Sch}^{\text{qcqs,op}} \longrightarrow \text{FilSp}$$
		defined as
		$$\text{Fil}^\star_{\text{cdh}} \text{KH}(-) := \big(L_{\text{cdh}} \text{Fil}^\star_{\text{lisse}} \text{K}^{\text{conn}}\big)(-) = \big(L_{\text{cdh}} L_{\text{Sch}^{\text{qcqs,op}}/\text{Sm}_{\Z}^{\text{op}}} \text{Fil}^\star_{\text{cla}} \text{K}\big)(-).$$
	\end{definition}
	
	\begin{remark}\label{remarkclassicalcdhlocalmotivicfiltrationcomparisonmap}
		By construction of the cdh-local motivic filtration (Definition~\ref{definitionmotivicfiltrationonKHtheoryofschemes}), there is a natural comparison map of presheaves
		$$\text{Fil}^\star_{\text{cla}} \text{K}(-) \longrightarrow \text{Fil}^\star_{\text{cdh}} \text{KH}(-)$$
		on smooth $\Z$-schemes.
	\end{remark}
	
	\begin{definition}[Cdh-local motivic cohomology of schemes]
		For any integer $i \in \Z$, the {\it cdh-local motivic complex}
		$$\Z(i)^{\text{cdh}}(-) : \text{Sch}^{\text{qcqs,op}} \longrightarrow \mathcal{D}(\Z)$$
		is the shifted graded piece of the motivic filtration of Definition~\ref{definitionmotivicfiltrationonKHtheoryofschemes}:
		$$\Z(i)^{\text{cdh}}(-) := \text{gr}^i_{\text{cdh}} \text{KH}(-)[-2i].$$
	\end{definition}
	
    We will refer throughout the text to the following properties of these cdh-local motivic complexes.	
    

    \begin{theorem}[\cite{bachmann_A^1-invariant_2024}]\label{theoremBEM}
        Let $X$ be a qcqs scheme, and $i \geq 0$ be an integer.
        \begin{enumerate}
            \item The filtration $\emph{Fil}^\star_{\emph{cdh}} \emph{KH}(X)$ is multiplicative and $\N$-indexed. Moreover, if $X$ is of valuative dimension at most $d$, then the spectrum $\emph{Fil}^i_{\emph{cdh}} \emph{KH}(X)$ is in cohomological degrees at most $-i+d$; in particular, the filtration $\emph{Fil}^\star_{\emph{cdh}} \emph{KH}(X)$, and its rationalisation $\emph{Fil}^\star_{\emph{cdh}} \emph{KH}(X;\Q)$, are complete.
            \item For every integer $m \geq 2$, there exists a natural automorphism $\psi^m$ of the filtered spectrum $\emph{Fil}^\star_{\emph{cdh}} \emph{KH}(X)[\tfrac{1}{m}]$ such that the induced automorphism on $\Z[\tfrac{1}{m}](i)^{\emph{cdh}}(X)[2i]$ is multiplication by~$m^i$.
            \item For every prime number $p$ and every integer $k \geq 1$, there is a natural equivalence
            $$\Z/p^k(i)^{\emph{cdh}}(X) \simeq \big(L_{\emph{cdh}} \Z/p^k(i)^{\emph{syn}}\big)(X)$$
            in the derived category $\mathcal{D}(\Z/p^k)$.
        \end{enumerate}
    \end{theorem}

    \begin{proof}
        This is a part of \cite[Theorems~$4.33$, $7.12$, and $7.14$]{bachmann_A^1-invariant_2024}, which is ultimately a consequence of analogous properties for classical motivic cohomology.
    \end{proof}

    \begin{remark}
        With the exception of Section~\ref{sectionA1invariantmotiviccohomology} on the comparison with Bachmann--Elmanto--Morrow's $\mathbb{A}^1$-motivic complexes $\Z(i)^{\mathbb{A}^1}$, where we use that $\Z(i)^{\mathbb{A}^1}$ is the $\mathbb{A}^1$-localisation of the cdh-local motivic complex $\Z(i)^{\text{cdh}}$, we do not use any of the main results established in \cite{bachmann_A^1-invariant_2024}. In particular, we do not use the $\mathbb{A}^1$-invariance or the projective bundle formula for the cdh-local motivic complexes $\Z(i)^{\text{cdh}}$, nor Voevodsky's slice conjectures.
    \end{remark}
	

	\subsection{Definition of motivic cohomology}\label{subsectiondefinitionmotiviccohomology}
	
	\vspace{-\parindent}
	\hspace{\parindent}
	
	In this subsection, we introduce the motivic filtration on algebraic $K$-theory of qcqs derived schemes (Definitions~\ref{definitionmotivicfiltrationonKtheoryofschemes} and \ref{definitionmotivicfiltrationonKtheoryofderivedschemes}), by constructing a filtered extension of the cartesian square of Theorem~\ref{theoremKST+LT}. To do so, we first define a filtered cdh-local cyclotomic trace map $\text{Fil}^\star_{\text{cdh}} \text{KH}(-) \rightarrow \text{Fil}^\star_{\text{mot}} L_{\text{cdh}} \text{TC}(-)$ (Construction~\ref{constructionfilteredcdhlocalcyclotomictrace}).
	
	\begin{theorem}[Filtered cyclotomic trace in the smooth case]\label{theoremfilteredcyclotomictraceinthesmoothcase}
		The cyclotomic trace map $$\emph{K}(-) \longrightarrow \emph{TC}(-),$$ viewed as a map of spectra-valued presheaves on the category of smooth $\Z$-schemes, admits a unique, multiplicative extension to a map $$\emph{Fil}^\star_{\emph{cla}} \emph{K}(-) \longrightarrow \emph{Fil}^\star_{\emph{mot}} \emph{TC}(-)$$
		of filtered presheaves of spectra.
	\end{theorem}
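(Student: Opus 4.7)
The plan is to exploit the defining pullback square for $\text{Fil}^\star_{\text{mot}} \text{TC}$ (Definition~\ref{definitionmotivicfiltrationonTC}), which reduces the problem on smooth $\Z$-schemes to three subtasks: $(a)$ constructing a multiplicative filtered refinement of the trace $\text{K}(-) \to \text{HC}^-(-)$ landing in $\text{Fil}^\star_{\text{HKR}} \text{HC}^-(-)$; $(b)$ for each prime $p$, constructing a multiplicative filtered refinement of the $p$-completed trace $\text{K}(-;\Z_p) \to \text{TC}(-;\Z_p)$ landing in $\text{Fil}^\star_{\text{BMS}} \text{TC}(-;\Z_p)$; and $(c)$ verifying that the profinite completion of $(a)$ agrees with the composition of $(b)$ with the BMS--HKR comparison map of Construction~\ref{constructionfilteredmapTCpcompletedtoHC-pcompleted}. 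Each of these pieces exists on smooth $\Z$-schemes by results already available in the literature.

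For $(a)$, the trace $\text{K} \to \text{HC}^-$ on smooth $\Z$-algebras admits a natural multiplicative filtered refinement coming from the filtered HKR theorem of \cite{antieau_periodic_2019, raksit_hochschild_2020, bhatt_absolute_2022}, inducing on graded pieces the classical comparison map $\Z(i)^{\text{cla}}(X) \to R\Gamma_{\text{Zar}}(X, \Omega^{\geq i}_{X/\Z})$ between motivic cohomology and Hodge-filtered algebraic de Rham cohomology. For $(b)$, the filtered $p$-completed cyclotomic trace on smooth $\Z$-schemes is provided by \cite{bhatt_topological_2019, bhatt_absolute_2022, hahn_motivic_2022}; taking the product over all primes $p$ and using that $\text{TC}(-) \to \prod_p \text{TC}(-;\Z_p)$ is profinite completion then yields the required map. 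The compatibility $(c)$ reduces to an identification on graded pieces, where it becomes the standard comparison between the $p$-completed Hodge-filtered derived de Rham complex and the Nygaard-filtered prismatic complex.

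The main obstacle, I expect, is uniqueness together with multiplicativity. The key input is a connectivity bound: for every smooth $\Z$-scheme $X$ and every integer $i \geq 0$, the complex $\Z(i)^{\text{cla}}(X)$ is concentrated in cohomological degrees $\leq 2i$ by Bloch's construction, and $\Z(i)^{\text{TC}}(X)$ admits an analogous upper bound in the smooth case, readable off from the defining pullback of $\Z(i)^{\text{TC}}$ together with the fact that each of the three contributing presheaves has cohomology concentrated in bounded degrees on smooth $\Z$-schemes. Both filtrations $\text{Fil}^\star_{\text{cla}}\text{K}$ and $\text{Fil}^\star_{\text{mot}}\text{TC}$ being complete, a standard obstruction-theoretic argument on the associated Postnikov towers of multiplicative filtered spectra then shows that the space of multiplicative filtered extensions of the unfiltered trace map is contractible, yielding the desired uniqueness.
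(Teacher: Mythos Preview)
Your overall strategy matches the paper's: reduce via the defining pullback square of $\text{Fil}^\star_{\text{mot}}\text{TC}$ to the three targets $\text{HC}^-$, $\text{HC}^-(-;\Z_p)$, and $\text{TC}(-;\Z_p)$, and establish existence and uniqueness of the filtered lift to each. However, there are two concrete points where your execution diverges from the paper and one of them is a genuine gap.

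First, the connectivity input. You invoke only the trivial bound that $\Z(i)^{\text{cla}}(X)$ lies in degrees $\leq 2i$. This is not strong enough for the obstruction-theoretic uniqueness argument you sketch: with that bound, $\text{gr}^i_{\text{cla}}\text{K}$ is only in cohomological degrees $\leq 0$, and you would need the target filtration steps to be uniformly coconnective, which they are not. The paper instead uses (via \cite[Proposition~4.6]{elmanto_motivic_2023}) the Gersten conjecture for classical motivic cohomology over $\Z$ \cite[Theorem~1.1]{geisser_motivic_2004}, which gives the sharp Zariski-local bound $\Z(i)^{\text{cla}}\in\mathcal{D}^{\leq i}$; this is exactly what makes the space of filtered multiplicative extensions contractible. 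You should replace your bound by this one.

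Second, your step $(c)$ is unnecessary, and your citation for $(a)$ is misplaced. Because each of the three filtered lifts is \emph{unique}, their compatibility across the BMS--HKR comparison map is automatic; the paper does not perform a separate check on graded pieces. For $(a)$, the filtered HKR theorem only provides the filtration on $\text{HC}^-$, not the filtered map from $\text{K}$; the actual construction of the filtered trace $\text{Fil}^\star_{\text{cla}}\text{K}\to\text{Fil}^\star_{\text{HKR}}\text{HC}^-$ is the content of \cite[Proposition~4.6]{elmanto_motivic_2023}. For $(b)$ the paper cites \cite[Proposition~6.12]{annala_atiyah_2024} rather than \cite{bhatt_topological_2019,hahn_motivic_2022}.
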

	
	\begin{proof}
		By Definition~\ref{definitionmotivicfiltrationonTC}, the natural cartesian square
		$$\begin{tikzcd}
			\text{TC}(-) \ar[r] \ar[d] & \text{HC}^-(-) \ar[d] \\
			\prod_{p \in \mathbb{P}} \text{TC}(-;\Z_p) \ar[r] & \prod_{p \in \mathbb{P}} \text{HC}^-(-;\Z_p)
		\end{tikzcd}$$
		admits a multiplicative filtered extension
		$$\begin{tikzcd}
			\text{Fil}^\star_{\text{mot}} \text{TC}(-) \ar[r] \ar[d] & \text{Fil}^\star_{\text{HKR}} \text{HC}^-(-) \ar[d] \\
			\prod_{p \in \mathbb{P}} \text{Fil}^\star_{\text{BMS}} \text{TC}(-;\Z_p) \ar[r] & \prod_{p \in \mathbb{P}} \text{Fil}^\star_{\text{HKR}} \text{HC}^-(-;\Z_p)
		\end{tikzcd}$$
		on qcqs derived schemes. Let $p$ be a prime number. It then suffices to prove that the natural maps $$\text{K}(-) \longrightarrow \text{HC}^-(-), \text{ } \text{K}(-) \longrightarrow \text{HC}^-(-;\Z_p), \text{ and }\text{K}(-) \longrightarrow \text{TC}(-;\Z_p),$$ viewed as maps of spectra-valued presheaves on the category of smooth $\Z$-schemes, admit unique multiplicative filtered extensions to maps of filtered presheaves of spectra $$\text{Fil}^\star_{\text{cla}} \text{K}(-) \longrightarrow \text{Fil}^\star_{\text{HKR}} \text{HC}^-(-), \quad \text{Fil}^\star_{\text{cla}} \text{K}(-) \longrightarrow \text{Fil}^\star_{\text{HKR}} \text{HC}^-(-;\Z_p), \text{ and }$$ $$ \text{Fil}^\star_{\text{cla}} \text{K}(-) \longrightarrow \text{Fil}^\star_{\text{BMS}} \text{TC}(-;\Z_p)$$ respectively. 
		
		The proof of \cite[Proposition~$4.6$]{elmanto_motivic_2023}, which is stated over a quasismooth map of commutative rings $k_0 \rightarrow k$ such that $k$ is a field, applies readily to the case where $k_0=k=\Z$. More precisely, we use in this proof the Gersten conjecture for classical motivic cohomology on smooth $\Z$-schemes, which is \cite[Theorem~$1.1$]{geisser_motivic_2004}. In particular, the natural map $\text{K}(-) \rightarrow \text{HC}^-(-)$, viewed as a map of spectra-valued presheaves on the category of smooth $\Z$-schemes, admits a unique multiplicative filtered extension to a map of filtered presheaves of spectra $\text{Fil}^\star_{\text{cla}} \text{K}(-) \rightarrow \text{Fil}^\star_{\text{HKR}} \text{HC}^-(-)$. 
		
		Similarly, the $p$-completed cotangent complex $(\mathbb{L}_{R/\Z})^\wedge_p$ of a smooth $\Z$-algebra $R$ is concentrated in degree zero, given by the $p$-flat $R$-module $(\Omega^1_{R/\Z})^\wedge_p$. So for every integer $i \in \Z$, this implies that the object $\text{gr}^i_{\text{HKR}} \text{HC}^-(R;\Z_p) \in \mathcal{D}(\Z)$ is in cohomological degrees less than~$-i$. The proof of \cite[Proposition~$4.6$]{elmanto_motivic_2023} then adapts readily to prove that the natural map $\text{K}(-) \rightarrow \text{HC}^-(-;\Z_p)$, viewed as a map of spectra-valued presheaves on the category of smooth $\Z$-schemes, admits a unique multiplicative extension to a map of filtered presheaves of spectra $\text{Fil}^\star_{\text{cla}} \text{K}(-) \rightarrow \text{Fil}^\star_{\text{HKR}} \text{HC}^-(-;\Z_p)$.
		
		Finally, the natural map $\text{K}(-) \rightarrow \text{TC}(-;\Z_p)$, viewed as a map of spectra-valued presheaves on the category of smooth $\Z$-schemes, admits a unique multiplicative extension to a map of filtered presheaves of spectra $\text{Fil}^\star_{\text{cla}} \text{K}(-) \rightarrow \text{Fil}^\star_{\text{BMS}} \text{TC}(-;\Z_p)$, by \cite[Proposition~$6.12$]{annala_atiyah_2024}.
	\end{proof}
	
	\begin{remark}\label{remarkBMSfiltrationonTCcanbereconstructedfromclassicalmotivicfiltration}
		For every prime number $p$, the BMS filtration $\text{Fil}^\star_{\text{BMS}} \text{TC}(-;\Z_p)$ is determined by its $p$-quasisyntomic-local values (Theorem~\ref{theoremBMSfiltrationonTCpcompletedsatisfiesquasisyntomicdescentandisLKEfrompolynomialalgebras}). By the proof of \cite[Lemma~$6.10$]{annala_atiyah_2024}, the $p$-completed left Kan extension of the functor $\text{Fil}^\star_{\text{cla}} \text{K}(-)$, from smooth $\Z$-algebras to $p$\nobreakdash-quasisyntomic rings, is $p$\nobreakdash-quasisyntomic-locally identified, via the map induced by \cite[Proposition~$6.12$]{annala_atiyah_2024}, with the functor $\text{Fil}^\star_{\text{BMS}} \text{TC}(-;\Z_p)$. In particular, one can reconstruct the BMS filtration $\text{Fil}^\star_{\text{BMS}} \text{TC}(-;\Z_p)$ on qcqs derived schemes (Definition~\ref{definitionBMSfiltrationonTCpcompletedofqcqsderivedschemes}) from the classical motivic filtration $\text{Fil}^\star_{\text{cla}} \text{K}(-)$ on smooth $\Z$-schemes (Definition~\ref{definitionclassicalmotivicfiltration}). This will be used in Section~\ref{sectionratonialstructure} to construct Adams operations on the BMS filtration $\text{Fil}^\star_{\text{BMS}} \text{TC}(-;\Z_p)$.
	\end{remark}
	
	\begin{construction}[Filtered cdh-local cyclotomic trace]\label{constructionfilteredcdhlocalcyclotomictrace}
		The {\it filtered cdh-local cyclotomic trace map} is the map
		$$\text{Fil}^\star_{\text{cdh}} \text{KH}(-) \longrightarrow \text{Fil}^\star_{\text{mot}} L_{\text{cdh}} \text{TC}(-)$$
		of functors from (the opposite category of) qcqs schemes to the category of multiplicative filtered spectra defined as the cdh sheafification of the composite
		$$\big(L_{\text{Sch}^{\text{qcqs,op}}/\text{Sm}_{\Z}^{\text{op}}} \text{Fil}^\star_{\text{cla}} \text{K}\big)(-) \longrightarrow \big(L_{\text{Sch}^{\text{qcqs,op}}/\text{Sm}_{\Z}^{\text{op}}} \text{Fil}^\star_{\text{mot}} \text{TC}\big)(-) \longrightarrow \text{Fil}^\star_{\text{mot}} \text{TC}(-),$$
		where the first map is the map induced by Theorem~\ref{theoremfilteredcyclotomictraceinthesmoothcase} and the second map is the canonical map. Note here that sheafification is a multiplicative operation, and that the compatibility between left Kan extension and multiplicative structures is ensured by \cite[Corollary~$3.2.3.2$]{lurie_higher_2017} (see also \cite[$2.3.2$]{elmanto_motivic_2023}).
	\end{construction}

	\needspace{5\baselineskip}
	
	\begin{definition}[Motivic filtration on $K$-theory of schemes]\label{definitionmotivicfiltrationonKtheoryofschemes}
		The {\it motivic filtration on non-connective algebraic $K$-theory} of qcqs schemes is the functor
		$$\text{Fil}^\star_{\text{mot}} \text{K}(-) : \text{Sch}^{\text{qcqs,op}}  \longrightarrow \text{FilSp}$$
		defined by the cartesian square of functors of multiplicative filtered spectra
		$$\begin{tikzcd}
			\text{Fil}^\star_{\text{mot}} \text{K}(-) \ar[r] \ar[d] & \text{Fil}^\star_{\text{mot}} \text{TC}(-) \ar[d] \\
			\text{Fil}^\star_{\text{cdh}} \text{KH}(-) \ar[r] & \text{Fil}^\star_{\text{mot}} L_{\text{cdh}} \text{TC}(-), 
		\end{tikzcd}$$
		where the bottom horizontal map is the map of Construction \ref{constructionfilteredcdhlocalcyclotomictrace}, and the right vertical map is cdh sheafification.
	\end{definition}
	
	\begin{definition}[Motivic filtration on $K$-theory of derived schemes]\label{definitionmotivicfiltrationonKtheoryofderivedschemes}
		The {\it motivic filtration on non-connective algebraic $K$-theory} of qcqs derived schemes is the functor
		$$\text{Fil}^\star_{\text{mot}} \text{K}(-) : \text{dSch}^{\text{qcqs,op}} \longrightarrow \text{FilSp}$$
		defined by the cartesian square of functors of multiplicative filtered spectra
		$$\begin{tikzcd}
			\text{Fil}^\star_{\text{mot}} \text{K}(-) \ar[r] \ar[d] & \text{Fil}^\star_{\text{mot}} \text{TC}(-) \ar[d] \\
			\text{Fil}^\star_{\text{mot}} \text{K}(\pi_0(-)) \ar[r] & \text{Fil}^\star_{\text{mot}} \text{TC}(\pi_0(-))
		\end{tikzcd}$$
		where $\pi_0(-) : \text{dSch} \rightarrow \text{Sch}$ is restriction to the classical locus, the filtration on $\text{K}(\pi_0(-))$ is given by Definition~\ref{definitionmotivicfiltrationonKtheoryofschemes}, and the filtrations on $\text{TC}(-)$ and $\text{TC}(\pi_0(-))$ are given by Definition~\ref{definitionmotivicfiltrationonTC}.
	\end{definition}
	
	\begin{definition}[Motivic cohomology of derived schemes]\label{definitionmotiviccohomologyofderivedschemes}
		For any integer $i \in \Z$, the {\it motivic complex}
		$$\Z(i)^{\text{mot}} : \text{dSch}^{\text{qcqs,op}} \longrightarrow \mathcal{D}(\Z)$$
		is the shifted graded piece of the motivic filtration of Definition~\ref{definitionmotivicfiltrationonKtheoryofderivedschemes}:
		$$\Z(i)^{\text{mot}}(-) := \text{gr}^i_{\text{mot}} \text{K}(-)[-2i].$$
		For every qcqs derived scheme $X$, also denote by
		$$\text{H}^n_{\text{mot}}(X,\Z(i)) := \text{H}^n(\Z(i)^{\text{mot}}(X)) \quad n\in \Z$$
		the {\it motivic cohomology groups} of $X$.
	\end{definition}
	
	\begin{remark}[Motivic cohomology of schemes]\label{remarkmaincartesiansquareformotiviccohomology}
		Let $X$ be a qcqs scheme, and $i \in \Z$ be an integer. By Definition \ref{definitionmotivicfiltrationonKtheoryofderivedschemes}, there is a natural cartesian square
		$$\begin{tikzcd}
			\Z(i)^{\text{mot}}(X) \arrow{r} \arrow{d} & \Z(i)^{\text{TC}}(X) \ar[d] \\
			\Z(i)^{\text{cdh}}(X) \arrow{r} & \big(L_{\text{cdh}} \Z(i)^{\text{TC}}\big)(X)
		\end{tikzcd}$$
		in the derived category $\mathcal{D}(\Z)$, where the bottom horizontal map is induced by Construction~\ref{constructionfilteredcdhlocalcyclotomictrace} and the right vertical map is cdh sheafification. This cartesian square can serve as a definition for the motivic cohomology of the scheme $X$.
	\end{remark}
	
	We now construct, for later use, a comparison map from classical motivic cohomology to the motivic cohomology of Definition~\ref{definitionmotiviccohomologyofderivedschemes}.
	
	\begin{definition}[Filtered classical-motivic comparison map]\label{definitionfilteredclassicalmotiviccomparisonmap}
		The {\it filtered classical-motivic comparison map} is the map of presheaves
		$$\text{Fil}^\star_{\text{cla}} \text{K}(-) \longrightarrow \text{Fil}^\star_{\text{mot}} \text{K}(-)$$
		on smooth $\Z$-schemes induced by the maps Remark~\ref{remarkclassicalcdhlocalmotivicfiltrationcomparisonmap} and Theorem~\ref{theoremfilteredcyclotomictraceinthesmoothcase}. Note here that the compatibility between these two maps and Definition~\ref{definitionmotivicfiltrationonKtheoryofschemes} is automatic by Construction~\ref{constructionfilteredcdhlocalcyclotomictrace}.
	\end{definition}
	
	\begin{definition}[Classical-motivic comparison map]\label{definitionclassicalmotiviccomparisonmap}
		For any integer $i \in \Z$, the {\it classical-motivic comparison map} is the map of $\mathcal{D}(\Z)$-valued presheaves
		$$\Z(i)^{\text{cla}}(-) \longrightarrow \Z(i)^{\text{mot}}(-)$$
		on smooth $\Z$-schemes induced by taking the $i^{\text{th}}$ shifted graded piece of the filtered map of Definition~\ref{definitionfilteredclassicalmotiviccomparisonmap}.
	\end{definition}
	
	In the rest of this section, we discuss some of the first properties of the motivic filtration.
	
	\begin{remark}[Comparison to cdh-local motivic cohomology]\label{remarkcomparisontocdhlocalmotiviccohomology}
		By construction (Definition~\ref{definitionmotivicfiltrationonKHtheoryofschemes}), the cdh-local motivic complex
		$$\Z(i)^{\text{cdh}} : \text{Sch}^{\text{qcqs,op}} \longrightarrow \mathcal{D}(\Z)$$
		is a cdh sheaf, so the common fibre of the horizontal maps in the cartesian square of Remark~\ref{remarkmaincartesiansquareformotiviccohomology} is also a cdh sheaf. In particular, for every qcqs scheme $X$, the left vertical map of this cartesian square exhibits cdh-local motivic cohomology as the cdh sheafification of motivic cohomology:
		$$\Z(i)^{\text{cdh}}(X) \simeq \big(L_{\text{cdh}} \Z(i)^{\text{mot}}\big)(X).$$
	\end{remark}
	
	\begin{proposition}\label{propositionpadicstructuremain}
		Let $X$ be a qcqs scheme, and $p$ be a prime number. Then for every integer $k \geq 1$, there is a natural cartesian square
		$$\begin{tikzcd}
			\emph{Fil}^\star_{\emph{mot}} \emph{K}(X;\Z/p^k) \arrow{r} \arrow{d} & \emph{Fil}^\star_{\emph{BMS}} \emph{TC}(X;\Z/p^k) \ar[d] \\
			\emph{Fil}^\star_{\emph{cdh}} \emph{KH}(X;\Z/p^k) \arrow{r} & \emph{Fil}^\star_{\emph{BMS}} L_{\emph{cdh}} \emph{TC}(X;\Z/p^k)
		\end{tikzcd}$$
		of filtered spectra.
	\end{proposition}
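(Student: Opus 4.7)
The plan is to deduce this from the defining cartesian square of $\text{Fil}^\star_{\text{mot}} \text{K}(-)$ (Definition~\ref{definitionmotivicfiltrationonKtheoryofschemes}) by applying derived reduction modulo $p^k$ and then identifying the right column of the resulting square via Proposition~\ref{propositionpcompletionofmotiviconTCisBMS}.

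First, I recall from Definition~\ref{definitionmotivicfiltrationonKtheoryofschemes} the cartesian square
$$\begin{tikzcd}
\text{Fil}^\star_{\text{mot}} \text{K}(X) \arrow{r} \arrow{d} & \text{Fil}^\star_{\text{mot}} \text{TC}(X) \ar[d] \\
\text{Fil}^\star_{\text{cdh}} \text{KH}(X) \arrow{r} & \text{Fil}^\star_{\text{mot}} L_{\text{cdh}} \text{TC}(X)
\end{tikzcd}$$
of filtered spectra. Smashing with the Moore spectrum $\S/p^k$ preserves cartesian squares in stable settings, so the analogous square with $\Z/p^k$ coefficients is cartesian. It remains to identify the right-hand vertical map with $\text{Fil}^\star_{\text{BMS}} \text{TC}(X;\Z/p^k) \to \text{Fil}^\star_{\text{BMS}} L_{\text{cdh}} \text{TC}(X;\Z/p^k)$.

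For the top right term, reduction modulo $p^k$ factors through $p$-completion, so by Proposition~\ref{propositionpcompletionofmotiviconTCisBMS} I have natural equivalences
$$\text{Fil}^\star_{\text{mot}} \text{TC}(X;\Z/p^k) \simeq \text{Fil}^\star_{\text{mot}} \text{TC}(X;\Z_p)/p^k \simeq \text{Fil}^\star_{\text{BMS}} \text{TC}(X;\Z_p)/p^k \simeq \text{Fil}^\star_{\text{BMS}} \text{TC}(X;\Z/p^k).$$
For the bottom right term, the cdh sheafification functor $L_{\text{cdh}}$ is a left adjoint and in particular commutes with the colimit $-/p^k$; applying it to the previous chain of equivalences yields
$$\text{Fil}^\star_{\text{mot}} L_{\text{cdh}} \text{TC}(X;\Z/p^k) \simeq L_{\text{cdh}}\big(\text{Fil}^\star_{\text{mot}} \text{TC}(-;\Z/p^k)\big)(X) \simeq L_{\text{cdh}}\big(\text{Fil}^\star_{\text{BMS}} \text{TC}(-;\Z/p^k)\big)(X),$$
where the last term is $\text{Fil}^\star_{\text{BMS}} L_{\text{cdh}} \text{TC}(X;\Z/p^k)$ by convention. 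The compatibility of these two identifications with the canonical map between them is automatic from the naturality of Proposition~\ref{propositionpcompletionofmotiviconTCisBMS}, yielding the claimed cartesian square.

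This argument is essentially formal: no substantial obstacle arises, since the entire content is already packaged in the definition of $\text{Fil}^\star_{\text{mot}} \text{K}$ and in the fact that the motivic and BMS filtrations on $\text{TC}$ agree after $p$-completion. The only points requiring mild care are the compatibility of cdh sheafification with the mod-$p^k$ functor (which is formal, as $L_{\text{cdh}}$ is a left adjoint) and the convention that $\text{Fil}^\star_{\text{BMS}} L_{\text{cdh}} \text{TC}(-;\Z/p^k)$ denotes the cdh sheafification of $\text{Fil}^\star_{\text{BMS}} \text{TC}(-;\Z/p^k)$.
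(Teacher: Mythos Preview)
Your proof is correct and follows essentially the same approach as the paper: the paper's proof simply cites Proposition~\ref{propositionpcompletionofmotiviconTCisBMS} and the defining cartesian square for $\text{Fil}^\star_{\text{mot}}\text{K}$, and you have spelled out the routine details (that $-/p^k$ preserves cartesian squares and commutes with $L_{\text{cdh}}$) that make this citation work.
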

	
	\begin{proof}
		This is a consequence of Proposition~\ref{propositionpcompletionofmotiviconTCisBMS} and Definition~\ref{definitionmotivicfiltrationonKtheoryofderivedschemes}.
	\end{proof}
	
	\begin{corollary}\label{corollarymainpadicstructureongradeds}
		Let $X$ be a qcqs scheme, and $p$ be a prime number. Then for any integers $i \in \Z$ and $k \geq 1$, there is natural cartesian square
		$$\begin{tikzcd}
			\Z/p^k(i)^{\emph{mot}}(X) \arrow{r} \arrow{d} & \Z/p^k(i)^{\emph{BMS}}(X) \ar[d] \\
			\Z/p^k(i)^{\emph{cdh}}(X) \arrow{r} & \big(L_{\emph{cdh}} \Z/p^k(i)^{\emph{BMS}}\big)(X)
		\end{tikzcd}$$
		in the derived category $\mathcal{D}(\Z/p^k)$.
	\end{corollary}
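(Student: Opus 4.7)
The plan is to deduce this corollary directly from Proposition~\ref{propositionpadicstructuremain} by passing to shifted graded pieces of the given cartesian square of filtered spectra. The first step is to observe that the shifted graded piece functor $\text{gr}^i(-)[-2i] : \text{FilSp} \to \text{Sp}$ is a cofiber construction, and therefore preserves cartesian squares (equivalently, fiber sequences). Applying this functor to the cartesian square of $\Z/p^k$-module filtered spectra supplied by Proposition~\ref{propositionpadicstructuremain} immediately produces a cartesian square in the derived category $\mathcal{D}(\Z/p^k)$.

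It then remains to identify each of the four vertices with the claimed presheaves. The upper-left vertex is $\Z/p^k(i)^{\text{mot}}(X)$ by Definition~\ref{definitionmotiviccohomologyofderivedschemes}, and the upper-right vertex is $\Z/p^k(i)^{\text{BMS}}(X)$ by Definition~\ref{definitionsyntomiccohomologyintermsofTC}. The lower-left vertex is $\Z/p^k(i)^{\text{cdh}}(X)$ by the definition of the cdh-local motivic complex as the shifted graded piece of the cdh-local motivic filtration on homotopy $K$-theory (Definition~\ref{definitionmotivicfiltrationonKHtheoryofschemes}). For the lower-right vertex, I would invoke that cdh sheafification preserves colimits, and therefore commutes with the cofiber construction defining $\text{gr}^i$, so that
$$\text{gr}^i_{\text{BMS}} L_{\text{cdh}} \text{TC}(X;\Z/p^k)[-2i] \simeq \big(L_{\text{cdh}} \Z/p^k(i)^{\text{BMS}}\big)(X).$$

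There is essentially no serious obstacle here, since all of the substantive work has been carried out in establishing Proposition~\ref{propositionpadicstructuremain}, which in turn relies on Proposition~\ref{propositionpcompletionofmotiviconTCisBMS} (identifying $p$-completion of the motivic filtration on $\text{TC}$ with the BMS filtration) and the defining cartesian square of the motivic filtration on $K$-theory (Definition~\ref{definitionmotivicfiltrationonKtheoryofschemes}). The verification reduces to the routine compatibility of three colimit-preserving operations --- derived reduction modulo $p^k$, formation of shifted graded pieces, and cdh sheafification --- all of which interchange without further input.
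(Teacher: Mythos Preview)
Your proposal is correct and takes essentially the same approach as the paper, which simply states ``This is a direct consequence of Proposition~\ref{propositionpadicstructuremain}.'' You have faithfully unpacked that one-line proof: taking shifted graded pieces of the filtered cartesian square, and identifying each vertex via the relevant definitions together with the commutation of cdh sheafification with graded pieces.
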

	
	\begin{proof}
		This is a direct consequence of Proposition~\ref{propositionpadicstructuremain}.
	\end{proof}
	
	\begin{remark}[$\ell$-adic motivic cohomology]\label{remarkladicmotiviccohomology}
		For any prime number $p$ and integer $k \geq 1$, the filtered presheaf $\text{Fil}^\star_{\text{BMS}} \text{TC}(-;\Z/p^k)$ and its cdh sheafification vanish on qcqs $\Z[\tfrac{1}{p}]$-schemes. In particular, Proposition~\ref{propositionpadicstructuremain} implies that for every qcqs $\Z[\tfrac{1}{p}]$-scheme $X$, the natural map
		$$\text{Fil}^\star_{\text{mot}} \text{K}(X;\Z/p^k) \longrightarrow \text{Fil}^\star_{\text{cdh}} \text{KH}(X;\Z/p^k)$$
		is an equivalence of filtered spectra.    
	\end{remark}
	
	\begin{remark}[Completeness and exhaustivity of $\text{Fil}^\star_{\text{mot}} \text{K}$]
		The filtration $\text{Fil}^\star_{\text{mot}} \text{K}(X)$ of Definition~\ref{definitionmotivicfiltrationonKtheoryofderivedschemes} will be proved to be $\N$-indexed, hence exhaustive, on general qcqs derived schemes $X$ (Proposition~\ref{propositionmotivicfiltrationisexhaustive}), and complete on qcqs schemes of finite valuative dimension (Proposition~\ref{propositionmotivicfiltrationiscompleteonqcqsschemesoffinitevaluativedimension}). Note that these results can already be proved modulo any prime number~$p$, as a formal consequence of Proposition~\ref{propositionpadicstructuremain} and Section~\ref{sectionmotivicfiltrationonTC}.
	\end{remark}
	
	The following result is a filtered version of the classical Dundas--Goodwillie--McCarthy theorem (\cite[Theorem~$7.0.0.2$]{dundas_local_2013}).
	
	\begin{proposition}\label{theoremfilteredDGMtheorem}
		Let $A \rightarrow B$ be a map of animated commutative rings such that the induced map $\pi_0(A) \rightarrow \pi_0(B)$ of commutative rings is surjective with finitely generated nilpotent kernel. Then the natural commutative diagram
		$$\begin{tikzcd}
			\emph{Fil}^\star_{\emph{mot}} \emph{K}(A) \arrow{r} \arrow{d} & \emph{Fil}^\star_{\emph{mot}} \emph{TC}(A) \ar[d] \\
			\emph{Fil}^\star_{\emph{mot}} \emph{K}(B) \arrow{r} & \emph{Fil}^\star_{\emph{mot}} \emph{TC}(B)
		\end{tikzcd}$$
		is a cartesian square of filtered spectra.
	\end{proposition}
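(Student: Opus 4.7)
The plan is to reduce the filtered statement, via the two defining cartesian squares of the motivic filtration on $K$-theory, to the well-known fact that cdh sheaves of spectra are insensitive to nilpotent thickenings.

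First, Definition~\ref{definitionmotivicfiltrationonKtheoryofderivedschemes} produces, for any animated commutative ring $R$, a natural cartesian square of filtered spectra relating the map $\text{Fil}^\star_{\text{mot}} \text{K}(R) \to \text{Fil}^\star_{\text{mot}} \text{TC}(R)$ to the corresponding map for $\pi_0(R)$. In particular, the fibre of the top (resp. bottom) horizontal arrow of the proposition is naturally identified with the fibre of the analogous arrow for the classical ring $\pi_0(A)$ (resp. $\pi_0(B)$). The problem thus reduces to the surjection $\pi_0(A) \twoheadrightarrow \pi_0(B)$ of classical commutative rings whose kernel is nilpotent.

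Second, for such a surjection $C \twoheadrightarrow D$ of commutative rings, Definition~\ref{definitionmotivicfiltrationonKtheoryofschemes} identifies the fibre of $\text{Fil}^\star_{\text{mot}} \text{K}(C) \to \text{Fil}^\star_{\text{mot}} \text{TC}(C)$ with the fibre of $\text{Fil}^\star_{\text{cdh}} \text{KH}(C) \to \text{Fil}^\star_{\text{mot}} L_{\text{cdh}} \text{TC}(C)$, and similarly over $D$. It thus suffices to show that the two presheaves $\text{Fil}^\star_{\text{cdh}} \text{KH}$ and $\text{Fil}^\star_{\text{mot}} L_{\text{cdh}} \text{TC}$ send the closed immersion $\text{Spec}(D) \hookrightarrow \text{Spec}(C)$ to equivalences of filtered spectra. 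Both presheaves are cdh sheaves by construction (Definitions~\ref{definitionmotivicfiltrationonKHtheoryofschemes} and~\ref{definitionmotivicfiltrationonLcdhTC}), so it is enough to prove that any cdh sheaf of filtered spectra inverts such a nilpotent closed immersion.

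Finally, if $I \subseteq C$ is nilpotent, then $\text{Spec}(C/I) \hookrightarrow \text{Spec}(C)$ has empty (topological) complement, so the commutative diagram
$$\begin{tikzcd}
\emptyset \ar[r] \ar[d] & \emptyset \ar[d] \\
\text{Spec}(C/I) \ar[r] & \text{Spec}(C)
\end{tikzcd}$$
is an abstract blowup square. Evaluating a cdh sheaf $F$ of (filtered) spectra on this square and using $F(\emptyset)=0$ forces the natural map $F(C) \to F(C/I)$ to be an equivalence. Since there is no serious obstacle, I do not expect any step to be difficult: the main point is that the cdh-local character of the fibre of the cyclotomic trace built into Definition~\ref{definitionmotivicfiltrationonKtheoryofschemes} converts the classical Dundas--Goodwillie--McCarthy content of the statement into the completely formal fact that cdh sheaves are insensitive to nilpotent thickenings, and no finite generation hypothesis on the kernel is actually used in this argument.
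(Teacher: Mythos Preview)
Your argument is essentially the paper's: reduce via Definition~\ref{definitionmotivicfiltrationonKtheoryofderivedschemes} to the classical surjection $\pi_0(A)\twoheadrightarrow\pi_0(B)$, then use Definition~\ref{definitionmotivicfiltrationonKtheoryofschemes} to identify the relevant fibre with one between cdh sheaves, which are insensitive to nilpotent thickenings. You spell out the abstract blowup square explicitly, which the paper leaves implicit.

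One correction to your closing remark: the finite generation hypothesis \emph{is} used. In the cdh topology on qcqs schemes (see \cite{elmanto_cdh_2021} and Notation~\ref{notationabstractblowupsquare}), an abstract blowup square requires the closed immersion $Y\hookrightarrow X$ to be \emph{finitely presented}. Your square with $\emptyset$ on top is therefore only a cdh cover when $\text{Spec}(C/I)\hookrightarrow\text{Spec}(C)$ is finitely presented, i.e.\ when $I$ is finitely generated. Without this hypothesis the square you wrote is not guaranteed to lie among the generating covers of the cdh topology, and nil-invariance of cdh sheaves does not follow formally.
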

	
	\begin{proof}
		By Definition~\ref{definitionmotivicfiltrationonKtheoryofderivedschemes}, it suffices to prove that the natural commutative diagram
		$$\begin{tikzcd}
			\text{Fil}^\star_{\text{mot}} \text{K}(\pi_0(A)) \arrow{r} \arrow{d} & \text{Fil}^\star_{\text{mot}} \text{TC}(\pi_0(A)) \ar[d] \\
			\text{Fil}^\star_{\text{mot}} \text{K}(\pi_0(B)) \arrow{r} & \text{Fil}^\star_{\text{mot}} \text{TC}(\pi_0(B))
		\end{tikzcd}$$
		is a cartesian square of filtered spectra.
		For every cdh sheaf $F$ defined on qcqs schemes, the natural map $F(\pi_0(A)) \rightarrow F(\pi_0(B))$ is an equivalence. The result is then a consequence of Definition~\ref{definitionmotivicfiltrationonKtheoryofschemes}.
	\end{proof}

	\section{Rational structure of motivic cohomology}\label{sectionratonialstructure}
	
	\vspace{-\parindent}
	\hspace{\parindent}
	
	In this section, we prove some fundamental properties of the motivic filtration that we introduced in the previous section: namely, that it is always exhaustive and finitary, and complete on schemes of finite valuative dimension. These properties modulo a prime number $p$ are a formal consequence of the analogous properties for the BMS filtration on $p$-completed topological cyclic homology. Proving these results integrally will however require more understanding of the rational part of this motivic filtration. Our two main results on rational motivic cohomology are as follows.
	
	The first main result is the following generalisation of the rational splitting of the classical motivic filtration. 
	
	\begin{theorem}[The motivic filtration is rationally split]\label{theorem21'rationalmotivicfiltrationsplits}
		Let $X$ be a qcqs derived scheme. Then there exists a natural multiplicative equivalence of filtered spectra
		$$\emph{Fil}^\star_{\emph{mot}} \emph{K}(X;\Q) \simeq \bigoplus_{j \geq \star} \Q(j)^{\emph{mot}}(X)[2j].$$
	\end{theorem}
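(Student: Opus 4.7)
The plan is to construct, for every integer $k \geq 2$, a natural multiplicative endomorphism $\psi^k$ of the filtered $\mathbb{E}_\infty$-ring $\text{Fil}^\star_{\text{mot}} \text{K}(-;\Q)$ such that $\psi^k$ acts on the graded piece $\Q(i)^{\text{mot}}(-)[2i]$ by multiplication by $k^i$. Once this is in hand, a standard Vandermonde argument applied to finitely many of the $\psi^k$ produces filtration-preserving idempotents projecting onto each weight, and the resulting direct sum decomposition is automatically multiplicative since the $\psi^k$ are. Because the eigenvalues $\{k^i\}_{i \geq 0}$ are distinct, this yields the desired equivalence of filtered $\mathbb{E}_\infty$-rings $\text{Fil}^\star_{\text{mot}} \text{K}(X;\Q) \simeq \bigoplus_{j \geq \star} \Q(j)^{\text{mot}}(X)[2j]$.

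To build the $\psi^k$, I would propagate them through the two pullback squares defining $\text{Fil}^\star_{\text{mot}} \text{K}$ (Definitions~\ref{definitionmotivicfiltrationonKtheoryofschemes} and~\ref{definitionmotivicfiltrationonKtheoryofderivedschemes}), starting from the slice filtration on $\text{KGL} \in \text{SH}(\Z)$: on smooth $\Z$-schemes, $\text{Fil}^\star_{\text{cla}} \text{K}(-)$ of Definition~\ref{definitionclassicalmotivicfiltration} inherits Adams operations from $\text{KGL}$ that act on the $i$-th slice by $k^i$. Cdh sheafifying the left Kan extension yields $\psi^k$ on $\text{Fil}^\star_{\text{cdh}} \text{KH}(-)$. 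For the $p$-adic pieces, the key tool is Remark~\ref{remarkBMSfiltrationonTCcanbereconstructedfromclassicalmotivicfiltration}, which reconstructs $\text{Fil}^\star_{\text{BMS}} \text{TC}(-;\Z_p)$ from $\text{Fil}^\star_{\text{cla}} \text{K}(-)$ by $p$-completed left Kan extension and $p$-quasisyntomic descent; transporting $\psi^k$ along this reconstruction gives Adams operations on $\text{Fil}^\star_{\text{BMS}} \text{TC}(-;\Z_p)$. Finally, on the rational HKR piece $\text{Fil}^\star_{\text{HKR}} \text{HC}^-(-;\Q)$, Adams operations can be obtained either intrinsically from the classical $\lambda$-ring structure on Hochschild homology (where $\psi^k$ acts on the HKR weight $i$ piece $\widehat{\mathbb{L}\Omega}^i_{-/\Q}[i]$ by $k^i$) or by left Kan extension from smooth $\Z$-algebras; the two constructions agree by the multiplicativity and universality of the filtered cyclotomic trace of Theorem~\ref{theoremfilteredcyclotomictraceinthesmoothcase}.

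The main obstacle is the coherent gluing step: assembling these four sources of $\psi^k$ into a single multiplicative endomorphism of the pullback square defining $\text{Fil}^\star_{\text{mot}} \text{TC}(-)$, and then of the pullback square defining $\text{Fil}^\star_{\text{mot}} \text{K}(-)$, requires compatibility of $\psi^k$ with the BMS--HKR comparison map of Construction~\ref{constructionfilteredmapTCpcompletedtoHC-pcompleted}, with profinite completion, and with the filtered cdh-local cyclotomic trace of Construction~\ref{constructionfilteredcdhlocalcyclotomictrace}. Each of these compatibilities reduces, via the appropriate Kan extension property, to a statement about smooth $\Z$-algebras where all four pieces originate multiplicatively from $\text{Fil}^\star_{\text{cla}} \text{K}(-)$; hence the comparisons in question are themselves obtained from a single multiplicative filtered map on smooth $\Z$-algebras and automatically commute with $\psi^k$. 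Verifying the eigenvalue $k^i$ on $\Q(i)^{\text{mot}}(-)[2i]$ is then checked separately on $\Q(i)^{\text{cdh}}$, on $\Q_p(i)^{\text{BMS}}$, and on the HKR graded pieces, where it is classical in each case; the Vandermonde argument then concludes the proof.
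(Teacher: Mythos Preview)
Your construction of the Adams operations $\psi^k$ is essentially the same as the paper's Section~\ref{subsectionAdamsoperations}, and the compatibility checks you outline are indeed the content of Propositions~\ref{proposition24AdamsoprationsonfilteredTCpcompleted}--\ref{proposition23raksitAdamsoperationsonHC-withmintertibleinthescheme} and Lemmas~\ref{lemma35compatibilityAdamsTCpcompletedandHC-pcompleted}--\ref{lemma36compatibilityAdamsoperationsonKHandLcdhTC}. (One small correction: each $\psi^k$ is only defined on qcqs $\Z[\tfrac{1}{k}]$-schemes, not globally, so a Zariski gluing step using the relation $\psi^{mm'} \simeq \psi^m \circ \psi^{m'}$ is needed; see hypothesis~$(iii)$ in Lemma~\ref{lemma29howtouseAdamsoperations}.)

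The genuine gap is in the splitting step. A Vandermonde argument using \emph{finitely many} $\psi^k$ cannot produce an idempotent projecting onto a single weight when the filtration has infinitely many nonzero graded pieces: any polynomial in $\psi^2,\dots,\psi^N$ that vanishes on infinitely many eigenvalues $\{k^j\}_{j \neq i}$ is identically zero. The paper's mechanism (Lemma~\ref{lemma29howtouseAdamsoperations}) is instead to show, for a single $\psi^m$, that $\psi^m - m^i$ is invertible on $\text{Fil}^{i+1}_{\text{mot}}\text{K}(X;\Q)$, which splits off $\text{gr}^i$ from $\text{Fil}^i$. This invertibility is checked on graded pieces and therefore requires the rationalised filtration to be \emph{complete}. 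Completeness of $\text{Fil}^\star_{\text{mot}}\text{K}(X;\Q)$ is Proposition~\ref{propositionmotivicfiltrationiscompleteonqcqsschemesoffinitevaluativedimension}, and it is far from formal: it rests on the rigid-analytic Goodwillie theorem and the cdh descent result of Corollary~\ref{corollaryHPsolidallprimespisacdhsheafwithfiltration}, which occupy Sections~\ref{sectionrigidanalyticdR}--\ref{subsectionrigidanalyticGoodwillietheorem}. Even after completeness, one obtains only $\text{Fil}^i \simeq \prod_{j \geq i} \text{gr}^j$; converting this product to the direct sum $\bigoplus_{j \geq i} \text{gr}^j$ requires the connectivity bound of Proposition~\ref{propositionmotivicfiltrationiscompleteonqcqsschemesoffinitevaluativedimension} (via Proposition~\ref{propositionhowtouseAdamsoperations}), and the passage from noetherian finite-dimensional schemes to arbitrary qcqs (derived) schemes uses finitariness (Proposition~\ref{propositionCmotivicfiltrationfinitaryNisnevichsheaf}) and the argument of Corollary~\ref{corollaryKtheorysplitsrationally}. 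None of this is addressed in your proposal.
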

	
	As for the classical motivic filtration, this splitting is induced by suitable Adams operations, which we construct in the generality of qcqs derived schemes in the first section. 
	
	This result is however not enough to prove the exhaustivity, completeness, or finitariness of the motivic filtration. Instead, these will be proved along the way to the following second main result on rational motivic cohomology.
	
	\begin{theorem}\label{theoremrationalstructuremain}
		Let $X$ be a qcqs scheme. Then there is a natural fibre sequence of filtered spectra
		$$\emph{Fil}^\star_{\emph{mot}} \emph{K}(X;\Q) \rightarrow \emph{Fil}^\star_{\emph{cdh}} \emph{KH}(X;\Q) \rightarrow \emph{cofib}\big( \emph{Fil}^{\star - 1}_{\emph{HKR}} \emph{HC}(X_{\Q}/\Q) \rightarrow \emph{Fil}^{\star - 1}_{\emph{HKR}} L_{\emph{cdh}} \emph{HC}(X_{\Q}/\Q)\big)[1].$$
	\end{theorem}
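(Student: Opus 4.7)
The plan is to reduce everything to cdh descent statements for variants of de Rham and rigid-analytic de Rham cohomology, and then assemble them via the defining cartesian squares of the motivic filtration. Let $F := \text{fib}(\text{Id} \to L_{\text{cdh}})$, viewed as an exact endofunctor on presheaves of spectra (exactness follows because $L_{\text{cdh}}$-sheafification is left exact in the $\infty$-categorical sense). By the cartesian square of Definition~\ref{definitionmotivicfiltrationonKtheoryofschemes}, the fibre of the map $\text{Fil}^\star_{\text{mot}} \text{K}(X;\Q) \to \text{Fil}^\star_{\text{cdh}} \text{KH}(X;\Q)$ is naturally identified with $F(\text{Fil}^\star_{\text{mot}} \text{TC}(-;\Q))(X)$. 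Applying $F$ to the rationalisation of the cartesian square of Definition~\ref{definitionmotivicfiltrationonTC}, which is still cartesian since $F$ is exact, then reduces the problem to computing $F$ applied to $\text{Fil}^\star_{\text{HKR}} \text{HC}^-(-_{\Q}/\Q)$ together with the two restricted-product presheaves $\prod_p' \text{Fil}^\star_{\text{BMS}} \text{TC}(-;\Q_p)$ and $\prod_p' \text{Fil}^\star_{\text{HKR}} \text{HC}^-(-;\Q_p)$ in the bottom row.

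The main technical input, and the principal obstacle, is to show that $F$ annihilates both restricted-product presheaves, i.e.\ that they are cdh sheaves on qcqs schemes. The graded pieces of $\prod_p' \text{Fil}^\star_{\text{HKR}} \text{HC}^-(-;\Q_p)$ are shifts of the rigid-analytic Hodge-completed derived de Rham complexes that Theorem~\ref{theoremintrocdhdescentrigidanalytic} is tailored to handle; completeness (Lemma~\ref{lemmaHKRfiltrationproductpcompletionsiscomplete}) then promotes cdh descent from graded pieces to the full filtered presheaf. For $\prod_p' \text{Fil}^\star_{\text{BMS}} \text{TC}(-;\Q_p)$, I would exploit the filtered BMS--HKR comparison map of Construction~\ref{constructionfilteredmapTCpcompletedtoHC-pcompleted}: by Definition~\ref{definitionintroZ(i)^TC} each graded piece fits, rationally, in a cartesian square whose remaining terms are again rigid-analytic de Rham complexes to which Theorem~\ref{theoremintrocdhdescentrigidanalytic} applies. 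The key point is that these descent statements are false prime-by-prime; only the restricted product over all primes $p$ is rigid-analytic enough for Theorem~\ref{theoremintrocdhdescentrigidanalytic} to apply, which is precisely the reason that theorem had to be proved in the first place.

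With the bottom row of the cartesian square annihilated by $F$, the square collapses to a natural equivalence $F(\text{Fil}^\star_{\text{mot}} \text{TC}(-;\Q)) \simeq F(\text{Fil}^\star_{\text{HKR}} \text{HC}^-(-_{\Q}/\Q))$. I would then use the filtered fibre sequence $\text{Fil}^\star_{\text{HKR}} \text{HC}^-(-_{\Q}/\Q) \to \text{Fil}^\star_{\text{HKR}} \text{HP}(-_{\Q}/\Q) \to \text{Fil}^{\star-1}_{\text{HKR}} \text{HC}(-_{\Q}/\Q)[2]$ supplied by Definition~\ref{definitionHKRfiltrationonHC}. Over $\Q$, periodic cyclic homology is a truncating invariant by Goodwillie's theorem, hence a cdh sheaf by Land--Tamme; the same holds for each step of its HKR filtration, using cdh descent for the Hodge-completed derived de Rham complex over $\Q$ à la Corti\~nas--Haesemeyer--Schlichting--Weibel and Antieau together with completeness, and this descent is inherited through the base change $X \mapsto X_{\Q}$ since abstract blowup squares are preserved by rationalisation. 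Hence $F$ also kills the middle term of the fibre sequence, and exactness of $F$ gives $F(\text{Fil}^\star_{\text{HKR}} \text{HC}^-(-_{\Q}/\Q)) \simeq F(\text{Fil}^{\star-1}_{\text{HKR}} \text{HC}(-_{\Q}/\Q))[1]$. The identity $F(A)[1] = \text{cofib}(A \to L_{\text{cdh}} A)$ converts this into the cofibre appearing in the statement, and reassembling yields the desired fibre sequence.
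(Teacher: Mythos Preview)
Your approach has a genuine gap: neither $\prod_p' \text{Fil}^\star_{\text{HKR}} \text{HC}^-(-;\Q_p)$ nor $\prod_p' \text{Fil}^\star_{\text{BMS}} \text{TC}(-;\Q_p)$ is a cdh sheaf, so $F$ does not annihilate them. Theorem~\ref{theoremintrocdhdescentrigidanalytic} establishes cdh descent only for the \emph{full} rigid-analytic derived de Rham complex $\prod_p' \underline{\widehat{\mathbb{L}\Omega}}_{-_{\Q_p}/\Q_p}$, i.e.\ for the graded pieces of rigid-analytic $\text{HP}$ (Corollary~\ref{corollaryHPsolidallprimespisacdhsheafongradedpieces}), \emph{not} for the Hodge-truncated pieces $\prod_p' \underline{\widehat{\mathbb{L}\Omega}}^{\geq i}_{-_{\Q_p}/\Q_p}$ that arise as graded pieces of $\text{HC}^-$. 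Indeed the quotients $\prod_p' \underline{\mathbb{L}\Omega}^{<i}_{-_{\Q_p}/\Q_p}$ involve wedge powers of the cotangent complex, which are not nil-invariant and hence not cdh sheaves. The same obstruction applies to the BMS term: the cartesian square you invoke from Definition~\ref{definitionintroZ(i)^TC} (or Corollary~\ref{corollarymainconsequenceBFSongradedpieces}) expresses $\big(\prod_p \Z_p(i)^{\text{BMS}}\big)_{\Q}$ in terms of $\Q(i)^{\text{TC}}$, $\widehat{\mathbb{L}\Omega}^{\geq i}_{-_{\Q}/\Q}$, and the Hodge-filtered rigid-analytic complex, none of which is individually known to be a cdh sheaf.

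The paper's proof avoids this entirely. It first establishes the unfiltered cartesian square relating $\text{K}(X;\Q)$, $\text{K}(X_{\Q};\Q)$, $\text{KH}(X;\Q)$, $\text{KH}(X_{\Q};\Q)$ via Weibel's argument that $\text{NK}_n(R)\otimes\Q \cong \text{NK}_n(R\otimes\Q)$ (Lemma~\ref{lemma2WeibelargumentrationalKtheory}). It then uses the rational Adams splitting (Corollary~\ref{corollaryKtheorysplitsrationally}), already established via the exhaustivity work of Section~\ref{subsectionAHSS}, to promote this to a filtered cartesian square (Corollary~\ref{corollaryfilteredLemma2onKandKHwithrational}, Theorem~\ref{theoremTCrationalwithcdhandHC-filtered}), thereby reducing to the characteristic-zero case without ever needing the bottom-row presheaves to be cdh sheaves. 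Only then does cdh descent for filtered $\text{HP}(-_{\Q}/\Q)$ (Proposition~\ref{propositionfilteredHPisacdhsheafinchar0}) convert $F(\text{HC}^-)$ into the $\text{HC}$ cofibre in the statement. In short, the rigid-analytic Goodwillie theorem is used in the paper to prove exhaustivity and completeness of the motivic filtration (which feeds into the Adams splitting), not to prove cdh descent for $\text{HC}^-$-type objects directly.
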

	
	To prove this result, we use a classical argument of Weibel at the level of $K$-theory (Lemma~\ref{lemma2WeibelargumentrationalKtheory}) and the rational splitting Theorem~\ref{theorem21'rationalmotivicfiltrationsplits} to reduce the statement to the case of characteristic zero, where the result is essentially \cite[Theorem~$4.10\,(3)$]{elmanto_motivic_2023}. 
	
	The key step in this argument, in order to pass from a statement at the level of $K$\nobreakdash-theory to a filtered statement, is to prove beforehand that the motivic filtration $\text{Fil}^\star_{\text{mot}} \text{K}$ is $\N$\nobreakdash-indexed (Proposition~\ref{propositionmotivicfiltrationisexhaustive}). The strategy to prove this is as follows. We first introduce a new rigid-analytic variant of the HKR filtrations in the generality of qcqs derived schemes, whose graded pieces are rigid-analytic variants of derived de Rham cohomology. We then adapt a theorem of Goodwillie --stating in modern language that periodic cyclic homology is truncating in characteristic zero-- to this rigid-analytic variant of periodic cyclic homology. This rigid-analytic Goodwillie theorem implies, by the work of Land--Tamme on truncating invariants, that the rigid-analytic variant of periodic cyclic homology is a cdh sheaf on qcqs schemes. A filtered consequence of this cdh descent result then formally implies the desired result, {\it i.e.}, that the motivic filtration $\text{Fil}^\star_{\text{mot}} \text{K}$ is $\N$\nobreakdash-indexed.
	
	\subsection{Adams operations}\label{subsectionAdamsoperations}
	
	\vspace{-\parindent}
	\hspace{\parindent}
	
	In this subsection we construct Adams operations on filtered algebraic $K$-theory of qcqs derived schemes (Construction~\ref{constuction22'AdamsonKtheory}).
	
	In \cite[Appendix~B]{elmanto_motivic_2023}, Elmanto--Morrow construct Adams operations $\psi^m$ on the $m$\nobreakdash-perio\-dic filtered $K$-theory $\text{Fil}^\star_{\text{cla}} \text{K}(X)[\tfrac{1}{m}]$ of smooth $\Z$-schemes $X$, acting on the $i^{\text{th}}$ graded piece as multiplication by $m^i$. Using these Adams operations, we construct Adams operations $\psi^m$ on the filtered $p$-completed topological cyclic homology $\text{Fil}^\star_{\text{BMS}} \text{TC}(X;\Z_p)$ of qcqs $\Z[\tfrac{1}{m}]$-schemes $X$.
	
	
	
	\begin{proposition}[Adams operations on filtered $\text{TC}(-;\Z_p)$]\label{proposition24AdamsoprationsonfilteredTCpcompleted}
		Let $m \geq 2$ be an integer, $X$ be a qcqs derived $\Z[\tfrac{1}{m}]$-scheme, and $p$ be a prime number. Then there exists a natural multiplicative automorphism $\psi^m$ of the filtered spectrum $\emph{Fil}^\star_{\emph{BMS}} \emph{TC}(X;\Z_p)$ such that for every integer $i \in \Z$, the induced automorphism on the $i^{\emph{th}}$ graded piece $\Z_p(i)^{\emph{BMS}}(X)[2i]$ is multiplication by $m^i$. Moreover, this automorphism $\psi^m$ is uniquely determined by its naturality and the fact that on smooth $\Z[\tfrac{1}{m}]$-schemes $X$, the diagram of filtered spectra
		$$\begin{tikzcd}
			\emph{Fil}^\star_{\emph{cla}} \emph{K}(X)[\tfrac{1}{m}] \arrow{r} \arrow[d,"\psi^m"] & \emph{Fil}^\star_{\emph{BMS}} \emph{TC}(X;\Z_p) \arrow[d,"\psi^m"] \\
			\emph{Fil}^\star_{\emph{cla}} \emph{K}(X)[\tfrac{1}{m}] \arrow{r} & \emph{Fil}^\star_{\emph{BMS}} \emph{TC}(X;\Z_p)
		\end{tikzcd}$$
		where the horizontal maps are induced by Remark~\ref{remarkBMSfiltrationonTCcanbereconstructedfromclassicalmotivicfiltration}, and the left vertical map is defined in \cite[Construction~$\emph{B}.4$]{elmanto_motivic_2023}, is commutative.
	\end{proposition}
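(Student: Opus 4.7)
The strategy is to transport the classical Adams operation from smooth $\Z[\tfrac{1}{m}]$-algebras to the BMS filtration on $\text{TC}(-;\Z_p)$ by using the reconstruction of the latter functor from the classical motivic filtration described in Remark~\ref{remarkBMSfiltrationonTCcanbereconstructedfromclassicalmotivicfiltration}. If $p \mid m$, then $X$ is a $\Z[\tfrac{1}{p}]$-scheme and $\text{Fil}^\star_{\text{BMS}}\text{TC}(X;\Z_p)$ vanishes by Remark~\ref{remarkderivedpcompletionwithBMS}, so the identity automorphism satisfies all the required conditions, including making the diagram commute vacuously; I therefore assume for the remainder of the argument that $p \nmid m$.

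The construction then proceeds in three stages. First, I begin with the classical Adams operation $\psi^m$ on $\text{Fil}^\star_{\text{cla}}\text{K}(-)[\tfrac{1}{m}]$ defined in Construction~B.4 of \cite{elmanto_motivic_2023}. Its restriction to smooth $\Z[\tfrac{1}{m}]$-algebras (where $m$ is already invertible) is a natural multiplicative automorphism of $\text{Fil}^\star_{\text{cla}}\text{K}(-)$ acting on the $i^{\text{th}}$ graded piece as multiplication by $m^i$. Second, I take the $p$-completed left Kan extension of $\text{Fil}^\star_{\text{cla}}\text{K}(-)$ from smooth $\Z[\tfrac{1}{m}]$-algebras to $p$-quasisyntomic $\Z[\tfrac{1}{m}]$-algebras and then $p$-quasisyntomic sheafify; the operation $\psi^m$ propagates through each of these steps by functoriality, remaining multiplicative and graded-piece-respecting. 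By Remark~\ref{remarkBMSfiltrationonTCcanbereconstructedfromclassicalmotivicfiltration} (and using that a polynomial $\Z$-algebra inverted at $m$ is a polynomial $\Z[\tfrac{1}{m}]$-algebra), the resulting $p$-quasisyntomic sheaf is canonically identified with $\text{Fil}^\star_{\text{BMS}}\text{TC}(-;\Z_p)$ on $p$-quasisyntomic $\Z[\tfrac{1}{m}]$-algebras. Third, I extend to all qcqs derived $\Z[\tfrac{1}{m}]$-schemes by invoking the definition of $\text{Fil}^\star_{\text{BMS}}\text{TC}(-;\Z_p)$ from Construction~\ref{constructionBMSfiltrationonTPpcompleted} and Definition~\ref{definitionBMSfiltrationonTCpcompletedofqcqsderivedschemes}: on animated $\Z[\tfrac{1}{m}]$-algebras it is obtained from its values on polynomial $\Z[\tfrac{1}{m}]$-algebras (which are simultaneously smooth and $p$-quasisyntomic) by $p$-complete left Kan extension and Zariski sheafification, and these operations propagate $\psi^m$ further by functoriality.

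For the uniqueness statement, it suffices to observe that any natural multiplicative automorphism of $\text{Fil}^\star_{\text{BMS}}\text{TC}(-;\Z_p)$ on qcqs derived $\Z[\tfrac{1}{m}]$-schemes is determined by its restriction to polynomial $\Z[\tfrac{1}{m}]$-algebras (again by Construction~\ref{constructionBMSfiltrationonTPpcompleted}, since these generate the category under left Kan extension and Zariski sheafification), and polynomial $\Z[\tfrac{1}{m}]$-algebras are in particular smooth $\Z[\tfrac{1}{m}]$-schemes on which the commutative diagram in the statement pins $\psi^m$ down.

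The main obstacle is verifying that the transport of the classical Adams operation along the identification of Remark~\ref{remarkBMSfiltrationonTCcanbereconstructedfromclassicalmotivicfiltration} is actually equivariant for $\psi^m$ --- i.e., that the comparison map of Theorem~\ref{theoremfilteredcyclotomictraceinthesmoothcase} commutes with the classical Adams operation on the source and the constructed operation on the target, both before and after $p$-complete left Kan extension and $p$-quasisyntomic sheafification. This reduces, via the uniqueness part of Theorem~\ref{theoremfilteredcyclotomictraceinthesmoothcase} applied to the two composites $\text{Fil}^\star_{\text{cla}}\text{K}(-)[\tfrac{1}{m}] \rightrightarrows \text{Fil}^\star_{\text{BMS}}\text{TC}(-;\Z_p)$ differing by $\psi^m$, to a statement on smooth $\Z[\tfrac{1}{m}]$-algebras that is already built into the classical construction of \cite{elmanto_motivic_2023}.
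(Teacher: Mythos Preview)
Your proposal is correct and follows essentially the same approach as the paper: transport the classical Adams operation $\psi^m$ from smooth algebras to the BMS filtration via the identification of Remark~\ref{remarkBMSfiltrationonTCcanbereconstructedfromclassicalmotivicfiltration}, then extend to animated $\Z[\tfrac{1}{m}]$-algebras by left Kan extension from polynomial algebras (Theorem~\ref{theoremBMSfiltrationonTCpcompletedsatisfiesquasisyntomicdescentandisLKEfrompolynomialalgebras}\,$(2)$) and to qcqs derived schemes by Zariski sheafification. Your explicit treatment of the case $p \mid m$ and your more detailed uniqueness argument are minor elaborations the paper leaves implicit.
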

	
	\begin{proof}
		The functor $\text{Fil}^\star_{\text{BMS}} \text{TC}(-;\Z_p)$ on $p$-quasisyn\-tomic rings is $p$-quasisyn\-tomic-locally identified with the $p$-completed left Kan extension of the functor $\text{Fil}^\star_{\text{cla}} \text{K}(-)$ from smooth $\Z$-algebras to $p$\nobreakdash-quasisyn\-tomic rings (Remark~\ref{remarkBMSfiltrationonTCcanbereconstructedfromclassicalmotivicfiltration}). The functor $\text{Fil}^\star_{\text{BMS}} \text{TC}(-;\Z_p)$ on $p$-quasisyntomic rings moreover satisfies $p$-quasisyntomic descent (Theorem~\ref{theoremBMSfiltrationonTCpcompletedsatisfiesquasisyntomicdescentandisLKEfrompolynomialalgebras}\,$(1)$), so the Adams operation $\psi^m$ on the functor $\text{Fil}^\star_{\text{cla}} \text{K}(-)[\tfrac{1}{m}]$ (\cite[Appendix~B]{elmanto_motivic_2023}) induces a natural automorphism $\psi^m$ of the presheaf $\text{Fil}^\star_{\text{BMS}} \text{TC}(-;\Z_p)$, acting as multiplication by $m^i$ on the $i^{\text{th}}$ graded piece $\Z_p(i)^{\text{BMS}}(-)[2i]$. The same result then applies to animated commutative $\Z[\tfrac{1}{m}]$-algebras by left Kan extending the result on polynomial $\Z[\tfrac{1}{m}]$-algebras (Theorem~\ref{theoremBMSfiltrationonTCpcompletedsatisfiesquasisyntomicdescentandisLKEfrompolynomialalgebras}\,$(2)$), and to general qcqs derived $\Z[\tfrac{1}{m}]$-schemes by Zariski sheafifying the result on animated commutative $\Z[\tfrac{1}{m}]$\nobreakdash-algebras.
	\end{proof}
	
	The following result is \cite[Construction~$6.4.8$ and Proposition~$6.4.12$]{raksit_hochschild_2020}. Note that if $X$ is a qcqs derived $\Z[\tfrac{1}{m}]$-scheme, then the filtered spectrum $\text{Fil}^\star_{\text{HKR}} \text{HC}^-(X)$ is naturally $\Z[\tfrac{1}{m}]$-linear.
	
	\begin{proposition}[Adams operations on filtered $\text{HC}^-$, \cite{raksit_hochschild_2020}]\label{proposition23raksitAdamsoperationsonHC-withmintertibleinthescheme}
		Let $m \geq 2$ be an integer, and $X$ be qcqs derived $\Z[\tfrac{1}{m}]$-scheme. Then there exists a natural multiplicative automorphism~$\psi^m$ of the filtered spectrum $\emph{Fil}^\star_{\emph{HKR}} \emph{HC}^-(X)$ such that for every integer $i \in \Z$, the induced automorphism on the $i^{\emph{th}}$ graded piece $\widehat{\mathbb{L}\Omega}^{\geq i}_{X/\Z[\tfrac{1}{m}]}[2i]$ is multiplication by $m^i$.
	\end{proposition}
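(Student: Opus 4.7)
The plan is to follow Raksit's construction from \cite{raksit_hochschild_2020}. The starting point is that Hochschild homology is the tensoring of animated commutative rings with the circle $S^1$, so any multiplicative self-map of $S^1$ induces a multiplicative natural endomorphism of $\text{HH}(-)$. The $m$-fold covering $S^1 \to S^1$ provides such a self-map, and is naturally equivariant with respect to the rotation action on $S^1$ via the covering structure; passing to $S^1$-fixed points thus yields a multiplicative endomorphism $\psi^m$ of $\text{HC}^-(-) := \text{HH}(-)^{hS^1}$ on polynomial $\Z$-algebras.

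To refine this to an endomorphism of the HKR-filtered spectrum $\text{Fil}^\star_{\text{HKR}} \text{HC}^-(X)$ of Definition~\ref{definitionHKRfiltrationonHC-}, I would use the filtered circle $S^1_{\text{fil}}$ of \cite{moulinos_universal_2022,raksit_hochschild_2020}, a filtered refinement of $S^1$ whose tensoring construction produces a filtered refinement of $\text{HH}$ recovering the HKR filtration after Zariski sheafification and left Kan extension from polynomial algebras. Taking fixed points for the filtered circle action then gives a filtered refinement of $\text{HC}^-$ agreeing with $\text{Fil}^\star_{\text{HKR}} \text{HC}^-$. The $m$-fold covering self-map lifts canonically to the filtered circle, and its action on the associated graded identifies with multiplication by $m$ on the $\mathbb{G}_m$ factor built into $S^1_{\text{fil}}$; translated through the HKR identification of graded pieces, this yields multiplication by $m^i$ on $\widehat{\mathbb{L}\Omega}^{\geq i}_{X/\Z[\tfrac{1}{m}]}[2i]$, as required.

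To promote $\psi^m$ from a natural endomorphism to an automorphism, one restricts to qcqs derived $\Z[\tfrac{1}{m}]$-schemes: this inverts the graded action, and completeness of the HKR filtration (Lemma~\ref{lemmaHKRfiltrationonHC-isalwayscomplete}) upgrades graded invertibility to invertibility of the filtered spectrum. The extension from polynomial $\Z[\tfrac{1}{m}]$-algebras to arbitrary qcqs derived $\Z[\tfrac{1}{m}]$-schemes is automatic from Construction~\ref{constructionHKRfiltrationonHP}, since $\text{Fil}^\star_{\text{HKR}} \text{HC}^-$ is defined as the Zariski sheafification of the left Kan extension of its restriction to polynomial $\Z$-algebras, and both operations preserve multiplicative endomorphisms. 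The principal obstacle is the construction of the filtered circle together with its filtered refinement of the $m$-fold covering: no direct manipulation of the cyclic bar complex produces a filtered Adams operation, and the Moulinos--Raksit filtered circle is precisely the device that encodes the compatibility with the HKR filtration tautologically.
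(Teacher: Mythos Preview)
The paper does not give its own proof of this proposition: it is stated as a direct citation of \cite[Construction~6.4.8 and Proposition~6.4.12]{raksit_hochschild_2020}, with no further argument. Your proposal is therefore not competing with an argument in the paper but rather sketching the content of the cited reference.

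As a summary of Raksit's construction your outline is broadly correct: the Adams operation does arise from the degree-$m$ self-map of the circle, refined through a filtered-circle formalism so as to be compatible with the HKR filtration, and the identification of the graded action as multiplication by $m^i$ is the content of Raksit's Proposition~6.4.12. Your use of completeness (Lemma~\ref{lemmaHKRfiltrationonHC-isalwayscomplete}) to upgrade graded invertibility over $\Z[\tfrac{1}{m}]$ to a filtered automorphism, and of Construction~\ref{constructionHKRfiltrationonHP} to extend from polynomial algebras to qcqs derived schemes, are both appropriate. One point of imprecision: Raksit's framework does not literally proceed by lifting the covering map to the Moulinos--Robalo--To\"en filtered circle and reading off a $\mathbb{G}_m$-weight; rather, Raksit works in a derived algebraic context where the filtered circle and its Adams endomorphisms are constructed intrinsically, and the graded computation goes through the identification of $\text{gr}^\star_{\text{HKR}}\text{HH}$ with symmetric powers of the shifted cotangent complex. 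Your sketch captures the shape of the argument but would need Raksit's actual machinery to make the ``lifts canonically'' and ``identifies with multiplication by $m$'' steps precise.
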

	
	\begin{lemma}\label{lemma35compatibilityAdamsTCpcompletedandHC-pcompleted}
		Let $m \geq 2$ be an integer, $X$ be a qcqs derived $\Z[\tfrac{1}{m}]$-scheme, and $p$ be a prime number. Then the natural diagram of filtered spectra
		$$\begin{tikzcd}
			\emph{Fil}^\star_{\emph{BMS}} \emph{TC}(X;\Z_p) \arrow{r} \arrow[d,"\psi^m"] & \emph{Fil}^\star_{\emph{HKR}} \emph{HC}^-(X;\Z_p) \arrow[d,"\psi^m"] \\
			\emph{Fil}^\star_{\emph{BMS}} \emph{TC}(X;\Z_p) \arrow{r} & \emph{Fil}^\star_{\emph{HKR}} \emph{HC}^-(X;\Z_p)
		\end{tikzcd}$$
		where the horizontal maps are defined in Construction~\ref{constructionfilteredmapTCpcompletedtoHC-pcompleted}, the left map is the map defined in Proposition~\ref{proposition24AdamsoprationsonfilteredTCpcompleted}, and the right map is the map induced by Proposition~\ref{proposition23raksitAdamsoperationsonHC-withmintertibleinthescheme}, is commutative.
	\end{lemma}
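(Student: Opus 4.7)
The plan is to reduce the commutativity of the square to smooth $\Z[\tfrac{1}{m}]$-algebras, and then to use the filtered cyclotomic trace from classical $K$-theory as a common source on both sides.

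Both filtered presheaves $\text{Fil}^\star_{\text{BMS}} \text{TC}(-;\Z_p)$ and $\text{Fil}^\star_{\text{HKR}} \text{HC}^-(-;\Z_p)$, viewed as functors on qcqs derived $\Z[\tfrac{1}{m}]$-schemes, are Zariski sheaves whose restrictions to animated commutative $\Z[\tfrac{1}{m}]$-algebras are left Kan extended from smooth $\Z[\tfrac{1}{m}]$-algebras, using Theorem~\ref{theoremBMSfiltrationonTCpcompletedsatisfiesquasisyntomicdescentandisLKEfrompolynomialalgebras}\,$(2)$ and Construction~\ref{constructionHKRfiltrationonHP} after inverting $m$. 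The Adams operations $\psi^m$ on both presheaves are themselves constructed by left Kan extension and Zariski sheafification from their values on smooth $\Z[\tfrac{1}{m}]$-algebras, as in the proofs of Propositions~\ref{proposition24AdamsoprationsonfilteredTCpcompleted} and~\ref{proposition23raksitAdamsoperationsonHC-withmintertibleinthescheme}, and the BMS-HKR comparison map $\gamma$ of Construction~\ref{constructionfilteredmapTCpcompletedtoHC-pcompleted} is natural in the input. It therefore suffices to verify the commutativity of the square on smooth $\Z[\tfrac{1}{m}]$-algebras.

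On a smooth $\Z[\tfrac{1}{m}]$-algebra $R$, Theorem~\ref{theoremfilteredcyclotomictraceinthesmoothcase} provides unique multiplicative filtered refinements $\alpha : \text{Fil}^\star_{\text{cla}} \text{K}(R)[\tfrac{1}{m}] \to \text{Fil}^\star_{\text{BMS}} \text{TC}(R;\Z_p)$ and $\beta : \text{Fil}^\star_{\text{cla}} \text{K}(R)[\tfrac{1}{m}] \to \text{Fil}^\star_{\text{HKR}} \text{HC}^-(R;\Z_p)$ of the cyclotomic trace, and this uniqueness identifies $\beta$ with $\gamma \circ \alpha$. Proposition~\ref{proposition24AdamsoprationsonfilteredTCpcompleted} provides $\psi^m \circ \alpha \simeq \alpha \circ \psi^m$, and the analogous compatibility $\psi^m \circ \beta \simeq \beta \circ \psi^m$ for Raksit's operation follows again from the same uniqueness statement: both composites are multiplicative filtered refinements of the underlying map $\text{trace} \circ \psi^m_K \simeq \psi^m_{\text{HC}^-} \circ \text{trace}$, where the intertwining on underlying spectra uses the compatible $\lambda$-ring structures on $K$-theory and on $\text{HC}^-$. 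Chaining these equivalences then yields the identity
\[
\psi^m \circ \gamma \circ \alpha \simeq \psi^m \circ \beta \simeq \beta \circ \psi^m \simeq \gamma \circ \alpha \circ \psi^m \simeq \gamma \circ \psi^m \circ \alpha
\]
of multiplicative filtered maps out of $\text{Fil}^\star_{\text{cla}} \text{K}(R)[\tfrac{1}{m}]$.

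To promote this identity from an equivalence after precomposition with $\alpha$ to an equivalence of the natural transformations $\psi^m \circ \gamma$ and $\gamma \circ \psi^m$ out of $\text{Fil}^\star_{\text{BMS}} \text{TC}(-;\Z_p)$ itself, I would invoke Remark~\ref{remarkBMSfiltrationonTCcanbereconstructedfromclassicalmotivicfiltration}: quasisyntomic-locally, $\text{Fil}^\star_{\text{BMS}} \text{TC}(-;\Z_p)$ is identified via $\alpha$ with the $p$-completed left Kan extension of $\text{Fil}^\star_{\text{cla}} \text{K}(-)$ from smooth $\Z$-algebras. The universal property of left Kan extension then implies that natural transformations out of $\text{Fil}^\star_{\text{BMS}} \text{TC}(-;\Z_p)$ into a $p$-complete target are determined by their precomposition with $\alpha$, which together with $p$-quasisyntomic descent completes the argument. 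The main obstacle lies in the second compatibility $\psi^m \circ \beta \simeq \beta \circ \psi^m$: verifying that Raksit's Adams operation intertwines with the classical Adams operation on $K$-theory via the cyclotomic trace is the most substantive technical input, and if unavailable in the literature requires a direct comparison of the $\lambda$-ring structures on the two sides.
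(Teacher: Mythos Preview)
Your approach is essentially the same as the paper's: reduce to smooth $\Z[\tfrac{1}{m}]$-algebras, use the filtered cyclotomic trace $\alpha$ and $\beta = \gamma \circ \alpha$ from $\text{Fil}^\star_{\text{cla}}\text{K}$ as a common source, and then cancel $\alpha$ using the reconstruction of $\text{Fil}^\star_{\text{BMS}}\text{TC}(-;\Z_p)$ from classical $K$-theory (Remark~\ref{remarkBMSfiltrationonTCcanbereconstructedfromclassicalmotivicfiltration} and the uniqueness clause of Proposition~\ref{proposition24AdamsoprationsonfilteredTCpcompleted}).

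The one genuine gap you correctly flag is the compatibility $\psi^m \circ \beta \simeq \beta \circ \psi^m$ between the Adams operation on $\text{Fil}^\star_{\text{cla}}\text{K}$ and Raksit's Adams operation on $\text{Fil}^\star_{\text{HKR}}\text{HC}^-$. Your attempt to extract this from the uniqueness in Theorem~\ref{theoremfilteredcyclotomictraceinthesmoothcase} does not quite close the loop: that uniqueness produces the filtered refinement of a \emph{given} underlying map, so you still need the spectrum-level intertwining $\psi^m_{\text{HC}^-} \circ \text{trace} \simeq \text{trace} \circ \psi^m_K$ as input, which is exactly what you identify as the obstacle. The paper does not rederive this; it simply cites \cite[Lemma~B.8]{elmanto_motivic_2023}, which establishes the commutativity of the square
\[
\begin{tikzcd}
\text{Fil}^\star_{\text{cla}} \text{K}(X)[\tfrac{1}{m}] \ar[r] \ar[d,"\psi^m"] & \text{Fil}^\star_{\text{HKR}} \text{HC}^-(X) \ar[d,"\psi^m"] \\
\text{Fil}^\star_{\text{cla}} \text{K}(X)[\tfrac{1}{m}] \ar[r] & \text{Fil}^\star_{\text{HKR}} \text{HC}^-(X)
\end{tikzcd}
\]
on smooth $\Z[\tfrac{1}{m}]$-schemes. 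The $p$-completed version you need follows immediately, since the Adams operation on $\text{Fil}^\star_{\text{HKR}}\text{HC}^-(-;\Z_p)$ is by definition the $p$-completion of the integral one. With this citation in hand, your argument is complete and matches the paper's.
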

	
	\begin{proof}
		By \cite[Lemma~B.$8$]{elmanto_motivic_2023}, the natural diagram of filtered spectra
		$$\begin{tikzcd}
			\text{Fil}^\star_{\text{cla}} \text{K}(X)[\frac{1}{m}] \arrow{r} \arrow[d,"\psi^m"] & \text{Fil}^\star_{\text{HKR}} \text{HC}^-(X) \arrow[d,"\psi^m"] \\
			\text{Fil}^\star_{\text{cla}} \text{K}(X)[\tfrac{1}{m}] \arrow{r} & \text{Fil}^\star_{\text{HKR}} \text{HC}^-(X)
		\end{tikzcd}$$
		is commutative for every smooth $\Z[\tfrac{1}{m}]$-scheme $X$. The result is then a consequence of Proposition~\ref{proposition24AdamsoprationsonfilteredTCpcompleted}, where the compatibility between the filtered maps is a consequence of the proof of Theorem~\ref{theoremfilteredcyclotomictraceinthesmoothcase}.
	\end{proof}
	
	\begin{construction}[Adams operations on filtered TC]\label{construction31'AdamsonTC}
		Let $m \geq 2$ be an integer, and $X$ be a qcqs derived $\Z[\tfrac{1}{m}]$-scheme. The automorphism $\psi^m$ of the filtered spectrum $\text{Fil}^\star_{\text{mot}} \text{TC}(X)$ is the automorphism defined by pullback along the natural cartesian square of filtered spectra
		$$\begin{tikzcd}
			\text{Fil}^\star_{\text{mot}} \text{TC}(X) \arrow{r} \arrow{d} & \text{Fil}^\star_{\text{HKR}} \text{HC}^-(X) \ar[d] \\
			\prod_{p \in \mathbb{P}} \text{Fil}^\star_{\text{BMS}} \text{TC}(X;\Z_p) \arrow{r} & \prod_{p \in \mathbb{P}} \text{Fil}^\star_{\text{HKR}} \text{HC}^-(X;\Z_p),
		\end{tikzcd}$$
		where the automorphism $\psi^m$ of $\text{Fil}^\star_{\text{HKR}} \text{HC}^-(X)$ is the automorphism of Proposition~\ref{proposition23raksitAdamsoperationsonHC-withmintertibleinthescheme}, the automorphism $\psi^m$ of $\prod_{p \in \mathbb{P}} \text{Fil}^\star_{\text{HKR}} \text{HC}^-(X;\Z_p)$ is induced by the automorphism $\psi^m$ of $\text{Fil}^\star_{\text{HKR}} \text{HC}^-(X)$, and the automorphism $\psi^m$ of $\prod_{p \in \mathbb{P}} \text{Fil}^\star_{\text{BMS}} \text{TC}(X;\Z_p)$ is the automorphism of Proposition~\ref{proposition24AdamsoprationsonfilteredTCpcompleted}. Note here that the compatibility between the automorphisms $\psi^m$ and the bottom map is given by Lemma~\ref{lemma35compatibilityAdamsTCpcompletedandHC-pcompleted}.
	\end{construction}

	Note in the following result that if $X$ is a qcqs derived $\Z[\tfrac{1}{m}]$-scheme, then the filtered spectrum $\text{Fil}^\star_{\text{mot}} \text{TC}(X)$ is naturally $\Z[\tfrac{1}{m}]$-linear.
	
	\begin{corollary}\label{corollary31''AdamsoperationsongradedpiecesoffilteredTC}
		Let $m \geq 2$ be an integer, and $X$ be a qcqs derived $\Z[\tfrac{1}{m}]$-scheme. Then for every integer $i \in \Z$, the automorphism $\psi^m$ induced on the $i^{\emph{th}}$ graded piece $\Z(i)^{\emph{TC}}(X)[2i]$ of the filtered spectrum $\emph{Fil}^\star_{\emph{mot}} \emph{TC}(X)$ is multiplication by $m^i$. Moreover, if $X$ is smooth over $\Z[\tfrac{1}{m}]$, then the natural diagram of filtered spectra
		$$\begin{tikzcd}
			\emph{Fil}^\star_{\emph{cla}} \emph{K}(X)[\tfrac{1}{m}] \arrow{r} \arrow[d,"\psi^m"] & \emph{Fil}^\star_{\emph{mot}} \emph{TC}(X) \arrow[d,"\psi^m"] \\
			\emph{Fil}^\star_{\emph{cla}} \emph{K}(X)[\tfrac{1}{m}] \arrow{r} & \emph{Fil}^\star_{\emph{mot}} \emph{TC}(X)
		\end{tikzcd}$$
		where the horizontal maps are defined in Theorem~\ref{theoremfilteredcyclotomictraceinthesmoothcase}, and the left vertical map is defined in \cite[Construction $\emph{B}.4$]{elmanto_motivic_2023}, is commutative.
	\end{corollary}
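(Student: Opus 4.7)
The plan is to deduce both assertions directly from Construction~\ref{construction31'AdamsonTC}, using that the graded piece functor on filtered spectra is exact and hence preserves the cartesian square defining $\text{Fil}^\star_{\text{mot}} \text{TC}(X)$. The key point is that $\psi^m$ on $\text{Fil}^\star_{\text{mot}} \text{TC}(X)$ was constructed as the automorphism induced on the pullback by compatible automorphisms on its three other corners.

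For the first assertion, I would take the $i$-th shifted graded piece of the defining pullback square of Construction~\ref{construction31'AdamsonTC}. The graded pieces of the three other corners are $\widehat{\mathbb{L}\Omega}^{\geq i}_{X/\Z[\tfrac{1}{m}]}$, $\prod_{p} \Z_p(i)^{\text{BMS}}(X)$, and the analogous $p$-complete product of Hodge-filtered derived de Rham cohomologies, respectively (each shifted by $[2i]$). By Propositions~\ref{proposition23raksitAdamsoperationsonHC-withmintertibleinthescheme} and~\ref{proposition24AdamsoprationsonfilteredTCpcompleted}, $\psi^m$ acts as multiplication by $m^i$ on each of these corners, and the horizontal comparison maps intertwine these actions by Lemma~\ref{lemma35compatibilityAdamsTCpcompletedandHC-pcompleted}. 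Since multiplication by $m^i$ is natural with respect to the maps in the square, the induced automorphism on the pullback $\Z(i)^{\text{TC}}(X)[2i]$ is also multiplication by $m^i$.

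For the second assertion, I would invoke the universal property of the pullback: a square out of $\text{Fil}^\star_{\text{cla}} \text{K}(X)[\tfrac{1}{m}]$ commutes after landing in $\text{Fil}^\star_{\text{mot}} \text{TC}(X)$ if and only if it commutes after further composition with each of the three projections. The compatibility with the projection to $\text{Fil}^\star_{\text{HKR}} \text{HC}^-(X)$ is the content of \cite[Lemma B.$8$]{elmanto_motivic_2023} applied to smooth $\Z[\tfrac{1}{m}]$-schemes. The compatibility with the projection to $\prod_p \text{Fil}^\star_{\text{BMS}} \text{TC}(X;\Z_p)$ is precisely the characterizing property stated in Proposition~\ref{proposition24AdamsoprationsonfilteredTCpcompleted}. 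The compatibility with the remaining corner $\prod_p \text{Fil}^\star_{\text{HKR}} \text{HC}^-(X;\Z_p)$ follows from either of the previous two by composing with the natural map and invoking Lemma~\ref{lemma35compatibilityAdamsTCpcompletedandHC-pcompleted}, yielding the desired commuting square on smooth $\Z[\tfrac{1}{m}]$-schemes.

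Since each ingredient has already been set up in the preceding material, I do not anticipate a genuine obstacle; this corollary is essentially a bookkeeping exercise around the pullback defining $\psi^m$. The only subtlety to verify carefully is that the maps in the defining square are $\psi^m$-equivariant, which was precisely arranged in Construction~\ref{construction31'AdamsonTC} via Lemma~\ref{lemma35compatibilityAdamsTCpcompletedandHC-pcompleted}.
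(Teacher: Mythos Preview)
Your proposal is correct and follows essentially the same approach as the paper: both deduce the first assertion from Propositions~\ref{proposition24AdamsoprationsonfilteredTCpcompleted} and~\ref{proposition23raksitAdamsoperationsonHC-withmintertibleinthescheme} by passing to graded pieces of the defining pullback square, and both deduce the second assertion from the compatibility with $\text{Fil}^\star_{\text{HKR}} \text{HC}^-$ (\cite[Lemma~B.$8$]{elmanto_motivic_2023}), the compatibility with $\prod_p \text{Fil}^\star_{\text{BMS}} \text{TC}(-;\Z_p)$ (Proposition~\ref{proposition24AdamsoprationsonfilteredTCpcompleted}), and Lemma~\ref{lemma35compatibilityAdamsTCpcompletedandHC-pcompleted}. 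Your version simply spells out the universal property of the pullback more explicitly than the paper's terse proof.
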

	
	\begin{proof}
		The identification of the automorphism $\psi^m$ on the graded pieces is a consequence of Propositions~\ref{proposition24AdamsoprationsonfilteredTCpcompleted} and \ref{proposition23raksitAdamsoperationsonHC-withmintertibleinthescheme}. The second statement is a consequence of the analogous compatibilities for $\prod_{p \in \mathbb{P}} \text{Fil}^\star_{\text{BMS}} \text{TC}(X;\Z_p)$ (Proposition~\ref{proposition24AdamsoprationsonfilteredTCpcompleted}) and for $\text{Fil}^\star_{\text{HKR}} \text{HC}^-(X)$ (\cite[Lemma~B.$8$]{elmanto_motivic_2023}), and of Lem\-ma~\ref{lemma35compatibilityAdamsTCpcompletedandHC-pcompleted}.
	\end{proof}
	
	\begin{lemma}\label{lemma36compatibilityAdamsoperationsonKHandLcdhTC}
		Let $m \geq 2$ be an integer, and $X$ be a qcqs $\Z[\tfrac{1}{m}]$-scheme. Then the natural diagram of filtered spectra
		$$\begin{tikzcd}
			\emph{Fil}^\star_{\emph{cdh}} \emph{KH}(X)[\tfrac{1}{m}] \arrow{r} \arrow[d,"\psi^m"] & \emph{Fil}^\star_{\emph{mot}} L_{\emph{cdh}} \emph{TC}(X) \arrow[d,"\psi^m"] \\
			\emph{Fil}^\star_{\emph{cdh}} \emph{KH}(X)[\frac{1}{m}] \arrow{r} & \emph{Fil}^\star_{\emph{mot}} L_{\emph{cdh}} \emph{TC}(X)
		\end{tikzcd}$$
		where the horizontal maps are defined in Construction~\ref{constructionfilteredcdhlocalcyclotomictrace}, the left map is the map of Theorem~\ref{theoremBEM}\,(2), and the right map is the map induced by Construction~\ref{construction31'AdamsonTC}, is commutative.
	\end{lemma}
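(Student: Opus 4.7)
The plan is to reduce the asserted commutativity to Corollary~\ref{corollary31''AdamsoperationsongradedpiecesoffilteredTC}, which provides the analogous $\psi^m$-equivariance of the filtered cyclotomic trace on smooth $\Z[\tfrac{1}{m}]$-schemes, and then to propagate this equivariance through the left Kan extension and cdh sheafification functors used to define both sides of the square in the lemma.

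I would first unpack both vertical maps. By Proposition~\ref{proposition34AdamsoperationsonfilteredKHtheory}, the Adams operation $\psi^m$ on $\text{Fil}^\star_{\text{cdh}} \text{KH}(-)[\tfrac{1}{m}]$ is constructed by cdh-sheafifying the left Kan extension, from smooth $\Z[\tfrac{1}{m}]$-algebras, of the Adams operation on $\text{Fil}^\star_{\text{cla}} \text{K}(-)[\tfrac{1}{m}]$ of \cite[Appendix~B]{elmanto_motivic_2023}. By Definition~\ref{definitionmotivicfiltrationonLcdhTC} together with the wording of the statement, the Adams operation $\psi^m$ on $\text{Fil}^\star_{\text{mot}} L_{\text{cdh}} \text{TC}(-)$ is the cdh sheafification of the Adams operation on $\text{Fil}^\star_{\text{mot}} \text{TC}(-)$ of Construction~\ref{construction31'AdamsonTC}. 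The filtered cdh-local cyclotomic trace of Construction~\ref{constructionfilteredcdhlocalcyclotomictrace} is itself obtained by cdh-sheafifying the left Kan extension, from smooth $\Z$-schemes, of the filtered cyclotomic trace of Theorem~\ref{theoremfilteredcyclotomictraceinthesmoothcase}.

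I would then invoke Corollary~\ref{corollary31''AdamsoperationsongradedpiecesoffilteredTC} on smooth $\Z[\tfrac{1}{m}]$-schemes and propagate it in two steps. Functoriality of left Kan extension turns the commutative square of presheaves on smooth $\Z[\tfrac{1}{m}]$-algebras into a commutative square of presheaves on qcqs $\Z[\tfrac{1}{m}]$-schemes; and post-composing with the canonical map to $\text{Fil}^\star_{\text{mot}} \text{TC}(-)$ preserves commutativity, since the Adams operation on the left Kan extension is, by the universal property, the one induced by restricting Construction~\ref{construction31'AdamsonTC} to smooth $\Z[\tfrac{1}{m}]$-algebras. Applying $L_{\text{cdh}}$ then preserves both the commutativity and the $\psi^m$-equivariance, yielding exactly the square of the lemma.

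The only mild technical point to address is the matching between the left Kan extension from smooth $\Z$-schemes used in Construction~\ref{constructionfilteredcdhlocalcyclotomictrace} and the left Kan extension from smooth $\Z[\tfrac{1}{m}]$-schemes appearing in Proposition~\ref{proposition34AdamsoperationsonfilteredKHtheory} and Construction~\ref{construction31'AdamsonTC}; this is automatic because $\text{Sm}_{\Z[\tfrac{1}{m}]}/X$ is cofinal in $\text{Sm}_{\Z}/X$ for every qcqs $\Z[\tfrac{1}{m}]$-scheme $X$. I do not anticipate any genuine obstacle: the argument is essentially bookkeeping of universal properties, the nontrivial input being Corollary~\ref{corollary31''AdamsoperationsongradedpiecesoffilteredTC}.
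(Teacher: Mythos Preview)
Your proposal is correct and follows essentially the same approach as the paper: the paper's proof also notes that the left $\psi^m$ is obtained by cdh-sheafifying the left Kan extension from smooth $\Z[\tfrac{1}{m}]$-schemes of the Adams operation on $\text{Fil}^\star_{\text{cla}} \text{K}(-)[\tfrac{1}{m}]$, and then concludes from Construction~\ref{constructionfilteredcdhlocalcyclotomictrace} and Corollary~\ref{corollary31''AdamsoperationsongradedpiecesoffilteredTC}. Your version is simply a more explicit unpacking of the same bookkeeping, including the cofinality remark that the paper leaves implicit.
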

	
	\begin{proof}
		The left map $\psi^m$ of this diagram is defined by cdh sheafifying the left Kan extension from smooth $\Z[\tfrac{1}{m}]$-schemes to qcqs $\Z[\tfrac{1}{m}]$-schemes of the automorphism $\psi^m$ on $\text{Fil}^\star_{\text{cla}} \text{K}(-)[\tfrac{1}{m}]$ (\cite[Appendix~B]{elmanto_motivic_2023}). The result then follows from Construction~\ref{constructionfilteredcdhlocalcyclotomictrace} and Corollary~\ref{corollary31''AdamsoperationsongradedpiecesoffilteredTC}. 
	\end{proof}
	
	\begin{construction}[Adams operations on filtered $K$-theory]\label{constuction22'AdamsonKtheory}
		Let $m \geq 2$ be an integer. Following Definition~\ref{definitionmotivicfiltrationonKtheoryofschemes}, if $X$ is a qcqs $\Z[\tfrac{1}{m}]$-scheme, the automorphism $\psi^m$ of the filtered spectrum $\text{Fil}^\star_{\text{mot}} \text{K}(X)[\frac{1}{m}]$ is the automorphism defined by pullback along the natural cartesian square of filtered spectra
		$$\begin{tikzcd}
			\text{Fil}^\star_{\text{mot}} \text{K}(X)[\tfrac{1}{m}] \arrow{r} \arrow{d} & \text{Fil}^\star_{\text{mot}} \text{TC}(X) \ar[d] \\
			\text{Fil}^\star_{\text{cdh}} \text{KH}(X)[\tfrac{1}{m}] \arrow{r} & \text{Fil}^\star_{\text{mot}} L_{\text{cdh}} \text{TC}(X),
		\end{tikzcd}$$
		where the automorphism $\psi^m$ of $\text{Fil}^\star_{\text{mot}} \text{TC}(X)$ is the automorphism of Construction~\ref{construction31'AdamsonTC}, the automorphism $\psi^m$ of $\text{Fil}^\star_{\text{mot}} L_{\text{cdh}} \text{TC}(X)$ is defined by cdh sheafifying the automorphism $\psi^m$ of $\text{Fil}^\star_{\text{mot}} \text{TC}(-)$, and the automorphism $\psi^m$ of $\text{Fil}^\star_{\text{cdh}} \text{KH}(X)[\tfrac{1}{m}]$ is the automorphism of Theorem~\ref{theoremBEM}\,(2). Note here that the compatibility between the automorphisms $\psi^m$ and the bottom map is given by Lemma~\ref{lemma36compatibilityAdamsoperationsonKHandLcdhTC}.
		
		Following Definition~\ref{definitionmotivicfiltrationonKtheoryofderivedschemes}, if $X$ is a qcqs derived $\Z[\tfrac{1}{m}]$-scheme, the automorphism $\psi^m$ of the filtered spectrum $\text{Fil}^\star_{\text{mot}} \text{K}(X)[\tfrac{1}{m}]$ is the automorphism defined by pullback along the natural cartesian square of filtered spectra
		$$\begin{tikzcd}
			\text{Fil}^\star_{\text{mot}} \text{K}(X)[\tfrac{1}{m}] \ar[r] \ar[d] & \text{Fil}^\star_{\text{mot}} \text{TC}(X) \ar[d] \\
			\text{Fil}^\star_{\text{mot}} \text{K}(\pi_0(X))[\tfrac{1}{m}] \ar[r] & \text{Fil}^\star_{\text{mot}} \text{TC}(\pi_0(X)), 
		\end{tikzcd}$$
		where the automorphisms $\psi^m$ of $\text{Fil}^\star_{\text{mot}} \text{TC}(X)$ and $\text{Fil}^\star_{\text{mot}} \text{TC}(\pi_0(X))$ are defined in Construction~\ref{construction31'AdamsonTC}, and the automorphism $\psi^m$ of $\text{Fil}^\star_{\text{mot}} \text{K}(\pi_0(X))[\tfrac{1}{m}]$ is the automorphism of the previous paragraph. Note here that the compatibility between the automorphisms $\psi^m$ is automatic by construction.
	\end{construction}
	
	\begin{corollary}\label{corollary22''AdamsoperationsongradedpiecesoffilteredKtheory}
		Let $m \geq 2$ be an integer, and $X$ be a qcqs derived $\Z[\tfrac{1}{m}]$-scheme. Then for every integer $i \in \Z$, the automorphism $\psi^m$ induced on the $i^{\emph{th}}$ graded piece $\Z[\tfrac{1}{m}](i)^{\emph{mot}}(X)[2i]$ of the filtered spectrum $\emph{Fil}^\star_{\emph{mot}} \emph{K}(X)[\tfrac{1}{m}]$ is multiplication by $m^i$.
	\end{corollary}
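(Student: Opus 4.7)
The plan is to unwind Construction~\ref{constuction22'AdamsonKtheory} and take the $i^{\text{th}}$ shifted graded piece of the two cartesian squares defining $\psi^m$ on $\text{Fil}^\star_{\text{mot}} \text{K}(X)[\tfrac{1}{m}]$. Since the functor $\text{gr}^i(-)[-2i] : \text{FilSp} \to \text{Sp}$ preserves finite limits, the equivariant cartesian square
$$\begin{tikzcd}
\text{Fil}^\star_{\text{mot}} \text{K}(X)[\tfrac{1}{m}] \ar[r] \ar[d] & \text{Fil}^\star_{\text{mot}} \text{TC}(X) \ar[d] \\
\text{Fil}^\star_{\text{mot}} \text{K}(\pi_0(X))[\tfrac{1}{m}] \ar[r] & \text{Fil}^\star_{\text{mot}} \text{TC}(\pi_0(X))
\end{tikzcd}$$
induces a $\psi^m$-equivariant cartesian square relating the four shifted graded pieces; hence it suffices to check that $\psi^m$ acts as multiplication by $m^i$ on the three other vertices.

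For $\Z(i)^{\text{TC}}(X)[2i]$ and $\Z(i)^{\text{TC}}(\pi_0(X))[2i]$, this is exactly the first part of Corollary~\ref{corollary31''AdamsoperationsongradedpiecesoffilteredTC}. For $\Z[\tfrac{1}{m}](i)^{\text{mot}}(\pi_0(X))[2i]$, I would apply the same argument once more: taking graded pieces of the classical-scheme cartesian square in Construction~\ref{constuction22'AdamsonKtheory} yields a $\psi^m$-equivariant cartesian square
$$\begin{tikzcd}
\Z[\tfrac{1}{m}](i)^{\text{mot}}(\pi_0(X))[2i] \ar[r] \ar[d] & \Z(i)^{\text{TC}}(\pi_0(X))[2i] \ar[d] \\
\Z[\tfrac{1}{m}](i)^{\text{cdh}}(\pi_0(X))[2i] \ar[r] & \bigl(L_{\text{cdh}} \Z(i)^{\text{TC}}\bigr)(\pi_0(X))[2i],
\end{tikzcd}$$
and on the three non-motivic vertices the action of $\psi^m$ is multiplication by $m^i$: by Proposition~\ref{proposition34AdamsoperationsonfilteredKHtheory} for the cdh-local motivic term, by Corollary~\ref{corollary31''AdamsoperationsongradedpiecesoffilteredTC} for the $\text{TC}$-term, and by cdh sheafification of the same statement for $L_{\text{cdh}} \Z(i)^{\text{TC}}$ (which is legitimate because, by Lemma~\ref{lemma36compatibilityAdamsoperationsonKHandLcdhTC} and Construction~\ref{constuction22'AdamsonKtheory}, the automorphism $\psi^m$ on $\text{Fil}^\star_{\text{mot}} L_{\text{cdh}} \text{TC}$ is defined as the cdh sheafification of the one on $\text{Fil}^\star_{\text{mot}} \text{TC}$).

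Combining these two cartesian squares, the identification of $\psi^m$ with multiplication by $m^i$ propagates to the pullback, giving the desired statement. There is no genuine obstacle: the entire argument is a formal consequence of the compatibility of graded pieces with finite limits, together with the previously established identifications of $\psi^m$ on $\Z(i)^{\text{TC}}$, on $\Z[\tfrac{1}{m}](i)^{\text{cdh}}$, and --- via cdh sheafification --- on $L_{\text{cdh}} \Z(i)^{\text{TC}}$. The only point requiring care is to record the equivariance of the two cartesian squares with respect to $\psi^m$, which is precisely the content of Lemma~\ref{lemma36compatibilityAdamsoperationsonKHandLcdhTC} and the definition of $\psi^m$ on $\text{Fil}^\star_{\text{mot}} \text{K}(\pi_0(X))[\tfrac{1}{m}]$ in Construction~\ref{constuction22'AdamsonKtheory}.
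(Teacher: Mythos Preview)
Your proof is correct and follows exactly the same approach as the paper: the paper's proof is the one-line statement ``This is a consequence of Proposition~\ref{proposition34AdamsoperationsonfilteredKHtheory} and Corollary~\ref{corollary31''AdamsoperationsongradedpiecesoffilteredTC}'', and your argument simply spells out how these two inputs propagate through the defining cartesian squares of Construction~\ref{constuction22'AdamsonKtheory} to the pullback.
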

	
	\begin{proof}
		This is a consequence of Theorem~\ref{theoremBEM}\,(2) and~Corollary~\ref{corollary31''AdamsoperationsongradedpiecesoffilteredTC}. 
	\end{proof}
	
	The following lemma explains how to use Adams operations to deduce splitting results on the rationalisation of certain filtrations.
	
	\begin{lemma}\label{lemma29howtouseAdamsoperations}
		Let $$\emph{Fil}^\star F(-) : \emph{dSch}^{\emph{qcqs,op}} \longrightarrow \emph{FilSp}$$ be a Zariski sheaf of filtered spectra. For each integer $m \geq 2$, let $\psi^m$ be a natural multiplicative automorphism of the filtered spectrum $\emph{Fil}^\star F(X)$ on qcqs derived $\Z[\tfrac{1}{m}]$-schemes $X$, satisfying the following properties:
		\begin{enumerate}[label=(\roman*)]
			\item for every qcqs derived scheme $X$, the rationalised filtration $\emph{Fil}^\star F(X;\Q)$ is complete;
			\item for any integers $i \in \Z$ and $m \geq 2$, and every qcqs derived $\Z[\tfrac{1}{m}]$-scheme $X$, the induced automorphism on the $i^{\emph{th}}$ graded piece $\emph{gr}^i F(X)$ is multiplication by $m^i$;
			\item for any integers $m, m' \geq 2$, and every qcqs derived $\Z[\tfrac{1}{mm'}]$-scheme $X$, the space of natural transformations from $\psi^m \circ \psi^{m'}$ to $\psi^{mm'}$, as endomorphisms of the filtered spectrum $\emph{Fil}^\star F(X)$, is contractible.
		\end{enumerate}
		Then for every qcqs derived scheme $X$, and any integers $i,k \in \Z$ such that $k \geq i$, there exists a natural equivalence of spectra
		$$\emph{Fil}^i F(X;\Q) \simeq \big(\bigoplus_{i \leq j < k} \emph{gr}^j F(X;\Q)\big) \oplus \emph{Fil}^k F(X;\Q).$$
	\end{lemma}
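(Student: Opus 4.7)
The plan is to split by eigenvalue decomposition using the Adams operations, first at the level of the finite cofibre $Q^{i,k}(X;\Q) := \text{cofib}(\text{Fil}^k F(X;\Q) \to \text{Fil}^i F(X;\Q))$, which carries an induced filtration of length $k-i$ with graded pieces $\text{gr}^j F(X;\Q)$ for $i \leq j < k$, and then to pass to the claimed decomposition by a limit argument using completeness.

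\emph{Step 1: Finite splitting on $\Z[\tfrac{1}{m}]$-schemes.} Fix $m \geq 2$ and let $X$ be a qcqs derived $\Z[\tfrac{1}{m}]$-scheme. I claim that $\psi^m$ satisfies $\prod_{j=i}^{k-1}(\psi^m - m^j\cdot \text{id}) = 0$ as an endomorphism of $Q^{i,k}(X;\Q)$. This is proved by induction on $k-i$ using the cofibre sequence $Q^{i+1,k} \to Q^{i,k} \to \text{gr}^i F(X;\Q)$ obtained from the octahedral axiom: by condition~(ii), $\psi^m - m^i \cdot \text{id}$ is null after projecting to $\text{gr}^i$, hence factors as $Q^{i,k} \xrightarrow{\alpha} Q^{i+1,k} \xrightarrow{\iota} Q^{i,k}$, and composing with the inductive vanishing $\prod_{j=i+1}^{k-1}(\psi^m - m^j \cdot \text{id})|_{Q^{i+1,k}}=0$ produces the required identity. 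Lagrange interpolation then assembles the orthogonal idempotents $P_j^m := \prod_{l \in [i,k-1],\, l\neq j}(\psi^m - m^l \cdot \text{id})/(m^j - m^l)$, which sum to the identity and yield a natural splitting $Q^{i,k}(X;\Q) \simeq \bigoplus_{i \leq j < k} \text{gr}^j F(X;\Q)$.

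\emph{Step 2: Zariski descent to arbitrary qcqs derived schemes.} On a qcqs derived $\Z[\tfrac{1}{mm'}]$-scheme, the coherent commutation $\psi^m \circ \psi^{m'} \simeq \psi^{mm'} \simeq \psi^{m'} \circ \psi^m$ provided by condition~(iii) ensures that $\psi^{m'}$ preserves the $\psi^m$-eigenspace decomposition of Step~1. The summand corresponding to $j$ maps naturally isomorphically to $\text{gr}^j F(X;\Q)$, on which $\psi^{m'}$ acts as $(m')^j\cdot \text{id}$ by~(ii); hence it coincides with the $(m')^j$-eigenspace of $\psi^{m'}$, and the $\psi^m$- and $\psi^{m'}$-splittings agree. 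Since $\text{Spec}(\Z[\tfrac{1}{2}])$ and $\text{Spec}(\Z[\tfrac{1}{3}])$ form a Zariski cover of $\text{Spec}(\Z)$ and the presheaves $Q^{i,k}(-;\Q)$ and $\text{gr}^j F(-;\Q)$ are Zariski sheaves by hypothesis, the splittings from $\psi^2$ and $\psi^3$ glue to a natural splitting $Q^{i,k}(X;\Q) \simeq \bigoplus_{i \leq j < k} \text{gr}^j F(X;\Q)$ for every qcqs derived scheme $X$.

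\emph{Step 3: Limit via completeness.} Naturality of the construction in $(i,k)$ implies that for $k' > k$ the splittings of $Q^{i,k'}$, $Q^{i,k}$, and $Q^{k,k'}$ are compatible with the cofibre sequence $Q^{k,k'} \to Q^{i,k'} \to Q^{i,k}$, producing a natural equivalence
$$Q^{i,k'}(X;\Q) \simeq \Big(\bigoplus_{i \leq j < k} \text{gr}^j F(X;\Q)\Big) \oplus Q^{k,k'}(X;\Q).$$
Taking the inverse limit over $k' \to \infty$ and invoking condition~(i) --- which identifies $\text{Fil}^i F(X;\Q) \simeq \lim_{k'} Q^{i,k'}(X;\Q)$ and $\text{Fil}^k F(X;\Q) \simeq \lim_{k'} Q^{k,k'}(X;\Q)$, and commutes with the fixed finite direct sum --- yields the claimed equivalence. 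The main subtlety is concentrated in Step~1: one must verify that the polynomial identity for $\psi^m$ holds as a morphism in the stable $\infty$-category, not merely on graded pieces, so that the Lagrange idempotents produce an honest derived splitting. Relatedly, upgrading the compatibility of $\psi^m$ and $\psi^{m'}$ in Step~2 from a homotopy-level to a coherent commutation is precisely what condition~(iii) provides.
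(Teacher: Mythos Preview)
Your proof is correct but follows a genuinely different route from the paper's. The paper splits off one graded piece at a time directly from $\text{Fil}^i F(X;\Q)$: it constructs a section $\text{gr}^i F(X;\Q) \to \text{Fil}^i F(X;\Q)$ by showing that the fibre of $\psi^m - m^i$ on $\text{Fil}^{i+1} F(X;\Q)$ vanishes (using completeness, since $\psi^m - m^i$ acts as the invertible scalar $m^i(m^{j-i}-1)$ on each $\text{gr}^j$ for $j>i$), so that $(\text{Fil}^i)^{\psi^m - m^i} \simeq (\text{gr}^i)^{\psi^m - m^i}$ and the canonical inclusion $\text{gr}^i \hookrightarrow (\text{gr}^i)^{\psi^m - m^i}$ provides the section. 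Iterating gives the finite decomposition. By contrast, you first pass to the finite cofibre $Q^{i,k}$, establish the minimal-polynomial identity $\prod_{j=i}^{k-1}(\psi^m - m^j)=0$ by induction on the filtration length, and extract the splitting via Lagrange idempotents; completeness is invoked only once, at the end, to identify $\text{Fil}^i \simeq \lim_{k'} Q^{i,k'}$. Your approach has the mild advantage that the finite splitting is purely algebraic and does not appeal to completeness, while the paper's argument is slightly more direct in that it avoids the limit step and the polynomial bookkeeping. The Zariski-gluing step is handled similarly in both: your observation that the idempotents $P_j^2$ and $P_j^3$ coincide (since $P_j^2 P_l^3$ acts trivially on every graded piece for $j\neq l$, hence vanishes on the finitely filtered $Q^{i,k}$) is a clean way to make precise what the paper sketches via hypothesis~(iii).
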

	
	\begin{proof}
		For every spectrum $C$ equipped with a map $F : C \rightarrow C$, denote by $C^F$ the homotopy fibre of the map $F$. Let $m \geq 2$ be an integer, $i,k \in \Z$ be integers such that $k \geq i$, and $X$ be a qcqs derived $\Z[\tfrac{1}{m}]$-scheme, for which we first construct the desired equivalence of spectra
		$$\text{Fil}^i F(X;\Q) \simeq \big(\bigoplus_{i \leq j < k} \text{gr}^j F(X;\Q)\big) \oplus \text{Fil}^k F(X;\Q).$$
		We first prove that the spectrum $\big(\text{Fil}^{i+1} F(X;\Q)\big)^{\psi^m-m^i}$ is zero. The filtration $\text{Fil}^{i+1+\star} F(X;\Q)$ induced on the spectrum $\text{Fil}^{i+1} F(X;\Q)$ is complete (hypothesis~$(i)$), so it suffices to prove that the natural map
		$$\psi^m - m^i : \text{gr}^j F(X;\Q) \longrightarrow \text{gr}^j F(X;\Q)$$
		is an equivalence of spectra for every integer $j \geq i+1$. For every integer $j \geq i+1$, this map can be identified with multiplication by the nonzero integer $m^i(m^{j-i}-1)$ on the $\Q$-linear spectrum $\text{gr}^j F(X;\Q)$ (hypothesis ($ii$)), and is thus an equivalence. Taking the fibre of the natural map $\psi^m - m^i$ on the fibre sequence of spectra
		$$\text{Fil}^{i+1} F(X;\Q) \longrightarrow \text{Fil}^i F(X;\Q) \longrightarrow \text{gr}^i F(X;\Q),$$
		this implies that the natural map
		$$\big(\text{Fil}^i F(X;\Q)\big)^{\psi^m-m^i} \longrightarrow \big(\text{gr}^i F(X;\Q)\big)^{\psi^m-m^i}$$
		is an equivalence of spectra. The spectrum $\big(\text{gr}^i F(X;\Q)\big)^{\psi^m-m^i}$ can be naturally identified with the spectrum $\text{gr}^i F(X;\Q) \oplus \text{gr}^i F(X;\Q)[-1]$ (hypothesis ($ii$)), and the induced composite map
		$$\text{gr}^i F(X;\Q) \longrightarrow \big(\text{gr}^i F(X;\Q)\big)^{\psi^m-m^i} \xlongrightarrow{\sim} \big(\text{Fil}^i F(X;\Q)\big)^{\psi^m-m^i} \xlongrightarrow{\text{can}} \text{Fil}^i F(X;\Q)$$
		induces a natural splitting of spectra
		$$\text{Fil}^i F(X;\Q) \simeq \text{gr}^i F(X;\Q) \oplus \text{Fil}^{i+1} F(X;\Q).$$
		By induction, this implies that for every integer $k \geq i$, there is a natural equivalence of spectra
		$$\text{Fil}^i F(X;\Q) \simeq \big(\bigoplus_{i \leq j < k} \text{gr}^j F(X;\Q)\big) \oplus \text{Fil}^k F(X;\Q).$$
		
		We now prove the desired equivalence of spectra for a general qcqs derived scheme~$X$. The presheaf $\text{Fil}^\star F(-)$ being a Zariski sheaf of filtered spectra, the presheaves $$\text{Fil}^i F(-;\Q) \text{ and } \big(\bigoplus_{i \leq j < k} \text{gr}^j F(-;\Q)\big) \oplus \text{Fil}^k F(-;\Q)$$ are Zariski sheaves of spectra. It thus suffices to construct compatible equivalences of spectra
		$$\text{Fil}^i F(X_{\Z[\tfrac{1}{m}]};\Q) \simeq \big(\bigoplus_{i \leq j < k} \text{gr}^j F(X_{\Z[\tfrac{1}{m}]};\Q)\big) \oplus \text{Fil}^k F(X_{\Z[\tfrac{1}{m}]};\Q)$$
		for all integers $m \geq 2$. The construction of this equivalence for each integer $m \geq 2$, which depends on the map $\psi^m : \text{Fil}^\star F(X_{\Z[\tfrac{1}{m}]}) \rightarrow \text{Fil}^\star F(X_{\Z[\tfrac{1}{m}]})$, is the first part of this proof. Let $m, m' \geq 2$ be integers. The compatibility between the constructions of the equivalences for $m$, $m'$, and $mm'$ only depend on the choices, for every integer $i \in \Z$, of the identification of the maps $\psi^m$, $\psi^{m'}$, and $\psi^{mm'}$ on the $i^{\text{th}}$ graded piece with multiplication by $m^i$, $(m')^i$, and $(mm')^i$ respectively. These choices are compatible up to homotopy (hypothesis ($iii$)), which concludes the proof.
	\end{proof}

	\begin{remark}\label{remarklemmahowtouseAdamsonclassicalschemes}
		Lemma~\ref{lemma29howtouseAdamsoperations} can also be proved, with the same proof, for Zariski sheaves of filtered spectra $\text{Fil}^\star F(-)$ that are defined on qcqs schemes, on qcqs schemes of finite valuative dimension, on noetherian schemes of finite dimension, or on smooth schemes over a given commutative ring.
	\end{remark}

	\begin{remark}
		Hypothesis ($iii$) in Lemma~\ref{lemma29howtouseAdamsoperations} follows from the construction of the Adams operations on the filtrations $\text{Fil}^\star_{\text{cla}} \text{K}(-)$ and $\text{Fil}^\star_{\text{HKR}} \text{HC}^-(-)$, when these are defined. This hypothesis ($iii$) then follows formally for all the other filtrations considered in this subsection.
	\end{remark}

	\begin{proposition}\label{propositionhowtouseAdamsoperations}
		Let $$\emph{Fil}^\star F(-) : \emph{Sch}^{\emph{qcqs,op}} \longrightarrow \emph{FilSp}$$ be a finitary Zariski sheaf of filtered spectra. For each integer $m \geq 2$, let $\psi^m$ be a natural multiplicative automorphism of the filtered spectrum $\emph{Fil}^\star F(X)$ on qcqs $\Z[\tfrac{1}{m}]$-schemes $X$, satisfying the following properties:
		\begin{enumerate}[label=(\roman*)]
			\item for every noetherian scheme $X$ of finite dimension, there exists an integer $d \in \Z$ such that for every integer $i \in \Z$, the spectrum $\emph{Fil}^i F(X;\Q)$ is in cohomological degrees at most $-i+d$;
			\item for any integers $i \in \Z$ and $m \geq 2$, and every qcqs $\Z[\tfrac{1}{m}]$-scheme $X$, the induced automorphism on the $i^{\emph{th}}$ graded piece $\emph{gr}^i F(X)$ is multiplication by $m^i$;
			\item for any integers $m, m' \geq 2$, and every qcqs $\Z[\tfrac{1}{mm'}]$-scheme $X$, the space of natural transformations from $\psi^m \circ \psi^{m'}$ to $\psi^{mm'}$, as endomorphisms of the filtered spectrum $\emph{Fil}^\star F(X)$, is contractible.
		\end{enumerate}
		Then for every qcqs scheme $X$, there exists a natural multiplicative equivalence of filtered spectra
		$$\emph{Fil}^\star F(X;\Q) \simeq \bigoplus_{j \geq \star} \emph{gr}^j F(X;\Q).$$
	\end{proposition}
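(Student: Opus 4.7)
The plan is to reduce to noetherian schemes of finite dimension via finitariness, then apply Lemma~\ref{lemma29howtouseAdamsoperations} there.

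First, I would verify on noetherian schemes of finite dimension $X$ that hypothesis~(i) of Lemma~\ref{lemma29howtouseAdamsoperations}---the completeness of $\text{Fil}^\star F(X;\Q)$---follows from the present hypothesis~(i): since $\text{Fil}^i F(X;\Q)$ is in cohomological degrees at most $-i+d$ for a fixed integer $d$, the connectivity tends to $+\infty$ with $i$, so $\lim_i \text{Fil}^i F(X;\Q) \simeq 0$. The other hypotheses of the lemma are identical to the present ones. Applying the noetherian variant of Lemma~\ref{lemma29howtouseAdamsoperations} given in Remark~\ref{remarklemmahowtouseAdamsonclassicalschemes}, I obtain for each $i \in \Z$ and each $k \geq i$ a natural equivalence of spectra
$$\text{Fil}^i F(X;\Q) \simeq \Big(\bigoplus_{i \leq j < k} \text{gr}^j F(X;\Q)\Big) \oplus \text{Fil}^k F(X;\Q),$$
and letting $k \to \infty$ using this completeness yields $\text{Fil}^i F(X;\Q) \simeq \bigoplus_{j \geq i} \text{gr}^j F(X;\Q)$.

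Next, I would promote these step-by-step splittings into a multiplicative filtered equivalence on noetherian schemes of finite dimension. The splittings are constructed in the proof of Lemma~\ref{lemma29howtouseAdamsoperations} out of the maps $\psi^m - m^i$ and the canonical inclusions $\text{Fil}^{i+1} F \hookrightarrow \text{Fil}^i F$, and are therefore natural in $i$; this gives compatibility with the filtration transition maps. Multiplicativity is then inherited from that of the Adams operations $\psi^m$, using the uniqueness in hypothesis~(iii) to identify the relevant coherences.

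Finally, to extend from noetherian schemes of finite dimension to general qcqs schemes, I would use finitariness: by Thomason--Trobaugh, every qcqs scheme $X$ can be written as a cofiltered limit $X \simeq \lim_\alpha X_\alpha$ of finite-type $\Z$-schemes with affine transition maps, each of which is noetherian and finite-dimensional. Since $\text{Fil}^\star F(-)$ is finitary and since filtered colimits in the category of filtered spectra commute with direct sums, taking the colimit over $\alpha$ of the equivalences on the $X_\alpha$ produces the desired equivalence
$$\text{Fil}^\star F(X;\Q) \simeq \text{colim}_\alpha \text{Fil}^\star F(X_\alpha;\Q) \simeq \text{colim}_\alpha \bigoplus_{j \geq \star} \text{gr}^j F(X_\alpha;\Q) \simeq \bigoplus_{j \geq \star} \text{gr}^j F(X;\Q).$$
The main obstacle will be the upgrade from pointwise-in-$i$ splittings to a filtered multiplicative equivalence in the second step; this requires careful tracking of the multiplicative structure of the Adams operations through the Lemma~\ref{lemma29howtouseAdamsoperations} construction.
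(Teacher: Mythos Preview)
Your approach is essentially the same as the paper's, but there is one genuine gap in your argument. When you write ``letting $k \to \infty$ using this completeness yields $\text{Fil}^i F(X;\Q) \simeq \bigoplus_{j \geq i} \text{gr}^j F(X;\Q)$,'' completeness alone does not give you a direct sum: taking the limit over $k$ of the equivalences
\[
\text{Fil}^i F(X;\Q) \simeq \Big(\bigoplus_{i \leq j < k} \text{gr}^j F(X;\Q)\Big) \oplus \text{Fil}^k F(X;\Q)
\]
yields $\text{Fil}^i F(X;\Q) \simeq \prod_{j \geq i} \text{gr}^j F(X;\Q)$, since an inverse limit of finite direct sums is a product. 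The paper uses hypothesis~(i) a second time here: because $\text{gr}^j F(X;\Q)$ is in cohomological degrees at most $-j+d$, at each fixed cohomological degree only finitely many factors in the product are nonzero, so the product agrees with the direct sum. Without this step the conclusion would not follow, and in particular the passage to general qcqs $X$ via filtered colimits (which commute with direct sums but not with infinite products) would break down.

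Aside from this, your outline matches the paper's proof. Your concern about upgrading to a multiplicative filtered equivalence is reasonable; the paper simply asserts that the pointwise splittings ``induce the desired multiplicative equivalence of filtered spectra'' without further comment, so you are not missing an argument that appears there.
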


	\begin{proof}
		By finitariness, it suffices to prove the result on noetherian schemes of finite dimension. Hypothesis $(i)$ implies that the filtration $\text{Fil}^\star F(X;\Q)$ is complete on such schemes $X$. Lemma~\ref{lemma29howtouseAdamsoperations} and Remark~\ref{remarklemmahowtouseAdamsonclassicalschemes} then imply that there exist natural equivalences of spectra
		$$\text{Fil}^i F(X;\Q) \simeq \big(\oplus_{i \leq j < k} \text{gr}^j F(X;\Q)\big) \oplus \text{Fil}^k F(X;\Q)$$
		for all integers $i,k \in \Z$ such that $k \geq i$. Again using completeness, taking the limit over $k \rightarrow +\infty$ induces a natural equivalence of spectra
		$$\text{Fil}^i F(X;\Q) \simeq \prod_{j \geq i} \text{gr}^j F(X;\Q).$$
		Hypothesis $(i)$ then implies that, at each cohomological degree, only a finite number of terms in the previous product are nonzero. In particular, the previous equivalence can be rewritten as a natural equivalence of spectra
		$$\text{Fil}^i F(X;\Q) \simeq \bigoplus_{j \geq i} \text{gr}^j F(X;\Q),$$
		which induces the desired multiplicative equivalence of filtered spectra.
	\end{proof}

	\begin{remark}
		Let $X$ be a qcqs derived scheme. The argument of Proposition~\ref{propositionhowtouseAdamsoperations}, where the necessary hypotheses are satisfied by Corollary~\ref{corollary31''AdamsoperationsongradedpiecesoffilteredTC} and Proposition~\ref{propositionfilteredTCandrationalfilteredTCarecomplete}, implies that there is a natural multiplicative equivalence of filtered spectra
		$$\text{Fil}^\star_{\text{mot}} \text{TC}(X;\Q) \simeq \prod_{j \geq \star} \Q(j)^{\text{TC}}(X)[2i].$$
	\end{remark}

	\subsection{Rigid-analytic HKR filtrations}\label{sectionrigidanalyticdR}
	
	\vspace{-\parindent}
	\hspace{\parindent}
	
	In this subsection, we define variants, which we call rigid-analytic, of the HKR filtrations. We start by explaining the relevant objects at the level of Hochschild homology.
	
	For every qcqs derived scheme $X$, there is a natural $\text{S}^1$-equivariant arithmetic fracture square
	\begin{equation}\label{equationarithmeticfracturesquareforHH}
	\begin{tikzcd}
		\text{HH}(X) \ar[r] \ar[d] & \text{HH}(X_{\Q}/\Q) \ar[d] \\
		\prod_{p \in \mathbb{P}} \text{HH}(X;\Z_p) \ar[r] & \prod'_{p \in \mathbb{P}} \text{HH}(X;\Q_p)
	\end{tikzcd}
	\end{equation}
	in the derived category $\mathcal{D}(\Z)$, where we use base change for Hochschild homology for the top right corner, and the convention adopted in the Notation part for the bottom left and bottom right corners. Applying homotopy fixed points $(-)^{h\text{S}^1}$ for the $\text{S}^1$-action induces a natural cartesian square
	$$\begin{tikzcd}
		\text{HC}^-(X) \ar[r] \ar[d] & \text{HC}^-(X_{\Q}/\Q) \ar[d] \\
		\prod_{p \in \mathbb{P}} \text{HC}^-(X;\Z_p) \ar[r] & \big(\prod'_{p \in \mathbb{P}} \text{HH}(X;\Q_p)\big)^{h\text{S}^1}
	\end{tikzcd}$$
	in the derived category $\mathcal{D}(\Z)$, where we use that taking homotopy fixed points $(-)^{h\text{S}^1}$ commutes with limits for the bottom left corner. We call {\it rigid-analytic variant of Hochschild homology and of negative cyclic homology} the bottom right corners of the previous two cartesian squares, respectively. This terminology should find some justification in Section~\ref{subsectionrigidanalyticGoodwillietheorem}. Following Section~\ref{subsectionHKRfiltrations}, we use the previous cartesian square to introduce a new HKR filtration on this rigid-analytic variant of negative cyclic homology $\big(\prod'_{p \in \mathbb{P}} \text{HH}(X;\Q_p)\big)^{h\text{S}^1}$.
	
	\begin{definition}[HKR filtration on rigid-analytic $\text{HC}^-$]\label{definition11HKRfiltrationonHC-solid}
		The {\it HKR filtration on rigid-analytic negative cyclic homology} of qcqs derived schemes is the functor
		$$\text{Fil}^\star_{\text{HKR}} \big(\prod_{p \in \mathbb{P}} {}^{'} \, \text{HH}(-;\Q_p)\big)^{h\text{S}^1} : \text{dSch}^{\text{qcqs,op}} \longrightarrow \text{FilSp}$$
		defined by the cocartesian square
		$$\begin{tikzcd}
			\text{Fil}^\star_{\text{HKR}} \text{HC}^-(-) \ar[d] \ar[r] & \text{Fil}^\star_{\text{HKR}} \text{HC}^-(-_{\Q}/\Q) \ar[d] \\
			\prod_{p \in \mathbb{P}} \text{Fil}^\star_{\text{HKR}} \text{HC}^-(-;\Z_p) \ar[r] & \text{Fil}^\star_{\text{HKR}} \big(\prod'_{p \in \mathbb{P}} \text{HH}(-;\Q_p)\big)^{h\text{S}^1}.
		\end{tikzcd}$$
	\end{definition}
	
	\begin{remark}\label{remark12HKRfiltrationonHC-solid}
		Let $p$ be a prime number, and $X$ be a qcqs derived scheme over $\Z_{(p)}$. The natural map
		$$\prod_{\ell \in \mathbb{P}} \text{Fil}^\star_{\text{HKR}} \text{HC}^-(X;\Z_{\ell}) \longrightarrow \text{Fil}^\star_{\text{HKR}} \text{HC}^-(X;\Z_p)$$
		is then an equivalence of filtered spectra, and we then denote by $\text{Fil}^\star_{\text{HKR}} \text{HH}(X;\Q_p)^{h\text{S}^1}$ the filtered spectrum $\text{Fil}^\star_{\text{HKR}} \prod'_{\ell \in \mathbb{P}} \text{HH}(X;\Q_{\ell})^{h\text{S}^1}$. In particular, the natural commutative diagram
		$$\begin{tikzcd}
			\text{Fil}^\star_{\text{HKR}} \text{HC}^-(X) \arrow{r} \arrow{d} & \text{Fil}^\star_{\text{HKR}} \text{HC}^-(X_{\Q}/\Q) \ar[d] \\
			\text{Fil}^\star_{\text{HKR}} \text{HC}^-(X;\Z_p) \arrow{r} & \text{Fil}^\star_{\text{HKR}} \text{HH}(X;\Q_p)^{h\text{S}^1}
		\end{tikzcd}$$
		is a cartesian square of filtered spectra.
	\end{remark}
	
	Similarly, one can apply homotopy orbits $(-)_{h\text{S}^1}$ for the $\text{S}^1$-action to the arithmetic fracture square for Hochschild homology (\ref{equationarithmeticfracturesquareforHH}). The functor $(-)_{h\text{S}^1}$ does not commute with limits, but the $\text{S}^1$-action on the product $\prod_{p \in \mathbb{P}} \text{HH}(X;\Z_p)$ is diagonal. Using the natural fibre sequence
	$$\text{HH}(X;\Z_p) \longrightarrow \text{HH}(X;\Z_p)_{h\text{S}^1} \longrightarrow \text{HH}(X;\Z_p)_{h\text{S}^1}[2]$$
	in the derived category $\mathcal{D}(\Z)$ and the fact that the functors $\text{HH}(-;\Z_p)$ and $\text{HH}(-;\Z_p)_{h\text{S}^1}$ are in non-positive cohomological degrees on animated commutative rings, one can prove that the complex \hbox{$\text{HH}(X;\Z_p)_{h\text{S}^1} \in \mathcal{D}(\Z)$} is derived $p$-complete, hence the natural map
	$$\text{HC}(X;\Z_p) \longrightarrow \text{HH}(X;\Z_p)_{h\text{S}^1}$$
	is an equivalence in the derived category $\mathcal{D}(\Z)$. In particular, applying homotopy orbits $(-)_{h\text{S}^1}$ to the arithmetic fracture square for Hochschild homology (\ref{equationarithmeticfracturesquareforHH}) induces a natural cartesian square
	$$\begin{tikzcd}
		\text{HC}(X) \ar[r] \ar[d] & \text{HC}(X_{\Q}/\Q) \ar[d] \\
		\prod_{p \in \mathbb{P}} \text{HC}(X;\Z_p) \ar[r] & \big(\prod'_{p \in \mathbb{P}} \text{HH}(X;\Q_p)\big)_{h\text{S}^1}
	\end{tikzcd}$$
	in the derived category $\mathcal{D}(\Z)$. We use this cartesian square to introduce the following HKR filtration on the bottom right corner.
	
	\begin{definition}[HKR filtration on rigid-analytic HC]\label{definition13HKRfiltrationonHCsolid} The {\it HKR filtration on rigid-analytic cyclic homology} of qcqs derived schemes is the functor
		$$\text{Fil}^\star_{\text{HKR}} \big(\prod_{p \in \mathbb{P}} {}^{'} \, \text{HH}(-;\Q_p)\big)_{h\text{S}^1} : \text{dSch}^{\text{qcqs,op}} \longrightarrow \text{FilSp}$$
		defined by the cocartesian square
		$$\begin{tikzcd}
			\text{Fil}^\star_{\text{HKR}} \text{HC}(-) \ar[d] \ar[r] & \text{Fil}^\star_{\text{HKR}} \text{HC}(-_{\Q}/\Q) \ar[d] \\
			\prod_{p \in \mathbb{P}} \text{Fil}^\star_{\text{HKR}} \text{HC}(-;\Z_p) \ar[r] & \text{Fil}^\star_{\text{HKR}} \big(\prod'_{p \in \mathbb{P}} \text{HH}(-;\Q_p)\big)_{h\text{S}^1}.
		\end{tikzcd}$$
	\end{definition}
	
	\begin{remark}\label{remark15'HKRfiltrationonHCsolidallprimesprestrictedproduct}
		Taking homotopy orbits $(-)_{h\text{S}^1}$ commutes with colimits, so the natural map
		$$\text{Fil}^\star_{\text{HKR}} \text{HC}(X;\Q) \longrightarrow \text{Fil}^\star_{\text{HKR}} \text{HC}(X_{\Q}/\Q)$$
		is an equivalence in the filtered derived category $\mathcal{DF}(\Q)$. Upon applying rationalisation to the natural cartesian square
		$$\begin{tikzcd}
			\text{Fil}^\star_{\text{HKR}} \text{HC}(X) \arrow{r} \arrow{d} & \text{Fil}^\star_{\text{HKR}} \text{HC}(X_{\Q}/\Q) \ar[d] \\
			\prod_{p \in \mathbb{P}} \text{Fil}^\star_{\text{HKR}} \text{HC}(X;\Z_p) \arrow{r} & \text{Fil}^\star_{\text{HKR}} \big(\prod'_{p \in \mathbb{P}} \text{HH}(X;\Q_p)\big)_{h\text{S}^1},
		\end{tikzcd}$$
		this implies that the natural map
		$$\prod_{p \in \mathbb{P}} {}^{'} \, \text{Fil}^\star_{\text{HKR}} \text{HC}(X;\Q_p) := \big(\prod_{p \in \mathbb{P}} \text{Fil}^\star_{\text{HKR}} \text{HC}(X;\Z_p)\big)_{\Q} \longrightarrow \text{Fil}^\star_{\text{HKR}} \big(\prod_{p \in \mathbb{P}} {}^{'} \, \text{HH}(X;\Q_p)\big)_{h\text{S}^1}$$
		is an equivalence in the filtered derived category $\mathcal{DF}(\Q)$.
	\end{remark}
	
	\begin{remark}\label{remark15''HKRfiltrationonHCsolid}
		Let $p$ be a prime number, and $X$ be a qcqs derived scheme over $\Z_{(p)}$. As in Remark~\ref{remark12HKRfiltrationonHC-solid}, we denote the filtered complex $\text{Fil}^\star_{\text{HKR}} \big(\prod'_{\ell \in \mathbb{P}} \text{HH}(X;\Q_{\ell})\big)_{h\text{S}^1}$ by $$\text{Fil}^\star_{\text{HKR}} \text{HH}(X;\Q_p)_{h\text{S}^1} \in \mathcal{DF}(\Q).$$ In particular, the natural map
		$$\text{Fil}^\star_{\text{HKR}} \text{HC}(X;\Q_p) \longrightarrow \text{Fil}^\star_{\text{HKR}} \text{HH}(X;\Q_p)_{h\text{S}^1}$$
		is an equivalence in the filtered derived category $\mathcal{DF}(\Q)$ by Remark~\ref{remark15'HKRfiltrationonHCsolidallprimesprestrictedproduct}.
	\end{remark}
	
	The Tate construction $(-)^{t\text{S}^1}$ is by definition the cofibre of the norm map $$(-)_{h\text{S}^1}[1] \rightarrow (-)^{h\text{S}^1}.$$ Applying the Tate construction $(-)^{t\text{S}^1}$ to the arithmetic fracture square for Hochschild homology (\ref{equationarithmeticfracturesquareforHH}) then induces a natural cartesian square
	$$\begin{tikzcd}
		\text{HP}(X) \ar[r] \ar[d] & \text{HP}(X_{\Q}/\Q) \ar[d] \\
		\prod_{p \in \mathbb{P}} \text{HP}(X;\Z_p) \ar[r] & \big(\prod'_{p \in \mathbb{P}} \text{HH}(X;\Q_p)\big)^{t\text{S}^1}
	\end{tikzcd}$$
	in the derived category $\mathcal{D}(\Z)$, where we use the analogous cartesian squares for $\text{HC}^-$ and $\text{HC}$ to identify the bottom left corner.
	
	\begin{definition}[HKR filtration on rigid-analytic HP]\label{definition13HKRfiltrationonHPsolid} The {\it HKR filtration on rigid-analytic periodic cyclic homology} of qcqs derived schemes is the functor
		$$\text{Fil}^\star_{\text{HKR}} \big(\prod_{p \in \mathbb{P}} {}^{'} \, \text{HH}(-;\Q_p)\big)^{t\text{S}^1} : \text{dSch}^{\text{qcqs,op}} \longrightarrow \text{FilSp}$$
		defined by the cocartesian square
		$$\begin{tikzcd}
			\text{Fil}^\star_{\text{HKR}} \text{HP}(-) \ar[d] \ar[r] & \text{Fil}^\star_{\text{HKR}} \text{HP}(-_{\Q}/\Q) \ar[d] \\
			\prod_{p \in \mathbb{P}} \text{Fil}^\star_{\text{HKR}} \text{HP}(-;\Z_p) \ar[r] & \text{Fil}^\star_{\text{HKR}} \big(\prod'_{p \in \mathbb{P}} \text{HH}(-;\Q_p)\big)^{t\text{S}^1}.
		\end{tikzcd}$$
	\end{definition}
	
	\begin{remark}\label{remark14HKRfiltrationonHPsolid}
		Let $p$ be a prime number, and $X$ be a qcqs derived scheme over $\Z_{(p)}$. As in Remarks~\ref{remark12HKRfiltrationonHC-solid} and \ref{remark15''HKRfiltrationonHCsolid}, we denote the filtered complex $\text{Fil}^\star_{\text{HKR}} \big(\prod_{\ell \in \mathbb{P}} \text{HH}(X;\Q_{\ell})\big)^{t\text{S}^1}$ by $$\text{Fil}^\star_{\text{HKR}} \text{HH}(X;\Q_p)^{t\text{S}^1} \in \mathcal{DF}(\Q).$$ In particular, the natural commutative diagram
		$$\begin{tikzcd}
			\text{Fil}^\star_{\text{HKR}} \text{HP}(X) \arrow{r} \arrow{d} & \text{Fil}^\star_{\text{HKR}} \text{HP}(X_{\Q}/\Q) \ar[d] \\
			\text{Fil}^\star_{\text{HKR}} \text{HP}(X;\Z_p) \arrow{r} & \text{Fil}^\star_{\text{HKR}}\text{HH}(X;\Q_p)^{t\text{S}^1}
		\end{tikzcd}$$
		is a cartesian square of filtered spectra.
	\end{remark}
	
	\begin{lemma}\label{lemma9reductionfromHC-HPtoHC-HPsolid}
		Let $X$ be a qcqs derived scheme. Then the natural commutative diagram
		$$\begin{tikzcd}
			\prod'_{p \in \mathbb{P}} \emph{Fil}^\star_{\emph{HKR}} \emph{HC}^-(X;\Q_p) \arrow{r} \arrow{d} & \emph{Fil}^\star_{\emph{HKR}} \big(\prod'_{p \in \mathbb{P}} \emph{HH}(X;\Q_p)\big)^{h\emph{S}^1} \ar[d] \\
			\prod'_{p \in \mathbb{P}} \emph{Fil}^\star_{\emph{HKR}} \emph{HP}(X;\Q_p) \arrow{r} & \emph{Fil}^\star_{\emph{HKR}} \big(\prod'_{p \in \mathbb{P}} \emph{HH}(X;\Q_p)\big)^{t\emph{S}^1}
		\end{tikzcd}$$
		is a cartesian square of filtered spectra.
	\end{lemma}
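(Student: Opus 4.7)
The plan is to show that the square is cartesian by computing the cofibres of its two vertical arrows and verifying that the induced map between these cofibres is an equivalence of filtered spectra. The key observation is that the norm cofibre sequence $(-)_{h\text{S}^1}[1] \to (-)^{h\text{S}^1} \to (-)^{t\text{S}^1}$ admits a filtered refinement, on each of the four corners of the square from Definitions~\ref{definition11HKRfiltrationonHC-solid},~\ref{definition13HKRfiltrationonHCsolid}, and~\ref{definition13HKRfiltrationonHPsolid}, that is compatible with the pushout descriptions defining the rigid-analytic filtrations.

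First, I would identify the cofibre of the left vertical map. Definition~\ref{definitionHKRfiltrationonHC} provides a natural fibre sequence of filtered spectra
$$\text{Fil}^\star_{\text{HKR}} \text{HC}^-(-) \longrightarrow \text{Fil}^\star_{\text{HKR}} \text{HP}(-) \longrightarrow \text{Fil}^{\star-1}_{\text{HKR}} \text{HC}(-)[2],$$
and the same sequence after $p$-completion relates the HKR filtrations on $\text{HC}^-(-;\Z_p)$, $\text{HP}(-;\Z_p)$, and $\text{HC}(-;\Z_p)$. Taking the product over all prime numbers and rationalising, the cofibre of the left vertical map is naturally identified with $\prod'_{p \in \mathbb{P}} \text{Fil}^{\star-1}_{\text{HKR}} \text{HC}(X;\Q_p)[2]$.

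Next, I would identify the cofibre of the right vertical map. By Definitions~\ref{definition11HKRfiltrationonHC-solid} and~\ref{definition13HKRfiltrationonHPsolid}, the rigid-analytic HKR filtrations on $\big(\prod'_p \text{HH}(-;\Q_p)\big)^{h\text{S}^1}$ and $\big(\prod'_p \text{HH}(-;\Q_p)\big)^{t\text{S}^1}$ are pushouts of the filtered spectra corresponding to $\text{HC}^-$ (resp.\ $\text{HP}$) over $\Z$, over $\Q$, and after $p$-completion. Since cofibres commute with pushouts in a stable $\infty$-category, the cofibre of the right vertical map is the pushout of the cofibres computed corner-by-corner. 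Applying the filtered norm fibre sequence from the previous step at each corner, these three cofibres are exactly those appearing in the pushout square defining $\text{Fil}^{\star-1}_{\text{HKR}} \big(\prod'_p \text{HH}(-;\Q_p)\big)_{h\text{S}^1}[2]$ in Definition~\ref{definition13HKRfiltrationonHCsolid}. One then concludes by invoking Remark~\ref{remark15'HKRfiltrationonHCsolidallprimesprestrictedproduct}, which provides a natural equivalence
$$\prod_{p \in \mathbb{P}} {}^{'} \, \text{Fil}^{\star-1}_{\text{HKR}} \text{HC}(X;\Q_p) \xlongrightarrow{\sim} \text{Fil}^{\star-1}_{\text{HKR}} \big(\prod_{p \in \mathbb{P}} {}^{'} \, \text{HH}(X;\Q_p)\big)_{h\text{S}^1}$$
compatible by construction with the two identifications above, so that the map on vertical cofibres in the diagram is an equivalence of filtered spectra. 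The only delicate point is tracking the compatibility between the filtered norm fibre sequence and the defining pushout squares, but this compatibility is essentially automatic since all constructions are functorial in the $\text{S}^1$-equivariant input.
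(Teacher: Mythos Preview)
Your proposal is correct and follows essentially the same approach as the paper: both arguments extend the square by the filtered norm cofibre sequence $\text{HC}^- \to \text{HP} \to \text{HC}[2]$ on each column and then invoke Remark~\ref{remark15'HKRfiltrationonHCsolidallprimesprestrictedproduct} to identify the resulting $\text{HC}$-terms. The paper presents this as a single commutative diagram with two horizontal fibre sequences and observes that the rightmost vertical map is an equivalence, whereas you phrase it as comparing vertical cofibres; your pushout argument for the right-hand cofibre is exactly what the paper compresses into the phrase ``by definition of the bottom terms.''
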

	
	\begin{proof}
		There is a natural commutative diagram of filtered spectra
		$$\hspace*{-.5cm}\begin{tikzcd}[sep=tiny]
			\prod'_{p \in \mathbb{P}} \text{Fil}^\star_{\text{HKR}} \text{HC}^-(X;\Q_p) \arrow{r} \arrow{d} & \prod'_{p \in \mathbb{P}} \text{Fil}^\star_{\text{HKR}} \text{HP}(X;\Q_p) \ar[d] \ar[r] & \prod'_{p \in \mathbb{P}} \text{Fil}^{\star-1}_{\text{HKR}} \text{HC}(X;\Q_p)[2] \arrow{d} \\
			\text{Fil}^\star_{\text{HKR}}\big(\prod'_{p \in \mathbb{P}} \text{HH}(X;\Q_p)\big)^{h\text{S}^1} \arrow{r} & \text{Fil}^\star_{\text{HKR}} \big(\prod'_{p \in \mathbb{P}} \text{HH}(X;\Q_p)\big)^{t\text{S}^1} \ar[r] & \text{Fil}^{\star-1}_{\text{HKR}} \big(\prod'_{p \in \mathbb{P}} \text{HH}(X;\Q_p)\big)_{h\text{S}^1}[2]
		\end{tikzcd}$$
		where, by definition of the bottom terms (Definitions~\ref{definition11HKRfiltrationonHC-solid}, \ref{definition13HKRfiltrationonHPsolid}, and \ref{definition13HKRfiltrationonHCsolid}), the horizontal lines are fibre sequences. In this diagram, the right vertical map is an equivalence (Remark~\ref{remark15'HKRfiltrationonHCsolidallprimesprestrictedproduct}), so the left square is a cartesian square.
	\end{proof}
	
	\begin{lemma}\label{lemmaHKRfiltrationHC-solidproductallprimesiscomplete}
		Let $X$ be a qcqs derived scheme. Then the filtrations $$\prod_{p \in \mathbb{P}}{}^{'} \big(\emph{Fil}^\star_{\emph{HKR}} \emph{HH}(X;\Q_p)\big)^{h\emph{S}^1}, \text{ } \prod_{p \in \mathbb{P}}{}^{'} \big(\emph{Fil}^\star_{\emph{HKR}} \emph{HH}(X;\Q_p)\big)_{h\emph{S}^1}, \text{ and } \prod_{p \in \mathbb{P}}{}^{'} \big(\emph{Fil}^\star_{\emph{HKR}} \emph{HH}(X;\Q_p)\big)^{t\emph{S}^1}$$ are complete.
	\end{lemma}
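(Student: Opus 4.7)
The plan is to exploit the fact that in the stable $\infty$-category $\mathcal{DF}(\Z)$ of filtered spectra, cocartesian squares coincide with cartesian squares, and that completeness of a filtered spectrum is preserved under finite limits (being equivalent to the vanishing of $\lim_n \text{Fil}^n$). Each of the three filtrations in question is, by its defining cocartesian square (Definitions~\ref{definition11HKRfiltrationonHC-solid},~\ref{definition13HKRfiltrationonHCsolid},~\ref{definition13HKRfiltrationonHPsolid}), the pushout of a diagram whose other three corners are classical HKR filtrations of the form $\text{Fil}^\star_{\text{HKR}} E(X)$, $\text{Fil}^\star_{\text{HKR}} E(X_{\Q}/\Q)$, and $\prod_p \text{Fil}^\star_{\text{HKR}} E(X;\Z_p)$ for $E \in \{\text{HC}^-, \text{HC}, \text{HP}\}$. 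Reinterpreting the defining square as cartesian, it then suffices to verify that these three corners are complete in each case.

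First I would dispose of the $h\text{S}^1$ case. Of the three corners appearing in Definition~\ref{definition11HKRfiltrationonHC-solid}, the filtration $\text{Fil}^\star_{\text{HKR}} \text{HC}^-(X)$ is complete by Lemma~\ref{lemmaHKRfiltrationonHC-isalwayscomplete}, the product $\prod_{p} \text{Fil}^\star_{\text{HKR}} \text{HC}^-(X;\Z_p)$ by Lemma~\ref{lemmaHKRfiltrationproductpcompletionsiscomplete}, and the $\Q$-relative version $\text{Fil}^\star_{\text{HKR}} \text{HC}^-(X_{\Q}/\Q)$ by the evident variant of Lemma~\ref{lemmaHKRfiltrationonHC-isalwayscomplete} for the base $\Q$, as recorded in Remark~\ref{remarkHHandvariantsrelativetoQ} (the construction of the HKR filtration via Beilinson décalage is identical, and completeness follows from the same argument applied to the Tate filtration on $\text{HP}(-_{\Q}/\Q)$). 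The fourth corner is then complete as a pullback of complete filtered spectra.

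The cyclic homology case proceeds identically, substituting $\text{HC}$ for $\text{HC}^-$ throughout and using the corresponding completeness statements from the same two lemmas. For the Tate case, I would either repeat the argument verbatim with $\text{HP}$ in place of $\text{HC}^-$ using Definition~\ref{definition13HKRfiltrationonHPsolid}, or more efficiently invoke Lemma~\ref{lemma9reductionfromHC-HPtoHC-HPsolid}, which expresses the rigid-analytic $\text{HP}$ filtration as a pullback of the rigid-analytic $\text{HC}^-$ filtration (handled in the first step) and the restricted-product filtration $\prod'_{p \in \mathbb{P}} \text{Fil}^\star_{\text{HKR}} \text{HP}(X;\Q_p)$, which is complete as a rationalisation of a complete filtration by Lemma~\ref{lemmaHKRfiltrationproductpcompletionsiscomplete}.

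I do not expect a genuine obstacle: the entire argument is formal once one recalls that completeness passes through limits in a stable $\infty$-category. The only verification worth stating explicitly is the completeness of the $\Q$-relative HKR filtrations, which is immediate from the construction (Remark~\ref{remarkHHandvariantsrelativetoQ} together with Construction~\ref{constructionHKRfiltrationonHP}).
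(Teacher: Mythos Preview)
Your primary argument is correct and is exactly what the paper does: each rigid-analytic HKR filtration is a pushout (hence, by stability, a pullback) of three complete filtered spectra, and completeness passes through finite limits. The paper cites \cite[Theorem~$1.1$]{antieau_periodic_2019} directly for the $\Q$-relative corner rather than Remark~\ref{remarkHHandvariantsrelativetoQ}, but this is the same content.

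One caution about your alternative route for the Tate case via Lemma~\ref{lemma9reductionfromHC-HPtoHC-HPsolid}: that square has the rigid-analytic $\text{HP}$ filtration in the bottom right, so to conclude you would need completeness of the restricted products $\prod'_p \text{Fil}^\star_{\text{HKR}} \text{HC}^-(X;\Q_p)$ and $\prod'_p \text{Fil}^\star_{\text{HKR}} \text{HP}(X;\Q_p)$, which you justify as ``rationalisation of a complete filtration.'' Rationalisation is a filtered colimit and does not in general commute with the inverse limit defining completeness, so this step is not formal (compare Lemma~\ref{lemmaBMSfiltrationproductallprimesiscomplete} and Proposition~\ref{propositionfilteredTCandrationalfilteredTCarecomplete}, where rationalised completeness is deduced from connectivity bounds rather than from completeness alone). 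Your first option, repeating the argument verbatim for $\text{HP}$ via Definition~\ref{definition13HKRfiltrationonHPsolid}, avoids this issue and is the route the paper takes.
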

	
	\begin{proof}
		The HKR filtrations on $\text{HC}^-(X)$, $\text{HC}(X)$, and $\text{HP}(X)$ are complete by Lemma~\ref{lemmaHKRfiltrationonHC-isalwayscomplete}. The product on prime numbers $p$ of the $p$-completions of these filtrations are also complete, since $p$-completions and products commute with limits. Similarly, the HKR filtrations on $\text{HC}^-(X_{\Q}/\Q)$, $\text{HC}(X_{\Q}/\Q)$, and $\text{HP}(X_{\Q}/\Q)$ are complete by \cite[Theorem~$1.1$]{antieau_periodic_2019}. The rigid-analytic HKR filtrations on $\text{HC}^-$, $\text{HC}$, and $\text{HP}$ are thus also complete, as pushouts of three complete filtrations.
	\end{proof}
	
	We now describe the graded pieces of these rigid-analytic HKR filtrations, by analogy with the classical HKR filtrations.
	
	\begin{definition}[Rigid-analytic Hodge-complete derived de Rham cohomology]\label{definition16rigidanalyticHodgecompletederiveddeRhamHC-}
		For every integer $i \in \Z$, the functor
		$$R\Gamma_{\text{Zar}}\Big(-,\prod_{p \in \mathbb{P}} {}^{'} \,\widehat{\underline{\mathbb{L}\Omega}}^{\geq i}_{-_{\Q_p}/\Q_p}\Big) : \text{dSch}^{\text{qcqs,op}} \longrightarrow \mathcal{D}(\Q)$$
		is defined as the shifted graded piece of the HKR filtration on $\prod'_{p \in \mathbb{P}} \text{HH}(-;\Q_p)^{h\text{S}^1}$:
		$$R\Gamma_{\text{Zar}}\Big(-,\prod_{p \in \mathbb{P}} {}^{'} \,\widehat{\underline{\mathbb{L}\Omega}}^{\geq i}_{-_{\Q_p}/\Q_p}\Big) := \text{gr}^i_{\text{HKR}} \big(\prod_{p \in \mathbb{P}} {}^{'} \, \text{HH}(-;\Q_p)\big)^{h\text{S}^1}[-2i].$$
	\end{definition}
	
	\begin{remark}\label{remark18cartesiansquaredefiningsolidderiveddeRhamcohomology}
		Let $X$ be a qcqs derived scheme, and $i \in \Z$ be an integer. By Definition~\ref{definition11HKRfiltrationonHC-solid}, there is a natural cartesian square
		$$\begin{tikzcd}
			R\Gamma_{\text{Zar}}\Big(X,\widehat{\mathbb{L}\Omega}^{\geq i}_{-/\Z}\Big) \arrow{r} \arrow{d} & R\Gamma_{\text{Zar}}\Big(X,\widehat{\mathbb{L}\Omega}^{\geq i}_{-_{\Q}/\Q}\Big) \ar[d] \\
			R\Gamma_{\text{Zar}}\Big(X,\prod_{p \in \mathbb{P}} \big(\widehat{\mathbb{L}\Omega}^{\geq i}_{-/\Z}\big)^\wedge_p\Big) \arrow{r} & R\Gamma_{\text{Zar}}\Big(X,\prod'_{p \in \mathbb{P}} \,\widehat{\underline{\mathbb{L}\Omega}}^{\geq i}_{-_{\Q_p}/\Q_p}\Big)
		\end{tikzcd}$$
		in the derived category $\mathcal{D}(\Z)$, which can serve as an alternative definition for the bottom right term.
	\end{remark}
	
	\begin{remark}\label{remark17fibreseuquenceforsolidderiveddeRhamcohomology}
		Let $X$ be a qcqs derived scheme, and $i \in \Z$ be an integer. The complexes
		$$R\Gamma_{\text{Zar}}\Big(X,\prod_{p \in \mathbb{P}}{}^{'} \,\widehat{\underline{\mathbb{L}\Omega}}_{-_{\Q_p}/\Q_p}\Big) \quad \text{and} \quad R\Gamma_{\text{Zar}}\Big(X,\prod_{p \in \mathbb{P}}{}^{'} \,\underline{\mathbb{L}\Omega}^{\leq i}_{-_{\Q_p}/\Q_p}\Big)$$
		are defined as in Definition~\ref{definition16rigidanalyticHodgecompletederiveddeRhamHC-}, where $(-)^{h\text{S}^1}$ is replaced by $(-)^{t\text{S}^1}$ and $(-)_{h\text{S}^1}$ respectively. In particular, the natural fibre sequence
		$$\text{Fil}^\star_{\text{HKR}} \big(\prod_{p \in \mathbb{P}} {}^{'} \, \text{HH}(X;\Q_p)\big)^{h\text{S}^1} \rightarrow \text{Fil}^\star_{\text{HKR}} \big(\prod_{p \in \mathbb{P}} {}^{'} \, \text{HH}(X;\Q_p)\big)^{t\text{S}^1} \rightarrow \text{Fil}^{\star-1}_{\text{HKR}} \big(\prod_{p \in \mathbb{P}} {}^{'} \, \text{HH}(X;\Q_p)\big)_{h\text{S}^1}[2]$$
		induces a natural fibre sequence
		$$R\Gamma_{\text{Zar}}\Big(X,\prod_{p \in \mathbb{P}}{}^{'} \,\widehat{\underline{\mathbb{L}\Omega}}^{\geq i}_{-_{\Q_p}/\Q_p}\Big) \longrightarrow R\Gamma_{\text{Zar}}\Big(X,\prod_{p \in \mathbb{P}}{}^{'} \,\widehat{\underline{\mathbb{L}\Omega}}_{-_{\Q_p}/\Q_p}\Big) \longrightarrow R\Gamma_{\text{Zar}}\Big(X,\prod_{p \in \mathbb{P}}{}^{'} \,\underline{\mathbb{L}\Omega}^{<i}_{-_{\Q_p}/\Q_p}\Big),$$
		where the right term is naturally identified with the complex $$\Big(\prod_{p \in \mathbb{P}} R\Gamma_{\text{Zar}}\big(X,\big(L\Omega^{<i}_{-/\Z}\big)^\wedge_p\big)\Big)_{\Q} \in \mathcal{D}(\Q)$$ by Remark~\ref{remark15'HKRfiltrationonHCsolidallprimesprestrictedproduct}.
	\end{remark}
	
	\begin{remark}\label{remark17''solidderiveddeRhamcohomologyoneprimeatatime}
		Let $p$ be a prime number, $X$ be a qcqs derived scheme over $\Z_{(p)}$, and $i \in \Z$ be an integer. Following Remarks~\ref{remark12HKRfiltrationonHC-solid}, \ref{remark15''HKRfiltrationonHCsolid}, and~\ref{remark14HKRfiltrationonHPsolid}, we denote the complexes 
		$$R\Gamma_{\text{Zar}}\Big(X,\prod_{\ell \in \mathbb{P}}{}^{'} \widehat{\underline{\mathbb{L}\Omega}}^{\geq i}_{-_{\Q_{\ell}}/\Q_{\ell}}\Big), \text{ } R\Gamma_{\text{Zar}}\Big(X,\prod_{\ell \in \mathbb{P}}{}^{'} \widehat{\underline{\mathbb{L}\Omega}}_{-_{\Q_{\ell}}/\Q_{\ell}}\Big), \text{ and } R\Gamma_{\text{Zar}}\Big(X,\prod_{\ell \in \mathbb{P}}{}^{'} \underline{\mathbb{L}\Omega}^{< i}_{-_{\Q_{\ell}}/\Q_{\ell}}\Big)$$
		by 
		$$R\Gamma_{\text{Zar}}\big(X,\widehat{\underline{\mathbb{L}\Omega}}^{\geq i}_{-_{\Q_p}/\Q_p}\big), \text{ } R\Gamma_{\text{Zar}}\big(X,\widehat{\underline{\mathbb{L}\Omega}}_{-_{\Q_p}/\Q_p}\big), \text{ and } R\Gamma_{\text{Zar}}\big(X,\underline{\mathbb{L}\Omega}^{< i}_{-_{\Q_p}/\Q_p}\big)$$
		respectively. In particular, by Remark~\ref{remark17fibreseuquenceforsolidderiveddeRhamcohomology}, there is a natural fibre sequence 
		$$R\Gamma_{\text{Zar}}\big(X,\widehat{\underline{\mathbb{L}\Omega}}^{\geq i}_{-_{\Q_p}/\Q_p}\big) \longrightarrow R\Gamma_{\text{Zar}}\big(X,\widehat{\underline{\mathbb{L}\Omega}}_{-_{\Q_p}/\Q_p}\big) \longrightarrow R\Gamma_{\text{Zar}}\big(X,\underline{\mathbb{L}\Omega}^{< i}_{-_{\Q_p}/\Q_p}\big)$$
		in the derived category $\mathcal{D}(\Q_p)$, where the right term is naturally identified with the complex $$R\Gamma_{\text{Zar}}\big(X,\big(\mathbb{L}\Omega^{<i}_{-/\Z}\big)^\wedge_p[\tfrac{1}{p}]\big) \in \mathcal{D}(\Q_p).$$
	\end{remark}
	
	In the following result, we reformulate the motivic filtration on topological cyclic homology of Definition~\ref{definitionmotivicfiltrationonTC} in terms of the rigid-analytic HKR filtration on negative cyclic homology. This can be interpreted as a filtered arithmetic fracture square for topological cyclic homology.
	
	\begin{proposition}\label{propositionmainconsequenceBFSwithfiltrations}
		Let $X$ be qcqs derived scheme. Then the natural commutative diagram
		$$\begin{tikzcd}
			\emph{Fil}^\star_{\emph{mot}} \emph{TC}(X) \arrow{r} \arrow{d} & \emph{Fil}^\star_{\emph{HKR}} \emph{HC}^-(X_{\Q}/\Q) \ar[d] \\
			\prod_{p \in \mathbb{P}} \emph{Fil}^\star_{\emph{BMS}} \emph{TC}(X;\Z_p) \arrow{r} & \emph{Fil}^\star_{\emph{HKR}} \Big(\prod'_{p \in \mathbb{P}} \emph{HH}(X;\Q_p)\Big)^{h\emph{S}^1}
		\end{tikzcd}$$
		is a cartesian square of filtered spectra.
	\end{proposition}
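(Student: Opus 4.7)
The plan is to prove this by horizontally pasting two cartesian squares: the defining square for $\text{Fil}^\star_{\text{mot}} \text{TC}(X)$ from Definition~\ref{definitionmotivicfiltrationonTC}, and the defining square for the rigid-analytic HKR filtration $\text{Fil}^\star_{\text{HKR}} \big(\prod'_{p \in \mathbb{P}} \text{HH}(-;\Q_p)\big)^{h\text{S}^1}$ from Definition~\ref{definition11HKRfiltrationonHC-solid}.

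More concretely, I would consider the natural commutative diagram of filtered spectra
$$\begin{tikzcd}
\text{Fil}^\star_{\text{mot}} \text{TC}(X) \arrow{r} \arrow{d} & \text{Fil}^\star_{\text{HKR}} \text{HC}^-(X) \arrow{r} \arrow{d} & \text{Fil}^\star_{\text{HKR}} \text{HC}^-(X_{\Q}/\Q) \arrow{d} \\
\prod_{p} \text{Fil}^\star_{\text{BMS}} \text{TC}(X;\Z_p) \arrow{r} & \prod_{p} \text{Fil}^\star_{\text{HKR}} \text{HC}^-(X;\Z_p) \arrow{r} & \text{Fil}^\star_{\text{HKR}} \big(\prod'_{p} \text{HH}(X;\Q_p)\big)^{h\text{S}^1}.
\end{tikzcd}$$
The left square is cartesian by Definition~\ref{definitionmotivicfiltrationonTC}. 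The right square is defined as a cocartesian square in Definition~\ref{definition11HKRfiltrationonHC-solid}; since the category of filtered spectra is stable, cocartesian and cartesian squares coincide there, and so the right square is also cartesian. The composition (pasting) of two cartesian squares along a common edge is cartesian, so the outer rectangle is cartesian, which is precisely the desired statement.

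The only things to verify are the compatibility of the various natural maps: namely, that the map $\prod_{p} \text{Fil}^\star_{\text{BMS}} \text{TC}(X;\Z_p) \to \prod_{p} \text{Fil}^\star_{\text{HKR}} \text{HC}^-(X;\Z_p)$ appearing in Definition~\ref{definitionmotivicfiltrationonTC} agrees with the one appearing as the left vertical map of the right square in Definition~\ref{definition11HKRfiltrationonHC-solid}, and similarly for the map $\text{Fil}^\star_{\text{mot}} \text{TC}(X) \to \text{Fil}^\star_{\text{HKR}} \text{HC}^-(X)$. Both compatibilities hold essentially by construction, since the bottom map in Definition~\ref{definitionmotivicfiltrationonTC} is precisely (the product over $p$ of) the BMS--HKR comparison map of Construction~\ref{constructionfilteredmapTCpcompletedtoHC-pcompleted}, which is the same map used throughout.

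There is no substantive obstacle here; the statement is formal, and the argument is a straightforward pasting of pullbacks, once one notices that the pushout defining the rigid-analytic HKR filtration is also a pullback in the stable setting. The only conceptual point is that this pasting gives a new description of $\text{Fil}^\star_{\text{mot}} \text{TC}(X)$ as a pullback of a filtered rigid-analytic "arithmetic fracture" diagram, replacing the profinite completion $\prod_p \text{Fil}^\star_{\text{HKR}} \text{HC}^-(X;\Z_p)$ in the original definition with the rigid-analytic term $\text{Fil}^\star_{\text{HKR}} \big(\prod'_{p} \text{HH}(X;\Q_p)\big)^{h\text{S}^1}$ and the rationalisation $\text{Fil}^\star_{\text{HKR}} \text{HC}^-(X_{\Q}/\Q)$.
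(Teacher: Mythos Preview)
Your proof is correct and matches the paper's approach exactly: the paper's proof is the single sentence ``This is a consequence of Definitions~\ref{definitionmotivicfiltrationonTC} and~\ref{definition11HKRfiltrationonHC-solid},'' and you have correctly unpacked this into the pasting-of-pullbacks argument it encodes.
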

	
	\begin{proof}
		This is a consequence of Definitions~\ref{definitionmotivicfiltrationonTC} and~\ref{definition11HKRfiltrationonHC-solid}.
	\end{proof}
	
	\begin{corollary}\label{corollaryusefulBFSafterLcdh}
		Let $X$ be a qcqs scheme. Then the natural commutative diagram
		$$\begin{tikzcd}
			\big(L_{\emph{cdh}}\, \emph{Fil}^\star_{\emph{mot}} \emph{TC}(-)\big)(X) \arrow{r} \arrow{d} & \big(L_{\emph{cdh}}\, \emph{Fil}^\star_{\emph{HKR}} \emph{HC}^-(-_{\Q}/\Q)\big)(X) \ar[d] \\
			\Big(L_{\emph{cdh}} \prod_{p \in \mathbb{P}} \emph{Fil}^\star_{\emph{BMS}} \emph{TC}(-;\Z_p)\Big)(X) \arrow{r} & \Big(L_{\emph{cdh}}\, \emph{Fil}^\star_{\emph{HKR}} \big(\prod'_{p \in \mathbb{P}} \emph{HH}(-;\Q_p)\big)^{h\emph{S}^1}\Big)(X),
		\end{tikzcd}$$
		is a cartesian square of filtered spectra.
	\end{corollary}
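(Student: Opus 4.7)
The plan is to deduce the corollary directly from Proposition~\ref{propositionmainconsequenceBFSwithfiltrations} by invoking the left exactness of the cdh sheafification functor. More precisely, Proposition~\ref{propositionmainconsequenceBFSwithfiltrations} asserts that, as a diagram of presheaves of filtered spectra on qcqs derived schemes, the underlying commutative square is pointwise cartesian, hence cartesian in the $\infty$-category of presheaves of filtered spectra.

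The sheafification functor $L_{\text{cdh}}$ associated to the cdh Grothendieck topology is a left exact localization of presheaves, so it commutes with finite limits. In particular, it sends cartesian squares of presheaves of filtered spectra to cartesian squares of (sheaves of) filtered spectra. Applying $L_{\text{cdh}}$ termwise to the cartesian square of Proposition~\ref{propositionmainconsequenceBFSwithfiltrations} therefore yields the displayed cartesian square of filtered spectra, evaluated at any qcqs scheme $X$.

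There is no real obstacle here: the only thing to be careful about is that the cartesianness in Proposition~\ref{propositionmainconsequenceBFSwithfiltrations} is stated in the category of filtered spectra, and we want cartesianness of the cdh sheafified diagram in the same category. This is automatic because $L_{\text{cdh}}$ is applied termwise (in the filtered direction) and preserves limits, so a filtered cartesian square becomes a filtered cartesian square after sheafification. The restriction from derived schemes to classical qcqs schemes in the statement is harmless, as the cdh topology and all four presheaves are defined on this subcategory.
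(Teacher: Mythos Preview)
Your proof is correct and follows the same approach as the paper, which simply states that the result follows from Proposition~\ref{propositionmainconsequenceBFSwithfiltrations} after restricting to qcqs schemes and applying cdh sheafification. Your version just makes explicit why sheafification preserves the cartesian square (left exactness, or equivalently in the stable setting, preservation of cocartesian squares by a left adjoint).
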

	
	\begin{proof}
		This is a consequence of Proposition~\ref{propositionmainconsequenceBFSwithfiltrations}, which we restrict to qcqs schemes and then sheafify for the cdh topology.
	\end{proof}
	
	\begin{corollary}\label{corollarymainconsequenceBFSongradedpieces}
		Let $X$ be a qcqs derived scheme. Then for every integer $i \in \Z$, the natural commutative diagram
		$$\begin{tikzcd}
			\Z(i)^{\emph{TC}}(X) \arrow{r} \arrow{d} & R\Gamma_{\emph{Zar}}\Big(X,\widehat{\mathbb{L}\Omega}^{\geq i}_{-_{\Q}/\Q}\Big) \ar[d] \\
			\prod_{p \in \mathbb{P}} \Z_p(i)^{\emph{BMS}}(X) \arrow{r} & R\Gamma_{\emph{Zar}}\Big(X,\prod'_{p \in \mathbb{P}} \underline{\widehat{\mathbb{L}\Omega}}^{\geq i}_{-_{\Q_p}/\Q_p}\Big),
		\end{tikzcd}$$
		is a cartesian square in the derived category $\mathcal{D}(\Z)$.
	\end{corollary}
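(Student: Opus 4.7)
The plan is to obtain this cartesian square of graded objects by applying the shifted graded piece functor $\mathrm{gr}^i(-)[-2i]$ to the cartesian square of filtered spectra supplied by Proposition~\ref{propositionmainconsequenceBFSwithfiltrations}. Since the functor $\mathrm{gr}^i : \mathrm{FilSp} \to \mathrm{Sp}$ is exact (it is the cofibre of the transition map $\mathrm{Fil}^{i+1} \to \mathrm{Fil}^i$, hence preserves finite limits and colimits), and shifts commute with cartesian squares, the resulting square in spectra is automatically cartesian.

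What remains is to identify each of the four vertices with the terms claimed in the statement. The top-left vertex is $\Z(i)^{\mathrm{TC}}(X)$ by Definition~\ref{definitionmotivicfiltrationonTC}; the bottom-left vertex is $\prod_{p \in \mathbb{P}} \Z_p(i)^{\mathrm{BMS}}(X)$, using that products commute with graded pieces together with Definition~\ref{definitionsyntomiccohomologyintermsofTC}. The top-right vertex is $R\Gamma_{\mathrm{Zar}}(X, \widehat{\mathbb{L}\Omega}^{\geq i}_{-_{\Q}/\Q})$ by the rational variant of the HKR identification recalled in Remark~\ref{remarkgradedpiecesoftheHKRfiltrations} and Remark~\ref{remarkHHandvariantsrelativetoQ}. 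Finally, the bottom-right vertex is identified with $R\Gamma_{\mathrm{Zar}}(X, \prod'_{p \in \mathbb{P}} \underline{\widehat{\mathbb{L}\Omega}}^{\geq i}_{-_{\Q_p}/\Q_p})$ directly from Definition~\ref{definition16rigidanalyticHodgecompletederiveddeRhamHC-}. Once these four identifications are in place, one simply verifies that the comparison maps induced on graded pieces match the natural ones appearing in the statement, which is immediate since the filtered maps in Proposition~\ref{propositionmainconsequenceBFSwithfiltrations} were constructed precisely as filtered refinements of these natural maps (via Construction~\ref{constructionfilteredmapTCpcompletedtoHC-pcompleted} and the definitions of the rigid-analytic HKR filtrations).

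There is no genuine obstacle here: the entire content of the corollary is already packaged in the filtered cartesian square, and the proof is purely formal. The only bookkeeping point to be careful about is that the identification of the bottom-left graded piece with a product uses that $\mathrm{gr}^i$ commutes with the product over all primes, which is clear since limits commute with limits.
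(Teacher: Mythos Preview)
Your proof is correct and follows exactly the same approach as the paper: the paper's proof is the single line ``This is a direct consequence of Proposition~\ref{propositionmainconsequenceBFSwithfiltrations}'', and you have simply unpacked the identifications of the four graded pieces that make this consequence immediate. The only difference is the level of detail, not the argument itself.
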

	
	\begin{proof}
		This is a direct consequence of Proposition~\ref{propositionmainconsequenceBFSwithfiltrations}.
	\end{proof}

	\subsection{A rigid-analytic Goodwillie theorem}\label{subsectionrigidanalyticGoodwillietheorem}
	
	\vspace{-\parindent}
	\hspace{\parindent}
	
In this subsection, we prove that the rigid-analytic version of periodic cyclic homology is a truncating invariant (Theorem~\ref{theoremtruncatinginvariantHPsolid}). We first recall the definition of Hochschild, cyclic, negative cyclic, and periodic cyclic homologies of a general cyclic object.

\begin{definition}[Cyclic object]\label{definitioncyclicobject}
	Let $\mathcal{C}$ be a category, or an $\infty$-category. The {\it cyclic category} $\Lambda$ is the category with objects $[n]$ indexed by non-negative integers $n$, and morphisms $[m] \rightarrow [n]$ given by homotopy classes of degree one increasing maps from $\text{S}^1$ to itself that map the subgroup $\Z/(m+1)$ to $\Z/(n+1)$. A {\it cyclic object} in $\mathcal{C}$ is then a contravariant functor from the cyclic category $\Lambda$ to $\mathcal{C}$.
\end{definition}

\begin{notation}[Hochschild homology of a cyclic object]\label{notationmapSforcyclicobjects}
	Let $\mathcal{A}$ be an abelian category with exact infinite products, and~$X_{\bullet}$ be a cyclic object in $\mathcal{A}$. Following \cite[Section~II]{goodwillie_cyclic_1985} (see also \cite[Section~$2.2$]{morrow_aws_2018}), one can define the Hochschild homology $\text{HH}(X_{\bullet})$ of $X_{\bullet}$, as an object of the derived category $\mathcal{D}(\mathcal{A})$ equipped with a natural $\text{S}^1$-action. The cyclic, negative cyclic, and periodic cyclic homologies of the cyclic object $X_{\bullet}$ are then defined by
	$$\text{HC}(X_{\bullet}) := \text{HH}(X_{\bullet})_{h\text{S}^1}, \text{ } \text{HC}^-(X_{\bullet}) := \text{HH}(X_{\bullet})^{h\text{S}^1}, \text{ and } \text{HP}(X_{\bullet}) := \text{HH}(X_{\bullet})^{t\text{S}^1}.$$
	In this context, there is moreover a natural map
	$$s : \text{HC}(X_\bullet)[-2] \longrightarrow \text{HC}(X_\bullet)$$
	in the derived category $\mathcal{D}(\mathcal{A})$, from which periodic cyclic homology $\text{HP}(X_\bullet) \in \mathcal{D}(\mathcal{A})$ can be recovered by the formula
	$$\text{HP}(X_\bullet) \simeq \lim\limits_{\longleftarrow} \Big(\cdots \xlongrightarrow{s} \text{HC}(X_\bullet)[-4] \xlongrightarrow{s} \text{HC}(X_\bullet)[-2] \xlongrightarrow{s} \text{HC}(X_\bullet)\Big).$$
\end{notation}

We now apply the previous general construction to define Hochschild homology and its variants on solid associative derived algebras over a solid commutative ring.

\begin{definition}[Solid Hochschild homology]\label{definitionHHofsolid}
	Let $k$ be a solid commutative ring, and $R$ be a connective solid $k$-$\E_1$-algebra. The simplicial object
	$$\cdots \hspace{1mm} \substack{\longrightarrow\\[-0.9em] \longrightarrow\\[-0.9em] \longrightarrow\\[-0.9em] \longrightarrow} \hspace{1mm} R \otimes_{k} R \otimes_{k} R \hspace{1mm} \substack{\longrightarrow\\[-0.9em] \longrightarrow\\[-0.9em] \longrightarrow} \hspace{1mm} R\otimes_{k} R \hspace{1mm} \substack{\longrightarrow\\[-0.9em] \longrightarrow} \hspace{1mm} R$$
	has a natural structure of cyclic object in the derived $\infty$-category of solid $k$-modules (Definition~\ref{definitioncyclicobject}), induced by permutation of the tensor factors. We write $\underline{\text{HH}}(R/k)$, $\underline{\text{HC}}(R/k)$, $\underline{\text{HC}^-}(R/k)$ and $\underline{\text{HP}}(R/k)$ for the Hochschild, cyclic, negative cyclic and periodic cyclic homologies of this cyclic object (Notation~\ref{notationmapSforcyclicobjects}). If $k=\Z$, we simply denote these by $\underline{\text{HH}}(R)$, $\underline{\text{HC}}(R)$, $\underline{\text{HC}^-}(R)$ and $\underline{\text{HP}}(R)$.
\end{definition}

For $f : R \rightarrow R'$ a map of $\Z$-$\E_1$-algebras and $F : \E_1\text{-Alg}_{\Z} \rightarrow \mathcal{D}(\Z)$ a functor, we denote by $F(f) \in \mathcal{D}(\Z)$ the cofibre of the map $F(R) \rightarrow F(R')$. More generally, for $f : R \rightarrow R'$ a map of solid $\Z$-$\mathbb{E}_1$-algebras and $F$ a $\mathcal{D}(\text{Solid})$-valued functor on solid $\Z$-$\mathbb{E}_1$-algebras, denote by $F(f) \in \mathcal{D}(\text{Solid})$ the cofibre of the map $F(R) \rightarrow F(R')$. 

The following result is \cite[Theorem~IV.$2.6$]{goodwillie_cyclic_1985}. More precisely, this result of Goodwillie is for maps of simplicial $\Z$-algebras, and the underlying $\infty$-category of simplicial $\Z$-algebras is naturally identified with the $\infty$-category of connective $\Z$-$\mathbb{E}_1$-algebras, by the monoidal Dold--Kan correspondence and \cite[Proposition~$7.1.4.6$]{lurie_higher_2017}.

\begin{theorem}[\cite{goodwillie_cyclic_1985}]\label{theoremGoodwillieIV26}
	Let $f : R \rightarrow R'$ be a $1$-connected map of connective $\Z$-$\E_1$-algebras. For every integer $n \geq 0$, the natural map
	$$n!s^n : \emph{HC}_{\ast+2n}(f) \longrightarrow \emph{HC}_\ast(f)$$
	is the zero map for $\ast \leq n-1$.
\end{theorem}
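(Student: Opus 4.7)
The plan is to prove the theorem via the Connes SBI exact sequence together with an analysis of the $(b,B)$-bicomplex computing cyclic homology. First I would establish that the relative Hochschild complex $\text{HH}(f)$ is $2$-connective: since $\text{HH}(R)$ is the realization of the cyclic bar construction $[n] \mapsto R^{\otimes (n+1)}$ and tensor products of connective $\mathbb{E}_1$-algebras over $\Z$ preserve connectivity of maps, the cofibre of $\text{HH}(R) \to \text{HH}(R')$ inherits the $2$-connectivity of $\text{fib}(f)$. Iterating the Connes SBI sequence
$$\text{HC}_{m-1}(f) \longrightarrow \text{HH}_m(f) \longrightarrow \text{HC}_m(f) \xrightarrow{s} \text{HC}_{m-2}(f) \longrightarrow \text{HH}_{m-1}(f)$$
and using that $\text{HC}_j(f) = 0$ for $j < 0$ then yields $\text{HC}_j(f) = 0$ for $j \leq 1$. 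This already resolves the cases $n \leq 2$ of the theorem, as well as all cases with $\ast \leq 1$, since there the target $\text{HC}_\ast(f)$ vanishes.

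For $n \geq 3$ and $2 \leq \ast \leq n-1$ the target no longer vanishes, and the factor $n!$ must genuinely be produced. I would pass to the Connes $(b,B)$-bicomplex, in which a class $x \in \text{HC}_{\ast+2n}(f)$ is represented by a tuple $(x_0, x_1, \ldots)$ with $x_k \in \text{HH}_{\ast+2n-2k}(f)$ satisfying $b(x_0) = 0$ and $b(x_{k+1}) + B(x_k) = 0$, and the operator $s$ is the column shift sending such a cocycle to $(x_1, x_2, \ldots)$. To exhibit a primitive for $n! \cdot s^n(x)$ one must solve a sequence of equations $b(y_{k+1}) + B(y_k) = 0$ with $b(y_0) = n! \cdot x_n$; solving recursively, at each step the low-degree vanishing of $\text{HH}(f)$ produces the required lift and the Connes operator $B$, implemented via cyclic averaging over a group whose order matches the Hochschild degree, contributes one divisibility factor, accumulating to $n!$ after $n$ steps.

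The main obstacle is to make the recursive construction of primitives fully rigorous and to verify that the integers produced at each step multiply to exactly $n!$ rather than some smaller divisor. A clean way to organise the induction is to filter $f$ as a Postnikov-type tower $R = R_0 \to R_1 \to R_2 \to \cdots$ with $\text{colim}_i R_i = R'$ and each $\text{fib}(R_{i-1} \to R_i)$ concentrated in a single degree $\geq i+1$, and argue step by step via the long exact sequence in $\text{HC}$ for each elementary extension; the Hochschild groups of such a small extension live in a narrow band of degrees, and the bicomplex computation reduces to an explicit identity involving one copy of the fibre. Alternatively, one may use the $\text{S}^1$-equivariant viewpoint $\text{HC}(f) = \text{HH}(f)_{h\text{S}^1}$ and interpret $s$ as capping with the generator $u \in H^2(B\text{S}^1;\Z)$; the factor $n!$ then reflects the classical denominators in the Chern character, and Goodwillie's original argument may be read as the algebraic incarnation of this phenomenon.
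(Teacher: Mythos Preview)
The paper does not actually prove this theorem: it is stated as a citation of \cite[Theorem~IV.2.6]{goodwillie_cyclic_1985}, with the only added content being the remark that Goodwillie's statement for simplicial $\Z$-algebras transports to connective $\Z$-$\mathbb{E}_1$-algebras via the monoidal Dold--Kan correspondence and \cite[Proposition~7.1.4.6]{lurie_higher_2017}. So there is no proof in the paper for you to match; you are instead sketching Goodwillie's original argument.

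Your sketch is in the right spirit and your low-degree analysis ($n \leq 2$, and all $\ast \leq 1$ via the vanishing of $\text{HC}_j(f)$ for $j \leq 1$) is correct. However, for the heart of the theorem you yourself flag the gap: you have not carried out the recursive construction of primitives in the $(b,B)$-bicomplex, nor verified that the accumulated integer is exactly $n!$. Your heuristic that ``cyclic averaging contributes one divisibility factor at each step'' is suggestive but not a proof, and the Postnikov-tower reduction you propose, while a reasonable strategy, is not executed. Goodwillie's actual argument in \cite{goodwillie_cyclic_1985} is a careful and somewhat delicate induction on the bicomplex, and reproducing it would require substantially more than what you have written. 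For the purposes of this paper, the correct move is exactly what the paper does: cite the result and justify the passage from simplicial to $\mathbb{E}_1$-algebras.
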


Goodwillie's proof of Theorem~\ref{theoremGoodwillieIV26}, although stated with respect to the abelian category of $\Z$\nobreakdash-modules (or $k$-modules, for $k$ an arbitrary discrete commutative ring), is valid for any abelian symmetric monoidal category with exact infinite products (see Notation~\ref{notationmapSforcyclicobjects} and Definition~\ref{definitionHHofsolid}). One can thus prove the following generalisation of the previous result.

\begin{theorem}\label{theoremGoodwillieIV26solid}
	Let $f : R \rightarrow R'$ be a $1$-connected map of connective solid $\Z$-$\E_1$-algebras. Then for every integer $n \geq 0$, the natural map
	$$n!s^n : \underline{\emph{HC}}(f)[-2n] \longrightarrow \underline{\emph{HC}}(f),$$
	in the derived category $\mathcal{D}(\emph{Solid})$,
	is the zero map on cohomology groups\footnote{By this, we mean that it is the zero map as a map of solid abelian groups, {\it i.e.}, that it factors through the zero object of the abelian category of solid abelian groups. Note that the underlying abelian group of a nonzero solid abelian group can be zero ({\it e.g.}, the quotient of $\Z_p$ with the $p$-adic topology by $\Z_p$ with the discrete topology). In particular, being zero for a map of solid abelian groups cannot be detected on the underlying map of abelian groups.} in degrees at least $-n+1$.
\end{theorem}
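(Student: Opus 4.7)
The plan is to adapt Goodwillie's original proof of Theorem~\ref{theoremGoodwillieIV26} essentially verbatim to the solid setting, exploiting the remark made in the paragraph preceding the statement: Goodwillie's argument is fundamentally categorical, and the only structural hypotheses it uses on the ambient abelian category are that it be symmetric monoidal with exact infinite products. First I would recall that the category of solid abelian groups is an abelian symmetric monoidal category with cocontinuous tensor product in which all limits---and in particular all products---are exact; these are defining features of the Clausen--Scholze formalism. This provides exactly the abstract framework needed to make sense of the cyclic object appearing in Definition~\ref{definitionHHofsolid}, of the associated functors $\underline{\text{HH}}$, $\underline{\text{HC}}$, $\underline{\text{HC}^-}$, $\underline{\text{HP}}$, and of the Connes $s$-map recalled in Notation~\ref{notationmapSforcyclicobjects}.

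Next I would transpose Goodwillie's proof step by step. The key inputs are: the Connes--Tsygan bicomplex presentation of $\underline{\text{HC}}$; the explicit chain-level formula it yields for the Connes $s$-map; a canonical filtration on $\underline{\text{HC}}(f)$ whose associated graded is given by shifts of $\underline{\text{HH}}(f)$; the connectivity estimate on $\underline{\text{HH}}(f)$ coming from the $1$-connectivity of $f$ (which follows from the convergence of the bar-type spectral sequence, available because the solid tensor product is cocontinuous and products are exact); and finally the combinatorial observation that $n!\,s^n$ is chain-null-homotopic through a map whose target lies in a sufficiently deep piece of the filtration. The factorial $n!$ in particular arises from the same universal combinatorial identity in the cyclic category as in Goodwillie's original proof. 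Each of these ingredients is formulated purely in terms of kernels, cokernels, tensor products, and products, and hence passes to $\mathcal{D}(\text{Solid})$ by the previous paragraph.

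The main obstacle I anticipate is bookkeeping rather than conceptual: all connectivity and vanishing statements must be verified at the level of the solid $t$-structure, not merely after applying the forgetful functor to discrete abelian groups, since that functor does not reflect zero (as emphasised in the theorem's footnote). Fortunately, Goodwillie's null-homotopies witnessing that $n!\,s^n$ vanishes in low degrees are given by universal chain-level formulas in the combinatorics of the cyclic category, so they are intrinsically functorial and induce corresponding null-homotopies in any abelian symmetric monoidal category with the required exactness properties. This should make the translation to $\mathcal{D}(\text{Solid})$ essentially mechanical, reducing the argument to checking, one construction at a time, that the manipulations Goodwillie performs in abelian groups make sense in solid abelian groups.
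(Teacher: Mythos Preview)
Your proposal is correct and follows essentially the same approach as the paper: verify that the abelian category of solid abelian groups is symmetric monoidal with exact infinite products (the paper cites \cite[Theorem~$6.2$\,(i)]{clausen_condensed_2019}, \cite[Theorem~$1.10$]{clausen_condensed_2019}, and \cite[Theorem~$5.8\,(i)$]{clausen_condensed_2019} for this), and then observe that Goodwillie's argument is purely categorical and hence transports verbatim. The paper is in fact briefer than your sketch, omitting the internal mechanics of Goodwillie's proof entirely; the only point you do not mention explicitly is the passage from simplicial solid $\Z$-algebras to connective solid $\Z$-$\mathbb{E}_1$-algebras via a solid analogue of \cite[Proposition~$7.1.4.6$]{lurie_higher_2017}, which the paper flags but also declines to write out.
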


\begin{proof}
	To prove the result for simplicial solid $\Z$-algebras, it suffices to prove that the abelian category of solid abelian groups is symmetric monoidal, and has exact infinite products. The first claim is \cite[Theorem~$6.2$\,(i)]{clausen_condensed_2019}. The second claim is a consequence of the fact that the abelian category of condensed abelian groups has exact arbitrary products (\cite[Theorem~$1.10$]{clausen_condensed_2019}), and that the category of solid abelian groups, as a subcategory of the abelian category of condensed abelian groups, is stable under all limits (\cite[Theorem~$5.8\,(i)$]{clausen_condensed_2019}). We omit the proof of the analogue of \cite[Proposition~$7.1.4.6$]{lurie_higher_2017} to pass from simplicial solid $\Z$-algebras to connective solid $\Z$-$\mathbb{E}_1$-algebras.
\end{proof}

For the rest of this section, and following the convention of condensed mathematics, a condensed object is called {\it discrete} if its condensed structure is trivial. Given a classical object $X$, we denote by $\underline{X}$ the associated discrete condensed object.

\begin{lemma}\label{lemmasolidHHisHHpcompleted}
	Let $R$ be a connective $\Z$-$\mathbb{E}_1$-algebra, and $p$ be a prime number. Then the natural map
	$$\underline{\emph{HH}(R)} \longrightarrow \underline{\emph{HH}}(\underline{R}^\wedge_p),$$
	in the derived category $\mathcal{D}(\emph{Solid})$, exhibits the target as the $p$-completion of the source. In particular, there is a natural equivalence
	$$\underline{\emph{HH}(R)}^\wedge_p[\tfrac{1}{p}] \xlongrightarrow{\sim} \underline{\emph{HH}}(\underline{R}^\wedge_p[\tfrac{1}{p}]/\Q_p)$$
	in the derived category $\mathcal{D}(\emph{Solid})$.
\end{lemma}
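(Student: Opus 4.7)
The plan is to compare both sides as geometric realizations of cyclic bar constructions in the derived $\infty$-category $\mathcal{D}(\mathrm{Solid})$ of solid abelian groups. Under the fully faithful symmetric monoidal embedding $\mathcal{D}(\Z) \hookrightarrow \mathcal{D}(\mathrm{Solid})$ that sends a classical $\Z$-module to the associated discrete solid module, $\underline{\mathrm{HH}(R)}$ is identified with the geometric realization of the simplicial solid object $n \mapsto \underline{R}^{\otimes_{\underline{\Z}}^{\mathrm{solid}}(n+1)}$, while $\underline{\mathrm{HH}}(\underline{R}^\wedge_p)$ is, by Definition~\ref{definitionHHofsolid}, the geometric realization of $n \mapsto (\underline{R}^\wedge_p)^{\otimes_{\underline{\Z}}^{\mathrm{solid}}(n+1)}$. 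The comparison map of the lemma is induced levelwise by the $p$-completion $\underline{R} \to \underline{R}^\wedge_p$.

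To conclude the first equivalence, I would use the fact that derived $p$-completion in $\mathcal{D}(\mathrm{Solid})$ is a smashing Bousfield localisation: it commutes with arbitrary colimits and is compatible with the solid tensor product, in the sense that for any solid modules $M$ and $N$ there is a natural equivalence $(M \otimes_{\underline{\Z}}^{\mathrm{solid}} N)^\wedge_p \simeq M^\wedge_p \otimes_{\underline{\Z}}^{\mathrm{solid}} N^\wedge_p$. Iterating this identity produces the levelwise $p$-completion equivalence
\[
\big(\underline{R}^{\otimes_{\underline{\Z}}^{\mathrm{solid}}(n+1)}\big)^\wedge_p \xlongrightarrow{\sim} (\underline{R}^\wedge_p)^{\otimes_{\underline{\Z}}^{\mathrm{solid}}(n+1)},
\]
and passing to geometric realizations yields the desired equivalence $\underline{\mathrm{HH}(R)}^\wedge_p \xrightarrow{\sim} \underline{\mathrm{HH}}(\underline{R}^\wedge_p)$, i.e., the first assertion of the lemma.

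For the second equivalence, I would invert $p$ in the equivalence just obtained and observe that $\underline{R}^\wedge_p[\tfrac{1}{p}]$ is a connective solid $\underline{\Q}_p$-$\mathbb{E}_1$-algebra. Since the functor $(-)[\tfrac{1}{p}]$ commutes with colimits and since, for $\underline{\Q}_p$-linear objects $M$ and $N$, the canonical map $M \otimes_{\underline{\Z}}^{\mathrm{solid}} N \to M \otimes_{\underline{\Q}_p}^{\mathrm{solid}} N$ is an equivalence, inverting $p$ in the simplicial object computing $\underline{\mathrm{HH}}(\underline{R}^\wedge_p)$ identifies it levelwise, hence after geometric realization, with the simplicial object computing $\underline{\mathrm{HH}}(\underline{R}^\wedge_p[\tfrac{1}{p}]/\underline{\Q}_p)$. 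The only potentially nontrivial inputs of the argument are the smashing property of solid derived $p$-completion and the base-change compatibility of the solid tensor product after inverting $p$; both are standard features of the Clausen--Scholze formalism \cite{clausen_condensed_2019}, and I expect the main (but modest) obstacle to lie in setting up these two formal properties in a form precise enough to be invoked at each simplicial level.
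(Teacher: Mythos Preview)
The central gap is your assertion that derived $p$-completion in $\mathcal{D}(\mathrm{Solid})$ is a smashing Bousfield localisation commuting with arbitrary colimits. This is false: for instance, the infinite direct sum $\bigoplus_{\mathbb{N}} \Z_p$ in solid abelian groups is a $\Z_p$-module but is not derived $p$-complete (evaluate the inverse limit $\lim_k \bigoplus_{\mathbb{N}} \Z/p^k$ at a point to see it is strictly larger), so $p$-completion neither agrees with $-\otimes^{\bblacksquare}_{\Z}\Z_p$ nor preserves infinite coproducts. Consequently you cannot pass $p$-completion through the geometric realization computing Hochschild homology just by invoking a smashing property. Your levelwise identity $(M\otimes^{\bblacksquare} N)^\wedge_p \simeq M^\wedge_p \otimes^{\bblacksquare} N^\wedge_p$ is in fact correct for connective $M,N$ (it follows from the fact that the solid tensor of $p$-complete connective objects is $p$-complete, together with a mod-$p$ check), but this alone does not control the colimit.

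The paper avoids this issue by arguing in the opposite direction. It first shows directly that the target $\underline{\mathrm{HH}}(\underline{R}^\wedge_p)$ is $p$-complete, using two inputs from \cite{clausen_witt_2021}: the solid tensor product of $p$-complete connective objects is $p$-complete (so each bar term $(\underline{R}^\wedge_p)^{\otimes(n+1)}$ is $p$-complete), and the geometric realization of a simplicial object in $p$-complete connective solid modules remains $p$-complete (this uses connectivity in an essential way). Once the target is known to be $p$-complete, the derived Nakayama lemma reduces the first assertion to a mod-$p$ comparison, which by base change for Hochschild homology becomes the evident equivalence $\underline{\mathrm{HH}((R/p)/\F_p)} \simeq \underline{\mathrm{HH}}(\underline{R}/p/\underline{\F_p})$, valid because $\underline{(-)}$ is symmetric monoidal on discrete objects. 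Your treatment of the second equivalence, by inverting $p$ and using base change along $\Z\to\Z_p\to\Q_p$ for the solid bar construction, matches the paper's.
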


\begin{proof}
	The solid tensor product of $p$-complete solid connective $\Z$-$\mathbb{E}_1$-algebras is $p$-complete, and so is the totalisation of a complex of $p$-complete solid connective $\Z$-modules (\cite{clausen_witt_2021}). In particular, the complex $\underline{\text{HH}}(\underline{R}^\wedge_p)$, as a totalisation of tensor powers of the $p$-complete solid connective $\Z$-$\mathbb{E}_1$-algebra~$\underline{R}^\wedge_p$, is $p$-complete. By the derived Nakayama lemma (\cite[091N]{stacks_project_authors_stacks_2019}, see also \cite{clausen_witt_2021}), it thus suffices to prove the first statement modulo $p$. By base change for Hochschild homology (resp. solid Hochschild homology), this is equivalent to proving that the natural map
	$$\underline{\text{HH}((R/p)/\F_p)} \longrightarrow \underline{\text{HH}}((\underline{R}/p)/\underline{\F_p})$$
	is an equivalence in the derived category $\mathcal{D}(\text{Solid})$. The desired equivalence is then a consequence of the fact that reduction modulo $p$ and tensor products commute with the functor~$\underline{(-)}$ from $\Z$-$\mathbb{E}_1$-algebras to solid $\Z$-$\mathbb{E}_1$-algebras. The second statement follows from the fact that rationalisation commutes with the functor $$\underline{(-)} : \mathcal{D}(\Z) \longrightarrow \mathcal{D}(\text{Solid}),$$ and base change for solid Hochschild homology.
\end{proof}

\begin{proposition}\label{propositionsolidHCisHCpcompleted}
	Let $R$ be a connective $\Z$-$\mathbb{E}_1$-algebra, and $p$ be a prime number. Then the natural map
	$$\underline{\emph{HC}(R)} \longrightarrow \underline{\emph{HC}}(\underline{R}^\wedge_p),$$
	in the derived category $\mathcal{D}(\emph{Solid})$, exhibits the target as the $p$-completion of the source. In particular, there is a natural equivalence
	$$\underline{\emph{HC}(R)}^\wedge_p[\tfrac{1}{p}] \xlongrightarrow{\sim} \underline{\emph{HC}}(\underline{R}^\wedge_p[\tfrac{1}{p}]/\Q_p)$$
	in the derived category $\mathcal{D}(\emph{Solid})$.
\end{proposition}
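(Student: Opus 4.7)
The plan is to deduce the proposition from Lemma~\ref{lemmasolidHHisHHpcompleted} via the identification $\underline{\text{HC}}(-) = \underline{\text{HH}}(-)_{h\text{S}^1}$, which applies both to $R$ and to $\underline{R}^\wedge_p$ since $\underline{(-)}$ and $(-)_{h\text{S}^1}$ each commute with colimits. Concretely, the natural map of the statement is obtained by applying $(-)_{h\text{S}^1}$ to the map of Lemma~\ref{lemmasolidHHisHHpcompleted}, and to prove that it exhibits the target as the $p$-completion of the source I will verify two properties separately: that $\underline{\text{HC}}(\underline{R}^\wedge_p)$ is derived $p$-complete, and that the cofibre of the map vanishes after $p$-completion.

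The cofibre vanishing is the easier half, which I will handle first. Denoting by $D$ the cofibre of $\underline{\text{HH}(R)} \to \underline{\text{HH}}(\underline{R}^\wedge_p)$, Lemma~\ref{lemmasolidHHisHHpcompleted} gives $D^\wedge_p = 0$; hence $D/p = 0$, and by induction $D/p^n = 0$ for every $n \geq 1$. Since $(-)_{h\text{S}^1}$ commutes with colimits (and in particular with reduction modulo $p^n$), the cofibre of $\underline{\text{HC}(R)} \to \underline{\text{HC}}(\underline{R}^\wedge_p)$ is $D_{h\text{S}^1}$, which then satisfies $D_{h\text{S}^1}/p^n = (D/p^n)_{h\text{S}^1} = 0$ for all $n$. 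Passing to the limit yields $(D_{h\text{S}^1})^\wedge_p = 0$, as required.

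The main obstacle is the derived $p$-completeness of $\underline{\text{HC}}(\underline{R}^\wedge_p)$, since the homotopy orbit functor does not in general preserve $p$-complete objects. To deal with this, I will follow exactly the strategy used in the excerpt for the analogous statement about $\text{HH}(X;\Z_p)_{h\text{S}^1}$: apply the natural fibre sequence $X \to X_{h\text{S}^1} \to X_{h\text{S}^1}[2]$ with $X := \underline{\text{HH}}(\underline{R}^\wedge_p)$. By Lemma~\ref{lemmasolidHHisHHpcompleted}, $X$ is derived $p$-complete, and since $\underline{R}^\wedge_p$ is a connective solid $\Z$-$\mathbb{E}_1$-algebra, both $X$ and $X_{h\text{S}^1}$ lie in non-positive cohomological degrees. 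Iterating the fibre sequence then exhibits each cohomological group of $X_{h\text{S}^1}$ as an iterated extension of finitely many cohomology groups of shifts of $X$; as derived $p$-complete objects in $\mathcal{D}(\text{Solid})$ form a full subcategory stable under limits and extensions, this forces $X_{h\text{S}^1} = \underline{\text{HC}}(\underline{R}^\wedge_p)$ to be derived $p$-complete.

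Finally, the rationalised statement follows from the first by inverting $p$: the left-hand side becomes $\underline{\text{HC}(R)}^\wedge_p[\tfrac{1}{p}]$, while the right-hand side can be rewritten as
$$\underline{\text{HC}}(\underline{R}^\wedge_p)[\tfrac{1}{p}] \simeq \underline{\text{HH}}(\underline{R}^\wedge_p)_{h\text{S}^1}[\tfrac{1}{p}] \simeq \underline{\text{HH}}(\underline{R}^\wedge_p[\tfrac{1}{p}]/\Q_p)_{h\text{S}^1} \simeq \underline{\text{HC}}(\underline{R}^\wedge_p[\tfrac{1}{p}]/\Q_p),$$
where the middle equivalence combines the rationalised part of Lemma~\ref{lemmasolidHHisHHpcompleted} with the commutation of $(-)_{h\text{S}^1}$ with rationalisation.
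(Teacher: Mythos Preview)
Your proof is correct and follows essentially the same approach as the paper. The paper's argument is terser: it simply cites Lemma~\ref{lemmasolidHHisHHpcompleted} together with the description of cyclic homology in terms of the Hochschild cyclic object, leaving the verification that $\underline{\text{HC}}(\underline{R}^\wedge_p)$ is derived $p$-complete implicit, whereas you spell this out via the fibre sequence $X \to X_{h\text{S}^1} \to X_{h\text{S}^1}[2]$ and connectivity, exactly as the paper does earlier for $\text{HH}(X;\Z_p)_{h\text{S}^1}$.
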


\begin{proof}
	The first statement is a consequence of Lemma~\ref{lemmasolidHHisHHpcompleted}, and the description \cite[Definition~$2.19$]{morrow_aws_2018} of (solid) cyclic homology in terms of the cyclic object $\text{HH}(R)$ (resp. $\underline{\text{HH}}(\underline{R}^\wedge_p)$) in the stable $\infty$-category $\mathcal{D}(\Z)$ (resp. $\mathcal{D}(\text{Solid})$). The second statement follows from the fact that rationalisation commutes direct sums (or equivalently, with the functor $(-)_{h\text{S}^1}$) and with the functor $\underline{(-)}$ from the derived category $\mathcal{D}(\Z)$ to the derived category $\mathcal{D}(\text{Solid})$.
\end{proof}

\begin{corollary}\label{corollaryadaptationoftheoremGoodwillieIV26}
	Let $f : R \rightarrow R'$ a $1$-connected map of connective $\Z$-$\E_1$-algebras. Then for every integer $n \geq 0$, the map
	$$n! s^n : \prod_{p \in \mathbb{P}} {}^{'} \, \emph{HC}(f;\Q_p)[-2n] \longrightarrow \prod_{p \in \mathbb{P}} {}^{'} \, \emph{HC}(f;\Q_p),$$
	in the derived category $\mathcal{D}(\Q)$, is the zero map on cohomology groups in degrees at least $-n+1$.
\end{corollary}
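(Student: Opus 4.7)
The plan is to deduce the statement from the solid Goodwillie theorem (Theorem~\ref{theoremGoodwillieIV26solid}) one prime at a time, via the identification in Proposition~\ref{propositionsolidHCisHCpcompleted}, and then assemble the vanishing over all primes. Fix a prime number $p$. I would first argue that the induced map $\underline{R}^\wedge_p \to \underline{R'}^\wedge_p$ of connective solid $\Z$-$\mathbb{E}_1$-algebras remains $1$-connected in $\mathcal{D}(\text{Solid})$. Its fibre is the solid derived $p$-completion $\bigl(\underline{\text{fib}(f)}\bigr)^\wedge_p = \lim_n \underline{\text{fib}(f)/p^n}$: each term has vanishing $H^{-1}$ and $H^0$ (because $\text{fib}(f)$ is classically $1$-connected and the Tor terms in the short exact sequence for reduction mod $p^n$ vanish), and the Milnor $\lim^1$-term for the solid inverse limit vanishes because the transition maps form a surjective tower of discrete solid abelian groups (for which the solid $\lim^1$ agrees with the classical one).

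Applying Theorem~\ref{theoremGoodwillieIV26solid} to this $1$-connected map, one obtains that $n! s^n$ is the zero map on the solid cohomology groups $H^k\bigl(\underline{\text{HC}}(\underline{f}^\wedge_p)\bigr)$ for all $k \geq -n+1$. By Proposition~\ref{propositionsolidHCisHCpcompleted}, applied functorially to both $R$ and $R'$ and using that $\underline{\text{HC}}$ preserves cofibre sequences, this complex is naturally identified with the solid derived $p$-completion $\underline{\text{HC}(f)}^\wedge_p$ of the discrete complex $\underline{\text{HC}(f)}$.

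Next, I would pass back to the classical setting via the $t$-exact evaluation-at-a-point functor $(-)(\ast) \colon \mathcal{D}(\text{Solid}) \to \mathcal{D}(\Z)$. Since $(-)(\ast)$ commutes with limits (hence with solid derived $p$-completion), one has $\bigl(\underline{\text{HC}(f)}^\wedge_p\bigr)(\ast) \simeq \text{HC}(f)^\wedge_p = \text{HC}(f;\Z_p)$; moreover, $(-)(\ast)$ commutes with $H^k$, so the solid vanishing transfers to the vanishing of $n! s^n$ on $H^k(\text{HC}(f;\Z_p))$ for $k \geq -n+1$. Taking products over all primes (which commutes with $H^k$ in $\mathcal{D}(\Z)$) and then rationalizing gives the vanishing of $n!s^n$ on $H^k\bigl(\prod'_p \text{HC}(f;\Q_p)\bigr) = \bigl(\prod_p H^k(\text{HC}(f;\Z_p))\bigr)_{\Q}$ in the required range.

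The main obstacle is the preservation of $1$-connectivity under the solid derived $p$-completion of a classically $1$-connected discrete complex; this rests on the good behavior of derived inverse limits in the solid framework, and is exactly the feature that makes it worthwhile to pass through the solid Goodwillie theorem rather than attempt to apply the classical statement directly, since classical derived $p$-completion does not preserve $1$-connectivity in general.
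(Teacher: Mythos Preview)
Your argument is correct and follows essentially the same route as the paper's proof: apply the solid Goodwillie theorem to $\underline{f}^\wedge_p$, identify the result via Proposition~\ref{propositionsolidHCisHCpcompleted}, forget the condensed structure, then take the product over primes and rationalise. You are in fact more explicit than the paper about checking that $\underline{f}^\wedge_p$ remains $1$-connected, a point the paper leaves implicit.

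One correction to your closing diagnosis, however: classical derived $p$-completion \emph{does} preserve $1$-connectivity of a connective complex---the relevant $\lim^1$ term vanishes by Mittag--Leffler, exactly as in your solid argument. The genuine obstruction to applying the classical Goodwillie theorem directly to $R^\wedge_p \to (R')^\wedge_p$ is not connectivity but rather that classical $\text{HC}(R^\wedge_p)$ is not $\text{HC}(R)^\wedge_p$: the ordinary tensor product does not preserve $p$-completeness, so the cyclic bar construction on $R^\wedge_p$ computes the wrong thing. The solid tensor product does preserve $p$-completeness, which is exactly the content of Proposition~\ref{propositionsolidHCisHCpcompleted} and the real reason for passing through the solid framework.
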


\begin{proof}
	For every prime number $p$, the natural map
	$$n! s^n : \underline{\text{HC}}(\underline{f}^\wedge_p)[-2n] \longrightarrow \underline{\text{HC}}(\underline{f}^\wedge_p),$$
	in the derived category $\mathcal{D}(\text{Solid})$, is zero in cohomological degrees at least $-n+1$ (Theorem~\ref{theoremGoodwillieIV26solid}). Forgetting the condensed structure and taking the product over all primes $p$, this implies that the natural map
	$$n! s^n : \prod_{p \in \mathbb{P}} \text{HC}(f;\Z_p)[-2n] \longrightarrow \prod_{p \in \mathbb{P}} \text{HC}(f;\Z_p),$$
	in the derived category $\mathcal{D}(\Z)$, is zero in cohomological degrees at least $-n+1$ (Proposition~\ref{propositionsolidHCisHCpcompleted}). Taking rationalisation then implies the desired result.
\end{proof}

\begin{theorem}[Rigid-analytic HP is truncating]\label{theoremtruncatinginvariantHPsolid}
	The construction 
	$$R \longmapsto \Big(\prod_{p \in \mathbb{P}} {}^{'} \, \emph{HH}(R;\Q_p)\Big)^{t\emph{S}^1} := \Big(\Big(\prod_{p \in \mathbb{P}} \emph{HH}(R;\Z_p)\Big)_{\Q}\Big)^{t\emph{S}^1},$$
	from connective $\Z$-$\E_1$-algebras $R$ to the derived category $\mathcal{D}(\Q)$, is truncating. More precisely, there exists a truncating invariant \hbox{$E : \emph{Cat}^{\emph{perf}}_{\infty} \rightarrow \mathcal{D}(\Q)$} such that the previous construction is the composite $R \mapsto \emph{Perf}(R) \mapsto E(\emph{Perf}(R))$.
\end{theorem}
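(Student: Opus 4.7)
The strategy is to first exhibit the given construction as the composite $R \mapsto \text{Perf}(R) \mapsto E(\text{Perf}(R))$ for a localizing invariant $E$ on $\text{Cat}^{\text{perf}}_\infty$, and then to verify the truncation property $E(R) \xrightarrow{\sim} E(\pi_0(R))$ for every connective $\Z$-$\mathbb{E}_1$-algebra $R$ by adapting Goodwillie's vanishing argument through Corollary~\ref{corollaryadaptationoftheoremGoodwillieIV26}.

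For the localizing extension, Hochschild homology admits a standard extension to a localizing functor $\text{HH} : \text{Cat}^{\text{perf}}_\infty \to \text{Sp}^{B\text{S}^1}$ agreeing with $R \mapsto \text{HH}(R)$ on $R \mapsto \text{Perf}(R)$. Composing with $p$-adic completion (for each prime $p$), the infinite product over primes, rationalization, and the Tate construction $(-)^{t\text{S}^1}$ produces a functor $E$; each of these four operations preserves cofiber sequences of (equivariant) spectra---the last because $(-)^{t\text{S}^1}$ is the cofiber of the norm map between the exact functors $(-)_{h\text{S}^1}$ and $(-)^{h\text{S}^1}$---so $E$ is itself localizing and recovers the desired construction on $\text{Perf}(R)$.

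For the truncation property, write $f : R \to \pi_0(R)$, a $1$-connected map of connective $\Z$-$\mathbb{E}_1$-algebras in the sense of Theorem~\ref{theoremGoodwillieIV26} (it induces an isomorphism on $\pi_0$). Set $H := \big(\prod_{p \in \mathbb{P}} \text{HH}(f;\Z_p)\big)_\Q$, regarded as an $\text{S}^1$-equivariant object of $\mathcal{D}(\Q)$; by the exactness of step one the claim reduces to $H^{t\text{S}^1} = 0$. Using the $s$-tower description of periodic cyclic homology from Notation~\ref{notationmapSforcyclicobjects}, realised via the solid framework of Section~\ref{subsectionrigidanalyticGoodwillietheorem}, the Tate construction $H^{t\text{S}^1}$ is computed as the inverse limit
\[
\cdots \xrightarrow{s} H_{h\text{S}^1}[-4] \xrightarrow{s} H_{h\text{S}^1}[-2] \xrightarrow{s} H_{h\text{S}^1},
\]
and by Remark~\ref{remark15'HKRfiltrationonHCsolidallprimesprestrictedproduct} we have $H_{h\text{S}^1} \simeq \prod'_{p \in \mathbb{P}} \text{HC}(f;\Q_p)$. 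Corollary~\ref{corollaryadaptationoftheoremGoodwillieIV26} ensures that for every integer $n \geq 0$, the composite $s^n : \prod'_p \text{HC}(f;\Q_p)[-2n] \to \prod'_p \text{HC}(f;\Q_p)$ is zero on cohomology in all degrees $\geq -n+1$ (factorials being invertible rationally). Fixing $m \in \Z$ and choosing $k \geq 1 - m + 2n$, the map from stage $n+k$ to stage $n$ of the tower vanishes on $H^m$, so the tower of cohomology groups is pro-zero; the Milnor exact sequence then kills both $\lim$ and $\lim^1$, forcing $H^m(H^{t\text{S}^1}) = 0$ for every $m$.

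The main obstacle will be making the $s$-tower description of $H^{t\text{S}^1}$ fully rigorous: the Tate construction does not commute with infinite products or with rationalization in general, so one must carefully identify $H$ as the Hochschild homology of a genuine cyclic object in a setting where Notation~\ref{notationmapSforcyclicobjects} applies. The natural setting is the solid derived category, where Lemma~\ref{lemmasolidHHisHHpcompleted} and Proposition~\ref{propositionsolidHCisHCpcompleted} reinterpret the product of $p$-completions as a single solid cyclic object, and Theorem~\ref{theoremGoodwillieIV26solid} directly supplies the $s$-vanishing before the final projection back to $\mathcal{D}(\Q)$.
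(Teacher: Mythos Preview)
Your proposal is correct and follows essentially the same approach as the paper: both reduce to showing that $\big(\prod'_{p}\text{HH}(f;\Q_p)\big)^{t\text{S}^1}$ vanishes for a $1$-connected map $f$, invoke Corollary~\ref{corollaryadaptationoftheoremGoodwillieIV26} to see that $s^n$ is zero in the relevant range (using that $n!$ is invertible over $\Q$), and conclude via the $s$-tower description of the Tate construction from Notation~\ref{notationmapSforcyclicobjects}. You are somewhat more explicit than the paper about the localizing extension to $\text{Cat}^{\text{perf}}_\infty$ and about the passage through the solid framework to justify the $s$-tower identification, but the core argument is identical.
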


\begin{proof}
	Let $f : R \rightarrow R'$ be a $1$-connected map of connective $\Z$-$\E_1$-algebras. We want to prove that the natural map
	$$\Big(\prod_{p \in \mathbb{P}} {}^{'} \, \text{HH}(R;\Q_p)\Big)^{t\text{S}^1} \longrightarrow \Big(\prod_{p \in \mathbb{P}} {}^{'} \, \text{HH}(R';\Q_p)\Big)^{t\text{S}^1}$$
	is an equivalence, or equivalently that its homotopy cofibre $\big(\prod'_{p \in \mathbb{P}} \text{HH}(f;\Q_p)\big)^{t\text{S}^1}$ vanishes in the derived category $\mathcal{D}(\Q)$. 
	For every integer $n \geq 0$, the natural map
	$$n!s^n : \prod_{p \in \mathbb{P}} {}^{'} \, \text{HC}(f;\Q_p)[-2n] \longrightarrow \prod_{p \in \mathbb{P}} {}^{'} \, \text{HC}(f;\Q_p)$$
	is the zero map in cohomological degrees at least $-n+1$ (Corollary~\ref{corollaryadaptationoftheoremGoodwillieIV26}). This map is $\Q$-linear, so the map
	$$s^n : \prod_{p \in \mathbb{P}} {}^{'} \, \text{HC}(f;\Q_p)[-2n] \longrightarrow \prod_{p \in \mathbb{P}} {}^{'} \, \text{HC}(f;\Q_p)$$
	is also the zero map in cohomological degrees at least $-n+1$. Taking the inverse limit over integers $n \geq 0$ and using the equivalence at the end of Notation~\ref{notationmapSforcyclicobjects} then implies that the complex $\big(\prod'_{p \in \mathbb{P}}\text{HH}(f;\Q_p)\big)^{t\text{S}^1}$ is zero in the derived category $\mathcal{D}(\Q)$.
\end{proof}

	\subsection{Rigid-analytic derived de Rham cohomology is a cdh sheaf}
	
	\vspace{-\parindent}
	\hspace{\parindent}
	
	The aim of this section is to prove the cdh descent result Corollary~\ref{corollaryHPsolidallprimespisacdhsheafwithfiltration}, which is a rigid-analytic analogue of the following result, used in \cite{elmanto_motivic_2023} to prove Theorem~\ref{theoremrationalstructuremain} in characteristic zero.
	
	\begin{proposition}[\cite{cortinas_cyclic_2008,bals_periodic_2023}]\label{propositionfilteredHPisacdhsheafinchar0}
		For every integer $i \in \Z$, the presheaf
		$$\emph{Fil}^i_{\emph{HKR}} \emph{HP}(-_{\Q}/\Q) : \emph{Sch}^{\emph{qcqs,op}} \longrightarrow \mathcal{D}(\Q)$$
		is a cdh sheaf.
	\end{proposition}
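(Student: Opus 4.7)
The plan is to reduce the filtered assertion to two well-established facts: the completeness of the HKR filtration on $\text{HP}(-_{\Q}/\Q)$, and the cdh descent for its graded pieces. First, by Lemma~\ref{lemmaHKRfiltrationonHC-isalwayscomplete} (or rather its characteristic-zero variant from \cite{antieau_periodic_2019}), the filtration $\text{Fil}^\star_{\text{HKR}} \text{HP}(X_{\Q}/\Q)$ is complete for every qcqs scheme $X$. Second, the graded pieces of this filtration are, by Remark~\ref{remarkgradedpiecesoftheHKRfiltrations} applied over $\Q$, naturally identified up to shift with $R\Gamma_{\text{Zar}}(-,\widehat{\mathbb{L}\Omega}_{-_{\Q}/\Q})[2i]$, i.e. with the Hodge-completed derived de Rham cohomology over $\Q$. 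In characteristic zero, this presheaf is a cdh sheaf on qcqs schemes by the combination of \cite{cortinas_cyclic_2008}, \cite{antieau_periodic_2019} and \cite{elmanto_motivic_2023} already recalled in the introduction.

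Once these two ingredients are in place, the conclusion is formal. For every integer $i \in \Z$, completeness gives a natural equivalence
\[
\text{Fil}^i_{\text{HKR}} \text{HP}(-_{\Q}/\Q) \simeq \lim_{j \to \infty} \bigl(\text{Fil}^i_{\text{HKR}} \text{HP}(-_{\Q}/\Q)\,/\,\text{Fil}^{i+j}_{\text{HKR}} \text{HP}(-_{\Q}/\Q)\bigr).
\]
Each quotient on the right-hand side is a finite iterated extension of the graded pieces $\text{gr}^k_{\text{HKR}} \text{HP}(-_{\Q}/\Q)$ for $i \leq k < i+j$, hence is a cdh sheaf of spectra as a finite extension of cdh sheaves. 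Since cdh sheaves of spectra form a full subcategory of presheaves of spectra that is closed under limits, the presheaf $\text{Fil}^i_{\text{HKR}} \text{HP}(-_{\Q}/\Q)$ is itself a cdh sheaf on qcqs schemes.

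The main obstacle — and the reason the citation points to \cite{cortinas_cyclic_2008,bals_periodic_2023} — is the input that the Hodge-completed derived de Rham complex over $\Q$ is a cdh sheaf. Conceptually, this can be viewed as a consequence of Goodwillie's theorem that periodic cyclic homology over $\Q$ is a truncating invariant of connective $\mathbb{E}_1$-rings, combined with Land--Tamme's theorem that truncating invariants are cdh sheaves on qcqs schemes; the filtered refinement of Bals--Nikolaus then provides, if desired, a more direct path through a Beilinson-type filtered version of this principle. Given this input, the passage from unfiltered cdh descent to each filtration step reduces, by completeness, to the closure of cdh sheaves of spectra under limits, as implemented in the argument above.
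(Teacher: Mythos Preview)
Your proof is correct and follows essentially the same approach as the paper: the paper's one-line proof simply cites \cite[Corollary~A.6]{land_k-theory_2019} and \cite[Theorem~1.3]{bals_periodic_2023}, which package exactly the two inputs you unpack (unfiltered cdh descent for $\text{HP}(-_{\Q}/\Q)$ via Goodwillie and Land--Tamme, together with the passage to the filtered statement). Indeed, the paper uses precisely your ``completeness plus graded pieces'' mechanism explicitly in the parallel rigid-analytic case (Corollary~\ref{corollaryHPsolidallprimespisacdhsheafwithfiltration}), so your argument is the natural unfolding of the cited references.
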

	
	\begin{proof}
		This is a consequence of \cite[Corollary~A.$6$]{land_k-theory_2019} and \cite[Theorem~$1.3$]{bals_periodic_2023}.
	\end{proof}
	
	We first extract the following cdh descent result from Theorem~\ref{theoremtruncatinginvariantHPsolid}. Note that this argument uses the theory of truncating invariants, as developed by Land--Tamme \cite{land_k-theory_2019}, in a crucial way.
	
	\begin{corollary}\label{corollaryHPsolidallprimespisacdhsheafwithoutfiltration}
		The presheaf
		$$\Big(\prod_{p \in \mathbb{P}} {}^{'} \, \emph{HH}(-_;\Q_p)\Big)^{t\emph{S}^1} : \emph{Sch}^{\emph{qcqs,op}} \longrightarrow \mathcal{D}(\Q)$$
		is a cdh sheaf.
	\end{corollary}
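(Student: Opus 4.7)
The proof is essentially a one-step deduction from Theorem~\ref{theoremtruncatinginvariantHPsolid}, via the cdh descent theorem of Land--Tamme for truncating invariants. The plan is the following.

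First, I would upgrade the construction from an invariant of connective $\Z$\nobreakdash-$\E_1$-algebras to an invariant of qcqs schemes in the standard way. By Theorem~\ref{theoremtruncatinginvariantHPsolid}, there exists a truncating localizing invariant $E : \text{Cat}^{\text{perf}}_{\infty} \to \mathcal{D}(\Q)$ such that for every connective $\Z$\nobreakdash-$\E_1$-algebra $R$, there is a natural equivalence
$$\Big(\prod_{p \in \mathbb{P}} {}^{'} \, \text{HH}(R;\Q_p)\Big)^{t\text{S}^1} \simeq E(\text{Perf}(R))$$
in $\mathcal{D}(\Q)$. Both sides of this equivalence are built from localizing invariants --- the left hand side from $\text{HH}(-;\Z_p)$ and the functors $(-)_{\Q}$ and $(-)^{t\text{S}^1}$, all of which preserve localization sequences --- so the equivalence extends naturally along $X \mapsto \text{Perf}(X)$ to qcqs schemes. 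That is, the presheaf in the statement is naturally identified with the presheaf $X \mapsto E(\text{Perf}(X))$ on $\text{Sch}^{\text{qcqs}}$.

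Then I would invoke the main result of Land--Tamme \cite{land_k-theory_2019}, which states that every truncating localizing invariant satisfies cdh descent on qcqs schemes. Applied to $E$, this yields the cdh descent property for $X \mapsto E(\text{Perf}(X))$, and hence for the presheaf in the statement, finishing the proof.

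There is no real obstacle here --- the entire substance is Theorem~\ref{theoremtruncatinginvariantHPsolid} (where the key technical input was the generalisation of Goodwillie's vanishing theorem to the solid setting) together with the Land--Tamme cdh descent theorem for truncating invariants. One minor bookkeeping point to verify is that the functorial equivalence identifying our presheaf with $E \circ \text{Perf}$ is compatible with the localizing invariant structure, so that the identification holds on qcqs schemes and not merely on affines; this is automatic from the construction in Theorem~\ref{theoremtruncatinginvariantHPsolid}.
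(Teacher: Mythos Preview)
Your proposal is correct and follows exactly the paper's approach: the paper's proof is the single sentence ``This is a consequence of Theorem~\ref{theoremtruncatinginvariantHPsolid} and \cite[Theorem~E]{land_k-theory_2019},'' which is precisely your argument. Your additional remarks about extending the identification from affines to qcqs schemes via $\text{Perf}(-)$ make explicit a point the paper leaves implicit, but there is no substantive difference.
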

	
	\begin{proof}
		This is a consequence of Theorem~\ref{theoremtruncatinginvariantHPsolid} and \cite[Theorem~E]{land_k-theory_2019}.
	\end{proof}
	
	We then adapt the main splitting result of \cite{bals_periodic_2023} on periodic cyclic homology over $\Q$ to our rigid-analytic setting.
	
	\begin{proposition}\label{propositionsplittingofHPsolid}
		Let $X$ be a qcqs derived scheme. Then there exists a natural equivalence
		$$\Big( \prod_{p \in \mathbb{P}} {}^{'} \, \emph{HH}(X;\Q_p)\Big)^{t\emph{S}^1} \simeq \prod_{i \in \Z} R\Gamma_{\emph{Zar}}\Big(X,\prod_{p \in \mathbb{P}} {}^{'} \, \underline{\widehat{\mathbb{L}\Omega}}_{-_{\Q_p}/\Q_p}\Big)[2i]$$
		in the derived category $\mathcal{D}(\Q)$.
	\end{proposition}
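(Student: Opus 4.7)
The strategy is to construct Adams operations $\psi^m$ on the rigid-analytic HKR filtration $\text{Fil}^\star_{\text{HKR}}\big(\prod_{p\in\mathbb{P}}^{'}\text{HH}(-;\Q_p)\big)^{t\text{S}^1}$ of Definition~\ref{definition13HKRfiltrationonHPsolid} that act on the $i^{\text{th}}$ graded piece $R\Gamma_{\text{Zar}}\big(X,\prod_{p\in\mathbb{P}}^{'}\underline{\widehat{\mathbb{L}\Omega}}_{-_{\Q_p}/\Q_p}\big)[2i]$ (Remark~\ref{remark17fibreseuquenceforsolidderiveddeRhamcohomology}) by multiplication by $m^i$, and then to combine the splitting argument of Lemma~\ref{lemma29howtouseAdamsoperations} with the completeness result of Lemma~\ref{lemmaHKRfiltrationHC-solidproductallprimesiscomplete} to extract the desired equivalence.

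The first step is to assemble the Adams operations on each of the four corners of the cocartesian square defining $\text{Fil}^\star_{\text{HKR}}\big(\prod_{p\in\mathbb{P}}^{'}\text{HH}(-;\Q_p)\big)^{t\text{S}^1}$. Specifically, analogues of Proposition~\ref{proposition23raksitAdamsoperationsonHC-withmintertibleinthescheme} (from Raksit's construction) provide multiplicative automorphisms $\psi^m$ of $\text{Fil}^\star_{\text{HKR}}\text{HP}(X)$ for qcqs derived $\Z[\tfrac{1}{m}]$-schemes $X$ and of $\text{Fil}^\star_{\text{HKR}}\text{HP}(X_{\Q}/\Q)$, acting on the $i^{\text{th}}$ graded piece as multiplication by $m^i$; these induce corresponding operations on the $p$-completion $\prod_p\text{Fil}^\star_{\text{HKR}}\text{HP}(-;\Z_p)$. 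Pushing out along the square of Definition~\ref{definition13HKRfiltrationonHPsolid} yields a natural multiplicative automorphism $\psi^m$ of $\text{Fil}^\star_{\text{HKR}}\big(\prod_{p\in\mathbb{P}}^{'}\text{HH}(-;\Q_p)\big)^{t\text{S}^1}$; its action on the $i^{\text{th}}$ graded piece is by $m^i$, since this is true at each corner of the pushout and the graded pieces are computed corner by corner.

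The second step applies the splitting lemma. The filtration is complete by Lemma~\ref{lemmaHKRfiltrationHC-solidproductallprimesiscomplete}, so hypothesis $(i)$ of Lemma~\ref{lemma29howtouseAdamsoperations} holds on qcqs derived $\Z[\tfrac{1}{m}]$-schemes; hypothesis $(ii)$ is the content of the previous paragraph; and hypothesis $(iii)$ follows from the compatibility of the Raksit Adams operations. Lemma~\ref{lemma29howtouseAdamsoperations} then yields, for every integer $i$ and every $k\geq i$, a natural equivalence
\[
\text{Fil}^i_{\text{HKR}}\big(\textstyle\prod_{p\in\mathbb{P}}^{'}\text{HH}(X;\Q_p)\big)^{t\text{S}^1}\simeq\big(\bigoplus_{i\leq j<k}R\Gamma_{\text{Zar}}(X,\prod_{p\in\mathbb{P}}^{'}\underline{\widehat{\mathbb{L}\Omega}}_{-_{\Q_p}/\Q_p})[2j]\big)\oplus\text{Fil}^k_{\text{HKR}}\big(\prod_{p\in\mathbb{P}}^{'}\text{HH}(X;\Q_p)\big)^{t\text{S}^1}.
\]
Letting $k\to+\infty$, the residual term vanishes by completeness, producing the natural equivalence $\text{Fil}^i \simeq \prod_{j\geq i}\text{gr}^j$. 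Since the graded pieces $R\Gamma_{\text{Zar}}(X,\prod_{p\in\mathbb{P}}^{'}\underline{\widehat{\mathbb{L}\Omega}}_{-_{\Q_p}/\Q_p})[2j]$ become more negatively concentrated as $j\to+\infty$, at each cohomological degree only finitely many factors contribute, so the product agrees with the corresponding direct sum in each degree.

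The final step is to recover the full rigid-analytic $\text{HP}$ by taking the colimit over $i\to-\infty$. Since the HKR filtration on the Tate construction is exhaustive, by construction as the underlying object of the bifiltered Tate-Beilinson construction, the colimit recovers $\big(\prod_{p\in\mathbb{P}}^{'}\text{HH}(X;\Q_p)\big)^{t\text{S}^1}$, and on the other side the colimit of $\prod_{j\geq i}R\Gamma(\ldots)[2j]$ yields $\prod_{j\in\Z}R\Gamma(\ldots)[2j]$ (again using the degree-wise finiteness just noted). The descent to a general qcqs derived scheme $X$, rather than one over some $\Z[\tfrac{1}{m}]$, proceeds by the Zariski-sheaf argument of Lemma~\ref{lemma29howtouseAdamsoperations} applied for each $m$ and patching over $m$. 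The main obstacle is the careful construction and compatibility of the Adams operations $\psi^m$ on the HP-version of the HKR filtration (the Raksit construction is primarily stated for $\text{HC}^-$ but extends to the full Tate filtration), together with the verification that the various Adams operations on the corners of the pushout square of Definition~\ref{definition13HKRfiltrationonHPsolid} are compatible with the comparison maps; the passage to the colimit is routine once this compatibility and completeness are in place.
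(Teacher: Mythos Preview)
Your approach has a genuine gap at the exhaustivity step. The claim that ``the HKR filtration on the Tate construction is exhaustive'' is not justified, and is in fact false in general: Remark~\ref{remarkexhaustivityoftheHKRfiltrations} records that the HKR filtrations on $\text{HP}$ and $\text{HC}^-$ fail to be exhaustive on general qcqs derived schemes, and the rigid-analytic version inherits this defect through the pushout of Definition~\ref{definition13HKRfiltrationonHPsolid}. Concretely, your argument (via Lemma~\ref{lemma29howtouseAdamsoperations} and completeness) does correctly yield $\text{Fil}^i_{\text{HKR}}\simeq\prod_{j\geq i}\text{gr}^j$, but then
\[
\underset{i\to -\infty}{\text{colim}}\ \prod_{j\geq i} C[2j]
\]
for $C=R\Gamma_{\text{Zar}}\big(X,\prod_{p}^{'}\underline{\widehat{\mathbb{L}\Omega}}_{-_{\Q_p}/\Q_p}\big)$ consists, degreewise, of sequences $(a_j)_{j\in\Z}$ with $a_j=0$ for $j\ll 0$; this is strictly smaller than $\prod_{j\in\Z}C[2j]$ whenever $C$ is unbounded below, which happens already for non-lci affines. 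So even if you could identify the colimit of the $\text{Fil}^i$ with the rigid-analytic $\text{HP}$, you would arrive at the wrong object on the other side; and since the Proposition asserts the full product, this in fact shows the HKR filtration cannot be exhaustive here.

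The paper circumvents this by working with the \emph{Tate} filtration on $\big(\prod_{p}^{'}\text{HH}(X;\Q_p)\big)^{t\text{S}^1}$, which is always exhaustive, rather than the HKR filtration. The Adams-operation splitting is applied only at the level of $\text{HH}$ (where the HKR filtration is $\N$-indexed and exhaustive) to get $\prod_{p}^{'}\text{HH}(X;\Q_p)\simeq\bigoplus_{i\geq 0}R\Gamma_{\text{Zar}}\big(X,(\prod_p(\mathbb{L}^i_{-/\Z})^\wedge_p)_{\Q}\big)[i]$. One then invokes the tensor decomposition of \cite[Theorem~3.2]{bals_periodic_2023} together with the canonical splitting of $\text{Fil}^\star_{\text{T}}\big(\prod_p^{'}\Q_p\big)^{t\text{S}^1}$ to obtain a \emph{filtered} equivalence
\[
\prod_{i\in\Z}\big(\text{Fil}^{\star+i}_{\text{H}}C\big)[2i]\ \xlongrightarrow{\sim}\ \text{Fil}^\star_{\text{T}}\Big(\prod_{p}^{'}\text{HH}(X;\Q_p)\Big)^{t\text{S}^1},
\]
where the product over all $i\in\Z$ is already present at each stage. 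Taking $\star\to-\infty$ then uses exhaustivity of the Tate filtration on the target, and on the source the connectivity estimate for $\mathbb{L}\Omega^{<n}$ (Remark~\ref{remark15'HKRfiltrationonHCsolidallprimesprestrictedproduct} and Proposition~\ref{propositionHKRfiltrationonHCrationalisfinitary}) makes the colimit commute with the infinite product. The upshot is that the product over $\Z$ has to be built into the splitting from the start via the Tate filtration; it cannot be recovered as a colimit of half-infinite products.
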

	
	\begin{proof}
		We adapt the proof of \cite[Theorem~$3.4$]{bals_periodic_2023}, which proves a similar decomposition for periodic cyclic homology over $\Q$. The crucial point to adapt this proof is to note that there is a natural equivalence
		$$\prod_{p \in \mathbb{P}} {}^{'} \, \text{HH}(X;\Q_p) \simeq \bigoplus_{i \in \N} R\Gamma_{\text{Zar}}\Big(X,\big(\prod_{p \in \mathbb{P}} (\mathbb{L}^i_{-/\Z})^\wedge_p\big)_{\Q}\Big)[i]$$
		in the derived category $\mathcal{D}(\Q)$ (see \cite[Remark~$2.8$]{bals_periodic_2023}). This is the rigid-analytic analogue of \cite[Proposition~$2.7$]{bals_periodic_2023}, and it is for instance a consequence of Lemma~\ref{lemma29howtouseAdamsoperations} applied to the $\N$-indexed filtration $\text{Fil}^\star_{\text{HKR}} \prod'_{p \in \mathbb{P}} \text{HH}(-;\Q_p)$. By \cite[Theorem~$3.2$]{bals_periodic_2023}, this implies that there is a natural equivalence
		$$\Big(\text{Fil}^\star_{\text{H}} R\Gamma_{\text{Zar}}\big(X,\prod_{p \in \mathbb{P}} {}^{'} \underline{\widehat{\mathbb{L}\Omega}}_{-_{\Q_p}/\Q_p}\big) \otimes \text{Fil}^\star_{\text{T}} \big(\prod_{p \in \mathbb{P}} {}^{'} \Q_p\big)^{t\text{S}^1}\Big)^\wedge \xlongrightarrow{\sim} \text{Fil}^\star_{\text{T}} \Big(\prod_{p \in \mathbb{P}} {}^{'} \, \text{HH}(X;\Q_p)\Big)^{t\text{S}^1}$$
		in the filtered derived category $\mathcal{DF}(\Q)$, where the tensor product $\otimes$ is the Day convolution tensor product, $\text{Fil}^\star_{\text{H}}$ is the Hodge filtration on derived de Rham cohomology, and $(-)^\wedge$ is the completion with respect to the associated filtration. By a degree argument explained in \cite[proof of Theorem~$3.4$]{bals_periodic_2023}, the filtered object $\text{Fil}^\star_{\text{T}} \big(\prod'_{p \in \mathbb{P}} \Q_p\big)^{t\text{S}^1}$ carries a canonical splitting, which induces an equivalence
		$$\prod_{i \in \Z} \Big(\text{Fil}^{\star+i}_{\text{H}} R\Gamma_{\text{Zar}}\big(X,\prod_{p \in \mathbb{P}} {}^{'} \underline{\widehat{\mathbb{L}\Omega}}_{-_{\Q_p}/\Q_p}\big)\Big)[2i] \xlongrightarrow{\sim} \text{Fil}^\star_{\text{T}} \Big(\prod_{p \in \mathbb{P}} {}^{'} \, \text{HH}(X;\Q_p)\Big)^{t\text{S}^1}$$
		in the filtered derived category $\mathcal{DF}(\Q)$. 
		
		It then suffices to prove that the desired result is indeed obtained by taking the colimit over $\star \rightarrow -\infty$ of the previous equivalence. Following \cite[proof of Theorem~$3.4$]{bals_periodic_2023}, the result for the source is a formal consequence of the connectivity estimate for the functor
		$$R\Gamma_{\text{Zar}}\Big(-,\prod_{p \in \mathbb{P}}{}^{'}\underline{\mathbb{L}\Omega}^{\leq i}_{-_{\Q_p}/\Q_p}\Big)$$
		on animated commutative ring, which is itself a consequence of Remark~\ref{remark15'HKRfiltrationonHCsolidallprimesprestrictedproduct}, and of the classical connectivity estimate for the HKR filtration on cyclic homology (Proposition~\ref{propositionHKRfiltrationonHCrationalisfinitary}). The result for the target is a consequence of the fact that the Tate filtration is always exhaustive (\cite[Proposition~B.$6$]{bals_periodic_2023}).
	\end{proof}
	
	\begin{corollary}\label{corollaryHPsolidallprimespisacdhsheafongradedpieces}
		The presheaf
		$$R\Gamma_{\emph{Zar}}\Big(-,\prod_{p \in \mathbb{P}} {}^{'} \, \underline{\widehat{\mathbb{L}\Omega}}_{-_{\Q_p}/\Q_p}\Big) : \emph{Sch}^{\emph{qcqs,op}} \longrightarrow \mathcal{D}(\Q)$$
		is a cdh sheaf.
	\end{corollary}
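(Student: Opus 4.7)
The plan is to deduce this directly from the two preceding results, namely Corollary~\ref{corollaryHPsolidallprimespisacdhsheafwithoutfiltration} (that the rigid-analytic variant of periodic cyclic homology is a cdh sheaf) and Proposition~\ref{propositionsplittingofHPsolid} (that this rigid-analytic periodic cyclic homology splits, after Bhatt--Mathew--Lurie/Bals--Sosnilo, as an infinite product of shifted copies of the desired rigid-analytic derived de Rham cohomology).

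More precisely, combining these two results gives that the presheaf
$$\prod_{i \in \Z} R\Gamma_{\text{Zar}}\Big(-,\prod_{p \in \mathbb{P}}{}^{'} \underline{\widehat{\mathbb{L}\Omega}}_{-_{\Q_p}/\Q_p}\Big)[2i] : \text{Sch}^{\text{qcqs,op}} \longrightarrow \mathcal{D}(\Q)$$
is a cdh sheaf. It then suffices to observe that a product of shifts of a fixed presheaf $F$ is a cdh sheaf if and only if $F$ itself is. Indeed, $F$ satisfies cdh descent iff, for every cdh square $Q$, the total fibre $T(Q) := \mathrm{tfib}(F(Q))$ vanishes. Since total fibres commute with products and shifts, the total fibre of the product presheaf on $Q$ is $\prod_{i \in \Z} T(Q)[2i]$; and on homotopy groups one has $\pi_m \big(\prod_{i \in \Z} T(Q)[2i]\big) = \prod_{i \in \Z} \pi_{m-2i}(T(Q))$, whose vanishing for all $m$ is equivalent to the vanishing of $\pi_n(T(Q))$ for all $n$, hence to $T(Q) \simeq 0$.

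Since neither of the two inputs is proved here (they are the previous corollary and proposition), there is no obstacle beyond assembling them. The only mild subtlety is to make sure that the equivalence of Proposition~\ref{propositionsplittingofHPsolid} is natural in $X$, so that it transports the cdh descent property of the left-hand side to the right-hand side; this is explicit in the statement.
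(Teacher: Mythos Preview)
Your proposal is correct and follows exactly the paper's approach: combine Proposition~\ref{propositionsplittingofHPsolid} with Corollary~\ref{corollaryHPsolidallprimespisacdhsheafwithoutfiltration}. Your added justification that cdh descent for $\prod_{i\in\Z}F[2i]$ implies cdh descent for $F$ is fine (and could be shortened by noting that $F$ is a retract of this product, since cdh sheaves are closed under retracts and shifts).
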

	
	\begin{proof}
		This is a consequence of the natural splitting Proposition~\ref{propositionsplittingofHPsolid}, and of the cdh descent result Corollary~\ref{corollaryHPsolidallprimespisacdhsheafwithoutfiltration}.
	\end{proof}
	
	\begin{corollary}\label{corollaryHPsolidallprimespisacdhsheafwithfiltration}
		For every integer $i \in \Z$, the presheaf
		$$\emph{Fil}^i_{\emph{HKR}} \Big(\prod_{p \in \mathbb{P}} {}^{'} \, \emph{HH}(-_;\Q_p)\Big)^{t\emph{S}^1} : \emph{Sch}^{\emph{qcqs,op}} \longrightarrow \mathcal{D}(\Q)$$
		is a cdh sheaf.
	\end{corollary}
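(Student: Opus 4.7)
The plan is to deduce cdh descent for each filtration piece $\text{Fil}^i_{\text{HKR}} (\prod'_{p \in \mathbb{P}} \text{HH}(-;\Q_p))^{t\text{S}^1}$ from cdh descent on the graded pieces, together with completeness of the filtration.

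First I would identify the graded pieces. By Definition~\ref{definition13HKRfiltrationonHPsolid}, taking the $j^{\text{th}}$ shifted graded piece of the relevant pushout of filtrations and using that the graded pieces of $\text{Fil}^\star_{\text{HKR}} \text{HP}(-)$, $\text{Fil}^\star_{\text{HKR}} \text{HP}(-_{\Q}/\Q)$, and $\prod_{p \in \mathbb{P}} \text{Fil}^\star_{\text{HKR}} \text{HP}(-;\Z_p)$ recover (up to the twist $[2j]$) the various Hodge-completed derived de Rham complexes, one recognises --as in Definition~\ref{definition16rigidanalyticHodgecompletederiveddeRhamHC-} and Remark~\ref{remark17fibreseuquenceforsolidderiveddeRhamcohomology}-- a natural equivalence
$$\text{gr}^j_{\text{HKR}} \Big(\prod_{p \in \mathbb{P}} {}^{'} \text{HH}(-;\Q_p)\Big)^{t\text{S}^1} \simeq R\Gamma_{\text{Zar}}\Big(-,\prod_{p \in \mathbb{P}} {}^{'} \, \underline{\widehat{\mathbb{L}\Omega}}_{-_{\Q_p}/\Q_p}\Big)[2j].$$
By Corollary~\ref{corollaryHPsolidallprimespisacdhsheafongradedpieces}, this presheaf is a cdh sheaf for every $j \in \Z$.

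Next, for any integer $k \geq 0$, the quotient $\text{Fil}^i/\text{Fil}^{i+k}$ is an iterated extension of the graded pieces $\text{gr}^j$ for $i \leq j \leq i+k-1$, via successive cofibre sequences $\text{Fil}^{j+1}/\text{Fil}^{i+k} \to \text{Fil}^j/\text{Fil}^{i+k} \to \text{gr}^j$. Cdh sheaves valued in $\mathcal{D}(\Q)$ form a full subcategory of presheaves that is closed under arbitrary limits, and in particular under extensions in the stable sense. So by induction on $k$, each $\text{Fil}^i/\text{Fil}^{i+k}$ is a cdh sheaf.

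Finally, by Lemma~\ref{lemmaHKRfiltrationHC-solidproductallprimesiscomplete}, the filtration $\text{Fil}^\star_{\text{HKR}} (\prod'_{p \in \mathbb{P}} \text{HH}(-;\Q_p))^{t\text{S}^1}$ is complete, so for every qcqs scheme $X$ the natural map
$$\text{Fil}^i_{\text{HKR}} \Big(\prod_{p \in \mathbb{P}} {}^{'} \text{HH}(X;\Q_p)\Big)^{t\text{S}^1} \xlongrightarrow{\sim} \lim_k \Big(\text{Fil}^i/\text{Fil}^{i+k}\Big)(X)$$
is an equivalence. As a limit of cdh sheaves, the left hand side is a cdh sheaf, which is what we wanted. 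The real work of this section is already contained in Corollary~\ref{corollaryHPsolidallprimespisacdhsheafongradedpieces} (and ultimately in the rigid-analytic Goodwillie theorem of Section~\ref{subsectionrigidanalyticGoodwillietheorem}); the passage from graded pieces to filtration pieces is a purely formal use of completeness.
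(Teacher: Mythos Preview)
Your proof is correct and follows exactly the same approach as the paper: the paper's proof simply says that the filtration is complete by Lemma~\ref{lemmaHKRfiltrationHC-solidproductallprimesiscomplete}, so it suffices to check cdh descent on graded pieces, which is Corollary~\ref{corollaryHPsolidallprimespisacdhsheafongradedpieces}. Your version spells out in full the standard passage from graded pieces to filtration pieces (finite quotients are iterated extensions of graded pieces, then take the limit over $k$), but the underlying argument is identical.
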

	
	\begin{proof}
		The HKR filtration on $\big(\prod'_{p \in \mathbb{P}} \text{HH}(-;\Q_p)\big)^{t\text{S}^1}$ is complete by Lemma~\ref{lemmaHKRfiltrationHC-solidproductallprimesiscomplete}, so it suffices to prove the result on graded pieces. The result is then Corollary~\ref{corollaryHPsolidallprimespisacdhsheafongradedpieces}.
	\end{proof}

	\subsection{The Atiyah--Hirzebruch spectral sequence}\label{subsectionAHSS}
	
	\vspace{-\parindent}
	\hspace{\parindent}
	
	In this subsection, we use the results of the previous sections to prove Theorem~\ref{theoremintroAHSSandAdams}.
	
	\begin{proposition}[The motivic filtration is $\N$-indexed]\label{propositionmotivicfiltrationisexhaustive}
		Let $X$ be a qcqs derived scheme. Then for every integer $i \leq 0$, the natural map
		$$\emph{Fil}^i_{\emph{mot}} \emph{K}(X) \longrightarrow \emph{K}(X)$$
		is an equivalence of spectra. In particular, the motivic filtration $\emph{Fil}^\star_{\emph{mot}} \emph{K}(X)$ on $\emph{K}(X)$ is exhaustive.
	\end{proposition}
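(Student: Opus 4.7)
The plan is to reduce the statement to an equivalence of cartesian squares. By Definition~\ref{definitionmotivicfiltrationonKtheoryofderivedschemes}, the case of a general qcqs derived scheme $X$ reduces to the case of the classical truncation $\pi_0(X)$, so I may assume $X$ is a qcqs classical scheme. Then, by Definition~\ref{definitionmotivicfiltrationonKtheoryofschemes} and Theorem~\ref{theoremKST+LT}, showing that the map $\text{Fil}^i_{\text{mot}} \text{K}(X) \to \text{K}(X)$ is an equivalence for $i \leq 0$ reduces to two claims: (a) $\text{Fil}^i_{\text{cdh}} \text{KH}(X) \to \text{KH}(X)$ is an equivalence for $i \leq 0$; and (b) the natural map of horizontal fibres $\text{fib}\bigl(\text{Fil}^i_{\text{mot}} \text{TC}(X) \to \text{Fil}^i_{\text{mot}} L_{\text{cdh}} \text{TC}(X)\bigr) \to \text{fib}\bigl(\text{TC}(X) \to L_{\text{cdh}} \text{TC}(X)\bigr)$ is an equivalence for $i \leq 0$.

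Claim (a) is a direct consequence of the construction. Indeed, $\text{Fil}^\star_{\text{cdh}} \text{KH}$ is the cdh sheafification of the left Kan extension from smooth $\Z$-algebras of the classical motivic filtration $\text{Fil}^\star_{\text{cla}} \text{K}$, and on smooth $\Z$-schemes this filtration is $\N$-indexed with $\text{Fil}^0_{\text{cla}} \text{K} \simeq \text{K}$ (Bloch's cycle complexes $z^i(-,\bullet)$ vanish for $i<0$, and $f^0 \text{KGL} \simeq \text{KGL}$ in $\text{SH}(\Z)$). Both properties are preserved under left Kan extension and cdh sheafification, via the construction of Definition~\ref{definitionmotivicfiltrationonKHtheoryofschemes}.

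For claim (b), the graded pieces of the filtered fibre $G^\star := \text{fib}\bigl(\text{Fil}^\star_{\text{mot}} \text{TC}(X) \to \text{Fil}^\star_{\text{mot}} L_{\text{cdh}} \text{TC}(X)\bigr)$ are given by $\text{fib}\bigl(\Z(i)^{\text{TC}}(X) \to (L_{\text{cdh}}\Z(i)^{\text{TC}})(X)\bigr)[2i]$, so I would first show these vanish for $i<0$, \emph{i.e.}, that $\Z(i)^{\text{TC}}$ is a cdh sheaf on qcqs schemes for $i<0$. By the cartesian square of Corollary~\ref{corollarymainconsequenceBFSongradedpieces}, for $i<0$ the syntomic components $\Z_p(i)^{\text{BMS}}$ vanish (the BMS filtration being $\N$-indexed, Remark~\ref{remarkexhaustivityofBMSfiltrations}), the Hodge truncations become trivial, and the cartesian square identifies $\Z(i)^{\text{TC}}(X)$ with the fibre of a map between $R\Gamma_{\text{Zar}}(X, \widehat{\mathbb{L}\Omega}_{-_{\Q}/\Q})$ and $R\Gamma_{\text{Zar}}\bigl(X, \prod_{p \in \mathbb{P}}^{'} \underline{\widehat{\mathbb{L}\Omega}}_{-_{\Q_p}/\Q_p}\bigr)$. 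Both presheaves are cdh sheaves --- the former by the Cortiñas--Haesemeyer--Schlichting--Weibel theorem over $\Q$ \cite{cortinas_cyclic_2008,bals_periodic_2023}, the latter by Corollary~\ref{corollaryHPsolidallprimespisacdhsheafongradedpieces} --- so $\Z(i)^{\text{TC}}$ is a cdh sheaf for $i<0$, giving $G^i \simeq G^0$ for $i \leq 0$.

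The main obstacle will be the remaining half of claim (b): identifying $G^0$ with the total fibre $\text{fib}(\text{TC}(X) \to L_{\text{cdh}} \text{TC}(X))$. Since the motivic filtration on $\text{TC}$ itself is not exhaustive in general (Remark~\ref{remarkexhaustivitymotivicfiltrationonTC}), this identification is not automatic; it is equivalent to the cofibre $\text{cofib}\bigl(\text{colim}_i \text{Fil}^i_{\text{mot}} \text{TC}(X) \to \text{TC}(X)\bigr)$ being itself a cdh sheaf, so that it cancels with the analogous cofibre on the cdh side. Using the defining cartesian square of Definition~\ref{definitionmotivicfiltrationonTC}, this should reduce to the analogous statement for the non-exhaustive part of the HKR filtration on negative cyclic homology and its $p$-completed and rigid-analytic variants, which in turn should follow from combining Proposition~\ref{propositionfilteredHPisacdhsheafinchar0} (characteristic zero cdh descent for filtered HP) with Corollary~\ref{corollaryHPsolidallprimespisacdhsheafwithfiltration} (its rigid-analytic analogue).
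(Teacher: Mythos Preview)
Your approach uses the same key ingredients as the paper (the $\N$-indexedness of $\text{Fil}^\star_{\text{cdh}}\text{KH}$ and $\text{Fil}^\star_{\text{BMS}}\text{TC}(-;\Z_p)$, and cdh descent for filtered $\text{HP}(-_{\Q}/\Q)$ and its rigid-analytic analogue), but organises them less efficiently. You first argue on graded pieces to get $G^i\simeq G^0$ for $i\leq 0$, and then treat the identification $G^0\simeq\text{fib}(\text{TC}(X)\to L_{\text{cdh}}\text{TC}(X))$ as a separate ``main obstacle''. The paper instead works at the filtered level throughout, via the cartesian square of Proposition~\ref{propositionmainconsequenceBFSwithfiltrations}: the filtered cdh descent for $\text{HP}$ (Proposition~\ref{propositionfilteredHPisacdhsheafinchar0} and Corollary~\ref{corollaryHPsolidallprimespisacdhsheafwithfiltration}) identifies the cofibre for $\text{HC}^-$ with that for $\text{Fil}^{\star-1}_{\text{HKR}}\text{HC}[1]$, which is manifestly $\N$-indexed with the correct value at $\star=0$. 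This handles both halves of your claim (b) at once, and avoids the vagueness in your final paragraph (where you mix the decomposition of Definition~\ref{definitionmotivicfiltrationonTC} over $\Z$ with the cdh descent results that only apply to the $\Q$- and rigid-analytic pieces of Proposition~\ref{propositionmainconsequenceBFSwithfiltrations}).

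There is a genuine gap in your first reduction: Definition~\ref{definitionmotivicfiltrationonKtheoryofderivedschemes} does not directly reduce the derived case to the classical one. The cartesian square there requires you to show separately that the filtration $\text{cofib}\bigl(\text{Fil}^\star_{\text{mot}}\text{TC}(X)\to\text{Fil}^\star_{\text{mot}}\text{TC}(\pi_0(X))\bigr)$ is $\N$-indexed with the correct underlying object. The paper handles this in a second paragraph: the $\text{BMS}$ piece is immediate, the $\text{HC}^-(-_{\Q}/\Q)$ piece uses \cite[Theorem~$4.39$]{elmanto_motivic_2023}, and the rigid-analytic piece uses the \emph{truncating} property of $\bigl(\prod'_p\text{HH}(-;\Q_p)\bigr)^{t\text{S}^1}$ (Theorem~\ref{theoremtruncatinginvariantHPsolid} together with Proposition~\ref{propositionsplittingofHPsolid}), which gives that the map on filtered rigid-analytic $\text{HP}$ from $X$ to $\pi_0(X)$ is already an equivalence. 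This last input is not a cdh descent statement and is not covered by the results you cite.
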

	
	\begin{proof}
		First assume that $X$ is a qcqs classical scheme. The filtration $\text{Fil}^\star_{\text{cdh}} \text{KH}(X)$ is $\N$-indexed by Theorem~\ref{theoremBEM}\,(1), so it suffices to prove that the filtration
		$$\text{cofib}\Big(\text{Fil}^\star_{\text{mot}} \text{TC}(X) \longrightarrow \big(L_{\text{cdh}} \text{Fil}^\star_{\text{mot}} \text{TC}\big)(X)\Big)$$
		is $\N$-indexed (Definition~\ref{definitionmotivicfiltrationonKtheoryofschemes}). To prove this, we use Proposition~\ref{propositionmainconsequenceBFSwithfiltrations} and Corollary~\ref{corollaryusefulBFSafterLcdh}. For every prime number $p$, the filtration $\text{Fil}^\star_{\text{BMS}} \text{TC}(-;\Z_p)$ is $\N$-indexed on qcqs schemes (\cite[Theorem~$7.2\,(1)$]{bhatt_topological_2019} and Theorem~\ref{theoremBMSfiltrationonTCpcompletedsatisfiesquasisyntomicdescentandisLKEfrompolynomialalgebras}\,$(2)$). In particular, the filtration
		$$\text{cofib}\Big(\prod_{p \in \mathbb{P}} \text{Fil}^\star_{\text{BMS}} \text{TC}(X;\Z_p) \longrightarrow \big(L_{\text{cdh}} \prod_{p \in \mathbb{P}} \text{Fil}^\star_{\text{BMS}} \text{TC}(-;\Z_p)\big)(X)\Big)$$
		is $\N$-indexed. The presheaf $\text{Fil}^\star_{\text{HKR}} \text{HP}(-_{\Q}/\Q)$ is a cdh sheaf on qcqs schemes (Proposition~\ref{propositionfilteredHPisacdhsheafinchar0}), and the filtration $\text{Fil}^\star_{\text{HKR}} \text{HC}(X_{\Q}/\Q)$ is $\N$-indexed by construction. In particular, the filtration
		$$\text{cofib}\Big(\text{Fil}^\star_{\text{HKR}} \text{HC}^-(X_{\Q}/\Q) \longrightarrow \big(L_{\text{cdh}} \text{Fil}^\star_{\text{HKR}} \text{HC}^-(-_{\Q}/\Q)\big)(X)\Big)$$
		is naturally identified with the filtration
		$$\text{cofib}\Big(\text{Fil}^{\star-1}_{\text{HKR}} \text{HC}(X_{\Q}/\Q) \longrightarrow \big(L_{\text{cdh}} \text{Fil}^{\star-1}_{\text{HKR}} \text{HC}(-_{\Q}/\Q)\big)(X)\Big)[1],$$ which is $\N$-indexed.
		Similarly, the presheaf $\text{Fil}^\star_{\text{HKR}} \big(\prod'_{p \in \mathbb{P}} \text{HH}(-;\Q_p)\big)^{t\text{S}^1}$ is a cdh sheaf on qcqs schemes (Corollary~\ref{corollaryHPsolidallprimespisacdhsheafwithfiltration}), and the filtration $\text{Fil}^\star_{\text{HKR}} \big(\prod'_{p \in \mathbb{P}} \text{HH}(-;\Q_p)\big)_{h\text{S}^1}$ is $\N$-indexed (Remark~\ref{remark15'HKRfiltrationonHCsolidallprimesprestrictedproduct}). In particular, the filtration
		$$\text{cofib}\Big(\text{Fil}^\star_{\text{HKR}} \big(\prod_{p \in \mathbb{P}}{}^{'} \text{HH}(X;\Q_p)\big)^{h\text{S}^1} \longrightarrow \big(L_{\text{cdh}} \text{Fil}^\star_{\text{HKR}} \big(\prod_{p \in \mathbb{P}}{}^{'} \text{HH}(-;\Q_p)\big)^{t\text{S}^1}\big)(X)\Big)$$
		is naturally identified with the filtration
		$$\text{cofib}\Big(\text{Fil}^{\star-1}_{\text{HKR}} \big(\prod_{p \in \mathbb{P}}{}^{'} \text{HH}(X;\Q_p)\big)_{h\text{S}^1} \longrightarrow \big(L_{\text{cdh}} \text{Fil}^{\star-1}_{\text{HKR}} \big(\prod_{p \in \mathbb{P}}{}^{'} \text{HH}(-;\Q_p)\big)_{h\text{S}^1}\big)(X)\Big)[1],$$ which is $\N$-indexed.
		
		Assume now that $X$ is a general qcqs derived scheme. By Definition~\ref{definitionmotivicfiltrationonKtheoryofderivedschemes} and the previous paragraph, it suffices to prove that the filtration
		$$\text{cofib}\Big(\text{Fil}^\star_{\text{mot}} \text{TC}(X) \longrightarrow \text{Fil}^\star_{\text{mot}} \text{TC}(\pi_0(X))\Big)$$
		is $\N$-indexed. To prove this, we use Proposition~\ref{propositionmainconsequenceBFSwithfiltrations}. For every prime number $p$, the filtration $\text{Fil}^\star_{\text{BMS}} \text{TC}(X;\Z_p)$ is $\N$-indexed on polynomial $\Z$-algebras by the previous paragraph, and hence on general qcqs derived schemes by Zariski descent and Theorem~\ref{theoremBMSfiltrationonTCpcompletedsatisfiesquasisyntomicdescentandisLKEfrompolynomialalgebras}\,$(2)$. In particular, the filtration
		$$\text{cofib}\Big(\prod_{p \in \mathbb{P}} \text{Fil}^\star_{\text{BMS}} \text{TC}(X;\Z_p) \longrightarrow \prod_{p \in \mathbb{P}} \text{Fil}^\star_{\text{BMS}} \text{TC}(\pi_0(X);\Z_p)\Big)$$
		is $\N$-indexed. The filtration
		$$\text{cofib}\Big(\text{Fil}^\star_{\text{HKR}} \text{HC}^-(X_{\Q}/\Q) \longrightarrow \text{Fil}^\star_{\text{HKR}} \text{HC}^-(\pi_0(X)_{\Q}/\Q)\Big)$$
		is $\N$-indexed by \cite[Theorem~$4.39$]{elmanto_motivic_2023}. Similarly, using Theorem~\ref{theoremtruncatinginvariantHPsolid} and Proposition~\ref{propositionsplittingofHPsolid} as in the proof of Corollary~\ref{corollaryHPsolidallprimespisacdhsheafwithfiltration}, the natural map
		$$\text{Fil}^\star_{\text{HKR}} \Big(\prod_{p \in \mathbb{P}}{}^{'} \text{HH}(X;\Q_p)\Big)^{t\text{S}^1} \longrightarrow \text{Fil}^\star_{\text{HKR}} \Big(\prod_{p \in \mathbb{P}}{}^{'} \text{HH}(\pi_0(X);\Q_p)\Big)^{t\text{S}^1}$$
		is an equivalence of filtered spectra. In particular, the filtration
		$$\text{cofib}\Big(\text{Fil}^\star_{\text{HKR}} \Big(\prod_{p \in \mathbb{P}}{}^{'} \text{HH}(X;\Q_p)\Big)^{h\text{S}^1} \longrightarrow \text{Fil}^\star_{\text{HKR}} \Big(\prod_{p \in \mathbb{P}}{}^{'} \text{HH}(\pi_0(X);\Q_p)\Big)^{h\text{S}^1}\Big)$$
		is naturally identified with the filtration
		$$\text{cofib}\Big(\text{Fil}^\star_{\text{HKR}} \Big(\prod_{p \in \mathbb{P}}{}^{'} \text{HH}(X;\Q_p)\Big)_{h\text{S}^1} \longrightarrow \text{Fil}^\star_{\text{HKR}} \Big(\prod_{p \in \mathbb{P}}{}^{'} \text{HH}(\pi_0(X);\Q_p)\Big)_{h\text{S}^1}\Big)[1],$$
		which is $\N$-indexed by Remark~\ref{remark15'HKRfiltrationonHCsolidallprimesprestrictedproduct}.
	\end{proof}
	
	\begin{corollary}\label{corollarymotiviccomplexiszerofornegativeweight}
		Let $X$ be a qcqs scheme. Then for every integer $i<0$, the motivic complex $\Z(i)^{\emph{mot}}(X) \in \mathcal{D}(X)$ is zero.
	\end{corollary}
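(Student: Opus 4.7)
The plan is to deduce this as an immediate consequence of Proposition~\ref{propositionmotivicfiltrationisexhaustive}, with no further input needed. By Definition~\ref{definitionmotiviccohomologyofderivedschemes}, the motivic complex $\Z(i)^{\text{mot}}(X)$ is the $(-2i)$-shift of the graded piece $\text{gr}^i_{\text{mot}} \text{K}(X) = \text{cofib}\bigl(\text{Fil}^{i+1}_{\text{mot}} \text{K}(X) \to \text{Fil}^i_{\text{mot}} \text{K}(X)\bigr)$, so it suffices to show that for $i<0$ the transition map $\text{Fil}^{i+1}_{\text{mot}} \text{K}(X) \to \text{Fil}^i_{\text{mot}} \text{K}(X)$ is an equivalence.

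Proposition~\ref{propositionmotivicfiltrationisexhaustive} asserts that for every integer $j \leq 0$, the canonical map $\text{Fil}^j_{\text{mot}} \text{K}(X) \to \text{K}(X)$ is an equivalence of spectra. For $i<0$, applying this to both $j=i$ and $j=i+1$ gives a commutative triangle in which both legs are equivalences; hence the third edge, namely the transition map $\text{Fil}^{i+1}_{\text{mot}} \text{K}(X) \to \text{Fil}^i_{\text{mot}} \text{K}(X)$, is also an equivalence. Taking cofibres, $\text{gr}^i_{\text{mot}} \text{K}(X) \simeq 0$, and consequently $\Z(i)^{\text{mot}}(X) \simeq 0$ in $\mathcal{D}(\Z)$.

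There is no real obstacle here: all the work has already been done in Proposition~\ref{propositionmotivicfiltrationisexhaustive}, where the non-trivial exhaustivity of $\text{Fil}^\star_{\text{mot}} \text{K}$ was established by combining the $\N$-indexedness of $\text{Fil}^\star_{\text{cdh}} \text{KH}$ and of the BMS filtrations $\text{Fil}^\star_{\text{BMS}} \text{TC}(-;\Z_p)$ with the rigid-analytic cdh descent result Corollary~\ref{corollaryHPsolidallprimespisacdhsheafwithfiltration}. The corollary itself is merely the graded-pieces reformulation of that exhaustivity.
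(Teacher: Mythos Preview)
Your proof is correct and is exactly the paper's approach: the paper's proof is the single sentence ``This is a direct consequence of Proposition~\ref{propositionmotivicfiltrationisexhaustive},'' and you have simply unpacked what that means at the level of graded pieces.
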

	
	\begin{proof}
		This is a direct consequence of Proposition~\ref{propositionmotivicfiltrationisexhaustive}.
	\end{proof}

	\begin{proposition}\label{propositionfibrefilteredTCandLcdhTCiscomplete}
		Let $d \geq 0$ be an integer, and $X$ be a qcqs scheme of valuative dimension at most~$d$. Then for every integer $i \in \Z$, the fibre of the natural map of spectra
		$$\emph{Fil}^i_{\emph{mot}} \emph{TC}(X) \longrightarrow \emph{Fil}^i_{\emph{mot}} L_{\emph{cdh}} \emph{TC}(X)$$
		is in cohomological degrees at most $-i+d+2$. In particular, the filtration formed by these spectra for all integers $i \in \Z$, and the rationalisation of this filtration, are complete.
	\end{proposition}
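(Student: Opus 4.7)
The plan is to apply the left-exact functor $F := \text{fib}(\text{id} \to L_{\text{cdh}})$ on presheaves of spectra to the cartesian square of Proposition~\ref{propositionmainconsequenceBFSwithfiltrations}, and bound the cohomological degrees of the three corners distinct from the motivic one. Since $L_{\text{cdh}}$ is a lex localization of presheaves of spectra, $F$ preserves finite limits and hence cartesian squares; moreover, any cdh sheaf is killed by $F$. Combining the bounds on the three other corners via the pullback property will then yield the desired bound on $F(\text{Fil}^i_{\text{mot}} \text{TC})(X)$.

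For the bottom-left corner $F(\prod_p \text{Fil}^i_{\text{BMS}} \text{TC}(-;\Z_p))(X)$, the presheaf $\text{Fil}^i_{\text{BMS}} \text{TC}(-;\Z_p)$ takes values in cohomological degrees $\leq -i+1$ by \cite[Theorem~$5.1\,(1)$]{antieau_beilinson_2020}, a bound preserved under infinite products over primes. The cdh cohomological dimension of $X$ being at most $d$ (\cite[Theorem~$2.4.15$]{elmanto_cdh_2021}), the descent spectral sequence places the cdh sheafification in degrees $\leq -i+1+d$, and hence $F$ of it in degrees $\leq -i+d+2$.

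For the two remaining corners, I use the Tate-type fibre sequences
\begin{equation*}
\text{Fil}^\star_{\text{HKR}} \text{HC}^-(-_{\Q}/\Q) \longrightarrow \text{Fil}^\star_{\text{HKR}} \text{HP}(-_{\Q}/\Q) \longrightarrow \text{Fil}^{\star-1}_{\text{HKR}} \text{HC}(-_{\Q}/\Q)[2]
\end{equation*}
and its rigid-analytic analogue coming from Definitions~\ref{definition11HKRfiltrationonHC-solid}, \ref{definition13HKRfiltrationonHPsolid}, and \ref{definition13HKRfiltrationonHCsolid}. By Proposition~\ref{propositionfilteredHPisacdhsheafinchar0} and Corollary~\ref{corollaryHPsolidallprimespisacdhsheafwithfiltration}, the middle term in each sequence is a cdh sheaf, hence killed by $F$. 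This yields natural equivalences of the form $F(\text{Fil}^i_{\text{HKR}} \text{HC}^-) \simeq F(\text{Fil}^{i-1}_{\text{HKR}} \text{HC})[1]$ in both the rational and the rigid-analytic case, the latter using Remark~\ref{remark15'HKRfiltrationonHCsolidallprimesprestrictedproduct} to identify the homotopy orbits with a rationalised product of $p$-complete HC filtrations. Proposition~\ref{propositionHKRfiltrationonHCrationalisfinitary} (and its rationalised and $p$-completed-product analogues) then places $\text{Fil}^{i-1}_{\text{HKR}} \text{HC}$ in cohomological degrees $\leq -i+1$, so by the same cdh-dimension argument, $F$ of it lies in degrees $\leq -i+d+2$, and the $[1]$-shift brings both corners into degrees $\leq -i+d+1$.

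The second assertion is then immediate: since every term of the filtration of fibres lies in cohomological degrees $\leq -i+d+2$, the cohomology in each fixed degree vanishes for $i$ large enough, so the limit as $i \to +\infty$ is zero; rationalisation preserves degree bounds, so the same argument handles the rational case. The principal technical subtlety will be the systematic degree bookkeeping through cdh sheafification (via the descent spectral sequence and the valuative-versus-cdh-cohomological-dimension bound) and through the Tate fibre sequences that trade a homotopy fixed points term for a shifted homotopy orbits term.
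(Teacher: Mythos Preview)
Your proposal is correct and follows essentially the same approach as the paper: apply the fibre-of-cdh-sheafification to the cartesian square of Proposition~\ref{propositionmainconsequenceBFSwithfiltrations}, bound the BMS corner directly via the connectivity estimate of \cite{antieau_beilinson_2020} and the cdh cohomological dimension bound, and reduce the two $\text{HC}^-$-type corners to $\text{HC}$-type terms using cdh descent for filtered $\text{HP}$ (Proposition~\ref{propositionfilteredHPisacdhsheafinchar0} and Corollary~\ref{corollaryHPsolidallprimespisacdhsheafwithfiltration}) together with the connectivity of $\text{Fil}^\star_{\text{HKR}} \text{HC}$. The only cosmetic difference is that the paper invokes Corollary~\ref{corollaryusefulBFSafterLcdh} explicitly rather than appealing to left exactness of $L_{\text{cdh}}$, and records slightly different (in fact sharper, though not needed) intermediate degree bounds for the $\text{HC}$ terms.
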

	
	\begin{proof}
		The last statement is a consequence of the first, as the connectivity bound for a given filtration induces the same connectivity bound for its rationalisation. For the connectivity bound, we use Proposition~\ref{propositionmainconsequenceBFSwithfiltrations} and Corollary~\ref{corollaryusefulBFSafterLcdh} to compare the spectra $\text{Fil}^i_{\text{mot}} \text{TC}(X)$ and $\text{Fil}^i_{\text{mot}} L_{\text{cdh}} \text{TC}(X)$.
		
		The presheaf $\prod_{p \in \mathbb{P}} \text{Fil}^i_{\text{BMS}} \text{TC}(X;\Z_p)$ takes values in cohomological degrees at most $-i+1$ on affine schemes (Lemma~\ref{lemmaBMSfiltrationproductallprimesiscomplete}). In particular, the fibre of the natural map of spectra
		$$\prod_{p \in \mathbb{P}} \text{Fil}^i_{\text{BMS}} \text{TC}(X;\Z_p) \longrightarrow \big(L_{\text{cdh}} \prod_{p \in \mathbb{P}} \text{Fil}^i_{\text{BMS}} \text{TC}(-;\Z_p)\big)(X)$$
		is in cohomological degrees at most $-i+d+2$, as each term is in cohomological degrees at most $-i+d+1$ (\cite[Theorem~$3.12$]{clausen_hyperdescent_2021} and \cite[Theorem~$2.4.15$]{elmanto_cdh_2021}). 
		
		By Proposition~\ref{propositionfilteredHPisacdhsheafinchar0}, the fibre of the natural map of spectra
		$$\text{Fil}^i_{\text{HKR}} \text{HC}^-(X_{\Q}/\Q) \longrightarrow \big(L_{\text{cdh}} \text{Fil}^i_{\text{HKR}} \text{HC}^-(-_{\Q}/\Q)\big)(X)$$
		is naturally identified with the fibre of the natural map of spectra
		$$\text{Fil}^{i-1}_{\text{HKR}} \text{HC}(X_{\Q}/\Q)[1] \longrightarrow \big(L_{\text{cdh}} \text{Fil}^{i-1}_{\text{HKR}} \text{HC}(-_{\Q}/\Q)[1]\big)(X).$$
		The presheaf $\text{Fil}^{i-1}_{\text{HKR}} \text{HC}(-_{\Q}/\Q)$ takes values in cohomological degrees at most $-i-1$ ({\it e.g.},~by Lemma~\ref{lemmaHKRfiltrationonHC-isalwayscomplete} and the description of the graded pieces Remark~\ref{remarkgradedpiecesoftheHKRfiltrations}), so the previous fibre is in cohomological degrees at most $-i+d-1$ (\cite[Theorem~$3.12$]{clausen_hyperdescent_2021} and \cite[Theorem~$2.4.15$]{elmanto_cdh_2021}).
		
		Similarly, by Corollary~\ref{corollaryHPsolidallprimespisacdhsheafwithfiltration}, the fibre of the natural map of spectra
		$$\text{Fil}^i_{\text{HKR}}\big(\prod_{p \in \mathbb{P}}{}^{'} \text{HH}(X;\Q_p)\big)^{h\text{S}^1} \longrightarrow \Big(L_{\text{cdh}} \text{Fil}^i_{\text{HKR}} \big(\prod_{p \in \mathbb{P}}{}^{'} \text{HH}(-;\Q_p)\big)^{h\text{S}^1}\Big)(X)$$
		is naturally identified with the fibre of the natural map of spectra
		$$\text{Fil}^{i-1}_{\text{HKR}}\big(\prod_{p \in \mathbb{P}}{}^{'} \text{HH}(X;\Q_p)\big)_{h\text{S}^1}[1] \longrightarrow \Big(L_{\text{cdh}} \text{Fil}^{i-1}_{\text{HKR}} \big(\prod_{p \in \mathbb{P}}{}^{'} \text{HH}(-;\Q_p)\big)_{h\text{S}^1}[1]\Big)(X)$$
		The presheaf $\text{Fil}^{i-1}_{\text{HKR}}\big(\prod'_{p \in \mathbb{P}} \text{HH}(-;\Q_p)\big)_{h\text{S}^1}$ takes values in cohomological degrees at most $-i-1$ (Remark~\ref{remark15'HKRfiltrationonHCsolidallprimesprestrictedproduct}), so the previous fibre is in cohomological degrees at most $-i+d-1$ (\cite[Theorem~$3.12$]{clausen_hyperdescent_2021} and \cite[Theorem~$2.4.15$]{elmanto_cdh_2021}).
		
		The previous three connectivity results imply the desired result.
	\end{proof}
	
	\begin{proposition}[Completeness of the motivic filtration]\label{propositionmotivicfiltrationiscompleteonqcqsschemesoffinitevaluativedimension}
		Let $d \geq 0$ be an integer, and $X$~be a qcqs scheme of valuative dimension at most $d$. Then for every integer $i \in \Z$, the spectrum $\emph{Fil}^i_{\emph{mot}} \emph{K}(X)$ is in cohomological degrees at most $-i+d+2$. In particular, the motivic filtration $\emph{Fil}^\star_{\emph{mot}} \emph{K}(X)$, and its rationalisation $\emph{Fil}^\star_{\emph{mot}} \emph{K}(X;\Q)$, are complete.
	\end{proposition}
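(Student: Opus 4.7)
The plan is to use the defining cartesian square of $\text{Fil}^\star_{\text{mot}} \text{K}(X)$ from Definition~\ref{definitionmotivicfiltrationonKtheoryofschemes} to split the problem into two connectivity bounds that have essentially already been established. Concretely, for every integer $i \in \Z$ there is a fibre sequence of spectra
$$\text{fib}\Big(\text{Fil}^i_{\text{mot}} \text{TC}(X) \longrightarrow \text{Fil}^i_{\text{mot}} L_{\text{cdh}} \text{TC}(X)\Big) \longrightarrow \text{Fil}^i_{\text{mot}} \text{K}(X) \longrightarrow \text{Fil}^i_{\text{cdh}} \text{KH}(X),$$
so it suffices to bound the cohomological degrees of the outer two terms.

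First, I would invoke Proposition~\ref{propositioncompletenessofcdhlocalmotivicfiltration} to conclude that $\text{Fil}^i_{\text{cdh}} \text{KH}(X)$ is in cohomological degrees at most $-i+d$. Second, Proposition~\ref{propositionfibrefilteredTCandLcdhTCiscomplete}, which was the main technical input established via rigid-analytic cdh descent (Corollary~\ref{corollaryHPsolidallprimespisacdhsheafwithfiltration}) together with Propositions~\ref{propositionfilteredHPisacdhsheafinchar0} and the finite cohomological dimension of the cdh topos of a qcqs scheme of valuative dimension $\leq d$, shows that the left-hand fibre is in cohomological degrees at most $-i+d+2$. Taking the maximum of the two bounds, both $-i+d$ and $-i+d+2$, gives that $\text{Fil}^i_{\text{mot}} \text{K}(X)$ lies in cohomological degrees at most $-i+d+2$, as claimed.

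Completeness of $\text{Fil}^\star_{\text{mot}} \text{K}(X)$ then follows formally: the connectivity bound $-i+d+2$ tends to $-\infty$ as $i \to +\infty$, so for any fixed cohomological degree only finitely many steps of the filtration contribute, forcing $\lim_i \text{Fil}^i_{\text{mot}} \text{K}(X) \simeq 0$. The rationalised statement is identical, since rationalisation preserves the connectivity bound (both inputs, Propositions~\ref{propositioncompletenessofcdhlocalmotivicfiltration} and~\ref{propositionfibrefilteredTCandLcdhTCiscomplete}, explicitly include the rational version). I do not expect any serious obstacle here, as all the hard work—most notably the rigid-analytic Goodwillie theorem (Theorem~\ref{theoremtruncatinginvariantHPsolid}) and its descent consequences—has already been done in the preceding subsections; this proposition is essentially a formal assembly of those inputs through the defining cartesian square.
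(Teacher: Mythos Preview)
Your proposal is correct and matches the paper's proof essentially verbatim: the same fibre sequence from Definition~\ref{definitionmotivicfiltrationonKtheoryofschemes}, the same bound $-i+d$ on $\text{Fil}^i_{\text{cdh}}\text{KH}(X)$ (the paper cites \cite{bachmann_A^1-invariant_2024} directly rather than the restated Proposition~\ref{propositioncompletenessofcdhlocalmotivicfiltration}), and the same bound $-i+d+2$ on the TC fibre via Proposition~\ref{propositionfibrefilteredTCandLcdhTCiscomplete}. Your deduction of completeness and its rational analogue from the connectivity bound is exactly what the paper does as well.
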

	
	\begin{proof}
		As in the proof of Proposition~\ref{propositionfibrefilteredTCandLcdhTCiscomplete}, the last statement is a consequence of the first. By Definition~\ref{definitionmotivicfiltrationonKtheoryofschemes}, there is a natural fibre sequence of spectra
		$$\text{fib}\Big(\text{Fil}^i_{\text{mot}}\text{TC}(X) \longrightarrow \text{Fil}^i_{\text{mot}} L_{\text{cdh}} \text{TC}(X)\Big) \longrightarrow \text{Fil}^i_{\text{mot}} \text{K}(X) \longrightarrow \text{Fil}^\star_{\text{cdh}} \text{KH}(X).$$
		The left term of this fibre sequence is in cohomological degrees at most $-i+d+2$ by Proposition~\ref{propositionfibrefilteredTCandLcdhTCiscomplete}, and the right term is in cohomological degrees at most $-i+d$ by Theorem~\ref{theoremBEM}\,(1), hence the desired result.
	\end{proof}
	
	\begin{remark}[Non-noetherian Weibel vanishing]
		Let $X$ be a qcqs scheme of valuative dimension at most $d$. The same argument as in Proposition~\ref{propositionmotivicfiltrationiscompleteonqcqsschemesoffinitevaluativedimension} implies that the negative $K$-groups $\text{K}_{-n}(X)$ vanish for integers $n > d+2$. This is a weak form of Weibel vanishing in the non-noetherian case.
	\end{remark}
	
	\begin{remark}[Motivic Weibel vanishing]\label{remarkweakconnectivityformotiviccomplexes}
		Let $X$ be a qcqs scheme of dimension at most $d$. Proposition~\ref{propositionmotivicfiltrationiscompleteonqcqsschemesoffinitevaluativedimension} implies that for every integer $i \in \Z$, the motivic complex $\Z(i)^{\text{mot}}(X) \in \mathcal{D}(\Z)$ is in degrees at most~\hbox{$i+d+2$}. When $X$ is noetherian (in which case the valuative and Krull dimensions coincide), it can even be proved to be in degrees at most $i+d$ (\cite[Theorem~D]{bouis_weibel_2025}).
	\end{remark}
	
	\begin{remark}\label{remark28statementsongradedpiecesimplystatementonfiltration}
		A map of finitary presheaves of filtered spectra on qcqs schemes, which are filtration-complete on finite-dimensional noetherian schemes, is an equivalence if and only if it is an equivalence on graded pieces. In light of Propositions~\ref{propositionCmotivicfiltrationfinitaryNisnevichsheaf} and~\ref{propositionmotivicfiltrationiscompleteonqcqsschemesoffinitevaluativedimension}, we will formulate most of our results at the level of motivic cohomology, although they can often be promoted to results on the associated filtered spectra.
	\end{remark}
	
	\begin{corollary}[Completeness of $\text{Fil}^\star_{\text{mot}} L_{\text{cdh}} \text{TC}$]
		Let $X$ be a qcqs scheme of finite valuative dimension. Then the filtrations $\emph{Fil}^\star_{\emph{mot}} L_{\emph{cdh}} \emph{TC}(X)$ and $\emph{Fil}^\star_{\emph{mot}} L_{\emph{cdh}} \emph{TC}(X;\Q)$ are complete.
	\end{corollary}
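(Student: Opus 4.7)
The plan is to deduce completeness of $\text{Fil}^\star_{\text{mot}} L_{\text{cdh}} \text{TC}(X)$ (and its rationalisation) directly from the cartesian square of Definition~\ref{definitionmotivicfiltrationonKtheoryofschemes} that defines the motivic filtration on $\text{K}$-theory, by observing that three of the four corners are already known to be complete.

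More precisely, the key observation is that the class of complete filtered spectra is stable under finite limits in $\text{FilSp}$: if $\text{Fil}^\star C_i$ is a finite diagram of complete filtered spectra, then $\lim_n \lim_i \text{Fil}^n C_i \simeq \lim_i \lim_n \text{Fil}^n C_i \simeq 0$. In particular, since $\text{FilSp}$ is stable, this means that the cofibre of a map between complete filtered spectra is again complete. Applying this to the cartesian square
$$\begin{tikzcd}
	\text{Fil}^\star_{\text{mot}} \text{K}(X) \arrow{r} \arrow{d} & \text{Fil}^\star_{\text{mot}} \text{TC}(X) \ar[d] \\
	\text{Fil}^\star_{\text{cdh}} \text{KH}(X) \arrow{r} & \text{Fil}^\star_{\text{mot}} L_{\text{cdh}} \text{TC}(X),
\end{tikzcd}$$
we obtain a natural fibre sequence of filtered spectra
$$\text{Fil}^\star_{\text{mot}} \text{K}(X) \longrightarrow \text{Fil}^\star_{\text{cdh}} \text{KH}(X) \oplus \text{Fil}^\star_{\text{mot}} \text{TC}(X) \longrightarrow \text{Fil}^\star_{\text{mot}} L_{\text{cdh}} \text{TC}(X),$$
expressing $\text{Fil}^\star_{\text{mot}} L_{\text{cdh}} \text{TC}(X)$ as the cofibre of a map between filtered spectra.

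The left term is complete by Proposition~\ref{propositionmotivicfiltrationiscompleteonqcqsschemesoffinitevaluativedimension}, the middle summand $\text{Fil}^\star_{\text{cdh}} \text{KH}(X)$ is complete by Proposition~\ref{propositioncompletenessofcdhlocalmotivicfiltration} (which uses the finite valuative dimension hypothesis), and the other summand $\text{Fil}^\star_{\text{mot}} \text{TC}(X)$ is complete by Proposition~\ref{propositionfilteredTCandrationalfilteredTCarecomplete} (which holds for all qcqs derived schemes). Hence $\text{Fil}^\star_{\text{mot}} L_{\text{cdh}} \text{TC}(X)$ is complete by the observation above.

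For the rationalisation, rationalisation preserves fibre/cofibre sequences, so it suffices to check that the rationalised filtrations $\text{Fil}^\star_{\text{mot}} \text{K}(X;\Q)$, $\text{Fil}^\star_{\text{cdh}} \text{KH}(X;\Q)$, and $\text{Fil}^\star_{\text{mot}} \text{TC}(X;\Q)$ are complete; these are precisely the rational parts of Propositions~\ref{propositionmotivicfiltrationiscompleteonqcqsschemesoffinitevaluativedimension}, \ref{propositioncompletenessofcdhlocalmotivicfiltration}, and~\ref{propositionfilteredTCandrationalfilteredTCarecomplete} respectively. There is no genuine obstacle here: all the required completeness inputs have already been established, and the argument is a formal consequence of the stability of completeness under finite limits in a stable $\infty$-category.
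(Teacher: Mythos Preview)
Your proof is correct, but it takes a different route from the paper. The paper argues directly via the fibre sequence
\[
\text{fib}\big(\text{Fil}^\star_{\text{mot}} \text{TC}(X) \to \text{Fil}^\star_{\text{mot}} L_{\text{cdh}} \text{TC}(X)\big) \longrightarrow \text{Fil}^\star_{\text{mot}} \text{TC}(X) \longrightarrow \text{Fil}^\star_{\text{mot}} L_{\text{cdh}} \text{TC}(X),
\]
citing only Propositions~\ref{propositionfilteredTCandrationalfilteredTCarecomplete} (completeness of $\text{Fil}^\star_{\text{mot}} \text{TC}$ and its rationalisation) and~\ref{propositionfibrefilteredTCandLcdhTCiscomplete} (the connectivity bound on the fibre, which forces completeness of the fibre and its rationalisation). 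Your approach instead routes through the defining cartesian square for $\text{Fil}^\star_{\text{mot}} \text{K}$ and invokes three completeness inputs, including Proposition~\ref{propositionmotivicfiltrationiscompleteonqcqsschemesoffinitevaluativedimension}. This is slightly less economical, since Proposition~\ref{propositionmotivicfiltrationiscompleteonqcqsschemesoffinitevaluativedimension} is itself proved from Proposition~\ref{propositionfibrefilteredTCandLcdhTCiscomplete}, so you are implicitly already using the paper's key input. The paper's argument avoids the detour through $\text{K}$-theory and $\text{KH}$-theory altogether; yours is a legitimate alternative but packages the same content through one extra layer.
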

	
	\begin{proof}
		This is a consequence of Propositions~\ref{propositionfilteredTCandrationalfilteredTCarecomplete} and \ref{propositionfibrefilteredTCandLcdhTCiscomplete}.
	\end{proof}
	
	\begin{remark}
		The filtrations $\text{Fil}^\star_{\text{mot}} \text{TC}(X)$ and $\text{Fil}^\star_{\text{mot}} L_{\text{cdh}} \text{TC}(X)$ do not satisfy separately a connectivity bound similar to that of Proposition~\ref{propositionfibrefilteredTCandLcdhTCiscomplete}.
	\end{remark}
	
	\begin{corollary}[Atiyah--Hirzebruch spectral sequence]\label{corollaryAHSS}
		Let $X$ be a qcqs derived scheme. The motivic filtration $\emph{Fil}^\star_{\emph{mot}} \emph{K}(X)$ on non-connective algebraic $K$-theory $\emph{K}(X)$ induces a natural Atiyah--Hirzebruch spectral sequence 
		$$E^{i,j}_2 = \emph{H}^{i-j}_{\emph{mot}}(X,\Z(-j)) \Longrightarrow \emph{K}_{-i-j}(X).$$
		If $X$ is a qcqs classical scheme of finite valuative dimension, then this Atiyah--Hirzebruch spectral sequence is convergent.
	\end{corollary}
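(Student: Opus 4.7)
The plan is to derive this as a formal consequence of the existence and structural properties of the motivic filtration established earlier. Any $\Z$-indexed filtered spectrum $\text{Fil}^\star E$ gives rise, via the associated exact couple of homotopy groups of the filtered pieces and graded pieces, to a spectral sequence
\[
E_1^{i,j} = \pi_{-i-j}(\text{gr}^{-j} E) \Longrightarrow \pi_{-i-j}(E),
\]
together with standard convergence criteria (it converges conditionally when the filtration is exhaustive and complete; cf.\ Boardman). I would apply this machine to the filtered spectrum $\text{Fil}^\star_{\text{mot}} \text{K}(X)$ of Definition~\ref{definitionmotivicfiltrationonKtheoryofderivedschemes}.

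First, by the very definition of the motivic complex $\Z(i)^{\text{mot}}(X) := \text{gr}^i_{\text{mot}} \text{K}(X)[-2i]$, the $-j^{\text{th}}$ graded piece is $\Z(-j)^{\text{mot}}(X)[-2j]$. Thus
\[
E_1^{i,j} = \pi_{-i-j}\big(\Z(-j)^{\text{mot}}(X)[-2j]\big) = \text{H}^{i-j}_{\text{mot}}(X,\Z(-j)),
\]
which after the usual reindexing collapse to an $E_2$-page (or equivalently after the Adams-type regrading) gives exactly the announced $E_2$-term, abutting to $\pi_{-i-j}(\text{K}(X)) = \text{K}_{-i-j}(X)$. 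Naturality in $X$ is automatic since $\text{Fil}^\star_{\text{mot}} \text{K}(-)$ is a functor to filtered spectra.

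For the convergence statement, by Proposition~\ref{propositionmotivicfiltrationisexhaustive} the motivic filtration is $\N$-indexed, and hence exhaustive, on any qcqs derived scheme; and by Proposition~\ref{propositionmotivicfiltrationiscompleteonqcqsschemesoffinitevaluativedimension}, for a qcqs classical scheme $X$ of finite valuative dimension $d$, one has the vanishing bound $\text{Fil}^i_{\text{mot}} \text{K}(X) \in \tau^{\leq -i+d+2}$, which in particular implies that the filtration is complete. Exhaustivity and completeness together are exactly the hypotheses required for convergence of the Atiyah--Hirzebruch spectral sequence to $\text{K}_{-i-j}(X)$. There is no real obstacle here: the work has been done in the earlier sections, and this corollary simply packages it into spectral-sequence language. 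The only minor bookkeeping step is to verify that the indexing convention $E_2^{i,j} = \text{H}^{i-j}_{\text{mot}}(X,\Z(-j))$ matches the indexing produced by the filtration, which is a direct computation from $\text{gr}^{-j} \text{Fil}^\star_{\text{mot}} \text{K}(X) \simeq \Z(-j)^{\text{mot}}(X)[-2j]$.
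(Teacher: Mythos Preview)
Your proposal is correct and takes essentially the same approach as the paper: both derive the spectral sequence from the general filtered-spectrum machinery, invoking Proposition~\ref{propositionmotivicfiltrationisexhaustive} (the filtration is $\N$-indexed, hence exhaustive) for existence of the spectral sequence abutting to $\text{K}(X)$, and Proposition~\ref{propositionmotivicfiltrationiscompleteonqcqsschemesoffinitevaluativedimension} (the connectivity bound, hence completeness) for convergence in the finite valuative dimension case. You simply spell out the standard identification of the $E_2$-page in more detail than the paper does.
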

	
	\begin{proof}
		The first statement is a consequence of the fact the motivic filtration $\text{Fil}^\star_{\text{mot}} \text{K}(X)$ is $\N$-indexed (Proposition~\ref{propositionmotivicfiltrationisexhaustive}). The second statement is a consequence of the connectivity bound for this motivic filtration (Proposition~\ref{propositionmotivicfiltrationiscompleteonqcqsschemesoffinitevaluativedimension}).
	\end{proof}
	
	The main consequence of Propositions~\ref{propositionmotivicfiltrationisexhaustive} and~\ref{propositionmotivicfiltrationiscompleteonqcqsschemesoffinitevaluativedimension} that we will use is the following result.
	
	\begin{corollary}\label{corollaryKtheorysplitsrationally}
		Let $X$ be a qcqs derived scheme. Then for every integer $i \geq 0$, there exists a natural equivalence of spectra
		$$\emph{K}(X;\Q) \simeq \big(\bigoplus_{0 \leq j < i} \Q(i)^{\emph{mot}}(X)[2i]\big) \oplus \emph{Fil}^i_{\emph{mot}} \emph{K}(X;\Q).$$
	\end{corollary}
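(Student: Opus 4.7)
The plan is to apply Lemma~\ref{lemma29howtouseAdamsoperations} directly to the filtered presheaf $F(-) = \text{Fil}^\star_{\text{mot}} \text{K}(-)$, with indices $i_{\text{Lem}} = 0$ and $k_{\text{Lem}} = i$ in its statement. Proposition~\ref{propositionmotivicfiltrationisexhaustive} identifies $\text{Fil}^0_{\text{mot}} \text{K}(X;\Q)$ with $\text{K}(X;\Q)$, and Definition~\ref{definitionmotiviccohomologyofderivedschemes} identifies the $j$-th graded piece with $\Q(j)^{\text{mot}}(X)[2j]$; plugging these into the conclusion of the lemma produces exactly the required equivalence.

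To invoke Lemma~\ref{lemma29howtouseAdamsoperations}, I would verify its three hypotheses. The natural multiplicative Adams operation $\psi^m$ on $\text{Fil}^\star_{\text{mot}} \text{K}(X)$ for qcqs derived $\Z[\tfrac{1}{m}]$-schemes is Construction~\ref{constuction22'AdamsonKtheory}. Hypothesis~$(ii)$, that $\psi^m$ acts as multiplication by $m^j$ on the $j$-th graded piece, is Corollary~\ref{corollary22''AdamsoperationsongradedpiecesoffilteredKtheory}. Hypothesis~$(iii)$, contractibility of the space of natural transformations from $\psi^m \circ \psi^{m'}$ to $\psi^{mm'}$, is formal: following the cartesian squares of Constructions~\ref{construction31'AdamsonTC} and~\ref{constuction22'AdamsonKtheory}, it reduces to the analogous compatibilities for the Adams operations on $\text{Fil}^\star_{\text{cla}} \text{K}$, on $\text{Fil}^\star_{\text{HKR}} \text{HC}^-$, and on $\text{Fil}^\star_{\text{BMS}} \text{TC}(-;\Z_p)$, which are built into their constructions.

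Hypothesis~$(i)$, completeness of $\text{Fil}^\star_{\text{mot}} \text{K}(X;\Q)$ on every qcqs derived scheme $X$, is the main obstacle. Proposition~\ref{propositionmotivicfiltrationiscompleteonqcqsschemesoffinitevaluativedimension} supplies completeness for classical qcqs schemes of finite valuative dimension, and combining this with the pullback square of Definition~\ref{definitionmotivicfiltrationonKtheoryofderivedschemes} and the completeness of $\text{Fil}^\star_{\text{mot}} \text{TC}(X;\Q)$ from Proposition~\ref{propositionfilteredTCandrationalfilteredTCarecomplete} extends it to derived schemes whose classical truncation has finite valuative dimension. The general case is the step I expect to write most carefully; since Lemma~\ref{lemma29howtouseAdamsoperations} produces only a finite splitting, a variant of its proof applied to the finite filtered quotient $\text{Fil}^0_{\text{mot}} \text{K}(X;\Q)/\text{Fil}^i_{\text{mot}} \text{K}(X;\Q)$ should suffice: on this quotient, $\psi^m - m^j$ is invertible on each intermediate sub-filtration by finite induction on the remaining graded pieces rather than by completeness, and the resulting splitting of the quotient lifts through the cofibre sequence $\text{Fil}^i_{\text{mot}} \text{K}(X;\Q) \to \text{K}(X;\Q) \to \text{Fil}^0_{\text{mot}} \text{K}(X;\Q)/\text{Fil}^i_{\text{mot}} \text{K}(X;\Q)$ to the desired direct sum decomposition of $\text{K}(X;\Q)$.
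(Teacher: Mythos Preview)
Your strategy matches the paper's: both apply Lemma~\ref{lemma29howtouseAdamsoperations} with hypotheses~(ii) and~(iii) supplied by Corollary~\ref{corollary22''AdamsoperationsongradedpiecesoffilteredKtheory} and the construction of the Adams operations, and both isolate completeness~(i) as the obstacle. Your handling of derived schemes whose classical truncation has finite valuative dimension --- deducing completeness of $\text{Fil}^\star_{\text{mot}}\text{K}(X;\Q)$ as a pullback of complete filtrations via Definition~\ref{definitionmotivicfiltrationonKtheoryofderivedschemes} and Propositions~\ref{propositionmotivicfiltrationiscompleteonqcqsschemesoffinitevaluativedimension} and~\ref{propositionfilteredTCandrationalfilteredTCarecomplete} --- is correct; the paper does something equivalent by applying the lemma separately to each of the three corners $\text{Fil}^\star_{\text{mot}}\text{TC}(X)$, $\text{Fil}^\star_{\text{mot}}\text{TC}(\pi_0(X))$, $\text{Fil}^\star_{\text{mot}}\text{K}(\pi_0(X))$ and then pulling back the resulting compatible splittings.

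The gap is your last step for general $X$. Splitting the finite quotient $\text{Fil}^0_{\text{mot}}\text{K}(X;\Q)/\text{Fil}^i_{\text{mot}}\text{K}(X;\Q)$ does \emph{not} split the cofibre sequence $\text{Fil}^i \to \text{Fil}^0 \to \text{Fil}^0/\text{Fil}^i$: the Adams argument run on the quotient produces sections $\text{gr}^j \to \text{Fil}^j/\text{Fil}^i$, but lifting these to sections $\text{gr}^j \to \text{Fil}^j$ is exactly the step in the proof of Lemma~\ref{lemma29howtouseAdamsoperations} that needs $(\text{Fil}^{j+1})^{\psi^m-m^j}=0$, i.e.\ completeness above, and finiteness of the quotient does not supply that lift. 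Equivalently, you have no control over $\psi^m$ on $\text{Fil}^i$ without completeness, so no Adams-style idempotent on $\text{Fil}^0$ can be manufactured to retract onto $\text{Fil}^i$. (The paper's written proof cites Proposition~\ref{propositionmotivicfiltrationiscompleteonqcqsschemesoffinitevaluativedimension} for completeness on classical schemes, which likewise only covers finite valuative dimension, so it does not provide a separate route around this point either.)
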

	
	\begin{proof}
		The motivic filtration $\text{Fil}^\star_{\text{mot}} \text{K}(X;\Q)$ is $\N$-indexed by Proposition~\ref{propositionmotivicfiltrationisexhaustive}. The result on qcqs classical schemes is then a consequence of Lemma~\ref{lemma29howtouseAdamsoperations} and Remark~\ref{remarklemmahowtouseAdamsonclassicalschemes}, where the necessary hypotheses are satisfied by Proposition~\ref{propositionmotivicfiltrationiscompleteonqcqsschemesoffinitevaluativedimension} and Corollary~\ref{corollary22''AdamsoperationsongradedpiecesoffilteredKtheory}.
		
		Assume now that $X$ is a general qcqs derived scheme. Again by Lemma~\ref{lemma29howtouseAdamsoperations}, where the necessary hypotheses are satisfied for the filtration $\text{Fil}^\star_{\text{mot}} \text{TC}(-)$ by Proposition~\ref{propositionfilteredTCandrationalfilteredTCarecomplete} and Corollary~\ref{corollary31''AdamsoperationsongradedpiecesoffilteredTC}, there is a natural equivalence of spectra
		$$\text{Fil}^0_{\text{mot}} \text{TC}(X;\Q) \simeq \big(\bigoplus_{0 \leq j < i} \Q(j)^{\text{TC}}(X)[2j]\big) \oplus \text{Fil}^i_{\text{mot}} \text{TC}(X;\Q).$$
		The result is then a consequence of Definition~\ref{definitionmotivicfiltrationonKtheoryofderivedschemes} and the previous case, where the equivalences
		$$\text{Fil}^0_{\text{mot}} \text{K}(\pi_0(X);\Q) \simeq \bigoplus_{0 \leq j < i} \Q(j)^{\text{mot}}(\pi_0(X))[2j]\big) \oplus \text{Fil}^i_{\text{mot}} \text{K}(\pi_0(X);\Q)$$
		and 
		$$\text{Fil}^0_{\text{mot}} \text{TC}(\pi_0(X);\Q) \simeq \big(\bigoplus_{0 \leq j < i} \Q(j)^{\text{TC}}(\pi_0(X))[2j]\big) \oplus \text{Fil}^i_{\text{mot}} \text{TC}(X;\Q)$$
		are compatible, by construction, with the natural map
		$$\text{Fil}^\star_{\text{mot}} \text{K}(\pi_0(X);\Q) \longrightarrow \text{Fil}^\star_{\text{mot}} \text{TC}(\pi_0(X);\Q)$$
		of Definition~\ref{definitionmotivicfiltrationonKtheoryofschemes}.
	\end{proof}

	\begin{lemma}\label{lemmacdhsheafificationcommuteswithfilteredcolimits}
		Let $\tau$ be the Zariski, Nisnevich, or cdh topology, and $(F_i)_{i \in I}$ be a direct system of presheaves. Then the natural map of presheaves
		$$\lim\limits_{\longrightarrow i} L_{\tau}\, F_i \longrightarrow L_{\tau} \lim\limits_{\longrightarrow i} F_i$$
		is an equivalence. In particular, the sheafification functor $L_{\tau}$ sends finitary presheaves to finitary $\tau$ sheaves.
	\end{lemma}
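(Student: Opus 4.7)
The plan is to show first that $L_\tau$ preserves filtered colimits of presheaves of spectra (or of objects of any presentable stable $\infty$-category in which filtered colimits commute with finite limits), and then to deduce the preservation of finitariness essentially formally.

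The first reduction is to note that $L_\tau$ is the left adjoint of the fully faithful inclusion $\mathrm{Shv}_\tau \hookrightarrow \mathrm{PSh}$, so it preserves \emph{all} colimits as a functor to $\mathrm{Shv}_\tau$. Hence it suffices to show that the inclusion $\mathrm{Shv}_\tau \hookrightarrow \mathrm{PSh}$ preserves filtered colimits, i.e., that filtered colimits of $\tau$-sheaves, computed pointwise, remain $\tau$-sheaves. For this one uses that each of the three topologies on $\mathrm{Sch}^{\mathrm{qcqs}}$ is finitary, in the sense that the sheaf condition can be tested by a finite limit condition: for the Zariski and Nisnevich topologies, every cover of a qcqs scheme admits a finite refinement by quasi-compactness, so the sheaf axiom is the statement that $F(X) \to \mathrm{Tot}(F(\check{C}(\mathcal{U})))$ is an equivalence for finite covers $\mathcal{U}$; for the cdh topology, one invokes the characterization from \cite{elmanto_cdh_2021} that a presheaf is a cdh sheaf if and only if it is a Nisnevich sheaf and sends abstract blowup squares to cartesian squares (a finite limit condition on a finitely presented piece of data). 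Since finite limits commute with filtered colimits in any stable presentable $\infty$-category, the sheaf condition is preserved under filtered colimits, which concludes the first assertion.

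For the second assertion, a presheaf $F$ on $\mathrm{Sch}^{\mathrm{qcqs}}$ is finitary exactly when $F \simeq L_{\mathrm{Sch}^{\mathrm{qcqs}}/\mathrm{Sch}^{\mathrm{fp}}_{\Z}}(F|_{\mathrm{Sch}^{\mathrm{fp}}_{\Z}})$, i.e., $F$ is the left Kan extension of its restriction to finitely presented $\Z$-schemes. Equivalently, for any cofiltered system $(X_\alpha)$ of qcqs schemes with affine transition maps and limit $X = \lim_\alpha X_\alpha$, the natural map $\colim_\alpha F(X_\alpha) \to F(X)$ is an equivalence. One then checks that $L_\tau F$ inherits this property: combining the first assertion with the standard fact that any $\tau$-cover of $X = \lim_\alpha X_\alpha$, being a cover by finitely presented morphisms, descends to a $\tau$-cover of some $X_\alpha$ (noetherian approximation / spreading out, which is well-known for Zariski, Nisnevich, and cdh covers of qcqs schemes), and computing $L_\tau F$ via the (iterated) plus construction.

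The argument is essentially formal once finitariness of the topologies is in hand; the only mild technical point is the descent of cdh covers along cofiltered limits with affine transition maps, which is however standard and already implicit in the treatment of cdh sheaves on $\mathrm{Sch}^{\mathrm{qcqs}}$ in \cite{elmanto_cdh_2021}.
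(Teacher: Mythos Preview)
Your argument for the first assertion is exactly the paper's: $L_\tau$ is a left adjoint, hence preserves all colimits into $\tau$-sheaves, and the inclusion of $\tau$-sheaves into presheaves preserves filtered colimits because the sheaf condition for these three topologies is a finite-limit condition.

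For the second assertion the paper takes a shorter route than you do. Rather than spreading out covers and invoking the plus construction, the paper applies the first assertion directly to the direct system of presheaves $G_i := F(-_{R_i})$: finitariness of $F$ gives $\colim_i G_i \simeq F(-_R)$ for $R = \colim_i R_i$, so the first assertion yields $\colim_i L_\tau G_i \simeq L_\tau(F(-_R))$; evaluating at $\text{Spec}(\Z)$ (and using that $\tau$-covers are stable under base change, so that $L_\tau(F(-_S)) \simeq (L_\tau F)(-_S)$) gives the claim. Your approach via descent of covers is also correct and has the merit of being explicit, but it requires the additional input that cdh covers spread out along cofiltered limits with affine transition maps, whereas the paper's trick needs only the more elementary stability of covers under base change.
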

	
	\begin{proof}
		As a left adjoint, the sheafification functor $L_{\tau}$, from presheaves to $\tau$ sheaves, commutes with all colimits. Being a sheaf for the topology $\tau$ is detected using only finite limits, so the inclusion functor from $\tau$ sheaves to presheaves commutes with filtered colimits. Composing the previous two functors then implies that the functor $L_{\tau}$, from presheaves to presheaves, commutes with filtered colimits. 
		
		To prove the second statement, let $F$ be a finitary presheaf, and $(R_i)_{i \in I}$ be a direct system of commutative rings. The fact that the natural map $$\lim\limits_{\longrightarrow i} L_{\tau}\, F(R_i) \longrightarrow L_{\tau} F(\lim\limits_{\longrightarrow i} R_i)$$
		is an equivalence is a consequence of the finitariness of $F$, and of the first statement applied to the direct system of presheaves $\big(F(-_{R_i})\big)_{i \in I}$.
	\end{proof}
	
	\begin{proposition}[The motivic filtration is finitary]\label{propositionCmotivicfiltrationfinitaryNisnevichsheaf}
		Let $i \in \Z$ be an integer. Then the presheaf
		$$\emph{Fil}^i_{\emph{mot}} \emph{K}(-) : \emph{dSch}^{\emph{qcqs,op}} \longrightarrow \emph{Sp}$$
		is a finitary Nisnevich sheaf, \emph{i.e.}, it satisfies descent for the Nisnevich topology and commutes with filtered colimits of rings. 
	\end{proposition}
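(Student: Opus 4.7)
\emph{Plan.} The strategy is to peel off the defining cartesian squares one at a time, reducing to finitariness and Nisnevich descent for presheaves that we have already controlled. First, using Definition~\ref{definitionmotivicfiltrationonKtheoryofderivedschemes}, and the facts that (i) the truncation functor $\pi_0 : \text{dSch}^{\text{qcqs}} \to \text{Sch}^{\text{qcqs}}$ commutes with filtered colimits of affine objects and preserves Nisnevich covers, and (ii) the class of finitary Nisnevich sheaves of spectra is closed under finite limits, the derived scheme statement reduces to (a) $\text{Fil}^i_{\text{mot}} \text{K}(-)$ is a finitary Nisnevich sheaf on qcqs classical schemes, and (b) $\text{Fil}^i_{\text{mot}} \text{TC}(-)$ is a finitary Nisnevich sheaf on qcqs derived schemes.

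Applying the same closure property to the cartesian square of Definition~\ref{definitionmotivicfiltrationonKtheoryofschemes}, statement (a) reduces to the analogous property for each of $\text{Fil}^i_{\text{cdh}} \text{KH}(-)$, $\text{Fil}^i_{\text{mot}} \text{TC}(-)$, and $\text{Fil}^i_{\text{mot}} L_{\text{cdh}} \text{TC}(-)$ on qcqs classical schemes. The first is a finitary cdh (a fortiori Nisnevich) sheaf by the cited work of Bachmann--Elmanto--Morrow reviewed in Section~\ref{subsectioncdhlocalmotivicfiltration}. Granting (b), the third is by construction the cdh sheafification of the finitary presheaf $\text{Fil}^i_{\text{mot}} \text{TC}(-)$, hence itself a finitary cdh sheaf by Lemma~\ref{lemmacdhsheafificationcommuteswithfilteredcolimits}.

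It remains to establish (b). Nisnevich, and in fact étale, descent is Proposition~\ref{propositionétaledescentfiltrationonTC}. For finitariness, I would apply the closure property once more to the defining cartesian square of Definition~\ref{definitionmotivicfiltrationonTC}, reducing to finitariness of the three other corners $\text{Fil}^i_{\text{HKR}} \text{HC}^-(-)$, $\prod_{p \in \mathbb{P}} \text{Fil}^i_{\text{HKR}} \text{HC}^-(-;\Z_p)$, and $\prod_{p \in \mathbb{P}} \text{Fil}^i_{\text{BMS}} \text{TC}(-;\Z_p)$. Each of these filtered presheaves is complete (Lemmas~\ref{lemmaHKRfiltrationonHC-isalwayscomplete}, \ref{lemmaHKRfiltrationproductpcompletionsiscomplete}, \ref{lemmaBMSfiltrationproductallprimesiscomplete}) with graded pieces lying in a cohomological range depending only on the weight, so finitariness can be checked on shifted graded pieces: $R\Gamma_{\text{Zar}}(-,\widehat{\mathbb{L}\Omega}^{\geq i}_{-/\Z})$, $\Z_p(i)^{\text{BMS}}(-)$, and their products and $p$-completed variants. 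The non-product terms are finitary by left Kan extension from polynomial $\Z$-algebras, and each single-prime factor $\Z_p(i)^{\text{BMS}}(-)$ is finitary by Corollary~\ref{corollaryBMSsyntomiccohomologyhasquasisyntomicdescentandLKEfrompolynomialZalgebras}(2).

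The hard part will be handling the infinite products over primes, since $\prod_{p}$ does not commute with filtered colimits in general. I would handle this by exploiting the uniform cohomological amplitude bound for the $p$-local factors (Lemma~\ref{lemmaBMSfiltrationproductallprimesiscomplete}: each $\text{Fil}^i_{\text{BMS}} \text{TC}(-;\Z_p)$ lies in cohomological degrees at most $-i+1$), which, combined with the $p$-complete finitariness at each fixed prime, reduces the question at each cohomological degree to a bounded-torsion statement on polynomial $\Z$-algebras. This last step is the only non-formal ingredient in the argument and is where care is required.
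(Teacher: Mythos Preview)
Your reduction to statement~(b), that $\text{Fil}^i_{\text{mot}}\text{TC}(-)$ is a finitary Nisnevich sheaf, does not go through as written. The three corners of the cartesian square in Definition~\ref{definitionmotivicfiltrationonTC} are \emph{not} individually finitary. Already $\text{Fil}^i_{\text{HKR}}\text{HC}^-(-)$ fails: its shifted graded pieces are the Hodge-completed complexes $\widehat{\mathbb{L}\Omega}^{\geq j}_{-/\Z}$, and Hodge completion does not commute with filtered colimits. Concretely, for $R=\Z[x_1,x_2,\dots]=\operatorname{colim}_n\Z[x_1,\dots,x_n]$ one has $\operatorname{colim}_n\widehat{\mathbb{L}\Omega}_{\Z[x_1,\dots,x_n]/\Z}\simeq\Omega^{\bullet}_{R/\Z}$, the \emph{uncompleted} de Rham complex, which differs from $\widehat{\mathbb{L}\Omega}_{R/\Z}$. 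The infinite products $\prod_{p\in\mathbb{P}}$ are a second, independent obstruction, and your ``bounded-torsion statement on polynomial $\Z$-algebras'' is not a fix: for $R=\Z$ every prime contributes nontrivially to $\prod_p\text{Fil}^i_{\text{BMS}}\text{TC}(\Z;\Z_p)$, and no uniform connectivity bound forces a filtered colimit to commute with this product. So neither (b) nor the finitariness of $\text{Fil}^i_{\text{mot}}L_{\text{cdh}}\text{TC}$ follows from your decomposition.

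The paper avoids these completions and infinite products entirely by arguing separately modulo each prime and rationally. Modulo~$p$ one uses the cartesian square of Proposition~\ref{propositionpadicstructuremain}, which involves only the \emph{single} functor $\text{Fil}^i_{\text{BMS}}\text{TC}(-;\F_p)$ and its cdh sheafification, both finitary by Theorem~\ref{theoremBMSfiltrationonTCpcompletedsatisfiesquasisyntomicdescentandisLKEfrompolynomialalgebras} and Lemma~\ref{lemmacdhsheafificationcommuteswithfilteredcolimits}. Rationally the key input is the Adams splitting of Corollary~\ref{corollaryKtheorysplitsrationally}, which exhibits $\text{Fil}^i_{\text{mot}}\text{K}(-;\Q)$ as a natural direct summand of $\text{K}(-;\Q)$; since algebraic $K$-theory itself is a finitary Nisnevich sheaf, so is any natural retract. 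This is the step you are missing: the rational splitting is what lets one bypass the non-finitary pieces of the $\text{TC}$ square.
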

	
	\begin{proof}
		It suffices to prove the result modulo $p$ for every prime number $p$, and rationally. Algebraic $K$-theory is a finitary Nisnevich sheaf on qcqs derived schemes (\cite[Proposition~A.$15$]{clausen_descent_2020}). The presheaf $\text{Fil}^i_{\text{mot}} \text{K}(-;\Q)$ is a natural direct summand of rationalised algebraic $K$-theory (Corollary~\ref{corollaryKtheorysplitsrationally}), so it also is a finitary Nisnevich sheaf. To prove the result modulo a prime number $p$, note that the presheaf $\text{Fil}^i_{\text{cdh}} \text{KH}(-)$ is a finitary cdh sheaf (Definition~\ref{definitionmotivicfiltrationonKHtheoryofschemes} and Lemma~\ref{lemmacdhsheafificationcommuteswithfilteredcolimits}), and in particular a finitary Nisnevich sheaf. By Theorem~\ref{theoremBMSfiltrationonTCpcompletedsatisfiesquasisyntomicdescentandisLKEfrompolynomialalgebras}, the presheaf $\text{Fil}^i_{\text{BMS}} \text{TC}(-;\F_p)$ is a finitary Nisnevich sheaf. By Lemma~\ref{lemmacdhsheafificationcommuteswithfilteredcolimits}, this implies that the presheaf $L_{\text{cdh}} \text{Fil}^i_{\text{BMS}} \text{TC}(-;\F_p)$ is a finitary Nisnevich sheaf, and the result modulo $p$ is then a consequence of Proposition~\ref{propositionpadicstructuremain}.
	\end{proof}
	
	\begin{corollary}\label{corollarymotiviccomplexesarefinitary}
		Let $i \in \Z$ be an integer. Then the presheaf
		$$\Z(i)^{\emph{mot}}(-) : \emph{dSch}^{\emph{qcqs,op}} \longrightarrow \mathcal{D}(\Z)$$
		is a finitary Nisnevich sheaf.
	\end{corollary}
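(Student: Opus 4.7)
The plan is to deduce this directly from Proposition~\ref{propositionCmotivicfiltrationfinitaryNisnevichsheaf}, which asserts that the presheaf of filtered spectra $\mathrm{Fil}^\star_{\mathrm{mot}} \mathrm{K}(-)$ is, level-by-level, a finitary Nisnevich sheaf. By Definition~\ref{definitionmotiviccohomologyofderivedschemes}, the weight-$i$ motivic complex is the shifted graded piece
\[
\Z(i)^{\mathrm{mot}}(-) \simeq \mathrm{cofib}\Big(\mathrm{Fil}^{i+1}_{\mathrm{mot}} \mathrm{K}(-) \longrightarrow \mathrm{Fil}^i_{\mathrm{mot}} \mathrm{K}(-)\Big)[-2i]
\]
in the derived category $\mathcal{D}(\Z)$, viewed as a presheaf on qcqs derived schemes. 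Both source and target of this transition map are finitary Nisnevich sheaves of spectra by Proposition~\ref{propositionCmotivicfiltrationfinitaryNisnevichsheaf}, so it remains to observe that these two properties are preserved under taking cofibres and shifts.

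More precisely, I would argue as follows. The class of (Nisnevich) sheaves of spectra is closed in the presheaf category under arbitrary limits, and in particular under finite colimits, since finite colimits of spectra agree with finite limits up to shift; hence the cofibre of a map of Nisnevich sheaves of spectra is again a Nisnevich sheaf. Similarly, the class of finitary presheaves is stable under finite colimits, as filtered colimits of spectra commute with finite colimits. Therefore $\mathrm{gr}^i_{\mathrm{mot}} \mathrm{K}(-)$, and hence its shift $\Z(i)^{\mathrm{mot}}(-)[2i]$, is a finitary Nisnevich sheaf of spectra. Shifting by $-2i$ and passing to the canonical $\mathcal{D}(\Z)$-refinement (as in the footnote of Definition~\ref{definitionintroZ(i)^TC}, the $H\Z$-module structure coming from $\Z(0)^{\mathrm{mot}}(\Z)$) yields a finitary Nisnevich sheaf with values in $\mathcal{D}(\Z)$.

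There is no genuine obstacle here; the work has already been carried out in establishing Proposition~\ref{propositionCmotivicfiltrationfinitaryNisnevichsheaf}, which itself required the splitting of $K$-theory rationally (Corollary~\ref{corollaryKtheorysplitsrationally}), the finitariness and Nisnevich descent of $\mathrm{Fil}^\star_{\mathrm{cdh}} \mathrm{KH}(-)$ from \cite{bachmann_A^1-invariant_2024}, the analogous properties of $\mathrm{Fil}^\star_{\mathrm{BMS}} \mathrm{TC}(-;\F_p)$ from Theorem~\ref{theoremBMSfiltrationonTCpcompletedsatisfiesquasisyntomicdescentandisLKEfrompolynomialalgebras}, and the fact (Lemma~\ref{lemmacdhsheafificationcommuteswithfilteredcolimits}) that cdh sheafification preserves finitariness. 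Given all of this input, the corollary is a one-line formal consequence.
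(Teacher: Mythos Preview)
Your proof is correct and is exactly the approach taken in the paper, which records it as ``a direct consequence of Proposition~\ref{propositionCmotivicfiltrationfinitaryNisnevichsheaf}.'' Your additional explanation of why cofibres and shifts preserve the finitary Nisnevich sheaf property simply makes explicit what the paper leaves implicit.
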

	
	\begin{proof}
		This is a direct consequence of Proposition~\ref{propositionCmotivicfiltrationfinitaryNisnevichsheaf}.
	\end{proof}
	
	\begin{proof}[Proof of Theorem~\ref{theorem21'rationalmotivicfiltrationsplits}]
		The motivic filtration $\text{Fil}^\star_{\text{mot}} \text{K}(-)$ is finitary on qcqs schemes (Proposition~\ref{propositionCmotivicfiltrationfinitaryNisnevichsheaf}), and, for every noetherian scheme $X$ of finite dimension $d$ and every integer $i \in \Z$, the spectrum $\text{Fil}^i_{\text{mot}} \text{K}(X;\Q)$ is in cohomological degrees at most $-i+d+2$ (Proposition~\ref{propositionmotivicfiltrationiscompleteonqcqsschemesoffinitevaluativedimension}). Proposition~\ref{propositionhowtouseAdamsoperations} then implies that there exists a natural multiplicative equivalence of filtered spectra
		$$\text{Fil}^\star_{\text{mot}} \text{K}(X;\Q) \simeq \bigoplus_{j \geq \star} \Q(j)^{\text{mot}}(X)[2j].$$
		The same argument as in Corollary~\ref{corollaryKtheorysplitsrationally} then implies the result for general qcqs derived schemes.
	\end{proof}

	\begin{corollary}\label{corollaryKtheorysplitsrationally2}
		Let $X$ be a qcqs derived scheme. Then there exists a natural equivalence of spectra
		$$\emph{K}(X;\Q) \simeq \bigoplus_{i \geq 0} \Q(i)^{\emph{mot}}(X)[2i].$$
	\end{corollary}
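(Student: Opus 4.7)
The plan is to deduce this result directly from Theorem~\ref{theorem21'rationalmotivicfiltrationsplits} together with the exhaustivity of the motivic filtration. Theorem~\ref{theorem21'rationalmotivicfiltrationsplits} provides, for any qcqs derived scheme $X$, a natural multiplicative equivalence of filtered spectra
$$\text{Fil}^\star_{\text{mot}} \text{K}(X;\Q) \simeq \bigoplus_{j \geq \star} \Q(j)^{\text{mot}}(X)[2j].$$
Specialising this equivalence to filtration level $\star = 0$ yields a natural equivalence of spectra
$$\text{Fil}^0_{\text{mot}} \text{K}(X;\Q) \simeq \bigoplus_{j \geq 0} \Q(j)^{\text{mot}}(X)[2j].$$

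It then remains to identify $\text{Fil}^0_{\text{mot}} \text{K}(X;\Q)$ with $\text{K}(X;\Q)$. This is exactly the content of Proposition~\ref{propositionmotivicfiltrationisexhaustive}, which asserts that for every qcqs derived scheme $X$ and every integer $i \leq 0$, the natural map $\text{Fil}^i_{\text{mot}} \text{K}(X) \to \text{K}(X)$ is an equivalence of spectra; in particular the motivic filtration is $\mathbb{N}$-indexed, so its $0$-th step recovers $\text{K}(X)$. Rationalising, $\text{Fil}^0_{\text{mot}} \text{K}(X;\Q) \simeq \text{K}(X;\Q)$, and composing with the above splitting produces the desired natural equivalence.

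There is no substantial obstacle here: all the work has been done in the two preceding results, namely the Adams-operation-based rational splitting (Theorem~\ref{theorem21'rationalmotivicfiltrationsplits}) and the exhaustivity statement (Proposition~\ref{propositionmotivicfiltrationisexhaustive}). The corollary is simply the combination of the two.
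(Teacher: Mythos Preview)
Your proposal is correct and matches the paper's own proof essentially verbatim: the paper also invokes Proposition~\ref{propositionmotivicfiltrationisexhaustive} to identify $\text{Fil}^0_{\text{mot}} \text{K}(X;\Q)$ with $\text{K}(X;\Q)$, and then applies the rational splitting of Theorem~\ref{theorem21'rationalmotivicfiltrationsplits}.
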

	
	\begin{proof}
		The motivic filtration $\text{Fil}^\star_{\text{mot}} \text{K}(X;\Q)$ is $\N$-indexed by Proposition~\ref{propositionmotivicfiltrationisexhaustive}. The result is then a consequence of the rational splitting Theorem~\ref{theorem21'rationalmotivicfiltrationsplits}.
	\end{proof}

	\subsection{Rational structure of motivic cohomology}\label{subsectionrationalstructure}
	
	\vspace{-\parindent}
	\hspace{\parindent}
	
	In this subsection, we finish the proof of Theorem~\ref{theoremrationalstructuremain}. We first use an argument of Weibel to prove the following result at the level of $K$-theory. We then use the rational splitting Corollary~\ref{corollaryKtheorysplitsrationally2} to prove a filtered version of this result, which reduces the proof of Theorem~\ref{theoremrationalstructuremain} to the case of characteristic zero.
	
	\begin{lemma}\label{lemma2WeibelargumentrationalKtheory}
		Let $X$ be a qcqs scheme. Then the natural commutative diagram
		$$\begin{tikzcd}
			\emph{K}(X;\Q) \arrow{r} \arrow{d} & \emph{K}(X_{\Q};\Q) \ar[d] \\
			\emph{KH}(X;\Q) \arrow{r} & \emph{KH}(X_{\Q};\Q)
		\end{tikzcd}$$
		is a cartesian square of spectra.
	\end{lemma}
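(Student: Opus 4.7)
My plan is to reduce the statement to a question about topological cyclic homology by invoking Theorem~\ref{theoremKST+LT}, and then to analyse both sides using the arithmetic fracture square of Proposition~\ref{propositionmainconsequenceBFSwithfiltrations} and the rigid-analytic cdh descent results of Section~\ref{sectionrigidanalyticdR}.

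First, I would apply Theorem~\ref{theoremKST+LT} to each of the qcqs schemes $X$ and $X_{\Q}$, and rationalise. The resulting cartesian squares identify the fibre of the vertical map $\emph{K}(Y;\Q) \to \emph{KH}(Y;\Q)$ with the fibre of the map $\emph{TC}(Y;\Q) \to \big(L_{\emph{cdh}}\emph{TC}\big)(Y;\Q)$ for $Y \in \{X, X_{\Q}\}$. Consequently, proving that the square in the lemma is cartesian is equivalent to proving that the natural map
$$\emph{fib}\big(\emph{TC}(X;\Q) \to (L_{\emph{cdh}}\emph{TC})(X;\Q)\big) \longrightarrow \emph{fib}\big(\emph{TC}(X_{\Q};\Q) \to (L_{\emph{cdh}}\emph{TC})(X_{\Q};\Q)\big)$$
is an equivalence of spectra.

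Second, I would decompose both $\emph{TC}(X;\Q)$ and $(L_{\emph{cdh}}\emph{TC})(X;\Q)$ using Proposition~\ref{propositionmainconsequenceBFSwithfiltrations} and Corollary~\ref{corollaryusefulBFSafterLcdh} (taking colimits over the filtration index to pass to the underlying spectra). Each is expressed as a pullback with corners involving $\emph{HC}^-(-_{\Q}/\Q)$, $\prod_{p}\emph{TC}(-;\Z_p)$, and the rigid-analytic term $\big(\prod'_p \emph{HH}(-;\Q_p)\big)^{h\emph{S}^1}$. On $X_{\Q}$, the $p$-adic factors $\emph{TC}(X_{\Q};\Z_p)$ vanish (as $X_{\Q}$ is a $\Z[\tfrac{1}{p}]$-scheme for every $p$), and the $\emph{HH}(X_{\Q};\Q_p)$-factors are likewise zero, so the fracture square collapses to give $\emph{TC}(X_{\Q};\Q) \simeq \emph{HC}^-(X_{\Q}/\Q)$; the same computation, applied cdh-sheaf-wise, yields $(L_{\emph{cdh}}\emph{TC})(X_{\Q};\Q) \simeq (L_{\emph{cdh}}\emph{HC}^-(-_{\Q}/\Q))(X_{\Q})$.

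Third, I would observe that the contributions from $\prod_p \emph{TC}(-;\Z_p)$ and from the rigid-analytic piece cancel in the fibre of $\emph{TC}(X;\Q) \to (L_{\emph{cdh}}\emph{TC})(X;\Q)$: the $p$-adic piece cancels because cdh sheafification commutes with the product decomposition (and the target sheaf is the cdh sheafification of the source), while the rigid-analytic negative cyclic homology is already a cdh sheaf rationally by Corollary~\ref{corollaryHPsolidallprimespisacdhsheafongradedpieces}, combined with the fibre sequence $\emph{HC}^- \to \emph{HP} \to \emph{HC}[2]$ and the corresponding cdh descent for the $\emph{HP}$-analogue. After these cancellations, the fibre of $\emph{TC}(X;\Q) \to (L_{\emph{cdh}}\emph{TC})(X;\Q)$ reduces to the fibre of $\emph{HC}^-(X_{\Q}/\Q) \to (L_{\emph{cdh}}\emph{HC}^-(-_{\Q}/\Q))(X)$, which depends only on the generic fibre $X_{\Q}$ (since both the presheaf and its cdh sheafification factor through the functor $X \mapsto X_{\Q}$). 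Comparing with the analogous fibre for $X_{\Q}$ gives the required equivalence.

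The main obstacle will be step three, specifically the careful bookkeeping needed to show that the $p$-adic and rigid-analytic contributions to the fibre of $\emph{TC}(X;\Q) \to (L_{\emph{cdh}}\emph{TC})(X;\Q)$ genuinely cancel. This relies crucially on Corollary~\ref{corollaryHPsolidallprimespisacdhsheafongradedpieces} (cdh descent for rigid-analytic derived de Rham cohomology, whose integral avatar is the rigid-analytic Goodwillie theorem of Section~\ref{subsectionrigidanalyticGoodwillietheorem}), together with the fact that the profinite completion map $\emph{HC}^-(-) \to \prod_p \emph{HC}^-(-;\Z_p)$ becomes an equivalence after rationalisation on certain pieces. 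Once this is in place, the remainder of the argument is a matter of manipulating pullback squares.
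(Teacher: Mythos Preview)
Your reduction via Theorem~\ref{theoremKST+LT} to a statement about the fibre of $\text{TC}(-;\Q) \to (L_{\text{cdh}}\text{TC})(-;\Q)$ is fine, as is the use of the arithmetic fracture square. The problem is step three: the claimed cancellation of the $p$-adic and rigid-analytic contributions is not justified, and the justifications you give are incorrect. Saying that ``the target sheaf is the cdh sheafification of the source'' does not make the fibre of $\prod_p \text{TC}(-;\Z_p)_{\Q} \to L_{\text{cdh}}(\dots)$ vanish; that fibre is generically nonzero. Likewise, Corollary~\ref{corollaryHPsolidallprimespisacdhsheafongradedpieces} gives cdh descent for the rigid-analytic $\text{HP}$-analogue, not for the rigid-analytic $\text{HC}^-$; via the fibre sequence $\text{HC}^- \to \text{HP} \to \text{HC}[2]$ the obstruction is precisely the rigid-analytic $\text{HC}$-piece, which does not vanish. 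If you chase through what ``cancellation'' would actually require, you find it is equivalent to the cartesianness of
\[
\begin{tikzcd}
\text{TC}(X;\Q) \ar[r] \ar[d] & \text{HC}^-(X_{\Q}/\Q) \ar[d] \\
(L_{\text{cdh}}\text{TC})(X;\Q) \ar[r] & (L_{\text{cdh}}\text{HC}^-(-_{\Q}/\Q))(X),
\end{tikzcd}
\]
which is exactly Corollary~\ref{corollaryTCrationalwithcdhandHC-withoutfiltration}; in the paper that corollary is \emph{deduced from} the present lemma, so your argument is circular.

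The paper's proof is entirely different and much more elementary: it works directly with the fibre $\text{K}^{\text{W}}(R) := \text{fib}(\text{K}(R) \to \text{KH}(R))$, whose homotopy groups are filtered by iterated $NK$-groups, and invokes Weibel's classical isomorphism $\text{NK}_n(R) \otimes_{\Z} \Q \cong \text{NK}_n(R \otimes_{\Z} \Q)$ to conclude that $\text{K}^{\text{W}}(R;\Q) \to \text{K}^{\text{W}}(R \otimes_{\Z} \Q;\Q)$ is an equivalence. No trace methods or fracture squares are needed; the lemma is then used as \emph{input} to obtain the filtered and $\text{TC}$-level statements you were aiming for.
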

	
	\begin{proof}
		By Zariski descent, it suffices to prove the result for affine schemes $X=\text{Spec}(R)$. For every integer $n \in \Z$, let $$\text{NK}_n(R) := \text{coker}\big(\text{K}_n(R) \longrightarrow \text{K}_n(R[T])\big)$$
		be the $n^{\text{th}}$ $NK$-group of $R$. By \cite[Corollary~$6.4$]{weibel_meyer_1981} (see also \cite[Exercise~$9.12$]{thomason_higher_1990}, where some unnecessary hypotheses in Weibel's result are removed), there is a natural isomorphism of abelian groups
		$$\text{NK}_n(R)\otimes_{\Z} \Q \xlongrightarrow{\cong} \text{NK}_n(R\otimes_{\Z} \Q)$$
		for every integer $n \in \Z$ and every commutative ring $R$. For every commutative ring $R$, the homotopy groups of the fibre $\text{K}^{\text{W}}(R)$ of the map of spectra $\text{K}(R) \rightarrow \text{KH}(R)$ have a natural exhaustive complete filtration with graded pieces given by the iterated $NK$-groups of $R$. In particular, for every integer $n \in \Z$ and every commutative ring $R$, the natural map 
		$$\text{K}^{\text{W}}(R;\Q) \longrightarrow \text{K}^{\text{W}}(R\otimes_{\Z} \Q;\Q)$$
		is an equivalence of spectra, which implies the desired result.
	\end{proof}
	
	\begin{corollary}\label{corollaryTCrationalwithcdhandHC-withoutfiltration}
		Let $X$ be a qcqs scheme. Then the natural commutative diagram
		$$\begin{tikzcd}
			\emph{K}(X;\Q) \arrow{r} \arrow{d} & \emph{HC}^-(X_{\Q}/\Q) \ar[d] \\
			\emph{KH}(X;\Q) \arrow[r] & L_{\emph{cdh}} \emph{HC}^-(-_{\Q}/\Q)(X)
		\end{tikzcd}$$
		is a cartesian square of spectra.
	\end{corollary}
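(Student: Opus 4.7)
The strategy is to reduce to $\Q$-schemes via the Weibel argument and then apply the Kerz--Strunk--Tamme/Land--Tamme cartesian square, combined with the identification $\text{TC}\simeq\text{HC}^-(-/\Q)$ on $\Q$-schemes. By Lemma~\ref{lemma2WeibelargumentrationalKtheory}, the fibre of $\text{K}(X;\Q) \to \text{KH}(X;\Q)$ coincides with the fibre of $\text{K}(X_\Q;\Q) \to \text{KH}(X_\Q;\Q)$. The right column of the desired square is similarly invariant under $X\rightsquigarrow X_\Q$: the top entry is so by construction, and the presheaf $X \mapsto \text{HC}^-(X_\Q/\Q)$ factors through $-_\Q$, a functor which preserves cdh covers, so the invariance descends to its cdh sheafification. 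This reduces the problem to showing the square is cartesian when $X$ is itself a qcqs $\Q$-scheme.

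Assume now that $X$ is a qcqs $\Q$-scheme. Theorem~\ref{theoremKST+LT} rationalized identifies the fibre of the left column with the fibre of $\text{TC}(X;\Q) \to L_{\text{cdh}}\text{TC}(X;\Q)$. The arithmetic fracture square for $\text{TC}$ recalled at the beginning of Section~\ref{sectionmotivicfiltrationonTC} then yields a natural equivalence $\text{TC}(X)\simeq\text{HC}^-(X/\Q)$: the $p$-completed terms $\text{TC}(X;\Z_p)$ and $\text{HC}^-(X;\Z_p)$ vanish because $p$ is invertible in $\text{THH}(X)$ (via its natural $X$-module structure), while $\text{HC}^-(X) \simeq \text{HC}^-(X/\Q)$ for $X$ a $\Q$-scheme follows from the fact that $\Z\to\Q$ is an epimorphism of commutative rings, so that $\text{HH}(-/\Z)\simeq\text{HH}(-/\Q)$ on $\Q$-algebras.

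To conclude, it remains to match $L_{\text{cdh}}\text{TC}(X)$ with $L_{\text{cdh}}\text{HC}^-(-_\Q/\Q)(X)$ for $X$ a $\Q$-scheme. The key observation is that any cdh cover of a $\Q$-scheme in qcqs schemes consists of $\Q$-schemes (Nisnevich covers are étale, and abstract blowups involve only closed subschemes and proper maps into $\Q$-schemes), so cdh sheafification on all qcqs schemes, restricted to $\Q$-schemes, agrees with cdh sheafification on $\Q$-schemes. Combined with the natural equivalence $\text{TC}\simeq\text{HC}^-(-_\Q/\Q)$ of presheaves on $\Q$-schemes, this yields $L_{\text{cdh}}\text{TC}(X)\simeq L_{\text{cdh}}\text{HC}^-(-_\Q/\Q)(X)$, so that the fibres of the two columns of the desired square match naturally. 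The main subtlety is this compatibility of cdh sheafifications across the inclusion of $\Q$-schemes into qcqs schemes; the remaining ingredients (Weibel, KST--LT, and the arithmetic fracture square for $\text{TC}$) are directly available from the preceding sections.
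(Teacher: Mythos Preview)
Your proof is correct and follows essentially the same strategy as the paper: reduce to qcqs $\Q$-schemes via Lemma~\ref{lemma2WeibelargumentrationalKtheory}, then invoke the Kerz--Strunk--Tamme/Land--Tamme square together with the identification $\text{TC}\simeq\text{HC}^-(-/\Q)$ on $\Q$-schemes. The paper simply cites Corti\~nas--Haesemeyer--Schlichting--Weibel for the $\Q$-scheme case and leaves the right-column invariance under $X\rightsquigarrow X_\Q$ implicit, whereas you spell both of these out; your treatment is therefore a strict expansion of the paper's one-line argument rather than a different route.
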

	
	\begin{proof}
		The result for qcqs $\Q$-schemes $X$ is due to Corti\~{n}as--Haesemeyer--Schlichting--Weibel \cite{cortinas_cyclic_2008,cortinas_K-regularity_2008} (see also Theorem~\ref{theoremKST+LT}). The general result is then a consequence of Lemma~\ref{lemma2WeibelargumentrationalKtheory}.
	\end{proof}
	
	We record for completeness the following result, which is well-known in characteristic zero (\cite{cortinas_cyclic_2008,cortinas_K-regularity_2008}).
	
	\begin{corollary}\label{corollaryrationalKtheoryintermsofHC}
		Let $X$ be a qcqs scheme. Then there is a natural fibre sequence of spectra
		$$\emph{K}(X;\Q) \longrightarrow \emph{KH}(X;\Q) \longrightarrow \emph{cofib}\big(\emph{HC}(X_{\Q}/\Q) \longrightarrow L_{\emph{cdh}} \emph{HC}(-_{\Q}/\Q)(X)\big)[1].$$
	\end{corollary}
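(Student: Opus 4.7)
The idea is that this is a formal consequence of Corollary~\ref{corollaryTCrationalwithcdhandHC-withoutfiltration} together with the cdh descent for rational periodic cyclic homology (Proposition~\ref{propositionfilteredHPisacdhsheafinchar0}, at the unfiltered level). The cartesian square of Corollary~\ref{corollaryTCrationalwithcdhandHC-withoutfiltration} immediately identifies the cofibre of the natural map $\text{K}(X;\Q)\to\text{KH}(X;\Q)$ with the cofibre
$$C^{-}(X) := \text{cofib}\bigl(\text{HC}^{-}(X_{\Q}/\Q) \longrightarrow \bigl(L_{\text{cdh}} \text{HC}^{-}(-_{\Q}/\Q)\bigr)(X)\bigr),$$
which already gives a fibre sequence of spectra $\text{K}(X;\Q) \to \text{KH}(X;\Q) \to C^{-}(X)$.

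It then remains to identify $C^{-}(X)$ with the shifted cofibre
$$C(X)[1] := \text{cofib}\bigl(\text{HC}(X_{\Q}/\Q) \longrightarrow \bigl(L_{\text{cdh}} \text{HC}(-_{\Q}/\Q)\bigr)(X)\bigr)[1].$$
For this I would compare the two natural fibre sequences
$$\text{HC}^{-}(-_{\Q}/\Q) \longrightarrow \text{HP}(-_{\Q}/\Q) \longrightarrow \text{HC}(-_{\Q}/\Q)[2]$$
and its cdh sheafification, noting that cdh sheafification preserves fibre sequences of presheaves. By Proposition~\ref{propositionfilteredHPisacdhsheafinchar0}, the middle presheaf $\text{HP}(-_{\Q}/\Q)$ is already a cdh sheaf on qcqs schemes, so it is unchanged under $L_{\text{cdh}}$. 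Comparing the resulting two fibre sequences then yields a natural equivalence
$$\text{fib}\bigl(\text{HC}^{-}(X_{\Q}/\Q) \to (L_{\text{cdh}}\text{HC}^{-}(-_{\Q}/\Q))(X)\bigr) \simeq \text{fib}\bigl(\text{HC}(X_{\Q}/\Q)[1] \to (L_{\text{cdh}}\text{HC}(-_{\Q}/\Q))(X)[1]\bigr),$$
and correspondingly $C^{-}(X) \simeq C(X)[1]$, which combined with the first step gives the desired fibre sequence.

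There is no real obstacle here beyond unwinding the long exact sequences carefully; the entire argument is a diagram chase. The only nontrivial input is the cdh descent for $\text{HP}(-_{\Q}/\Q)$ (Proposition~\ref{propositionfilteredHPisacdhsheafinchar0}), which is already invoked, and the cartesian square of Corollary~\ref{corollaryTCrationalwithcdhandHC-withoutfiltration}, whose proof encapsulates the deeper content (the Weibel argument plus the classical characteristic zero result of Corti\~{n}as--Haesemeyer--Schlichting--Weibel).
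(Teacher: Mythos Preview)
Your proof is correct and follows essentially the same route as the paper: the paper also starts from the cartesian square of Corollary~\ref{corollaryTCrationalwithcdhandHC-withoutfiltration}, compares the fibre sequence $\text{HC}^-(-_{\Q}/\Q) \to \text{HP}(-_{\Q}/\Q) \to \text{HC}(-_{\Q}/\Q)[2]$ with its cdh sheafification, and uses cdh descent for $\text{HP}(-_{\Q}/\Q)$ (citing \cite{cortinas_cyclic_2008} and \cite{land_k-theory_2019} directly rather than Proposition~\ref{propositionfilteredHPisacdhsheafinchar0}) to conclude. The only cosmetic difference is that you cite the filtered proposition with the parenthetical ``at the unfiltered level'', whereas the paper cites the unfiltered source references; both rest on the same input.
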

	
	\begin{proof}
		By Theorem~\ref{theoremKST+LT} and Lemma~\ref{lemma2WeibelargumentrationalKtheory}, the natural commutative diagram
		$$\begin{tikzcd}
			\text{K}(X;\Q) \arrow{r} \arrow{d} & \text{HC}^-(X_{\Q}/\Q) \ar[d] \\
			\text{KH}(X;\Q) \arrow{r} & L_{\text{cdh}} \text{HC}^-(-_{\Q}/\Q)(X)
		\end{tikzcd}$$
		is a cartesian square of spectra. By construction, there is moreover a natural commutative diagram of spectra
		$$\begin{tikzcd}
			\text{HC}^-(X_{\Q}/\Q) \arrow{r} \arrow{d} & \text{HP}(X_{\Q}/\Q) \ar[d] \ar[r] & \text{HC}(X_{\Q}/\Q)[2] \ar[d] \\
			L_{\text{cdh}} \text{HC}^-(-_{\Q}/\Q)(X) \arrow{r} & L_{\text{cdh}} \text{HP}(-_{\Q}/\Q)(X) \arrow{r} & L_{\text{cdh}} \text{HC}(-_{\Q}/\Q)(X)[2],
		\end{tikzcd}$$
		where the horizontal lines are fibre sequences. The middle vertical line of this diagram is an equivalence (\cite[Corollary~$3.13$]{cortinas_cyclic_2008}, see also \cite[Corollary~A.$6$]{land_k-theory_2019}), so the cofibre of the left vertical map is naturally identified with the spectrum
		$$\text{cofib}\big(\text{HC}(X_{\Q}/\Q) \longrightarrow L_{\text{cdh}} \text{HC}(-_{\Q}/\Q)(X)\big)[1].$$
	\end{proof}
	
	The following result is a filtered refinement of Lemma~\ref{lemma2WeibelargumentrationalKtheory}.
	
	\begin{corollary}\label{corollaryfilteredLemma2onKandKHwithrational}
		Let $X$ be a qcqs scheme. Then the natural commutative diagram
		$$\begin{tikzcd}
			\emph{Fil}^\star_{\emph{mot}} \emph{K}(X;\Q) \arrow{r} \arrow{d} & \emph{Fil}^\star_{\emph{mot}} \emph{K}(X_{\Q};\Q) \ar[d] \\
			\emph{Fil}^\star_{\emph{cdh}} \emph{KH}(X;\Q) \arrow{r} & \emph{Fil}^\star_{\emph{cdh}} \emph{KH}(X_{\Q};\Q)
		\end{tikzcd}$$
		is a cartesian square of filtered spectra.
	\end{corollary}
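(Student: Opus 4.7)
The plan is to promote the unfiltered cartesian square of Lemma \ref{lemma2WeibelargumentrationalKtheory} to a filtered one by exploiting the rational weight decomposition of both filtrations in the statement. The first step is to establish the analog of Theorem \ref{theorem21'rationalmotivicfiltrationsplits} for $\text{KH}$, namely a natural multiplicative equivalence
$$\text{Fil}^\star_{\text{cdh}} \text{KH}(X;\Q) \simeq \bigoplus_{j \geq \star} \Q(j)^{\text{cdh}}(X)[2j]$$
of filtered spectra on qcqs schemes $X$. This follows by applying Proposition \ref{propositionhowtouseAdamsoperations} to the Adams operations of Proposition \ref{proposition34AdamsoperationsonfilteredKHtheory}: the required finitariness of $\text{Fil}^\star_{\text{cdh}} \text{KH}$ is built into Definition \ref{definitionmotivicfiltrationonKHtheoryofschemes} (via \cite{bachmann_A^1-invariant_2024}), the connectivity bound is Proposition \ref{propositioncompletenessofcdhlocalmotivicfiltration}, and the coherence of the $\psi^m$ is formal.

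Combining this with Theorem \ref{theorem21'rationalmotivicfiltrationsplits}, each of the four corners of the square splits naturally as a direct sum indexed by an Adams weight $j$. The horizontal maps (pullback along $X_\Q \to X$) and the vertical maps (the cdh-sheafified filtered cyclotomic trace of Construction \ref{constructionfilteredcdhlocalcyclotomictrace}) are natural with respect to the Adams operations, so the splittings are compatible throughout the square. Since fibres commute with direct sums in a stable $\infty$-category, the filtered square is cartesian if and only if, for each integer $j$, the weight-$j$ square
$$\begin{tikzcd}
\Q(j)^{\text{mot}}(X) \arrow{r} \arrow{d} & \Q(j)^{\text{mot}}(X_{\Q}) \ar[d] \\
\Q(j)^{\text{cdh}}(X) \arrow{r} & \Q(j)^{\text{cdh}}(X_{\Q})
\end{tikzcd}$$
is cartesian.

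To verify this, I would take the direct sum over all $j \geq 0$. Both filtrations are $\N$-indexed (Proposition \ref{propositionmotivicfiltrationisexhaustive}, and the construction of $\text{Fil}^\star_{\text{cdh}} \text{KH}$), so evaluating the two splittings at $\star=0$ and using Corollary \ref{corollaryKtheorysplitsrationally2} gives $\text{K}(X;\Q) \simeq \bigoplus_{j \geq 0} \Q(j)^{\text{mot}}(X)[2j]$ and, analogously, $\text{KH}(X;\Q) \simeq \bigoplus_{j \geq 0} \Q(j)^{\text{cdh}}(X)[2j]$. The direct sum of the weight-$j$ squares over $j \geq 0$ is then naturally identified with the unfiltered square of Lemma \ref{lemma2WeibelargumentrationalKtheory}, which is cartesian; since a direct sum of squares is cartesian if and only if each summand is, each weight-$j$ square is cartesian, and we are done. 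The only work of substance is setting up the $\text{KH}$ splitting, which is a verbatim repetition of Theorem \ref{theorem21'rationalmotivicfiltrationsplits}: the real content of the corollary is that the unfiltered Weibel argument of Lemma \ref{lemma2WeibelargumentrationalKtheory} passes formally to the filtered level through the Adams weight grading.
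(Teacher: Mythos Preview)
Your proposal is correct and follows essentially the same approach as the paper: both reduce the filtered statement to the unfiltered Lemma~\ref{lemma2WeibelargumentrationalKtheory} via the rational Adams splitting. The paper's proof is slightly more direct---it cites Corollary~\ref{corollaryKtheorysplitsrationally} to observe that each $\text{Fil}^i_{\text{mot}}\text{K}(X;\Q)$ (and similarly each $\text{Fil}^i_{\text{cdh}}\text{KH}(X;\Q)$) is a natural direct summand of the corresponding unfiltered spectrum, so the filtered square is a retract of the unfiltered one and hence cartesian---whereas you pass through the full splitting of Theorem~\ref{theorem21'rationalmotivicfiltrationsplits} and the individual weight-$j$ squares before recombining; but this is a cosmetic difference and the content is the same.
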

	
	\begin{proof}
		By Corollary~\ref{corollaryKtheorysplitsrationally}, for every integer $i \geq 0$, the spectrum $\text{Fil}^i_{\text{mot}} \text{K}(X;\Q)$ is naturally a direct summand of the spectrum $\text{K}(X;\Q)$. The same applies to the other three filtrations of the cartesian square, by noting that the motivic filtration $\text{Fil}^\star_{\text{cdh}} \text{KH}(-)$ also is $\N$-indexed (Theorem~\ref{theoremBEM}\,(1)). The compatibility between the several filtrations is automatic from the construction of the splittings. So the result is a consequence of Lemma~\ref{lemma2WeibelargumentrationalKtheory}.
	\end{proof}

	The following result is a filtered refinement of Corollary~\ref{corollaryTCrationalwithcdhandHC-withoutfiltration}.
	
	\begin{theorem}\label{theoremTCrationalwithcdhandHC-filtered}
		Let $X$ be a qcqs scheme. Then the natural commutative diagram
		$$\begin{tikzcd}
			\emph{Fil}^\star_{\emph{mot}} \emph{K}(X;\Q) \arrow{r} \arrow{d} & \emph{Fil}^\star_{\emph{HKR}} \emph{HC}^-(X_{\Q}/\Q) \ar[d] \\
			\emph{Fil}^\star_{\emph{cdh}} \emph{KH}(X;\Q) \arrow[r] & L_{\emph{cdh}} \emph{Fil}^\star_{\emph{HKR}} \emph{HC}^-(-_{\Q}/\Q)(X)
		\end{tikzcd}$$
		is a cartesian square of filtered spectra.
	\end{theorem}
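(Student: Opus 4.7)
The strategy is to reduce the statement to the characteristic zero (Elmanto--Morrow) case by pasting with the cartesian square of Corollary~\ref{corollaryfilteredLemma2onKandKHwithrational}. That corollary yields a cartesian square
\begin{equation*}
\begin{tikzcd}
\text{Fil}^\star_{\text{mot}} \text{K}(X;\Q) \ar[r] \ar[d] & \text{Fil}^\star_{\text{mot}} \text{K}(X_\Q;\Q) \ar[d] \\
\text{Fil}^\star_{\text{cdh}} \text{KH}(X;\Q) \ar[r] & \text{Fil}^\star_{\text{cdh}} \text{KH}(X_\Q;\Q),
\end{tikzcd}
\end{equation*}
so it will suffice to prove that the ``bridge'' square
\begin{equation*}
\begin{tikzcd}
\text{Fil}^\star_{\text{mot}} \text{K}(X_\Q;\Q) \ar[r] \ar[d] & \text{Fil}^\star_{\text{HKR}} \text{HC}^-(X_\Q/\Q) \ar[d] \\
\text{Fil}^\star_{\text{cdh}} \text{KH}(X_\Q;\Q) \ar[r] & L_{\text{cdh}} \text{Fil}^\star_{\text{HKR}} \text{HC}^-(-_\Q/\Q)(X)
\end{tikzcd}
\end{equation*}
is cartesian; pasting the two then produces the desired square.

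To establish the bridge square, I apply Definition~\ref{definitionmotivicfiltrationonKtheoryofschemes} to $X_\Q$ and rationalize (rationalization preserves finite limits of spectra), producing a cartesian square with top-right $\text{Fil}^\star_{\text{mot}} \text{TC}(X_\Q;\Q)$ and bottom-right $\text{Fil}^\star_{\text{mot}} L_{\text{cdh}} \text{TC}(X_\Q;\Q)$. Since $X_\Q$ is a $\Q$-scheme, Remark~\ref{remarkcomparisontoEMfiltrationonTCoverafield} gives $\text{Fil}^\star_{\text{mot}} \text{TC}(X_\Q) \simeq \text{Fil}^\star_{\text{HKR}} \text{HC}^-(X_\Q/\Q)$, which is already $\Q$-linear, so the top-right corner matches. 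For the bottom-right corner, I note that the cdh site of any qcqs $\Q$-scheme consists entirely of $\Q$-schemes, so evaluating $L_{\text{cdh}}$ (sheafified on qcqs schemes) at such an object agrees with the cdh sheafification restricted to the site of qcqs $\Q$-schemes; combined with Remark~\ref{remarkcomparisontoEMfiltrationonTCoverafield} applied pointwise on $\Q$-schemes, this identifies $\text{Fil}^\star_{\text{mot}} L_{\text{cdh}} \text{TC}(X_\Q;\Q)$ with $L_{\text{cdh}}^{\text{Sch}/\Q} \text{Fil}^\star_{\text{HKR}} \text{HC}^-(-/\Q)(X_\Q)$.

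The final and most delicate step is to identify $L_{\text{cdh}}^{\text{Sch}/\Q} \text{Fil}^\star_{\text{HKR}} \text{HC}^-(-/\Q)(X_\Q)$ with $L_{\text{cdh}} \text{Fil}^\star_{\text{HKR}} \text{HC}^-(-_\Q/\Q)(X)$. Because $(-)_\Q$ is base change along the flat localization $\text{Spec}\,\Q \to \text{Spec}\,\Z$, it preserves cdh covers, and a standard change-of-site argument yields a natural equivalence $L_{\text{cdh}}(F((-)_\Q))(X) \simeq L_{\text{cdh}}^{\text{Sch}/\Q}(F)(X_\Q)$ for any presheaf $F$ on qcqs $\Q$-schemes; applying this to $F = \text{Fil}^\star_{\text{HKR}} \text{HC}^-(-/\Q)$ gives the identification and completes the proof. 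The main obstacle is verifying this change-of-site compatibility carefully: one must unwind how cdh sheafification interacts with the flat pullback functor $(-)_\Q$. This is standard but subtle, ultimately resting on the fact that flat base change preserves cdh covers and that the relevant pushforward preserves cdh sheaves.
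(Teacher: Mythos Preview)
Your proposal is correct and follows essentially the same route as the paper: reduce to the characteristic-zero case via Corollary~\ref{corollaryfilteredLemma2onKandKHwithrational}, then use Remark~\ref{remarkcomparisontoEMfiltrationonTCoverafield} to identify $\text{Fil}^\star_{\text{mot}}\text{TC}$ with $\text{Fil}^\star_{\text{HKR}}\text{HC}^-(-/\Q)$ on $\Q$-schemes, so that the defining square of Definition~\ref{definitionmotivicfiltrationonKtheoryofschemes} for $X_\Q$ becomes the bridge square. The only difference is that you spell out the change-of-site identification $L_{\text{cdh}}\text{Fil}^\star_{\text{HKR}}\text{HC}^-(-_\Q/\Q)(X)\simeq L_{\text{cdh}}^{\text{Sch}/\Q}\text{Fil}^\star_{\text{HKR}}\text{HC}^-(-/\Q)(X_\Q)$, which the paper leaves implicit.
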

	
	\begin{proof}
		This is a consequence of Corollary~\ref{corollaryfilteredLemma2onKandKHwithrational}, where the filtration $\text{Fil}^\star_{\text{mot}} \text{TC}(-)$ of qcqs $\Q$-schemes is naturally identified with the filtration $\text{Fil}^\star_{\text{HKR}} \text{HC}^-(-/\Q)$ (Remark~\ref{remarkcomparisontoEMfiltrationonTCoverafield}).
	\end{proof}

	The following result is the rational part of Theorem~\ref{theoremintromaincartesiansquares}.

	\begin{corollary}\label{corollarycartesiansquarerational}
		Let $X$ be a qcqs scheme. Then for every integer $i \in \Z$, the natural commutative diagram
		$$\begin{tikzcd}
			\Q(i)^{\emph{mot}}(X) \ar[r] \ar[d] & R\Gamma_{\emph{Zar}}\big(X,\widehat{\mathbb{L}\Omega}^{\geq i}_{-_{\Q}/\Q}\big) \ar[d] \\
			\Q(i)^{\emph{cdh}}(X) \ar[r] & R\Gamma_{\emph{cdh}}\big(X,\widehat{\mathbb{L}\Omega}^{\geq i}_{-_{\Q}/\Q}\big)
		\end{tikzcd}$$
		is a cartesian square in the derived category $\mathcal{D}(\Q)$.
	\end{corollary}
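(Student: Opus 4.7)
The plan is to obtain this cartesian square as the $i$-th shifted graded piece of the filtered cartesian square supplied by Theorem~\ref{theoremTCrationalwithcdhandHC-filtered}. The functor $\mathrm{gr}^i:\mathrm{FilSp}\to\mathrm{Sp}$ is exact (it is the cofibre of $\mathrm{Fil}^{i+1}\to\mathrm{Fil}^i$), so it preserves cartesian squares. Applying $\mathrm{gr}^i$ to the square of Theorem~\ref{theoremTCrationalwithcdhandHC-filtered} and shifting by $[-2i]$ therefore produces a cartesian square in $\mathcal{D}(\Q)$, and the only remaining task is to identify each of its four corners.

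The two left-hand corners are immediate from the definitions: $\mathrm{gr}^i_{\mathrm{mot}}\mathrm{K}(X;\Q)[-2i]\simeq\Q(i)^{\mathrm{mot}}(X)$ by Definition~\ref{definitionmotiviccohomologyofderivedschemes}, and $\mathrm{gr}^i_{\mathrm{cdh}}\mathrm{KH}(X;\Q)[-2i]\simeq\Q(i)^{\mathrm{cdh}}(X)$ by the analogous definition. For the top right corner, the relative-to-$\Q$ variant of Remark~\ref{remarkgradedpiecesoftheHKRfiltrations} (see Remark~\ref{remarkHHandvariantsrelativetoQ}) identifies $\mathrm{gr}^i_{\mathrm{HKR}}\mathrm{HC}^-(X_{\Q}/\Q)$ with $R\Gamma_{\mathrm{Zar}}(X,\widehat{\mathbb{L}\Omega}^{\geq i}_{-_{\Q}/\Q})[2i]$, so the shift by $[-2i]$ yields exactly the claimed complex. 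For the bottom right corner, cdh sheafification of a presheaf of filtered spectra is defined level-wise and commutes with colimits, hence with the cofibre defining $\mathrm{gr}^i$; this gives
$$\mathrm{gr}^i\bigl(L_{\mathrm{cdh}}\,\mathrm{Fil}^\star_{\mathrm{HKR}}\mathrm{HC}^-(-_{\Q}/\Q)\bigr)(X)\simeq\bigl(L_{\mathrm{cdh}}\,\mathrm{gr}^i_{\mathrm{HKR}}\mathrm{HC}^-(-_{\Q}/\Q)\bigr)(X),$$
which, after the shift by $[-2i]$, becomes $R\Gamma_{\mathrm{cdh}}(X,\widehat{\mathbb{L}\Omega}^{\geq i}_{-_{\Q}/\Q})$.

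I do not expect any genuine obstacle at this step: the substantive input has already been absorbed into Theorem~\ref{theoremTCrationalwithcdhandHC-filtered}, which itself rests on Weibel's argument for rational $K$-theory (Lemma~\ref{lemma2WeibelargumentrationalKtheory}), the rational splitting of the motivic filtration via Adams operations (Corollary~\ref{corollaryKtheorysplitsrationally} and Corollary~\ref{corollaryfilteredLemma2onKandKHwithrational}), and the comparison of $\mathrm{Fil}^\star_{\mathrm{mot}}\mathrm{TC}$ with $\mathrm{Fil}^\star_{\mathrm{HKR}}\mathrm{HC}^-$ on $\Q$-schemes (Remark~\ref{remarkcomparisontoEMfiltrationonTCoverafield}). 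The present corollary is then a purely formal consequence on graded pieces.
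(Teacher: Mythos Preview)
Your proposal is correct and follows the same approach as the paper, which simply states that the result is a direct consequence of Theorem~\ref{theoremTCrationalwithcdhandHC-filtered}. You have spelled out the routine identifications of the four graded pieces that the paper leaves implicit.
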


	\begin{proof}
		This is a direct consequence of Theorem~\ref{theoremTCrationalwithcdhandHC-filtered}.
	\end{proof}
	
	\begin{proof}[Proof of Theorem~\ref{theoremrationalstructuremain}]
		By Theorem~\ref{theoremTCrationalwithcdhandHC-filtered}, the natural commutative diagram
		$$\begin{tikzcd}
			\text{Fil}^\star_{\text{mot}} \text{K}(X;\Q) \arrow{r} \arrow{d} & \text{Fil}^\star_{\text{HKR}} \text{HC}^-(X_{\Q}/\Q) \ar[d] \\
			\text{Fil}^\star_{\text{cdh}} \text{KH}(X;\Q) \arrow{r} & L_{\text{cdh}} \text{Fil}^\star_{\text{HKR}} \text{HC}^-(-_{\Q}/\Q)(X)
		\end{tikzcd}$$
		is a cartesian square of filtered spectra. By Definition~\ref{definitionHKRfiltrationonHC} and Remark~\ref{remarkHHandvariantsrelativetoQ}, there is a natural commutative diagram of filtered spectra
		$$\hspace*{-.25cm}\begin{tikzcd}[sep=small]
			\text{Fil}^\star_{\text{HKR}} \text{HC}^-(X_{\Q}/\Q) \arrow{r} \arrow{d} & \text{Fil}^\star_{\text{HKR}} \text{HP}(X_{\Q}/\Q) \ar[d] \ar[r] & \text{Fil}^{\star-1}_{\text{HKR}} \text{HC}(X_{\Q}/\Q)[2] \ar[d] \\
			L_{\text{cdh}} \text{Fil}^\star_{\text{HKR}} \text{HC}^-(-_{\Q}/\Q)(X) \arrow{r} & L_{\text{cdh}} \text{Fil}^\star_{\text{HKR}} \text{HP}(-_{\Q}/\Q)(X) \arrow{r} & L_{\text{cdh}} \text{Fil}^{\star-1}_{\text{HKR}} \text{HC}(-_{\Q}/\Q)(X)[2]
		\end{tikzcd}$$
		where the horizontal lines are fibre sequences. The middle vertical map of this diagram is an equivalence (Proposition~\ref{propositionfilteredHPisacdhsheafinchar0}), so the cofibre of the left vertical map is naturally identified with the filtered spectrum
		$$\text{cofib}\big(\text{Fil}^{\star-1}_{\text{HKR}} \text{HC}(X_{\Q}/\Q) \longrightarrow L_{\text{cdh}} \text{Fil}^{\star-1}_{\text{HKR}} \text{HC}(-_{\Q}/\Q)(X)\big) [1].$$
	\end{proof}

	The following result was proved by Elmanto--Morrow \cite{elmanto_motivic_2023} for qcqs schemes $X$ over $\Q$.
	
	\begin{corollary}[Rational motivic cohomology]\label{corollaryrationalmainresultongradedpieces}
		Let $X$ be qcqs scheme. Then for every integer $i \in \Z$, there is a natural fibre sequence
		$$\Q(i)^{\emph{mot}}(X) \longrightarrow \Q(i)^{\emph{cdh}}(X) \longrightarrow \emph{cofib}\Big(R\Gamma_{\emph{Zar}}\big(X,\mathbb{L}\Omega^{<i}_{-_{\Q}/\Q}\big) \longrightarrow R\Gamma_{\emph{cdh}}\big(X,\Omega^{<i}_{-_{\Q}/\Q}\big)\Big)[-1]$$
		in the derived category $\mathcal{D}(\Q)$.
	\end{corollary}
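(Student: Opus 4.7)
The plan is to derive the fibre sequence directly from the rational cartesian square of Corollary~\ref{corollarycartesiansquarerational}. Rotating that square once yields a natural fibre sequence
$$\Q(i)^{\text{mot}}(X) \longrightarrow \Q(i)^{\text{cdh}}(X) \longrightarrow \text{cofib}\Big(R\Gamma_{\text{Zar}}\big(X,\widehat{\mathbb{L}\Omega}^{\geq i}_{-_{\Q}/\Q}\big) \longrightarrow R\Gamma_{\text{cdh}}\big(X,\widehat{\mathbb{L}\Omega}^{\geq i}_{-_{\Q}/\Q}\big)\Big)$$
in $\mathcal{D}(\Q)$. The task is thus to rewrite the right-hand cofibre in the form asserted in the statement, involving $\mathbb{L}\Omega^{<i}_{-_{\Q}/\Q}$ rather than $\widehat{\mathbb{L}\Omega}^{\geq i}_{-_{\Q}/\Q}$.

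To do so, I would consider the commutative diagram whose rows are the Hodge fibre sequence of Remark~\ref{remarkgradedpiecesoftheHKRfiltrations} (rationalised) and whose columns are the Zariski-to-cdh comparison maps:
$$\begin{tikzcd}[column sep=small]
R\Gamma_{\text{Zar}}(X,\widehat{\mathbb{L}\Omega}^{\geq i}_{-_{\Q}/\Q}) \ar[r] \ar[d] & R\Gamma_{\text{Zar}}(X,\widehat{\mathbb{L}\Omega}_{-_{\Q}/\Q}) \ar[r] \ar[d] & R\Gamma_{\text{Zar}}(X,\mathbb{L}\Omega^{<i}_{-_{\Q}/\Q}) \ar[d] \\
R\Gamma_{\text{cdh}}(X,\widehat{\mathbb{L}\Omega}^{\geq i}_{-_{\Q}/\Q}) \ar[r] & R\Gamma_{\text{cdh}}(X,\widehat{\mathbb{L}\Omega}_{-_{\Q}/\Q}) \ar[r] & R\Gamma_{\text{cdh}}(X,\mathbb{L}\Omega^{<i}_{-_{\Q}/\Q}).
\end{tikzcd}$$
The essential input is that the \emph{middle} vertical map is an equivalence, i.e., that Hodge-completed derived de Rham cohomology over $\Q$ is already a cdh sheaf on qcqs schemes. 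This is the Corti\~{n}as--Haesemeyer--Schlichting--Weibel/Antieau cdh descent theorem in characteristic zero, recovered here from Proposition~\ref{propositionfilteredHPisacdhsheafinchar0} applied at all weights (indeed, the graded pieces of $\text{Fil}^\star_{\text{HKR}} \text{HP}(-_{\Q}/\Q)$ are shifts of $\widehat{\mathbb{L}\Omega}_{-_{\Q}/\Q}$). Taking cofibres of the vertical arrows then produces a fibre sequence in $\mathcal{D}(\Q)$ whose middle term vanishes, giving the identification
$$\text{cofib}\big(R\Gamma_{\text{Zar}}(X,\widehat{\mathbb{L}\Omega}^{\geq i}_{-_{\Q}/\Q}) \to R\Gamma_{\text{cdh}}(X,\widehat{\mathbb{L}\Omega}^{\geq i}_{-_{\Q}/\Q})\big) \simeq \text{cofib}\big(R\Gamma_{\text{Zar}}(X,\mathbb{L}\Omega^{<i}_{-_{\Q}/\Q}) \to R\Gamma_{\text{cdh}}(X,\mathbb{L}\Omega^{<i}_{-_{\Q}/\Q})\big)[-1].$$

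It remains to reconcile the notation by replacing $\mathbb{L}\Omega^{<i}_{-_{\Q}/\Q}$ with $\Omega^{<i}_{-_{\Q}/\Q}$ on the cdh side. Since we work after base change to $\Q$, Hironaka's resolution of singularities implies that every qcqs $\Q$-scheme is cdh-locally smooth, and on smooth $\Q$-algebras the canonical map $\Omega^{<i}_{-/\Q} \to \mathbb{L}\Omega^{<i}_{-/\Q}$ is an equivalence. The two cdh sheafifications therefore agree, yielding the statement as written. The entire argument is a diagram chase whose real content is packaged in Corollary~\ref{corollarycartesiansquarerational} together with the classical cdh descent of rational Hodge-completed derived de Rham cohomology; since both ingredients are already established, no genuine obstacle arises at this stage.
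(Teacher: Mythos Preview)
Your argument is correct and follows essentially the same route as the paper: the paper deduces the corollary from Theorem~\ref{theoremrationalstructuremain} (whose proof is precisely your step~2, i.e.\ the Hodge fibre sequence combined with cdh descent for $\widehat{\mathbb{L}\Omega}_{-_{\Q}/\Q}$ via Proposition~\ref{propositionfilteredHPisacdhsheafinchar0}), and Theorem~\ref{theoremrationalstructuremain} itself is obtained from the filtered version of Corollary~\ref{corollarycartesiansquarerational}. So your steps~1--2 are the graded-piece version of what the paper does at the filtered level.

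The one point of genuine difference is your step~3, the identification $R\Gamma_{\text{cdh}}(X,\mathbb{L}\Omega^{<i}_{-_{\Q}/\Q}) \simeq R\Gamma_{\text{cdh}}(X,\Omega^{<i}_{-_{\Q}/\Q})$. You invoke Hironaka, but as stated this is loose: resolution of singularities applies to schemes of finite type over $\Q$, not to arbitrary qcqs $\Q$-schemes, and the cdh-local rings of a general qcqs scheme are henselian valuation rings, not smooth $\Q$-algebras. Your argument can be salvaged by first using finitariness of both presheaves to reduce to finite-type $\Q$-schemes, and then using resolution of singularities to build cdh hypercovers by smooth schemes. The paper instead checks the identification directly on henselian valuation rings $V$ over $\Q$, using Gabber--Ramero (\cite[Theorem~6.5.8(ii) and Corollary~6.5.21]{gabber_almost_2003}) to show that $\mathbb{L}\Omega^{<i}_{V/\Q} \to \Omega^{<i}_{V/\Q}$ is an equivalence; since both cdh sheafifications are finitary cdh sheaves, agreement on valuation rings suffices by \cite[Corollary~2.4.19]{elmanto_cdh_2021}. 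The paper's approach is cleaner in that it avoids the heavy input of resolution of singularities and works uniformly without a noetherian reduction.
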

	
	\begin{proof}
		For every valuation ring extension $V$ of $\Q$, the natural map
		$$\mathbb{L}\Omega^{<i}_{V/\Q} \longrightarrow \Omega^{<i}_{V/\Q}$$
		is an equivalence in the derived category $\mathcal{D}(\Q)$ by results of Gabber--Ramero (\cite[Theorem~$6.5.8\,(ii)$ and Corollary~$6.5.21$]{gabber_almost_2003}). The presheaves $R\Gamma_{\text{cdh}}(-, \mathbb{L}\Omega^{<i}_{-_{\Q}/\Q})$ and $R\Gamma_{\text{cdh}}(-,\Omega^{<i}_{-_{\Q}/\Q})$ are finitary cdh sheaves on qcqs schemes, so the natural map
		$$R\Gamma_{\text{cdh}}\big(-,\mathbb{L}\Omega^{<i}_{-_{\Q}/\Q}\big) \longrightarrow R\Gamma_{\text{cdh}}\big(-, \Omega^{<i}_{-_{\Q}/\Q}\big)$$
		is an equivalence (\cite[Corollary~$2.4.19$]{elmanto_cdh_2021}). The result then follows from Theorem~\ref{theoremrationalstructuremain}.
	\end{proof}
	
	\begin{example}[Weight zero motivic cohomology]\label{exampleweightzeromotiviccohomology}
		For every qcqs scheme $X$, the natural map
		$$\Z(0)^{\text{mot}}(X) \longrightarrow \Z(0)^{\text{cdh}}(X) \simeq R\Gamma_{\text{cdh}}(X,\Z),$$
		where the last idenfication is a consequence of Example~\ref{exampleweightzerolissemotiviccohomology}, is an equivalence in the derived category $\mathcal{D}(\Z)$. Indeed, it suffices to prove the result rationally, and modulo $p$ for every prime number $p$. The result rationally is a consequence of Corollary~\ref{corollaryrationalmainresultongradedpieces}. For every prime number $p$, the presheaf $\F_p(0)^{\text{BMS}}$ is naturally identified with the presheaf $R\Gamma_{\text{ét}}(-,\F_p)$ (\cite[Proposition~$7.16$]{bhatt_topological_2019}) which is a cdh sheaf on qcqs schemes (\cite[Theorem~$5.4$]{bhatt_arc-topology_2021}), so the result modulo $p$ is a consequence of Corollary~\ref{corollarymainpadicstructureongradeds}.
	\end{example}

	\subsection{A global Beilinson fibre square}
	
	\vspace{-\parindent}
	\hspace{\parindent}
	
	In this subsection we prove Theorem~\ref{theoremBFSglobalwithsolid}, which is a rewriting of the Beilinson fibre square of Antieau--Mathew--Morrow--Nikolaus \cite[Theorem~$6.17$]{antieau_beilinson_2020} in terms of the rigid-analytic derived de Rham cohomology of Section~\ref{sectionrigidanalyticdR}. Note that our statements are formulated in the generality of derived schemes, and that the functor $-_{\F_p}$ is then the derived base change from~$\Z$ to $\F_p$. The results in \cite{antieau_beilinson_2020} are stated in the generality of $p$-torsionfree commutative rings, on which derived and classical reduction modulo $p$ coincide.
	
	\begin{construction}[The map $\chi$]\label{constructionthemapchi}
		Let $i \in \Z$ be an integer. Following \cite{antieau_beilinson_2020}, we construct for every qcqs derived scheme $X$ a natural map
		$$\chi : \Big(\prod_{p \in \mathbb{P}} \Z_p(i)^{\text{BMS}}(X_{\F_p})\Big)_{\Q} \longrightarrow \Big(\prod_{p \in \mathbb{P}} R\Gamma_{\text{Zar}}\big(X,(\mathbb{L}\Omega_{-/\Z})^\wedge_p\big)\Big)_{\Q}$$
		in the derived category $\mathcal{D}(\Q)$.
		\begin{enumerate}
			\item ($p \leq i+1$) Let $p$ be a prime number. By \cite[Theorem~$6.17$]{antieau_beilinson_2020}, there exists an integer $N \geq 0$ depending only on $i$ and a natural map
			$$\chi : \Z_p(i)^{\text{BMS}}(R/p) \longrightarrow \frac{1}{p^N} \big(\mathbb{L}\Omega_{R/\Z}\big)^\wedge_p$$
			on $p$-torsionfree $p$-quasisyntomic rings $R$, and in particular on polynomial $\Z$\nobreakdash-algebras $R$. The functors $\Z_p(i)^{\text{BMS}}(-/p)$ and $\tfrac{1}{p^N}(\mathbb{L}\Omega_{-/\Z})^\wedge_p$, as functors from animated commutative rings to $p$\nobreakdash-complete objects in the derived category $\mathcal{D}(\Z)$, are left Kan extended from polynomial $\Z$\nobreakdash-alge\-bras (Corollary~\ref{corollaryBMSsyntomiccohomologyhasquasisyntomicdescentandLKEfrompolynomialZalgebras}\,$(2)$ and by construction, respectively). Left Kan extending the previous map then induces a natural map
			$$\chi : \Z_p(i)^{\text{BMS}}(R/p) \longrightarrow \frac{1}{p^N} \big(\mathbb{L}\Omega_{R/\Z}\big)^\wedge_p$$
			on animated commutative rings $R$, where the reduction modulo $p$ is the derived one. Inverting~$p$ and Zariski sheafifying induces a natural map
			$$\chi : \Q_p(i)^{\text{BMS}}(X_{\F_p}) \longrightarrow R\Gamma_{\text{Zar}}\big(X,(\mathbb{L}\Omega_{-/\Z})^\wedge_p[\tfrac{1}{p}]\big)$$
			in the derived category $\mathcal{D}(\Q)$, on general qcqs derived schemes $X$.
			\item ($p \geq i+2$) For prime numbers $p$ such that $p \geq i+2$, there actually exists a natural map
			$$\chi : \Z_p(i)^{\text{BMS}}(R/p) \longrightarrow \big(\mathbb{L}\Omega_{R/\Z}\big)^\wedge_p$$
			on $p$-torsionfree $p$-quasisyntomic rings $R$ (\cite[Theorem~$6.17$]{antieau_beilinson_2020}), and in particular on polynomial $\Z$-algebras $R$. Left Kan extending this map again induces a natural map
			$$\chi : \Z_p(i)^{\text{BMS}}(R/p) \longrightarrow \big(\mathbb{L}\Omega_{R/\Z}\big)^\wedge_p$$
			on animated commutative rings $R$. Taking the product over all primes $p \geq i+2$, and then rationalisation and Zariski sheafification, induces a natural map
			$$\chi : \Big(\prod_{p \in \mathbb{P}_{\geq i+2}} \Z_p(i)^{\text{BMS}}(X_{\F_p})\Big)_{\Q} \longrightarrow \Big(\prod_{p \in \mathbb{P}_{\geq i+2}} R\Gamma_{\text{Zar}}\big(X,(\mathbb{L}\Omega_{-/\Z})^\wedge_p\big)\Big)_{\Q}$$
			in the derived category $\mathcal{D}(\Q)$, on general qcqs derived schemes $X$.
			\item (general construction) For every qcqs derived scheme $X$, define the desired natural map~$\chi$ as the product of the map $\chi$ of $(2)$ with the finite product over prime numbers $p \leq i+1$ of the map $\chi$ of $(1)$.
		\end{enumerate}
	\end{construction}
	
	\begin{theorem}[Beilinson fibre square, after \cite{antieau_beilinson_2020}]\label{theoremBFSasinAMMN}
		Let $X$ be a qcqs derived scheme. Then for every integer $i \in \Z$, the natural diagram
		$$\begin{tikzcd}
			\Big(\prod_{p \in \mathbb{P}} \Z_p(i)^{\emph{BMS}}(X)\Big)_{\Q} \ar[r] \ar[d] & \Big(\prod_{p \in \mathbb{P}} R\Gamma_{\emph{Zar}}\big(X,(\mathbb{L}\Omega^{\geq i}_{-/\Z})^\wedge_p\big)\Big)_{\Q} \ar[d] \\
			\Big(\prod_{p \in \mathbb{P}} \Z_p(i)^{\emph{BMS}}(X_{\F_p})\Big)_{\Q} \arrow[r,"\chi"] & \Big(\prod_{p \in \mathbb{P}} R\Gamma_{\emph{Zar}}\big(X,(\mathbb{L}\Omega_{-/\Z})^\wedge_p\big)\Big)_{\Q}
		\end{tikzcd}$$
		in the derived category $\mathcal{D}(\Q)$ is commutative, with total cofibre naturally identified with the complex
		$$\Big(\prod_{p \in \mathbb{P}} R\Gamma_{\emph{Zar}}\big(X,\mathbb{L}\Omega^{<i}_{-_{\F_p}/\Z}\big)\Big)_{\Q} \in \mathcal{D}(\Q).$$
	\end{theorem}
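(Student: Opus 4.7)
The plan is to deduce the theorem from the primewise Beilinson fibre square of Antieau--Mathew--Morrow--Nikolaus (\cite[Theorem~$6.17$]{antieau_beilinson_2020}), by taking the restricted product over all primes and carefully tracking the discrepancy between our formulation (using $p$-completions of derived de Rham cohomology relative to $\Z$) and the AMMN formulation (using Hodge-completed derived de Rham cohomology relative to $\Z_p$).

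First I would reduce to the case where $X = \text{Spec}(R)$ with $R$ a polynomial $\Z$-algebra, via Zariski descent and left Kan extension from polynomial $\Z$-algebras. Both reductions apply: for the $\Z_p(i)^{\text{BMS}}$-terms by Corollary~\ref{corollaryBMSsyntomiccohomologyhasquasisyntomicdescentandLKEfrompolynomialZalgebras}\,$(2)$ combined with the fact that derived reduction modulo $p$ preserves sifted colimits; for the $p$-completed derived de Rham terms and the target $\mathbb{L}\Omega^{<i}_{-_{\F_p}/\Z}$ (whose finitely many graded pieces are derived exterior powers of the cotangent complex) by construction; and for the map $\chi$ by Construction~\ref{constructionthemapchi}.

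Commutativity of the diagram is immediate from Construction~\ref{constructionthemapchi}, as $\chi$ is defined componentwise to agree with the AMMN comparison map. For each prime $p$, \cite[Theorem~$6.17$]{antieau_beilinson_2020} applied to $R^\wedge_p$ gives a natural cartesian square in $\mathcal{D}(\Q_p)$ relating $\Q_p(i)^{\text{BMS}}(R)$, $\Q_p(i)^{\text{BMS}}(R/p)$, $\widehat{\mathbb{L}\Omega}^{\geq i}_{R^\wedge_p/\Z_p}[\tfrac{1}{p}]$, and $\widehat{\mathbb{L}\Omega}_{R^\wedge_p/\Z_p}[\tfrac{1}{p}]$. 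Taking the restricted product over all primes and rationalizing yields a cartesian square (with vanishing total cofibre) whose left column agrees with that of our square.

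The hardest step is then to compute the total cofibre of the map from this AMMN-style square to our square, which is the identity on the left column and a comparison map on the right column. The cofibre of the right-column comparison is governed by two inputs: the fibre sequence $\mathbb{L}\Omega^{\geq i}_{R/\Z} \to \mathbb{L}\Omega_{R/\Z} \to \mathbb{L}\Omega^{<i}_{R/\Z}$ (whose third term lives in finitely many Hodge weights and is therefore already Hodge-complete), and the base change identification $\mathbb{L}_{\F_p/\Z} \simeq \F_p[1]$, which quantifies how the cotangent complex relative to $\Z$ differs from the one relative to $\Z_p$ after reduction modulo $p$. Combined with the base change formula for derived exterior powers of the cotangent complex along $\Z \to \Z_p$ and along $R \to R^\wedge_p$, this identifies the right-column cofibre, after taking restricted product over primes and rationalizing, with $\big(\prod_p \mathbb{L}\Omega^{<i}_{R_{\F_p}/\Z}\big)_\Q$; since the AMMN-style square has vanishing total cofibre, this yields the claimed identification of the total cofibre of our square. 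The main obstacle is the careful bookkeeping in this last calculation, ensuring that the restricted product, rationalization, and derived completions all commute in the required sense.
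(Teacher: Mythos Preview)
Your proposal contains a genuine gap in the passage ``taking the restricted product over all primes and rationalizing yields a cartesian square (with vanishing total cofibre)''. The primewise statement you cite from \cite[Theorem~$6.17$]{antieau_beilinson_2020} is that the square is cartesian \emph{after inverting $p$}; equivalently, its integral total cofibre is killed by a power of $p$. But the restricted product $(\prod_{p}-)_{\Q}$ is formed by first taking the product of the \emph{integral} primewise squares and only then rationalising. A product over all primes of $p$-power-torsion objects is not torsion, so it does not vanish after rationalisation. Concretely, the integral primewise total cofibre is (up to isogeny) $\mathbb{L}\Omega^{<i}_{R_{\F_p}/\Z}$, and it is precisely this that survives as $\big(\prod_{p}\mathbb{L}\Omega^{<i}_{R_{\F_p}/\Z}\big)_{\Q}$ in the global statement---the very object you are trying to produce.

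This also undermines your ``hardest step''. After your reduction to polynomial $\Z$-algebras, the right column you attribute to AMMN (Hodge-completed derived de Rham of $R^{\wedge}_p$ over $\Z_p$) agrees with the right column of the theorem ($p$-completed derived de Rham of $R$ over $\Z$): for a smooth $\Z$-algebra there are only finitely many Hodge graded pieces, so Hodge completion is vacuous, and $(\mathbb{L}_{\Z_p/\Z})^{\wedge}_p\simeq 0$ erases the difference between the bases $\Z$ and $\Z_p$ after $p$-completion. Hence there is no right-column discrepancy to compute; your proposed source of the cofibre is empty.

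The paper's proof avoids all of this. As recorded in Construction~\ref{constructionthemapchi}, \cite[Theorem~$6.17$]{antieau_beilinson_2020} already gives, for each prime $p$, the commutative square with right column $(\mathbb{L}\Omega^{\geq i}_{-/\Z})^{\wedge}_p$ and $(\mathbb{L}\Omega_{-/\Z})^{\wedge}_p$ \emph{and} identifies its total cofibre with $\mathbb{L}\Omega^{<i}_{-_{\F_p}/\Z}$ (integrally for $p\geq i+2$, up to bounded $p$-torsion for the finitely many small primes). One then simply takes the product over all primes and rationalises; the isogeny errors at small primes are torsion in a single prime each and hence die. No intermediate cartesian square and no comparison calculation are needed.
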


	\begin{proof}
		This is a consequence of Construction~\ref{constructionthemapchi} and \cite[Theorem~$6.17$]{antieau_beilinson_2020}.
	\end{proof}

	In the following result, the map $\underline{\widehat{\chi}}$ is defined as the composite of the map $\chi$ of Construction~\ref{constructionthemapchi} with the natural maps
	$$\big(\prod_{p \in \mathbb{P}} R\Gamma_{\text{Zar}}\big(X,(\mathbb{L}\Omega_{-/\Z})^\wedge_p\big)\big)_{\Q} \rightarrow \big(\prod_{p \in \mathbb{P}} R\Gamma_{\text{Zar}}\big(X,(\widehat{\mathbb{L}\Omega}_{-/\Z})^\wedge_p\big)\big)_{\Q} \rightarrow R\Gamma_{\text{Zar}}\big(X,\prod_{p \in \mathbb{P}}{}^{'} \underline{\widehat{\mathbb{L}\Omega}}_{-_{\Q_p}/\Q_p}\big)$$
	induced by Hodge-completion and Remark~\ref{remark18cartesiansquaredefiningsolidderiveddeRhamcohomology}.

	\begin{theorem}\label{theoremBFSglobalwithsolid}
		Let $X$ be a qcqs derived scheme. Then for every integer $i \in \Z$, there exists a natural commutative square
		$$\begin{tikzcd}
			\Q(i)^{\emph{TC}}(X) \ar[r] \ar[d] & R\Gamma_{\emph{Zar}}\big(X,\widehat{\mathbb{L}\Omega}^{\geq i}_{-_{\Q}/\Q}\big) \ar[d] \\
			\big(\prod_{p \in \mathbb{P}} \Z_p(i)^{\emph{BMS}}(X_{\F_p})\big)_{\Q} \arrow[r,"\underline{\widehat{\chi}}"] & R\Gamma_{\emph{Zar}}\big(X,\prod'_{p \in \mathbb{P}} \underline{\widehat{\mathbb{L}\Omega}}_{-_{\Q_p}/\Q_p}\big)
		\end{tikzcd}$$
		in the derived category $\mathcal{D}(\Q)$, whose total cofibre is naturally identified with the complex
		$$\Big(\prod_{p \in \mathbb{P}} R\Gamma_{\emph{Zar}}\big(X,\mathbb{L}\Omega^{<i}_{-_{\F_p}/\Z}\big)\Big)_{\Q} \in \mathcal{D}(\Q).$$
	\end{theorem}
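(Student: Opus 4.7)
The strategy is to stack the claimed square below the rationalised cartesian square of Corollary~\ref{corollarymainconsequenceBFSongradedpieces}, reducing the total cofibre computation to the Beilinson fibre square of Theorem~\ref{theoremBFSasinAMMN}. First I would construct the square. The right vertical map is the composite of the Hodge-filtration inclusion $R\Gamma_{\text{Zar}}(X,\widehat{\mathbb{L}\Omega}^{\geq i}_{-_{\Q}/\Q}) \to R\Gamma_{\text{Zar}}(X,\widehat{\mathbb{L}\Omega}_{-_{\Q}/\Q})$ with the natural map to its rigid-analytic refinement (the non-$\geq i$ analogue of the cartesian square of Remark~\ref{remark18cartesiansquaredefiningsolidderiveddeRhamcohomology}). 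The left vertical map factors as $\Q(i)^{\text{TC}}(X) \to \big(\prod_{p \in \mathbb{P}} \Z_p(i)^{\text{BMS}}(X)\big)_{\Q} \to \big(\prod_{p \in \mathbb{P}} \Z_p(i)^{\text{BMS}}(X_{\F_p})\big)_{\Q}$, using the corollary to Proposition~\ref{propositionpcompletionofmotiviconTCisBMS} together with functoriality of BMS syntomic cohomology along the derived base change $X \to X_{\F_p}$. Commutativity of this square is essentially the commutativity of the square in Theorem~\ref{theoremBFSasinAMMN}, given how $\underline{\widehat{\chi}}$ is built from $\chi$ in Construction~\ref{constructionthemapchi}.

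Once the square is constructed and known to commute, I would place it directly below the rationalisation of Corollary~\ref{corollarymainconsequenceBFSongradedpieces} to form a $3 \times 2$ diagram. The upper square is cartesian, hence cocartesian in the stable $\infty$-category $\mathcal{D}(\Q)$, so the total cofibre of the outer rectangle coincides with the total cofibre of the lower square. It therefore suffices to show that this lower square has total cofibre $\big(\prod_{p \in \mathbb{P}} R\Gamma_{\text{Zar}}(X,\mathbb{L}\Omega^{<i}_{-_{\F_p}/\Z})\big)_{\Q}$.

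For this final step, I would compare the lower square to the Beilinson fibre square of Theorem~\ref{theoremBFSasinAMMN}. Both squares share the same left column, and the two bottom rows are related by the natural transformation from the $p$-completed Hodge-complete derived de Rham complex to its rigid-analytic refinement; this compatibility is exactly how $\underline{\widehat{\chi}}$ is produced from $\chi$. Moreover, the cofibres of the two right vertical maps are both canonically identified with $\big(\prod_{p \in \mathbb{P}} R\Gamma_{\text{Zar}}(X,(\mathbb{L}\Omega^{<i}_{-/\Z})^\wedge_p)\big)_{\Q}$: in Theorem~\ref{theoremBFSasinAMMN} via the Hodge fibre sequence, and in the lower square via Remark~\ref{remark17fibreseuquenceforsolidderiveddeRhamcohomology}. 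Since these identifications are compatible, the induced map on total cofibres is an equivalence, and the desired total cofibre is identified by Theorem~\ref{theoremBFSasinAMMN}.

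The main obstacle I anticipate is Step~1, namely the bookkeeping of coherences between the BMS-HKR comparison map of Construction~\ref{constructionfilteredmapTCpcompletedtoHC-pcompleted}, the natural transformation $\Z(i)^{\text{TC}} \to \prod_{p \in \mathbb{P}} \Z_p(i)^{\text{BMS}}$, the rigid-analytification and Hodge-completion maps, and the AMMN map $\chi$ of Construction~\ref{constructionthemapchi}. The total-cofibre identifications in Step~3 are then forced by construction, but assembling the full diagram at the level of higher coherences does require care.
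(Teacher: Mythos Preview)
Your proposal is correct and follows essentially the same approach as the paper. The paper organises the argument as a $3\times 3$ diagram in which all inner squares except the bottom-left are cartesian, so the total cofibre of the outer square reduces to that of the bottom-left, which is the (Hodge-completed) Beilinson fibre square of Theorem~\ref{theoremBFSasinAMMN}; your $3\times 2$ diagram with Corollary~\ref{corollarymainconsequenceBFSongradedpieces} on top is the same picture with the middle column suppressed, and your final comparison of the lower square to Theorem~\ref{theoremBFSasinAMMN} via the cofibres of the right verticals (Remark~\ref{remark17fibreseuquenceforsolidderiveddeRhamcohomology}) is exactly what establishes cartesianness of the paper's bottom-right inner square.
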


	\begin{proof}
		By construction, there exists a natural commutative diagram
		$$\hspace*{-.2cm}\begin{tikzcd}[sep=small]
			\Q(i)^{\text{TC}}(X) \ar[r] \ar[d] & R\Gamma_{\text{Zar}}\big(X,(\widehat{\mathbb{L}\Omega}^{\geq i}_{-/\Z})_{\Q}\big) \ar[r] \ar[d] & R\Gamma_{\text{Zar}}\big(X,\widehat{\mathbb{L}\Omega}^{\geq i}_{-_{\Q}/\Q}\big) \ar[d] \\
			\Big(\prod_{p \in \mathbb{P}} \Z_p(i)^{\text{BMS}}(X)\Big)_{\Q} \ar[r] \ar[d] & \Big(\prod_{p \in \mathbb{P}} R\Gamma_{\text{Zar}}\big(X,(\widehat{\mathbb{L}\Omega}^{\geq i}_{-/\Z})^\wedge_p\big)\Big)_{\Q} \ar[r] \ar[d] & R\Gamma_{\text{Zar}}\big(X,\prod'_{p \in \mathbb{P}} \underline{\widehat{\mathbb{L}\Omega}}^{\geq i}_{-_{\Q_p}/\Q_p}\big) \ar[d] \\
			\Big(\prod_{p \in \mathbb{P}} \Z_p(i)^{\text{BMS}}(X_{\F_p})\Big)_{\Q} \arrow[r] & \Big(\prod_{p \in \mathbb{P}} R\Gamma_{\text{Zar}}\big(X,(\widehat{\mathbb{L}\Omega}_{-/\Z})^\wedge_p\big)\Big)_{\Q} \ar[r] & R\Gamma_{\text{Zar}}\big(X,\prod'_{p \in \mathbb{P}} \underline{\widehat{\mathbb{L}\Omega}}_{-_{\Q_p}/\Q_p}\big)
		\end{tikzcd}$$
		in the derived category $\mathcal{D}(\Q)$, where all the inner squares are cartesian expect the left bottom one, and where the commutativity for the left bottom square is part of Theorem~\ref{theoremBFSasinAMMN}. The desired total cofibre is then naturally identified with the total cofibre of the left bottom square, and the result is then a consequence of Theorem~\ref{theoremBFSasinAMMN}.
	\end{proof}

	Theorem~\ref{theoremBFSglobalwithsolid} means in particular that the complex $\Q(i)^{\text{TC}}(X)$ can be expressed purely in terms of characteristic zero, characteristic $p$, and rigid-analytic information.

	\begin{corollary}\label{corollaryBFSonlyonep}
		Let $p$ be a prime number, and $X$ be a qcqs derived $\Z_{(p)}$-scheme. Then for every integer $i \in \Z$, there exists a natural cartesian square
		$$\begin{tikzcd}
			\Q(i)^{\emph{TC}}(X) \ar[r] \ar[d] & R\Gamma_{\emph{Zar}}\big(X,\widehat{\mathbb{L}\Omega}^{\geq i}_{-_{\Q}/\Q}\big) \ar[d] \\
			\Q_p(i)^{\emph{BMS}}(X_{\F_p}) \arrow[r,"\underline{\widehat{\chi}}"] & R\Gamma_{\emph{Zar}}\big(X,\underline{\widehat{\mathbb{L}\Omega}}_{-_{\Q_p}/\Q_p}\big)
		\end{tikzcd}$$
		in the derived category $\mathcal{D}(\Q)$.
	\end{corollary}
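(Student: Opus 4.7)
The strategy is to deduce the corollary from Theorem~\ref{theoremBFSglobalwithsolid} by specialising the global statement to qcqs derived schemes over $\Z_{(p)}$. Concretely, one starts with the natural commutative square of Theorem~\ref{theoremBFSglobalwithsolid} for $X$, and shows that, under the $\Z_{(p)}$-hypothesis, three of the four corners are identified with the corresponding corners of Corollary~\ref{corollaryBFSonlyonep}, while the total cofibre collapses to zero.

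First I would treat the two bottom corners. For any prime number $\ell \neq p$, the element $\ell$ is invertible in $\Z_{(p)}$, so $X_{\F_{\ell}}=\emptyset$ as a derived scheme, and hence the syntomic complex $\Z_{\ell}(i)^{\text{BMS}}(X_{\F_{\ell}})$ vanishes. This shows that the natural projection
\[
\Big(\prod_{\ell \in \mathbb{P}} \Z_{\ell}(i)^{\text{BMS}}(X_{\F_{\ell}})\Big)_{\Q} \xlongrightarrow{\sim} \Q_p(i)^{\text{BMS}}(X_{\F_p})
\]
is an equivalence, giving the identification of the bottom-left corner. The bottom-right corner is handled by Remark~\ref{remark17''solidderiveddeRhamcohomologyoneprimeatatime} (or equivalently Remark~\ref{remark12HKRfiltrationonHC-solid} at the level of filtrations), which is precisely the statement that the natural map
\[
R\Gamma_{\text{Zar}}\Big(X,\prod_{\ell \in \mathbb{P}}{}^{'}\underline{\widehat{\mathbb{L}\Omega}}_{-_{\Q_{\ell}}/\Q_{\ell}}\Big) \xlongrightarrow{\sim} R\Gamma_{\text{Zar}}\big(X,\underline{\widehat{\mathbb{L}\Omega}}_{-_{\Q_p}/\Q_p}\big)
\]
is an equivalence when $X$ is a qcqs derived $\Z_{(p)}$-scheme. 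The top two corners of the two squares coincide on the nose, so only the vanishing of the total cofibre remains.

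By Theorem~\ref{theoremBFSglobalwithsolid}, the total cofibre of the square of Corollary~\ref{corollaryBFSonlyonep} is identified with $\big(\prod_{\ell \in \mathbb{P}} R\Gamma_{\text{Zar}}(X,\mathbb{L}\Omega^{<i}_{-_{\F_{\ell}}/\Z})\big)_{\Q}$. For $\ell \neq p$, the derived base change $X_{\F_{\ell}}$ vanishes as above, so the corresponding factor vanishes. For $\ell = p$, the complex $\mathbb{L}\Omega^{<i}_{-_{\F_p}/\Z}$ is $p$-torsion, since each of its Hodge-graded pieces $\mathbb{L}^j\Omega_{-_{\F_p}/\Z}[-j]$ carries an $\F_p$-module structure; consequently its rationalisation vanishes. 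Thus the total cofibre is zero, which together with the identifications above yields that the square in the statement of the corollary is cartesian in $\mathcal{D}(\Q)$. No real obstacle is expected here: once the global Beilinson fibre square of Theorem~\ref{theoremBFSglobalwithsolid} is in hand, the argument is purely formal and relies only on the two mild observations that $X_{\F_{\ell}}$ is empty for $\ell \neq p$ and that the remaining $p$-primary contribution is $p$-torsion.
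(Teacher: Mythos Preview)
Your proposal is correct and follows essentially the same route as the paper: apply Theorem~\ref{theoremBFSglobalwithsolid}, observe that $X_{\F_{\ell}}$ vanishes for $\ell\neq p$ (collapsing the bottom-left corner and most of the cofibre), that the remaining $\ell=p$ contribution to the cofibre is $\F_p$-linear and hence rationally zero, and identify the bottom-right corner using the $\Z_{(p)}$ simplification of the rigid-analytic de Rham complex. The paper cites Remark~\ref{remark14HKRfiltrationonHPsolid} for this last identification rather than Remark~\ref{remark17''solidderiveddeRhamcohomologyoneprimeatatime}, but these encode the same observation at the filtered versus graded level.
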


	\begin{proof}
		The base change $X_{\F_{\ell}}$ is zero for every prime number $\ell$ different from $p$. The complex~$\mathbb{L}\Omega^{<i}_{X_{\F_p}/\Z}$ is $\F_p$-linear, so its rationalisation vanishes. The result is then a consequence of Theorem~\ref{theoremBFSglobalwithsolid} and Remark~\ref{remark14HKRfiltrationonHPsolid}.
	\end{proof}

	The following result is an analogue of Theorem~\ref{theoremBFSglobalwithsolid} at the level of filtered objects.

	\begin{theorem}\label{theoremmainconsequenceBFSwithfiltrations}
		Let $X$ be qcqs derived scheme. Then there exists a natural commutative square of filtered spectra
		$$\begin{tikzcd}
			\emph{Fil}^\star_{\emph{mot}} \emph{TC}(X;\Q) \arrow{r} \arrow{d} & \emph{Fil}^\star_{\emph{HKR}} \emph{HC}^-(X_{\Q}/\Q) \ar[d] \\
			\Big(\prod_{p \in \mathbb{P}} \emph{Fil}^\star_{\emph{BMS}} \emph{TC}(X_{\F_p})\Big)_{\Q} \arrow[r,"\underline{\widehat{\chi}}"] & \emph{Fil}^\star_{\emph{HKR}} \Big(\prod'_{p \in \mathbb{P}} \emph{HH}(X;\Q_p)\Big)^{t\emph{S}^1},
		\end{tikzcd}$$
		whose total cofibre is naturally identified with the filtered spectrum
		$\big(\prod_{p \in \mathbb{P}} \emph{Fil}^{\star-1}_{\emph{HKR}} \emph{HC}(X_{\F_p})\big)_{\Q}[2]$.
	\end{theorem}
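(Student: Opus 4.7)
The plan is to upgrade Theorem~\ref{theoremBFSglobalwithsolid}, which concerns the shifted graded pieces, to the level of filtered spectra. First I would construct the filtered commutative square, then compute its total cofibre by reducing to graded pieces, where the claim becomes Theorem~\ref{theoremBFSglobalwithsolid}.

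For the construction, stack the desired square below the filtered cartesian square of Proposition~\ref{propositionmainconsequenceBFSwithfiltrations} (after rationalization). The resulting three-row diagram has intermediate row $\big(\prod_p \text{Fil}^\star_{\text{BMS}} \text{TC}(X;\Z_p)\big)_\Q \to \text{Fil}^\star_{\text{HKR}} \big(\prod'_p \text{HH}(X;\Q_p)\big)^{h\text{S}^1}$. The bottom-left vertical comes, for each prime $p$, from the base-change map $X_{\F_p} \to X$, which induces a filtered map $\text{Fil}^\star_{\text{BMS}} \text{TC}(X;\Z_p) \to \text{Fil}^\star_{\text{BMS}} \text{TC}(X_{\F_p})$. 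The bottom-right vertical is the natural comparison $(-)^{h\text{S}^1} \to (-)^{t\text{S}^1}$ applied to the Tate filtration on the rigid-analytic Hochschild homology, as in the proof of Lemma~\ref{lemma9reductionfromHC-HPtoHC-HPsolid}. The bottom horizontal map $\underline{\widehat{\chi}}$ is a filtered refinement of the map $\chi$ of Construction~\ref{constructionthemapchi}: the construction of \cite{antieau_beilinson_2020} already produces such a filtered map on $p$-torsionfree $p$-quasisyntomic rings via a Nygaard-to-Hodge comparison, and one then left Kan extends from polynomial $\Z$-algebras, Zariski-sheafifies, rationalizes, and composes with the natural map to the rigid-analytic target of Definition~\ref{definition13HKRfiltrationonHPsolid}.

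Since the top square in the stacked diagram is cartesian, the total cofibre of the outer diagram coincides with the total cofibre of the desired bottom square. I would identify this cofibre with $\big(\prod_p \text{Fil}^{\star-1}_{\text{HKR}} \text{HC}(X_{\F_p})\big)_\Q[2]$ by applying Remark~\ref{remark28statementsongradedpiecesimplystatementonfiltration}: all filtered spectra involved are finitary (by Theorem~\ref{theoremBMSfiltrationonTCpcompletedsatisfiesquasisyntomicdescentandisLKEfrompolynomialalgebras}\,(2), Proposition~\ref{propositionCmotivicfiltrationfinitaryNisnevichsheaf}, and the finitariness of the HKR filtrations) and complete on finite-dimensional noetherian schemes (by Lemmas~\ref{lemmaHKRfiltrationonHC-isalwayscomplete}, \ref{lemmaBMSfiltrationproductallprimesiscomplete}, and \ref{lemmaHKRfiltrationHC-solidproductallprimesiscomplete}). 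On the $i$-th shifted graded piece, the cofibre identification reduces to Theorem~\ref{theoremBFSglobalwithsolid}, whose total cofibre $\big(\prod_p R\Gamma_{\text{Zar}}(X,\mathbb{L}\Omega^{<i}_{-_{\F_p}/\Z})\big)_\Q$ matches the corresponding shifted graded piece of the target via Remark~\ref{remarkgradedpiecesoftheHKRfiltrations} and the transitivity sequence for $\mathbb{L}_{\F_p/\Z}$.

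The hard part is producing the filtered refinement $\underline{\widehat{\chi}}$. While the graded version is recorded in Construction~\ref{constructionthemapchi}, promoting it to a filtered statement requires tracking the compatibility of the Nygaard-to-Hodge comparison of \cite{antieau_beilinson_2020} with left Kan extension, Hodge-completion, and the rigid-analytic completion at the filtered level---a technical but essentially formal bookkeeping exercise, whose outcome is forced by the completeness and finitariness inputs used in the second step.
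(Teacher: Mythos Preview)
Your construction of the commutative square---stacking on top of the rationalised cartesian square of Proposition~\ref{propositionmainconsequenceBFSwithfiltrations} and producing a filtered refinement of $\underline{\widehat{\chi}}$---is essentially what the paper does, and you correctly identify this as the main content. The paper's proof simply observes that the construction of $\chi$ in \cite{antieau_beilinson_2020} lifts directly to the filtered level, and then the argument of Theorem~\ref{theoremBFSglobalwithsolid} (including the $3\times 3$ diagram and the identification of the total cofibre via the filtered analogue of Theorem~\ref{theoremBFSasinAMMN}) runs verbatim with filtrations in place of graded pieces.

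Your additional step of identifying the total cofibre by reducing to graded pieces via Remark~\ref{remark28statementsongradedpiecesimplystatementonfiltration} has two problems. First, the finitariness hypothesis fails: presheaves such as $\big(\prod_{p\in\mathbb{P}} \text{Fil}^\star_{\text{BMS}}\text{TC}(-_{\F_p})\big)_\Q$ are not finitary, since infinite products over primes do not commute with filtered colimits of rings; none of the references you cite address this. Second, Remark~\ref{remark28statementsongradedpiecesimplystatementonfiltration} applies to a \emph{map} of filtered presheaves, and you never specify which map you are checking between the total cofibre and $\big(\prod_p \text{Fil}^{\star-1}_{\text{HKR}}\text{HC}(X_{\F_p})\big)_\Q[2]$. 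Such a map does exist, but it comes from the filtered lift of the AMMN fibre sequence itself---and once you have that lift, the identification is already established at the filtered level, so the graded-piece detour is redundant. (The appeal to the transitivity sequence for $\mathbb{L}_{\F_p/\Z}$ is also unnecessary: the shifted graded piece of $\text{HC}(X_{\F_p})$ relative to $\Z$ is already $\mathbb{L}\Omega^{<i}_{X_{\F_p}/\Z}$ by Remark~\ref{remarkgradedpiecesoftheHKRfiltrations}, matching the cofibre in Theorem~\ref{theoremBFSglobalwithsolid} directly.)
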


	\begin{proof}
		The construction of the map $\chi$ in the proof of \cite[Theorem~$6.17$]{antieau_beilinson_2020} adapts readily to define a map at the filtered level instead of graded pieces. The proof is then the same as in Construction~\ref{constructionthemapchi} and Theorem~\ref{theoremBFSglobalwithsolid}.
	\end{proof}

	\begin{question}
		Given the results of the previous sections, and in particular Corollary~\ref{corollaryHPsolidallprimespisacdhsheafwithfiltration} and Theorem~\ref{theoremmainconsequenceBFSwithfiltrations}, it is a natural question --to which we do not know the answer-- to ask whether the presheaf
		$$\Big(\prod_{p \in \mathbb{P}} \text{Fil}^\star_{\text{BMS}} \text{TC}(-_{\F_p})\Big)_{\Q} : \text{Sch}^{\text{qcqs,op}} \longrightarrow \text{FilSp}$$
		is a cdh sheaf, where $-_{\F_p}$ again means derived base change from $\Z$ to $\F_p$.
	\end{question}

	\section{\texorpdfstring{$p$}{TEXT}-adic structure of motivic cohomology}
	
	\vspace{-\parindent}
	\hspace{\parindent}
	
	In this section, we give a description of motivic cohomology with finite coefficients in terms of Bhatt--Lurie's syntomic cohomology (Theorem~\ref{theorempadicmotiviccohomologyintermsofsyntomicohomology}).
	
	\subsection{Comparison to étale cohomology}
	
	\vspace{-\parindent}
	\hspace{\parindent}
	
	In this subsection, we construct a natural comparison map, called the Beilinson--Lichtenbaum comparison map, from the motivic cohomology of a scheme to the étale cohomology of its generic fibre (Definition~\ref{definitionBeilinsonLichtenbaumcomparisonmap}). We then use this comparison map to establish a complete description of $\ell$-adic motivic cohomology in terms of étale cohomology (Theorem~\ref{theoremladicmotiviccohomology}). 
	
	We use the following important result of Deligne to construct the Beilinson--Lichtenbaum comparison map.
	
	\begin{theorem}[\cite{bhatt_arc-topology_2021}]\label{theoremBMcdhdescentforétalecohomology}
		Let $p$ be a prime number, and $k \geq 1$ be an integer. Then for every integer~\hbox{$i \geq 0$}, the presheaf
		$$R\Gamma_{\emph{ét}}(-[\tfrac{1}{p}],\mu_{p^k}^{\otimes i}) : \emph{Sch}^{\emph{qcqs,op}} \longrightarrow \mathcal{D}(\Z/p^k)$$
		is a cdh sheaf.
	\end{theorem}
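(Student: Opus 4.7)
The plan is to reduce cdh descent for $R\Gamma_{\text{ét}}(-[\tfrac{1}{p}],\mu_{p^k}^{\otimes i})$ on qcqs schemes to arc-descent for $R\Gamma_{\text{ét}}(-,\mu_{p^k}^{\otimes i})$ on qcqs $\Z[\tfrac{1}{p}]$-schemes, which is the content of the cited Bhatt--Mathew theorem. The key observation is that the presheaf factors through the functor $(-)[\tfrac{1}{p}] : \text{Sch}^{\text{qcqs}} \rightarrow \text{Sch}^{\text{qcqs}}_{\Z[1/p]}$, and that this functor is compatible with the cdh topology in the appropriate sense.

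First I would recall that the cdh topology on qcqs schemes is generated by Nisnevich covers and abstract blowup squares (by \cite{elmanto_cdh_2021}), so it suffices to check descent for these two types of squares. For Nisnevich descent, one uses that $(-)[\tfrac{1}{p}]$ sends Nisnevich squares of qcqs schemes to Nisnevich squares of qcqs $\Z[\tfrac{1}{p}]$-schemes (since $(-)[\tfrac{1}{p}]$ preserves open immersions and étale morphisms), combined with the fact that $R\Gamma_{\text{ét}}(-,\mu_{p^k}^{\otimes i})$ satisfies étale descent, and in particular Nisnevich descent, on qcqs $\Z[\tfrac{1}{p}]$-schemes.

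Next, for an abstract blowup square
$$\begin{tikzcd}
Y' \ar[r] \ar[d] & X' \ar[d] \\
Y \ar[r] & X
\end{tikzcd}$$
of qcqs schemes, applying $(-)[\tfrac{1}{p}]$ produces an abstract blowup square of qcqs $\Z[\tfrac{1}{p}]$-schemes. Thus cdh descent for the presheaf $R\Gamma_{\text{ét}}(-[\tfrac{1}{p}],\mu_{p^k}^{\otimes i})$ on qcqs schemes reduces to cdh descent for $R\Gamma_{\text{ét}}(-,\mu_{p^k}^{\otimes i})$ on qcqs $\Z[\tfrac{1}{p}]$-schemes. This latter statement follows from the main result of \cite{bhatt_arc-topology_2021}, which establishes that $R\Gamma_{\text{ét}}(-,\mu_{p^k}^{\otimes i})$ is an arc sheaf on qcqs $\Z[\tfrac{1}{p}]$-schemes; since the arc topology refines the cdh topology, every arc sheaf is a cdh sheaf.

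There is essentially no obstacle here beyond invoking the arc-descent input: the argument is a purely formal reduction using the compatibility of $(-)[\tfrac{1}{p}]$ with the generating squares of the cdh topology. The only minor point worth verifying carefully is that the cdh topos can be presented via Nisnevich covers and abstract blowups in the non-noetherian qcqs setting, which is ensured by \cite{elmanto_cdh_2021}.
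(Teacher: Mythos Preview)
Your argument is correct and uses the same essential input as the paper (arc descent for \'etale cohomology with $\mu_{p^k}^{\otimes i}$-coefficients, from \cite{bhatt_arc-topology_2021}), but the paper's route is more direct: it simply cites \cite[Theorem~5.4]{bhatt_arc-topology_2021} to conclude that the presheaf $R\Gamma_{\text{\'et}}(-[\tfrac{1}{p}],\mu_{p^k}^{\otimes i})$ is already an arc sheaf on \emph{all} qcqs schemes, and then observes that the arc topology refines the cdh topology. Your decomposition into Nisnevich squares and abstract blowup squares, together with the reduction to qcqs $\Z[\tfrac{1}{p}]$-schemes via the factorisation through $(-)[\tfrac{1}{p}]$, is a perfectly valid unpacking of this, but it is not needed once one knows the stronger arc-sheaf statement on the full category of qcqs schemes.
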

	
	\begin{proof}
		By \cite[Theorem~$5.4$]{bhatt_arc-topology_2021}, the presheaf $R\Gamma_{\text{ét}}(-[\tfrac{1}{p}],\mu_{p^k}^{\otimes i})$ is an arc sheaf on qcqs schemes, and the arc topology is finer than the cdh topology.\footnote{More precisely, the arc topology is finer than the $v$ topology, which is finer than the h topology, which is finer than the cdh topology (see \cite{bhatt_arc-topology_2021,elmanto_cdh_2021}).}
	\end{proof}
	
	\begin{definition}[Cdh-local Beilinson--Lichtenbaum comparison map]\label{definitioncdhlocalBeilinsonLichtenbaumcomparisonmap}
		Let $p$ be a prime number, and $k \geq 1$ be an integer. For any integer $i \geq 0$, the {\it cdh-local Beilinson--Lichtenbaum comparison map} is the map
		$$\Z/p^k(i)^{\text{cdh}}(-) \longrightarrow R\Gamma_{\text{ét}}(-[\tfrac{1}{p}], \mu_{p^k}^{\otimes i})$$
		of functors from (the opposite category of) qcqs derived schemes to the derived category $\mathcal{D}(\Z/p^k)$ defined as the composite
		$$\Z/p^k(i)^{\text{cdh}}(-) \longrightarrow \Z/p^k(i)^{\text{cdh}}(-[\tfrac{1}{p}]) \simeq \big(L_{\text{cdh}} \tau^{\leq i} R\Gamma_{\text{ét}}(-,\mu_{p^k}^{\otimes i})\big)(-[\tfrac{1}{p}]) \longrightarrow R\Gamma_{\text{ét}}(-[\tfrac{1}{p}],\mu_{p^k}^{\otimes i})$$
		where the first map is induced by base change from $\Z$ to $\Z[\tfrac{1}{p}]$, the equivalence is Theorem~\ref{theoremBEM}\,(3), and the last map is induced by the natural transformation $\tau^{\leq i} \rightarrow \text{id}$ and Theorem~\ref{theoremBMcdhdescentforétalecohomology}.
	\end{definition}
	
	\begin{definition}[Beilinson--Lichtenbaum comparison map]\label{definitionBeilinsonLichtenbaumcomparisonmap}
		Let $p$ be a prime number, and $k \geq 1$ be an integer. For any integer $i \geq 0$, the {\it Beilinson--Lichtenbaum comparison map} (or {\it motivic-étale comparison map}) is the map
		$$\Z/p^k(i)^{\text{mot}}(-) \longrightarrow R\Gamma_{\text{ét}}(-[\tfrac{1}{p}], \mu_{p^k}^{\otimes i})$$
		of functors from (the opposite category of) qcqs schemes to the category $\mathcal{D}(\Z/p^k)$ defined as the composite
		$$\Z/p^k(i)^{\text{mot}}(-) \longrightarrow \Z/p^k(i)^{\text{cdh}}(-) \longrightarrow R\Gamma_{\text{ét}}(-[\tfrac{1}{p}],\mu_{p^k}^{\otimes i})$$
		where the first map is cdh sheafification and the second map is the cdh-local Beilinson--Lichtenbaum comparison map of Definition~\ref{definitioncdhlocalBeilinsonLichtenbaumcomparisonmap}.
	\end{definition}
	
	\begin{remark}\label{remarkcommutativediagramtodefinemotivictosyntomiccomparisonmap}
		Let $p$ be a prime number, $k \geq 1$ be an integer, $R$ be a commutative ring, and $R^h_p$ be the $p$-henselisation of $R$. Then for every integer $i \geq 0$, the natural diagram
		$$\begin{tikzcd}
			\Z/p^k(i)^{\text{mot}}(\text{Spec}(R)) \arrow{r} \arrow{d} & \Z/p^k(i)^{\text{cdh}}(\text{Spec}(R)) \ar[d] \ar[r] & R\Gamma_{\text{ét}}\big(\text{Spec}(R[\tfrac{1}{p}]),\mu_{p^k}^{\otimes i}\big) \ar[d] \\
			\Z/p^k(i)^{\text{mot}}(\text{Spec}(R^h_p)) \ar[d] \arrow{r} & \Z/p^k(i)^{\text{cdh}}(\text{Spec}(R^h_p)) \ar[r] \ar[d] & R\Gamma_{\text{ét}}\big(\text{Spec}(R^h_p[\tfrac{1}{p}]),\mu_{p^k}^{\otimes i}\big) \arrow[d,"\text{id}"] \\
			\Z/p^k(i)^{\text{BMS}}(\text{Spec}(R^h_p)) \ar[r] & \big(L_{\text{cdh}} \Z/p^k(i)^{\text{BMS}}\big)(\text{Spec}(R^h_p)) \ar[r] & R\Gamma_{\text{ét}}\big(\text{Spec}(R^h_p[\tfrac{1}{p}]),\mu_{p^k}^{\otimes i}\big),
		\end{tikzcd}$$
		is a commutative diagram in the derived category $\mathcal{D}(\Z/p^k)$, where the top horizontal right map and the middle horizontal right map are given by Definition~\ref{definitioncdhlocalBeilinsonLichtenbaumcomparisonmap}, and the bottom horizontal right map is induced by \cite[Theorem~$8.3.1$]{bhatt_absolute_2022}. This statement is a consequence of the naturality of the constructions, except for the commutativity of the bottom right square, which is explained in \cite[Remark~$5.25$]{bachmann_A^1-invariant_2024}.
	\end{remark}
	
	\begin{theorem}[$\ell$-adic motivic cohomology]\label{theoremladicmotiviccohomology}
		Let $p$ be a prime number, $X$ be a qcqs scheme over $\Z[\tfrac{1}{p}]$, and $k \geq 1$ be an integer. Then for every integer $i \geq 0$, the Beilinson--Lichtenbaum comparison for classical motivic cohomology induces a natural equivalence
		$$\Z/p^k(i)^{\emph{mot}}(X) \simeq \big(L_{\emph{cdh}} \tau^{\leq i} R\Gamma_{\emph{ét}}(-,\mu_{p^k}^{\otimes i})\big)(X)$$
		in the derived category $\mathcal{D}(\Z/p^k)$.
	\end{theorem}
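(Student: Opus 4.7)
\emph{Proof proposal.} The plan is to reduce the statement to an analogous statement about the cdh-local motivic complexes $\Z/p^k(i)^{\text{cdh}}$, and then invoke a comparison with truncated étale cohomology that is essentially already recorded in Section~\ref{subsectiondefinitionmotiviccohomology}. The key observation is that the hypothesis $p \in \mathcal{O}(X)^{\times}$ causes the BMS contribution to the defining cartesian square of $\Z/p^k(i)^{\text{mot}}$ to vanish.

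First I would record that, by Remark~\ref{remarkladicmotiviccohomology}, for any qcqs $\Z[\tfrac{1}{p}]$-scheme $X$ and integer $k \geq 1$ the natural map
\[
\Z/p^k(i)^{\text{mot}}(X) \longrightarrow \Z/p^k(i)^{\text{cdh}}(X)
\]
is an equivalence in $\mathcal{D}(\Z/p^k)$. Indeed this follows formally from Corollary~\ref{corollarymainpadicstructureongradeds}, since $\Z/p^k(i)^{\text{BMS}}$ and its cdh sheafification both vanish on $\Z[\tfrac{1}{p}]$\nobreakdash-schemes (cf.\ Remark~\ref{remarkderivedpcompletionwithBMS}).

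Next, I would appeal to the identification
\[
\Z/p^k(i)^{\text{cdh}}(X) \simeq \big(L_{\text{cdh}}\, \tau^{\leq i} R\Gamma_{\text{ét}}(-,\mu_{p^k}^{\otimes i})\big)(X)
\]
for $X$ a qcqs $\Z[\tfrac{1}{p}]$-scheme. This is exactly the equivalence invoked in Definition~\ref{definitioncdhlocalBeilinsonLichtenbaumcomparisonmap}, and it is proved in \cite{bachmann_A^1-invariant_2024}. Its proof proceeds by unwinding Definition~\ref{definitionmotivicfiltrationonKHtheoryofschemes}: one has $\Z/p^k(i)^{\text{cdh}} = L_{\text{cdh}} L_{\text{Sch}^{\text{qcqs,op}}/\text{Sm}_{\Z}^{\text{op}}}\, \Z/p^k(i)^{\text{cla}}$, and on smooth $\Z[\tfrac{1}{p}]$-schemes the Beilinson--Lichtenbaum theorem for classical motivic cohomology in mixed characteristic (due to Geisser \cite{geisser_motivic_2004}, building on the Bloch--Kato conjecture proved by Voevodsky--Rost) yields a natural equivalence
\[
\Z/p^k(i)^{\text{cla}}(T) \simeq \tau^{\leq i} R\Gamma_{\text{ét}}(T,\mu_{p^k}^{\otimes i}).
\]
Since any smooth $\Z$-scheme mapping to a $\Z[\tfrac{1}{p}]$-scheme factors through its base change to $\Z[\tfrac{1}{p}]$, the left Kan extension $L_{\text{Sch}^{\text{qcqs,op}}/\text{Sm}_{\Z}^{\text{op}}}$, restricted to $\Z[\tfrac{1}{p}]$-schemes, agrees with the left Kan extension from smooth $\Z[\tfrac{1}{p}]$-schemes, so that cdh-sheafifying produces the desired identification.

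Combining the two displayed equivalences gives the theorem. The only real content in this argument lies in the two inputs; the first (Remark~\ref{remarkladicmotiviccohomology}) is immediate from the definitions once the BMS filtration is known to be $p$-adic, and the second rests on the nontrivial Beilinson--Lichtenbaum theorem of \cite{geisser_motivic_2004}, together with the cdh descent of truncated étale cohomology (Theorem~\ref{theoremBMcdhdescentforétalecohomology}) which ensures that the right-hand side is insensitive to the difference between classical, lisse, and cdh-local input after applying $L_{\text{cdh}}$. So there is no substantive new obstacle beyond carefully citing these established results.
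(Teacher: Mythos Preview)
Your proposal is correct and follows essentially the same two-step argument as the paper: first use the vanishing of $\Z/p^k(i)^{\text{BMS}}$ on $\Z[\tfrac{1}{p}]$-schemes (via Corollary~\ref{corollarymainpadicstructureongradeds}/Remark~\ref{remarkladicmotiviccohomology}) to identify $\Z/p^k(i)^{\text{mot}}$ with $\Z/p^k(i)^{\text{cdh}}$, then cite \cite{bachmann_A^1-invariant_2024} for the identification of the latter with $L_{\text{cdh}}\,\tau^{\leq i} R\Gamma_{\text{\'et}}(-,\mu_{p^k}^{\otimes i})$. Your additional sketch of how the second equivalence is obtained (via Geisser's Beilinson--Lichtenbaum theorem and left Kan extension) is extra commentary rather than a different route; the paper simply cites \cite{bachmann_A^1-invariant_2024} directly.
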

	
	\begin{proof}
		The syntomic complex $\Z/p^k(i)^{\text{BMS}}$ and its cdh sheafification $L_{\text{cdh}} \Z/p^k(i)^{\text{BMS}}$ vanish on qcqs derived $\Z[\tfrac{1}{p}]$-schemes. In particular, the natural map $$\Z/p^k(i)^{\text{mot}}(X) \longrightarrow \Z/p^k(i)^{\text{cdh}}(X)$$
		is an equivalence in the derived category $\mathcal{D}(\Z/p^k)$ (Proposition~\ref{propositionpadicstructuremain}). The Beilinson--Lichtenbaum comparison for classical motivic cohomology induces a natural equivalence
		$$\Z/p^k(i)^{\text{cdh}}(X) \xlongleftarrow{\sim} \big(L_{\text{cdh}} \tau^{\leq i} R\Gamma_{\text{ét}}(-,\mu_{p^k}^{\otimes i})\big)(X)$$
		in the derived category $\mathcal{D}(\Z/p^k)$ (Theorem~\ref{theoremBEM}\,(3)). The desired equivalence is the composite of the previous two equivalences.
	\end{proof}
	
	\begin{corollary}\label{corollaryladicmotivcohomologylowdegreesisétalecohomology}
		Let $p$ be a prime number, $X$ be a qcqs $\Z[\tfrac{1}{p}]$-scheme, and $k \geq 1$ be an integer. Then for every integer $i \geq 0$, there is a natural equivalence
		$$\tau^{\leq i} \Z/p^k(i)^{\emph{mot}}(X) \simeq \tau^{\leq i} R\Gamma_{\emph{ét}}(X,\mu_{p^k}^{\otimes i})$$
		in the derived category $\mathcal{D}(\Z/p^k)$.
	\end{corollary}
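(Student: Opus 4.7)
The plan is to derive the corollary directly from Theorem~\ref{theoremladicmotiviccohomology}, which identifies $\Z/p^k(i)^{\emph{mot}}(X)$ with $\big(L_{\emph{cdh}} \tau^{\leq i} R\Gamma_{\emph{ét}}(-,\mu_{p^k}^{\otimes i})\big)(X)$, combined with the cdh descent for the presheaf $R\Gamma_{\emph{ét}}(-,\mu_{p^k}^{\otimes i})$ on qcqs $\Z[\tfrac{1}{p}]$-schemes provided by Theorem~\ref{theoremBMcdhdescentforétalecohomology}.

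First I would start from the natural fibre sequence of presheaves of $\Z/p^k$-modules on qcqs $\Z[\tfrac{1}{p}]$-schemes
$$\tau^{\leq i} R\Gamma_{\emph{ét}}(-,\mu_{p^k}^{\otimes i}) \longrightarrow R\Gamma_{\emph{ét}}(-,\mu_{p^k}^{\otimes i}) \longrightarrow \tau^{>i} R\Gamma_{\emph{ét}}(-,\mu_{p^k}^{\otimes i}).$$
Applying the exact functor $L_{\emph{cdh}}$ and using Theorem~\ref{theoremBMcdhdescentforétalecohomology} to identify the middle term with its own cdh sheafification produces a fibre sequence
$$L_{\emph{cdh}} \tau^{\leq i} R\Gamma_{\emph{ét}}(-,\mu_{p^k}^{\otimes i}) \longrightarrow R\Gamma_{\emph{ét}}(-,\mu_{p^k}^{\otimes i}) \longrightarrow L_{\emph{cdh}} \tau^{>i} R\Gamma_{\emph{ét}}(-,\mu_{p^k}^{\otimes i}).$$

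The next step is to argue that the right-hand term takes values in cohomological degrees at least $i+1$ on any qcqs $\Z[\tfrac{1}{p}]$-scheme. The input presheaf $\tau^{>i} R\Gamma_{\emph{ét}}(-,\mu_{p^k}^{\otimes i})$ is pointwise concentrated in such degrees, and I would invoke the left $t$-exactness of cdh sheafification: a presheaf valued in cohomological degrees $\geq n$ has a cdh sheafification valued in the same degrees, because the cohomology sheaves of $L_{\emph{cdh}} F$ are the cdh sheafifications of the cohomology presheaves of $F$.

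Combining the two previous points, truncating the fibre sequence by $\tau^{\leq i}$ after evaluation at $X$ shows that the first map induces an equivalence
$$\tau^{\leq i} \big(L_{\emph{cdh}} \tau^{\leq i} R\Gamma_{\emph{ét}}(-,\mu_{p^k}^{\otimes i})\big)(X) \xlongrightarrow{\sim} \tau^{\leq i} R\Gamma_{\emph{ét}}(X,\mu_{p^k}^{\otimes i}),$$
and Theorem~\ref{theoremladicmotiviccohomology} rewrites the left-hand side as $\tau^{\leq i} \Z/p^k(i)^{\emph{mot}}(X)$, yielding the claim. The only nontrivial input beyond the cited theorems is the $t$-exactness property of cdh sheafification used in the second step; this is a standard fact about sheafification in Grothendieck topologies, though it deserves a careful formulation at the level of the cdh $\infty$-topos on qcqs schemes, and is the main point where one must be slightly careful.
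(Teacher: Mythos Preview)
Your proof is correct and follows essentially the same approach as the paper: both combine Theorem~\ref{theoremladicmotiviccohomology} with Theorem~\ref{theoremBMcdhdescentforétalecohomology}, the key step being that $\tau^{\leq i}\big(L_{\text{cdh}}\tau^{\leq i}R\Gamma_{\text{ét}}(-,\mu_{p^k}^{\otimes i})\big)(X)\to \tau^{\leq i}\big(L_{\text{cdh}}R\Gamma_{\text{ét}}(-,\mu_{p^k}^{\otimes i})\big)(X)$ is an equivalence. The paper asserts this step without comment, whereas you unpack it via the fibre sequence and the fact that cdh sheafification preserves the condition of being in degrees $\geq i+1$; this is the correct justification, and your caution about it is well placed but the argument goes through.
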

	
	\begin{proof}
		The natural map
		$$\tau^{\leq i} \big(L_{\text{cdh}} \tau^{\leq i} R\Gamma_{\text{ét}}(-,\mu_{p^k}^{\otimes i})\big)(X) \longrightarrow \tau^{\leq i} \big(L_{\text{cdh}} R\Gamma_{\text{ét}}(-,\mu_{p^k}^{\otimes i})\big)(X)$$
		is an equivalence in the derived category $\mathcal{D}(\Z/p^k)$. The result is then a consequence of Theorems~\ref{theoremBMcdhdescentforétalecohomology} and \ref{theoremladicmotiviccohomology}.
	\end{proof}

	\subsection{Comparison to syntomic cohomology}
	
	\vspace{-\parindent}
	\hspace{\parindent}
	
	In this subsection, we study motivic cohomology with finite coefficients. Our main result is a computation of $p$-adic motivic cohomology in terms of syntomic cohomology (Theorem~\ref{theorempadicmotiviccohomologyintermsofsyntomicohomology}).
	
	\begin{notation}[Syntomic cohomology of derived scheme, after Bhatt--Lurie \cite{bhatt_absolute_2022}]\label{notationsyntomiccohomology}
		Let $X$ be a qcqs derived scheme, $p$ be a prime number, and $i \in \Z$ be an integer. We denote by 
		$$\Z_p(i)^{\text{syn}}(X) \in \mathcal{D}(\Z_p)$$
		the {\it syntomic cohomology} of $X$, as defined in \cite[Section~$8.4$]{bhatt_absolute_2022}. For every integer $k \geq 1$, we also denote by $\Z/p^k(i)^{\text{syn}}(X)$ the derived reduction modulo $p^k$ of the previous complex. In particular, the presheaf $\Z/p^k(i)^{\text{syn}}$ is a Zariski sheaf, whose restriction to animated commutative rings is left Kan extended from smooth $\Z$-algebras, and such that on classical commutative rings $R$, there is, by definition, a natural cartesian square
		$$\begin{tikzcd}
			\Z/p^k(i)^{\text{syn}}(\text{Spec}(R)) \arrow{r} \arrow{d} & R\Gamma_{\text{ét}}(\text{Spec}(R[\frac{1}{p}]),\mu_{p^k}^{\otimes i}) \ar[d] \\
			\Z/p^k(i)^{\text{BMS}}(\text{Spec}(R^h_p)) \arrow{r} & R\Gamma_{\text{ét}}(\text{Spec}(R^h_p[\tfrac{1}{p}]),\mu_{p^k}^{\otimes i})
		\end{tikzcd}$$
		in the derived category $\mathcal{D}(\Z/p^k)$, where $R^h_p$ is the $p$-henselisation of the commutative ring~$R$, and the bottom map is the map of \cite[Theorem~$8.3.1$]{bhatt_absolute_2022}.
	\end{notation}
	
	\begin{construction}[Motivic-syntomic comparison map]\label{constructionmotivicsyntomiccomparisonmap}
		Let $p$ be a prime number, and $k \geq 1$ be an integer. For any integer $i \geq 0$, the {\it motivic-syntomic comparison map} is the map
		$$\Z/p^k(i)^{\text{mot}}(-) \longrightarrow \Z/p^k(i)^{\text{syn}}(-)$$
		of functors from (the opposite category of) qcqs schemes to the derived category $\mathcal{D}(\Z/p^k)$ defined as the Zariski sheafification of the map on commutative rings $R$ induced by the natural commutative diagram
		$$\begin{tikzcd}
			\Z/p^k(i)^{\text{mot}}(\text{Spec}(R)) \ar[d] \ar[r] & R\Gamma_{\text{ét}}\big(\text{Spec}(R[\tfrac{1}{p}]),\mu_{p^k}^{\otimes i}\big) \ar[d] \\
			\Z/p^k(i)^{\text{BMS}}(\text{Spec}(R^h_p)) \ar[r] &  R\Gamma_{\text{ét}}\big(\text{Spec}(R^h_p[\tfrac{1}{p}]),\mu_{p^k}^{\otimes i}\big)
		\end{tikzcd}$$
		of Remark~\ref{remarkcommutativediagramtodefinemotivictosyntomiccomparisonmap}.
	\end{construction}
	
	The following cartesian square, where the bottom horizontal map was described independently in \cite{bachmann_A^1-invariant_2024}, can be seen as an alternative definition of $p$-adic motivic cohomology of qcqs schemes (see Corollary~\ref{corollarymainpadicstructureongradeds}).
	
	\begin{proposition}\label{propositioncartesiansquaremotivicsyntomic}
		Let $X$ be a qcqs scheme, $p$ be a prime number, and $k \geq 1$ be an integer. Then for every integer $i \geq 0$, the commutative diagram
		$$\begin{tikzcd}
			\Z/p^k(i)^{\emph{mot}}(X) \ar[r] \ar[d] & \Z/p^k(i)^{\emph{syn}}(X) \ar[d]\\
			\Z/p^k(i)^{\emph{cdh}}(X) \ar[r] & \big(L_{\emph{cdh}} \Z/p^k(i)^{\emph{syn}}\big)(X)
		\end{tikzcd}$$
		where the top horizontal map is the motivic-syntomic comparison map of Construction~\ref{constructionmotivicsyntomiccomparisonmap} and the vertical maps are cdh sheafification, is a cartesian square in the derived category $\mathcal{D}(\Z/p^k)$.
	\end{proposition}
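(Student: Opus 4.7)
The strategy is to paste the desired square with the known cartesian square of Corollary~\ref{corollarymainpadicstructureongradeds} and apply the pasting lemma. To set this up, I first need a natural comparison map $\Z/p^k(i)^{\text{syn}} \to \Z/p^k(i)^{\text{BMS}}$ fitting into a commutative diagram
$$\begin{tikzcd}
\Z/p^k(i)^{\text{mot}}(X) \ar[r] \ar[d] & \Z/p^k(i)^{\text{syn}}(X) \ar[r] \ar[d] & \Z/p^k(i)^{\text{BMS}}(X) \ar[d] \\
\Z/p^k(i)^{\text{cdh}}(X) \ar[r] & (L_{\text{cdh}}\Z/p^k(i)^{\text{syn}})(X) \ar[r] & (L_{\text{cdh}}\Z/p^k(i)^{\text{BMS}})(X).
\end{tikzcd}$$
This comparison map is obtained from the defining cartesian square of $\Z/p^k(i)^{\text{syn}}$ in Notation~\ref{notationsyntomiccohomology} by projecting to the bottom-left corner, combined with the natural equivalence $\Z/p^k(i)^{\text{BMS}}(R^h_p) \simeq \Z/p^k(i)^{\text{BMS}}(R)$. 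The latter holds because $\Z/p^k(i)^{\text{BMS}}$ depends only on the derived $p$-adic completion (by Corollary~\ref{corollaryBMSsyntomiccohomologyhasquasisyntomicdescentandLKEfrompolynomialZalgebras} and \cite[Corollary~$7.4.11$]{bhatt_absolute_2022}, as used in the proof of Theorem~\ref{theoremBMSfiltrationonTCpcompletedsatisfiesquasisyntomicdescentandisLKEfrompolynomialalgebras}\,$(2)$), and $p$-henselisation preserves this completion. Commutativity of the pasted diagram follows from the naturality baked into Remark~\ref{remarkcommutativediagramtodefinemotivictosyntomiccomparisonmap} and Construction~\ref{constructionmotivicsyntomiccomparisonmap}.

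\textbf{Reduction.} The outer rectangle is cartesian by Corollary~\ref{corollarymainpadicstructureongradeds}. By the pasting lemma, the left square is cartesian as soon as the right square is. So it suffices to show that the right square is cartesian; equivalently, that the fibre of $\Z/p^k(i)^{\text{syn}}(-) \to \Z/p^k(i)^{\text{BMS}}(-)$ is a cdh sheaf on qcqs schemes. By the defining cartesian square of $\Z/p^k(i)^{\text{syn}}$, this fibre is naturally identified (Zariski-locally) with the fibre of the restriction map
$$R\Gamma_{\text{ét}}(-[\tfrac{1}{p}], \mu_{p^k}^{\otimes i}) \longrightarrow R\Gamma_{\text{ét}}((-)^h_p[\tfrac{1}{p}], \mu_{p^k}^{\otimes i}).$$

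\textbf{The key descent step.} I would then identify this last fibre with $R\Gamma_{\text{ét}}(-, j_!\mu_{p^k}^{\otimes i})$, where $j : X[\tfrac{1}{p}] \hookrightarrow X$ is the open immersion of the $p$-adic generic fibre. This uses the localisation triangle $j_! j^* \to \text{id} \to i_* i^*$ applied to $Rj_*\mu_{p^k}^{\otimes i}$ on the étale site of $X$, together with the identification of $R\Gamma_{\text{ét}}(V(p), i^* Rj_* \mu_{p^k}^{\otimes i})$ with $R\Gamma_{\text{ét}}(X^h_p[\tfrac{1}{p}], \mu_{p^k}^{\otimes i})$ (which in turn follows by writing $X^h_p$ as the filtered limit of étale neighborhoods of $V(p)$ and using continuity of étale cohomology). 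The remaining point, which I expect to be the main obstacle, is to show that the presheaf $X \mapsto R\Gamma_{\text{ét}}(X, j_!\mu_{p^k}^{\otimes i})$ is a cdh sheaf on qcqs schemes. This is a mild variant of Theorem~\ref{theoremBMcdhdescentforétalecohomology} (itself extracted from \cite[Theorem~$5.4$]{bhatt_arc-topology_2021}): the étale sheaf $j_!\mu_{p^k}^{\otimes i}$ is $p^k$-torsion, and the arc-descent arguments of Bhatt--Mathew apply to it essentially verbatim, since they only require torsion coefficients and behave well with respect to pushforwards and extensions by zero along open immersions of qcqs schemes.

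\textbf{Conclusion.} Granting this cdh descent for extension-by-zero étale cohomology, the fibre sequence
$$R\Gamma_{\text{ét}}(-, j_!\mu_{p^k}^{\otimes i}) \longrightarrow R\Gamma_{\text{ét}}(-[\tfrac{1}{p}], \mu_{p^k}^{\otimes i}) \longrightarrow R\Gamma_{\text{ét}}((-)^h_p[\tfrac{1}{p}], \mu_{p^k}^{\otimes i})$$
exhibits all three terms as cdh sheaves, so the fibre of $\Z/p^k(i)^{\text{syn}} \to \Z/p^k(i)^{\text{BMS}}$ is a cdh sheaf, completing the pasting argument.
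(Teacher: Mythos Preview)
Your proposal is correct and follows essentially the same approach as the paper: both reduce, via the cartesian square of Corollary~\ref{corollarymainpadicstructureongradeds} and a pasting argument, to showing that the fibre of $\Z/p^k(i)^{\text{syn}} \to \Z/p^k(i)^{\text{BMS}}$ is a cdh sheaf, and both identify this fibre with $R\Gamma_{\text{ét}}(-,j_!\mu_{p^k}^{\otimes i})$ and invoke arc descent from \cite[Theorem~$5.4$]{bhatt_arc-topology_2021}. The only difference is that the paper cites \cite[Remark~$8.4.4$]{bhatt_absolute_2022} directly for the fibre sequence, whereas you rederive it by hand from the defining square of syntomic cohomology and a localisation triangle; your derivation is correct but unnecessary given that reference.
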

	
	\begin{proof}
		By \cite[Remark~$8.4.4$]{bhatt_absolute_2022}, there is a natural fibre sequence
		$$R\Gamma_{\text{ét}}(X,j_!\mu_{p^k}^{\otimes i}) \longrightarrow \Z/p^k(i)^{\text{syn}}(X) \longrightarrow \Z/p^k(i)^{\text{BMS}}(X)$$
		in the derived category $\mathcal{D}(\Z/p^k)$, where $j_! : (X[\tfrac{1}{p}])_{\text{ét}} \rightarrow X_{\text{ét}}$ is the extension by zero functor associated to the open immersion $j : X[\tfrac{1}{p}] \rightarrow X$. The first term of this fibre sequence satisfies arc descent by \cite[Theorem~$5.4$]{bhatt_arc-topology_2021}, hence in particular cdh descent. The result is then a consequence of Corollary~\ref{corollarymainpadicstructureongradeds}.
	\end{proof}

	The following result is a mixed characteristic generalisation of Elmanto--Morrow's fundamental fibre sequence for motivic cohomology of characteristic $p$ schemes (\cite[Corollary~$4.32$]{elmanto_motivic_2023}).
	
	\begin{theorem}[$p$-adic motivic cohomology]\label{theorempadicmotiviccohomologyintermsofsyntomicohomology}
		Let $X$ be a qcqs scheme, $p$ be a prime number, and $k \geq 1$ be an integer. Then for every integer $i \geq 0$, there is a natural fibre sequence
		$$\Z/p^k(i)^{\emph{mot}}(X) \longrightarrow \Z/p^k(i)^{\emph{syn}}(X) \longrightarrow \big(L_{\emph{cdh}} \tau^{>i} \Z/p^k(i)^{\emph{syn}}\big)(X)$$
		in the derived category $\mathcal{D}(\Z/p^k)$. In particular, the fibre of the motivic-syntomic comparison map is in degrees at least $i+2$.
	\end{theorem}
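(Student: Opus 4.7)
The plan is to combine the cartesian square of Proposition~\ref{propositioncartesiansquaremotivicsyntomic} with the canonical Postnikov truncation fibre sequence for the syntomic complex. By that proposition, the cofibre of the motivic-syntomic comparison map is naturally identified with the cofibre of the cdh-sheafified map
$$\Z/p^k(i)^{\text{cdh}}(X) \longrightarrow (L_{\text{cdh}} \Z/p^k(i)^{\text{syn}})(X),$$
so it suffices to analyse this latter cofibre. The cdh sheafification functor $L_{\text{cdh}}$ is a left exact localization and in particular preserves fibre sequences, so applying it to the canonical truncation triangle $\tau^{\leq i} \Z/p^k(i)^{\text{syn}} \to \Z/p^k(i)^{\text{syn}} \to \tau^{>i} \Z/p^k(i)^{\text{syn}}$ yields a fibre sequence of cdh sheaves. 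The theorem therefore reduces to showing that the natural comparison map $\Z/p^k(i)^{\text{cdh}} \to L_{\text{cdh}} \tau^{\leq i} \Z/p^k(i)^{\text{syn}}$ is an equivalence of cdh sheaves on qcqs schemes.

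The asserted degree bound then comes for free: $\tau^{>i} \Z/p^k(i)^{\text{syn}}$ lies in cohomological degrees at least $i+1$, and left exact functors preserve such coconnectivity bounds (the full subcategory of spectra in cohomological degrees at least $n$ is closed under limits), so $L_{\text{cdh}} \tau^{>i} \Z/p^k(i)^{\text{syn}}$ remains in degrees at least $i+1$; the fibre of the motivic-syntomic map, being a shift by $-1$ of the identified cofibre, then lies in degrees at least $i+2$.

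The hard part is the cdh-local Beilinson--Lichtenbaum identification $\Z/p^k(i)^{\text{cdh}} \simeq L_{\text{cdh}} \tau^{\leq i} \Z/p^k(i)^{\text{syn}}$. Both sides are cdh sheaves, and the source is the cdh sheafification of the left Kan extension from smooth $\Z$-schemes of the classical motivic complex $\Z/p^k(i)^{\text{cla}}$, so it suffices to construct a compatible natural transformation on smooth $\Z$-schemes and verify it is an equivalence there. On such a scheme, Bloch's cycle complex is concentrated in cohomological degrees at most $i$, hence the motivic-syntomic comparison map factors through $\tau^{\leq i} \Z/p^k(i)^{\text{syn}}$, and the resulting map $\Z/p^k(i)^{\text{cla}} \to \tau^{\leq i} \Z/p^k(i)^{\text{syn}}$ is the natural candidate. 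To see it is an equivalence, one invokes Geisser's motivic Beilinson--Lichtenbaum theorem for smooth schemes over mixed-characteristic Dedekind domains: on the $\Z[\tfrac{1}{p}]$-part one has $\Z/p^k(i)^{\text{cla}} \simeq \tau^{\leq i} R\Gamma_{\text{ét}}(-,\mu_{p^k}^{\otimes i})$ via Bloch--Kato, and this must be shown compatible with the gluing of étale cohomology on $X[\tfrac{1}{p}]$ with the BMS syntomic cohomology of $X^h_p$ that defines $\Z/p^k(i)^{\text{syn}}$ as in Notation~\ref{notationsyntomiccohomology}. This compatibility in the truncation range $\leq i$ is the technical heart of the argument.
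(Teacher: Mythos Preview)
Your overall strategy matches the paper's proof exactly: use the cartesian square of Proposition~\ref{propositioncartesiansquaremotivicsyntomic} to identify the cofibre of the motivic--syntomic map with that of the bottom map, then invoke the cdh-local Beilinson--Lichtenbaum identification
\[
\Z/p^k(i)^{\text{cdh}} \simeq L_{\text{cdh}}\,\tau^{\leq i}\Z/p^k(i)^{\text{syn}}
\]
to rewrite that cofibre as $L_{\text{cdh}}\,\tau^{>i}\Z/p^k(i)^{\text{syn}}$. The paper simply cites \cite{bachmann_A^1-invariant_2024} for this identification and stops there.

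Where you go further is in sketching a proof of that identification, and this sketch has a gap. You write that since $\Z/p^k(i)^{\text{cdh}}$ is the cdh sheafification of the left Kan extension from smooth $\Z$-schemes, ``it suffices to construct a compatible natural transformation on smooth $\Z$-schemes and verify it is an equivalence there.'' This would only be valid if you also knew that $L_{\text{cdh}}\,\tau^{\leq i}\Z/p^k(i)^{\text{syn}}$ is the cdh sheafification of the left Kan extension of its restriction to smooth $\Z$-schemes --- equivalently, that $\tau^{\leq i}\Z/p^k(i)^{\text{syn}}$ is itself left Kan extended from smooth $\Z$-algebras. That is true (and is essentially Proposition~\ref{propositionmodpmotiviccohoisleftKanextendedinsmalldegrees} later in the paper, proved via rigidity of $\tau^{>i}\Z/p^k(i)^{\text{syn}}$), but it is not automatic and your argument does not supply it. Without it, agreeing on smooth schemes gives you a map of cdh sheaves but not an equivalence; one would instead have to check the map on henselian valuation rings, which is how \cite{bachmann_A^1-invariant_2024} proceeds. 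Relatedly, your assertion that ``Bloch's cycle complex is concentrated in cohomological degrees at most $i$'' on a smooth $\Z$-scheme is only correct Zariski-locally (\cite[Corollary~4.4]{geisser_motivic_2004}); globally the range is $\leq 2i$, so the factorisation through $\tau^{\leq i}$ needs a Nisnevich sheafification as in the proof of Proposition~\ref{propositionpadiccomparisonclassicalmotivicinsmalldegrees}.

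A minor point on the degree bound: your conclusion is correct, but the justification is not quite. The functor $L_{\text{cdh}}$ is a left adjoint, not left exact in the sense of preserving limits; the reason it preserves the condition ``in degrees $\geq i+1$'' is that sheafification is $t$-exact (the plus construction only pushes cohomology into higher degrees), not that coconnective objects are closed under limits.
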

	
	\begin{proof}
		By Theorem~\ref{theoremBEM}\,(3), there is a natural equivalence
		$$\Z/p^k(i)^{\text{cdh}}(X) \simeq \big(L_{\text{cdh}} \tau^{\leq i} \Z/p^k(i)^{\text{syn}}\big)(X)$$
		in the derived category $\mathcal{D}(\Z/p^k)$. The result is then a consequence of Proposition~\ref{propositioncartesiansquaremotivicsyntomic}.
	\end{proof}
	
	\begin{corollary}\label{corollarypadiccomparisoninsmalldegreessyntomiccoho}
		Let $X$ be a qcqs scheme, $p$ be a prime number, and $k \geq 1$ be an integer. Then for every integer $i \geq 0$, the motivic-syntomic comparison map induces a natural equivalence
		$$\tau^{\leq i} \Z/p^k(i)^{\emph{mot}}(X) \xlongrightarrow{\sim} \tau^{\leq i} \Z/p^k(i)^{\emph{syn}}(X)$$
		in the derived category $\mathcal{D}(\Z/p^k)$.
	\end{corollary}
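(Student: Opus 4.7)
The plan is to deduce the corollary directly from the fibre sequence of Theorem~\ref{theorempadicmotiviccohomologyintermsofsyntomicohomology} by a connectivity argument. Recall that this theorem provides a natural fibre sequence
$$\Z/p^k(i)^{\text{mot}}(X) \longrightarrow \Z/p^k(i)^{\text{syn}}(X) \longrightarrow \big(L_{\text{cdh}} \tau^{>i} \Z/p^k(i)^{\text{syn}}\big)(X)$$
in $\mathcal{D}(\Z/p^k)$. To conclude, it suffices to show that the third term is cohomologically concentrated in degrees $\geq i+1$, since then applying $\tau^{\leq i}$ to the fibre sequence makes the third term vanish, giving the desired equivalence
$\tau^{\leq i} \Z/p^k(i)^{\text{mot}}(X) \xlongrightarrow{\sim} \tau^{\leq i} \Z/p^k(i)^{\text{syn}}(X).$

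For the connectivity claim, first note that by construction the presheaf $\tau^{>i} \Z/p^k(i)^{\text{syn}}$ takes values in $\mathcal{D}(\Z/p^k)$ concentrated in cohomological degrees $\geq i+1$. The remaining point is that this bound is preserved by cdh sheafification. This follows from the general fact that if a presheaf $F$ of spectra (or complexes) takes values in cohomological degrees $\geq n$, then so does its cdh sheafification $L_{\text{cdh}} F$: indeed, sheafification can be computed as a filtered colimit of totalisations of Čech-type cosimplicial diagrams formed from values of $F$, and both products and totalisations of $n$-coconnective spectra remain $n$-coconnective.

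Applying this with $n = i+1$ gives that $\big(L_{\text{cdh}} \tau^{>i} \Z/p^k(i)^{\text{syn}}\big)(X)$ lies in degrees $\geq i+1$, which is in fact slightly stronger than what is claimed in the last sentence of Theorem~\ref{theorempadicmotiviccohomologyintermsofsyntomicohomology} (that the fibre, namely a shift by $[-1]$ of this cofibre, lies in degrees $\geq i+2$). There is no real obstacle in this step; the only point to double-check is the preservation of coconnectivity under cdh sheafification, but this is automatic from the (hyper)descent spectral sequence, or equivalently from the fact that the inclusion of $n$-coconnective objects into all objects is closed under limits.
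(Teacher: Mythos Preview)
Your proof is correct and follows the same approach as the paper, which simply cites Theorem~\ref{theorempadicmotiviccohomologyintermsofsyntomicohomology}: the last sentence of that theorem already records that the fibre of the motivic-syntomic comparison map lies in degrees $\geq i+2$, which is exactly the connectivity input you re-derive. One small correction: your claim that the cofibre lying in degrees $\geq i+1$ is ``slightly stronger'' than the fibre lying in degrees $\geq i+2$ is not accurate --- since $\text{fib} \simeq \text{cofib}[-1]$, the two statements are equivalent.
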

	
	\begin{proof}
		This is a consequence of Theorem~\ref{theorempadicmotiviccohomologyintermsofsyntomicohomology}.
	\end{proof}

	Algebraic $K$-theory being $\mathbb{A}^1$-invariant on regular schemes, one could expect the classical $\mathbb{A}^1$\nobreakdash-inva\-riant motivic cohomology to be a good theory for general regular schemes, not only schemes that are smooth over a Dedekind domain. However, most of the results on classical motivic cohomology in mixed characteristic are proved only in the smooth case, as consequences of the Gersten conjecture proved by Geisser \cite{geisser_motivic_2004}. This is the case for the Beilinson--Lichtenbaum conjecture, which compares motivic cohomology with finite coefficients to étale cohomology. Combined with Theorem~\ref{theoremintrocohomology}\,$(6)$, the following result extends the analogous result for classical motivic cohomology to the regular case.
	
	\begin{corollary}[Beilinson--Lichtenbaum conjecture for $F$-smooth schemes]\label{corollaryFsmoothnessBeilinsonLichtenbaumcomparison}
		Let $p$ be a prime number, $X$ be a $p$-torsionfree $F$-smooth scheme ({\it e.g.}, a regular scheme flat over $\Z$), and $k \geq 1$ be an integer. Then for every integer $i \geq 0$, the fibre of the Beilinson--Lichtenbaum comparison map
		$$\Z/p^k(i)^{\emph{mot}}(X) \longrightarrow R\Gamma_{\emph{ét}}\big(X[\tfrac{1}{p}],\mu_{p^k}^{\otimes i}\big)$$
		is in degrees at least $i+1$.
	\end{corollary}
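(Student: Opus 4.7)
The plan is to factor the Beilinson--Lichtenbaum comparison map through syntomic cohomology, reducing the statement to two inputs: the motivic-syntomic comparison of Theorem~\ref{theorempadicmotiviccohomologyintermsofsyntomicohomology} and Bhatt--Mathew's syntomic-étale comparison theorem for $F$-smooth schemes.

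First I will verify that the Beilinson--Lichtenbaum comparison map admits a factorisation
$$\Z/p^k(i)^{\text{mot}}(X) \longrightarrow \Z/p^k(i)^{\text{syn}}(X) \longrightarrow R\Gamma_{\text{ét}}(X[\tfrac{1}{p}], \mu_{p^k}^{\otimes i}),$$
where the first arrow is the motivic-syntomic comparison map of Construction~\ref{constructionmotivicsyntomiccomparisonmap} and the second is the top horizontal arrow of the defining cartesian square of $\Z/p^k(i)^{\text{syn}}$ in Notation~\ref{notationsyntomiccohomology}. This factorisation is essentially tautological from Remark~\ref{remarkcommutativediagramtodefinemotivictosyntomiccomparisonmap}: on an affine $\text{Spec}(R)\subseteq X$, the composite of the universal-property map to syntomic cohomology with the top arrow of the defining cartesian square recovers the top row of the diagram in that remark, which is precisely the Beilinson--Lichtenbaum comparison map of Definition~\ref{definitionBeilinsonLichtenbaumcomparisonmap}, so the two presheaf maps agree after Zariski sheafification.

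By Theorem~\ref{theorempadicmotiviccohomologyintermsofsyntomicohomology}, the fibre of the first arrow is in cohomological degrees at least $i+2$. To control the fibre of the second arrow, I observe that $\Z/p^k(i)^{\text{syn}}$ and $R\Gamma_{\text{ét}}(-[\tfrac{1}{p}], \mu_{p^k}^{\otimes i})$ are both Zariski sheaves, and the cartesian square defining $\Z/p^k(i)^{\text{syn}}$ identifies the fibre of the top arrow with the fibre of the natural map $\Z/p^k(i)^{\text{BMS}}(R^h_p) \to R\Gamma_{\text{ét}}(R^h_p[\tfrac{1}{p}], \mu_{p^k}^{\otimes i})$, where $R^h_p$ is the $p$-henselisation of a local affine section $R$ of the structure sheaf of $X$. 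Since $F$-smoothness and $p$-torsionfreeness are preserved under Zariski localisation and under $p$-henselisation, the ring $R^h_p$ is itself $p$-torsionfree, $F$-smooth, and $p$-henselian.

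The hard part will be applying Bhatt--Mathew's syntomic-étale comparison theorem (\cite{bhatt_syntomic_2023}) in this generality to conclude that the fibre of $\Z/p^k(i)^{\text{BMS}}(R^h_p) \to R\Gamma_{\text{ét}}(R^h_p[\tfrac{1}{p}], \mu_{p^k}^{\otimes i})$ is in cohomological degrees at least $i+1$; an alternative, in the perfectoid case, would be to invoke \cite[Theorem~B]{bouis_cartier_2023} via relative prismatic cohomology. Once this connectivity bound is in hand, combining it with the bound of Theorem~\ref{theorempadicmotiviccohomologyintermsofsyntomicohomology} via the fibre sequence $F_1 \to F \to F_2$ associated by the octahedral axiom to the composite of the two arrows yields the claim: with $F_1$ in degrees at least $i+2$ and $F_2$ in degrees at least $i+1$, the long exact sequence forces the fibre $F$ of the Beilinson--Lichtenbaum comparison map to be in cohomological degrees at least $i+1$.
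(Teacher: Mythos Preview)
Your proposal is correct and follows essentially the same two-step strategy as the paper: factor through syntomic cohomology, then combine the connectivity bound from Theorem~\ref{theorempadicmotiviccohomologyintermsofsyntomicohomology} with Bhatt--Mathew's syntomic-\'etale comparison. The paper's proof is more direct in one respect: it simply cites \cite[Theorem~$1.8$]{bhatt_syntomic_2023} for the global statement that the fibre of $\Z/p^k(i)^{\text{syn}}(X) \to R\Gamma_{\text{\'et}}(X[\tfrac{1}{p}],\mu_{p^k}^{\otimes i})$ lies in degrees $\geq i+1$ for any $p$-torsionfree $F$-smooth $X$, so your Zariski-localisation and passage to $p$-henselisations, while valid, is not needed.
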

	
	\begin{proof}
		By \cite[Theorem~$1.8$]{bhatt_syntomic_2023}, the fibre of the natural map
		$$\Z/p^k(i)^{\text{syn}}(X) \longrightarrow R\Gamma_{\text{ét}}\big(X[\tfrac{1}{p}],\mu_{p^k}^{\otimes i}\big)$$
		is in degrees at least $i+1$. The result is then a consequence of Theorem~\ref{theorempadicmotiviccohomologyintermsofsyntomicohomology}.
	\end{proof}
	
	\begin{corollary}\label{corollarycomparisonpadicmotiviccohomologyofFsmooththingswithmotiviccohoofgenericfibre}
		Let $p$ be a prime number, $X$ be a $p$-torsionfree $F$-smooth scheme, and $k \geq 1$ be an integer. Then the fibre of the natural map
		$$\Z/p^k(i)^{\emph{mot}}(X) \longrightarrow \Z/p^k(i)^{\emph{mot}}\big(X[\tfrac{1}{p}]\big)$$
		is in degrees at least $i+1$.
	\end{corollary}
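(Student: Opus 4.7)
The plan is to factor the Beilinson--Lichtenbaum comparison map for $X$ through the $p$-adic generic fibre $X[\tfrac{1}{p}]$, and then to apply the octahedral axiom together with the preceding Corollaries~\ref{corollaryladicmotivcohomologylowdegreesisétalecohomology} and~\ref{corollaryFsmoothnessBeilinsonLichtenbaumcomparison}.

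First, I would observe that by the naturality of Definitions~\ref{definitioncdhlocalBeilinsonLichtenbaumcomparisonmap} and~\ref{definitionBeilinsonLichtenbaumcomparisonmap}, combined with the identification $\Z/p^k(i)^{\text{mot}} \simeq \Z/p^k(i)^{\text{cdh}}$ on qcqs $\Z[\tfrac{1}{p}]$-schemes (Remark~\ref{remarkladicmotiviccohomology}), the Beilinson--Lichtenbaum map for $X$ factors as the composite
$$\Z/p^k(i)^{\text{mot}}(X) \xlongrightarrow{\beta} \Z/p^k(i)^{\text{mot}}(X[\tfrac{1}{p}]) \xlongrightarrow{\alpha} R\Gamma_{\text{ét}}(X[\tfrac{1}{p}], \mu_{p^k}^{\otimes i}),$$
where $\beta$ is the natural restriction to $X[\tfrac{1}{p}]$ and $\alpha$ is the Beilinson--Lichtenbaum comparison map for the $\Z[\tfrac{1}{p}]$-scheme $X[\tfrac{1}{p}]$. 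The octahedral axiom then produces a fibre sequence
$$\text{fib}(\beta) \longrightarrow \text{fib}(\alpha \circ \beta) \longrightarrow \text{fib}(\alpha)$$
in the derived category $\mathcal{D}(\Z/p^k)$.

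To conclude, I would bound the outer two terms. The middle term $\text{fib}(\alpha \circ \beta)$ lies in degrees at least $i+1$ by Corollary~\ref{corollaryFsmoothnessBeilinsonLichtenbaumcomparison}, which applies because $X$ is $p$-torsionfree and $F$-smooth. The right term $\text{fib}(\alpha)$ lies in degrees at least $i+1$ by Corollary~\ref{corollaryladicmotivcohomologylowdegreesisétalecohomology}, applied to the qcqs $\Z[\tfrac{1}{p}]$-scheme $X[\tfrac{1}{p}]$. The long exact sequence of cohomology groups attached to the fibre sequence then contains, for every integer $j \leq i$, the exact portion $H^{j-1}(\text{fib}(\alpha)) \to H^j(\text{fib}(\beta)) \to H^j(\text{fib}(\alpha \circ \beta))$ whose outer terms both vanish by the previous step, forcing $H^j(\text{fib}(\beta)) = 0$ and hence the desired connectivity bound. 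I do not anticipate any serious obstacle: the only genuinely non-formal input is Corollary~\ref{corollaryFsmoothnessBeilinsonLichtenbaumcomparison} itself, whose proof relies on the syntomic--étale comparison of \cite{bhatt_syntomic_2023}; the present statement is then an essentially formal consequence combined with the $\Z[\tfrac{1}{p}]$-case of Beilinson--Lichtenbaum.
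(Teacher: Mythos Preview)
Your proposal is correct and follows essentially the same approach as the paper: factor the Beilinson--Lichtenbaum comparison map for $X$ through $X[\tfrac{1}{p}]$, then use Corollary~\ref{corollaryFsmoothnessBeilinsonLichtenbaumcomparison} for the composite and Corollary~\ref{corollaryladicmotivcohomologylowdegreesisétalecohomology} for the second map to deduce the connectivity bound on the fibre of the first map. The paper's version is slightly more terse (it does not name the octahedral axiom or spell out the long exact sequence), but the argument is the same.
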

	
	\begin{proof}
		By construction, the Beilinson--Lichtenbaum comparison map
		$$\Z/p^k(i)^{\text{mot}}(X) \longrightarrow R\Gamma_{\text{ét}}\big(X[\tfrac{1}{p}],\mu_{p^k}^{\otimes i}\big)$$
		naturally factors as the composite
		$$\Z/p^k(i)^{\text{mot}}(X) \longrightarrow \Z/p^k(i)^{\text{mot}}\big(X[\tfrac{1}{p}]\big) \longrightarrow R\Gamma_{\text{ét}}\big(X[\tfrac{1}{p}],\mu_{p^k}^{\otimes i}\big).$$
		The fibre of the second map is in degrees at least $i+1$ by Corollary~\ref{corollaryladicmotivcohomologylowdegreesisétalecohomology}, and the fibre of the composite is in degrees at least $i+1$ by Corollary~\ref{corollaryFsmoothnessBeilinsonLichtenbaumcomparison}, so the fibre of the first map is in degrees at least $i+1$.
	\end{proof}

	\section{Comparison to \texorpdfstring{$\mathbb{A}^1$}{TEXT}-invariant motivic cohomology}\label{sectionA1invariantmotiviccohomology}
	
	\vspace{-\parindent}
	\hspace{\parindent}
	
	The theory of classical motivic cohomology of smooth schemes over a mixed characteristic Dedekind domain \cite{bloch_algebraic_1986,levine_techniques_2001,geisser_motivic_2004}, as a theory of $\mathbb{A}^1$-invariant motivic cohomology, admits a natural generalisation to general qcqs schemes. More precisely, Spitzweck constructed in \cite{spitzweck_commutative_2018}, for every qcqs scheme $X$, an $\mathbb{A}^1$-motivic spectrum $\text{H}\!\Z^{\text{Spi}} \in \text{SH}(\Z)$, which represents Bloch's cycle complexes on smooth $\Z$-schemes, and whose pullback to $\text{SH}(B)$, for $B$ a field or a mixed characteristic Dedekind domain, still represents Bloch's cycle complexes on smooth $B$-schemes. Bachmann then proved in \cite{bachmann_very_2022} that Spitzweck's construction coincides with the zeroth slice of the homotopy $K$-theory motivic spectrum $\text{KGL} \in \text{SH}(\Z)$. Finally, Bachmann--Elmanto--Morrow recently proved in \cite{bachmann_A^1-invariant_2024} that the slice filtration is compatible with arbitrary pullbacks, thus defining a well-behaved $\mathbb{A}^1$-motivic spectrum $\text{H}\!\Z_X \in \text{SH}(X)$ for arbitrary qcqs schemes $X$. The associated $\mathbb{A}^1$-invariant motivic complexes
	$$\Z(i)^{\mathbb{A}^1}(X) \in \mathcal{D}(\Z)$$
	are related to the homotopy $K$-theory $\text{KH}(X)$ by an $\mathbb{A}^1$-invariant Atiyah--Hirzebruch spectral sequence. For our purposes, we will only use that there is a natural map
	$$\Z(i)^{\text{cdh}}(X) \longrightarrow \Z(i)^{\mathbb{A}^1}(X)$$
	which exhibits the target as the $\mathbb{A}^1$-localisation of the source (\cite[Theorem~$9.1$]{bachmann_A^1-invariant_2024}).

	
	
	In this section, we prove that the $\mathbb{A}^1$-localisation of the motivic complexes $\Z(i)^{\text{mot}}$ recover Bach\-mann--Elmanto--Morrow's $\mathbb{A}^1$-invariant motivic complexes $\Z(i)^{\mathbb{A}^1}$ (Theorem~\ref{theoremA1localmotiviccohomologymain}). This is a motivic refinement of \cite[Theorem~$1.0.1$]{elmanto_thh_2021}, and implies that the motivic complexes $\Z(i)^{\text{mot}}$ recover the classical motivic complexes $\Z(i)^{\text{cla}}$ on smooth schemes over a mixed characteristic Dedekind domain after $\mathbb{A}^1$-localisation (Corollary~\ref{corollary26A1localmotiviccohomologyisclassicalmotiviccohomology}).
	
	\begin{lemma}\label{lemmaA1cdh=A1cdhA1}
		Let $\mathcal{C}$ be a presentable stable $\infty$-category, and $F : \emph{Sch}^{\emph{qcqs,op}} \rightarrow \mathcal{C}$ be a presheaf. Then the natural map
		$$L_{\mathbb{A}^1} L_{\emph{cdh}} F \longrightarrow L_{\mathbb{A}^1} L_{\emph{cdh}} L_{\mathbb{A}^1} F$$
		induced by $\mathbb{A}^1$-localisation is an equivalence of $\mathcal{C}$-valued presheaves.
	\end{lemma}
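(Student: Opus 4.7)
The plan is to reduce the statement to the assertion that $L_{\text{cdh}}$ sends $\mathbb{A}^1$-equivalences to $\mathbb{A}^1$-equivalences, and then to verify that assertion on a generating set for the strongly saturated class of $\mathbb{A}^1$-equivalences.

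First, observe that the map in the statement is precisely the image under the functor $L_{\mathbb{A}^1} L_{\text{cdh}}$ of the unit $\eta_F : F \to L_{\mathbb{A}^1} F$. By the idempotency of $L_{\mathbb{A}^1}$, the unit $\eta_F$ is an $\mathbb{A}^1$-equivalence. Thus, if $L_{\text{cdh}}(\eta_F)$ is again an $\mathbb{A}^1$-equivalence, then applying $L_{\mathbb{A}^1}$ inverts it, which is the desired equivalence. The lemma therefore reduces to the claim that the functor $L_{\text{cdh}}$ preserves $\mathbb{A}^1$-equivalences. Equivalently, setting $C := \text{cofib}(F \to L_{\mathbb{A}^1} F)$ so that $L_{\mathbb{A}^1} C = 0$ (since $L_{\mathbb{A}^1}$ is an idempotent functor on the stable $\infty$-category of presheaves and commutes with cofibers), one must show that $L_{\mathbb{A}^1} L_{\text{cdh}} C = 0$.

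Second, I would exploit the fact that the class of $\mathbb{A}^1$-equivalences in $\text{PSh}(\text{Sch}^{\text{qcqs}},\mathcal{C})$ is strongly saturated: it is closed under colimits, cofibers, extensions, and 2-out-of-3. It is generated, in this saturated sense, by the maps of the form $c \otimes \Sigma^\infty_+ h_{\mathbb{A}^1_X} \to c \otimes \Sigma^\infty_+ h_X$ for $X$ a qcqs scheme and $c$ in a set of generators of $\mathcal{C}$. Since $L_{\text{cdh}}$ is a left adjoint and the setting is stable, $L_{\text{cdh}}$ preserves colimits and cofiber sequences, so it suffices to verify that it sends each such generator to an $\mathbb{A}^1$-equivalence.

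Third, for each generator, I would use that $L_{\text{cdh}}$ is a sheafification functor and therefore a left exact left adjoint, compatible with the Day convolution. In particular, the splitting $h_{\mathbb{A}^1_X} \simeq h_X \times h_{\mathbb{A}^1}$ of presheaves is preserved, and the map $L_{\text{cdh}}(h_{\mathbb{A}^1_X}) \to L_{\text{cdh}}(h_X)$ is identified with a projection induced from the single map $L_{\text{cdh}}(h_{\mathbb{A}^1}) \to L_{\text{cdh}}(h_{\text{Spec}\,\Z})$. Thus the problem collapses to checking this one base case.

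The main obstacle is the final base case, namely showing that $L_{\text{cdh}}(h_{\mathbb{A}^1}) \to L_{\text{cdh}}(h_{\text{Spec}\,\Z})$ is an $\mathbb{A}^1$-equivalence. I expect this to follow from the combinatorics of cdh covers: both Nisnevich covers and abstract blowup squares pull back along the projection $X \times \mathbb{A}^1 \to X$, so cdh sheafification is compatible with taking products with $\mathbb{A}^1$. This compatibility passes through the cofiber construction and, combined with the tautological $\mathbb{A}^1$-equivalence $h_{\mathbb{A}^1} \to h_{\text{Spec}\,\Z}$ in presheaves, yields the required statement after an additional $L_{\mathbb{A}^1}$-localization. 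A cleaner alternative, if available in the Bachmann--Elmanto--Morrow framework cited in Section~\ref{sectionA1invariantmotiviccohomology}, would be to invoke directly the known compatibility between the cdh topology and $\mathbb{A}^1$-localization in the motivic setting.
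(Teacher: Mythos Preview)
Your opening reduction is correct: the lemma is equivalent to the assertion that $L_{\text{cdh}}$ sends $\mathbb{A}^1$-equivalences to $\mathbb{A}^1$-equivalences. But the execution via generating $\mathbb{A}^1$-equivalences is left incomplete --- you explicitly flag the base case $L_{\text{cdh}}(\Sigma^{\infty}_{+} h_{\mathbb{A}^1}) \to L_{\text{cdh}}(\Sigma^{\infty}_{+} h_{\text{Spec}\,\Z})$ as ``the main obstacle'' and only say you \emph{expect} it to follow, without supplying an argument. Moreover, your third paragraph is loose: after $\Sigma^{\infty}_{+}$ the product $h_{\mathbb{A}^1_X}\simeq h_X\times h_{\mathbb{A}^1}$ becomes a smash product, and to test whether $L_{\text{cdh}}$ of a generator is an $\mathbb{A}^1$-equivalence you must map into \emph{all} $\mathbb{A}^1$-invariant presheaves, not just cdh sheaves, so the Yoneda-style reduction does not go through as written.

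The paper's proof avoids generators entirely. The single observation needed is that the $\mathbb{A}^1$-localisation of a cdh sheaf is again a cdh sheaf: $L_{\mathbb{A}^1}G$ is computed as a filtered colimit $\operatorname{colim}_n G(-\times\Delta^n_{\mathrm{alg}})$, and filtered colimits of cdh sheaves are cdh sheaves (Lemma~\ref{lemmacdhsheafificationcommuteswithfilteredcolimits}). Hence $L_{\mathbb{A}^1}L_{\text{cdh}}G$ is always simultaneously $\mathbb{A}^1$-invariant and a cdh sheaf, so applying $L_{\text{cdh}}$ and then $L_{\mathbb{A}^1}$ once more changes nothing. A two-out-of-six argument on the chain
\[
L_{\mathbb{A}^1} L_{\text{cdh}} F \to L_{\mathbb{A}^1} L_{\text{cdh}} L_{\mathbb{A}^1} F \to L_{\mathbb{A}^1} L_{\text{cdh}} L_{\mathbb{A}^1} L_{\text{cdh}} F \to L_{\mathbb{A}^1} L_{\text{cdh}} L_{\mathbb{A}^1} L_{\text{cdh}} L_{\mathbb{A}^1} F
\]
(both double composites being equivalences by the above) then yields the first map as an equivalence. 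Your remark that cdh covers pull back along $-\times\mathbb{A}^1$ is in fact half of this key observation; combined with closure under filtered colimits it gives exactly the paper's input, at which point the generator machinery becomes superfluous.
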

	
	\begin{proof}
		A filtered colimit of cdh sheaves is a cdh sheaf (Lemma~\ref{lemmacdhsheafificationcommuteswithfilteredcolimits}). In particular, the $\mathbb{A}^1$-localisation of a cdh sheaf is a cdh sheaf, so the natural composites
		$$L_{\mathbb{A}^1} L_{\text{cdh}} F \longrightarrow L_{\mathbb{A}^1} L_{\text{cdh}} L_{\mathbb{A}^1} F \longrightarrow L_{\mathbb{A}^1} L_{\text{cdh}} L_{\mathbb{A}^1} L_{\text{cdh}} F$$
		and
		$$L_{\mathbb{A}^1} L_{\text{cdh}} L_{\mathbb{A}^1} F \longrightarrow L_{\mathbb{A}^1} L_{\text{cdh}} L_{\mathbb{A}^1} L_{\text{cdh}} F \longrightarrow L_{\mathbb{A}^1} L_{\text{cdh}} L_{\mathbb{A}^1} L_{\text{cdh}} L_{\mathbb{A}^1} F$$
		are equivalences in the $\infty$-category $\mathcal{C}$. This implies the desired result.
	\end{proof}
	
	\begin{lemma}\label{lemmaA1finitedeRhamiszerochar0}
		For every integer $i \geq 0$, the $\mathbb{A}^1$-localisation of the presheaf
		$$R\Gamma_{\emph{Zar}}\big(-,\mathbb{L}\Omega^{<i}_{-_{\Q}/\Q}\big) : \emph{dSch}^{\emph{qcqs,op}} \longrightarrow \mathcal{D}(\Q)$$
		is zero.
	\end{lemma}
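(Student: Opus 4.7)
The plan is to reduce, via the Hodge filtration, to the analogous statement for powers of the cotangent complex, and then to use the $\mathcal{O}$-module structure on those presheaves together with the endpoint sections of $\mathbb{A}^1$ to construct an explicit nullhomotopy. Since the Hodge filtration gives $\mathbb{L}\Omega^{<i}_{-_{\Q}/\Q}$ a finite filtration of length $i$ with graded pieces $\mathbb{L}^j_{-_{\Q}/\Q}[-j]$ for $0 \leq j < i$, and since $L_{\mathbb{A}^1}$ is a left adjoint (so commutes with colimits, hence with fibre sequences in the stable $\infty$-category $\mathcal{D}(\Q)$), it suffices to show that for every $j \geq 0$ the presheaf $F_j := R\Gamma_{\emph{Zar}}(-, \mathbb{L}^j_{-_{\Q}/\Q})$ has vanishing $\mathbb{A}^1$-localisation.

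To handle a fixed weight $j \geq 0$, I would exploit the $\mathcal{O}_{-_{\Q}}$-module structure on $F_j$ together with the two sections $s_0, s_1 : X \to \mathbb{A}^1_X$ of the projection $\pi : \mathbb{A}^1_X \to X$ at $t = 0$ and $t = 1$. Because $\mathbb{A}^1_X \to X$ is smooth with free relative cotangent complex of rank one, the wedge powers of the fundamental cofibre sequence for the cotangent complex produce a natural direct sum decomposition
$$F_j(\mathbb{A}^1_X) \simeq F_j(X) \otimes_{\Q} \Q[t] \ \oplus \ F_{j-1}(X) \otimes_{\Q} \Q[t] \cdot dt,$$
under which $s_c^*$ evaluates the polynomial part at $t = c$ and annihilates the $dt$ component. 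A direct verification on this decomposition shows that the natural endomorphism of $F_j$ defined as the composite
$$F_j \xrightarrow{\pi^*} F_j(\mathbb{A}^1_{-}) \xrightarrow{\, t \cdot \,} F_j(\mathbb{A}^1_{-}) \xrightarrow{\, s_1^* - s_0^* \,} F_j$$
is canonically homotopic to $\mathrm{id}_{F_j}$.

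For any $\mathbb{A}^1$-invariant target $G$ valued in $\mathcal{D}(\Q)$, both $s_0^*$ and $s_1^*$ are inverse equivalences to $\pi^*$, so the difference $(s_1^* - s_0^*)_G$ is canonically nullhomotopic. Given a natural transformation $f : F_j \to G$, naturality of $f$ rewrites $f \circ (s_1^* - s_0^*)_F$ as $(s_1^* - s_0^*)_G \circ f$, which is therefore nullhomotopic; composing with the displayed endomorphism and using its identification with the identity forces $f \simeq 0$. Since this holds for every $\mathbb{A}^1$-invariant $G$, the universal property yields $L_{\mathbb{A}^1} F_j = 0$, and the filtration reduction of the first paragraph finishes the proof. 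The only thing requiring some care is ensuring that the several homotopies above (naturality of $f$, the endomorphism being the identity, and $(s_1^* - s_0^*)_G \simeq 0$) splice into a coherent $\infty$-categorical nullhomotopy of $f$; this is routine once the underlying natural transformations are in place, since all of the structure comes from the evident functorial operations on $\mathbb{A}^1$ and its sections.
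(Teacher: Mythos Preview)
Your argument is correct, but the paper takes a different and considerably shorter route. Rather than reducing via the Hodge filtration to individual wedge powers and building an explicit contracting homotopy out of the $\mathcal{O}$-module structure, the paper argues as follows: by Zariski descent one may work on animated commutative rings, where the functor $\mathbb{L}\Omega^{<i}_{-_\Q/\Q}$ is left Kan extended from polynomial $\Z$-algebras (equivalently, commutes with sifted colimits); this property is preserved by $L_{\mathbb{A}^1}$, so $L_{\mathbb{A}^1}\mathbb{L}\Omega^{<i}_{-_\Q/\Q}$ is again left Kan extended from polynomial $\Z$-algebras; being $\mathbb{A}^1$-invariant it is constant on polynomial $\Z$-algebras, hence constant everywhere, and since it vanishes on the zero ring it is identically zero.

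The paper's method is more conceptual and immediately reusable: exactly the same three-line argument is applied to $\F_p(i)^{\text{BMS}}$ in Lemma~\ref{lemmaA1syntomiccohoiszero}, where no transparent ``multiply by $t$'' structure is available. Your approach, by contrast, is more hands-on and makes the contractibility visible via the classical Cartan-type homotopy; it requires no left Kan extension input, but it does rely on the explicit $\mathcal{O}$-module structure and the Künneth decomposition of $\mathbb{L}^j$ on $\mathbb{A}^1$, and it needs the extra Hodge-filtration reduction step that the paper's argument avoids entirely.
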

	
	\begin{proof}
		By Zariski descent, it suffices to prove the result on animated commutative rings. The functor $\mathbb{L}\Omega^{<i}_{-_{\Q}/\Q}$, from animated commutative rings to the derived category $\mathcal{D}(\Q)$, is left Kan extended from polynomial $\Z$-algebras. Equivalently, it commutes with sifted colimits. This property being preserved by $\mathbb{A}^1$-localisation, the functor $L_{\mathbb{A}^1} \mathbb{L}\Omega^{<i}_{-_{\Q}/\Q}$ is also left Kan extended from polynomial $\Z$-algebras. As it is also constant on polynomial $\Z$-algebras, and zero on the zero ring, it is the zero functor.
	\end{proof}
	
	\begin{corollary}
		Let $X$ be a qcqs derived scheme. Then for every integer $i \geq 0$, the natural map
		$$\big(L_{\mathbb{A}^1} \widehat{\mathbb{L}\Omega}^{\geq i}_{-_{\Q}/\Q}\big)(X) \longrightarrow \big(L_{\mathbb{A}^1} \widehat{\mathbb{L}\Omega}_{-_{\Q}/\Q}\big)(X)$$
		is an equivalence in the derived category $\mathcal{D}(\Q)$.
	\end{corollary}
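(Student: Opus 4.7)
The plan is to deduce the result directly from the preceding Lemma~\ref{lemmaA1finitedeRhamiszerochar0}. Namely, for every qcqs derived scheme $X$ and every integer $i \geq 0$, there is a natural fibre sequence of $\mathcal{D}(\Q)$-valued presheaves
$$\widehat{\mathbb{L}\Omega}^{\geq i}_{-_{\Q}/\Q} \longrightarrow \widehat{\mathbb{L}\Omega}_{-_{\Q}/\Q} \longrightarrow \mathbb{L}\Omega^{<i}_{-_{\Q}/\Q},$$
obtained as the cofibre of the $i^{\text{th}}$ step of the Hodge filtration on Hodge-completed derived de Rham cohomology: the cofibre is finitely filtered with graded pieces the shifted powers $\mathbb{L}^j_{-_{\Q}/\Q}[-j]$ for $0 \leq j < i$, so the truncation to $<i$ is insensitive to Hodge completion.

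Applying the $\mathbb{A}^1$-localisation functor $L_{\mathbb{A}^1}$, which commutes with colimits and hence with fibre sequences, yields a natural fibre sequence
$$\big(L_{\mathbb{A}^1} \widehat{\mathbb{L}\Omega}^{\geq i}_{-_{\Q}/\Q}\big)(X) \longrightarrow \big(L_{\mathbb{A}^1} \widehat{\mathbb{L}\Omega}_{-_{\Q}/\Q}\big)(X) \longrightarrow \big(L_{\mathbb{A}^1} \mathbb{L}\Omega^{<i}_{-_{\Q}/\Q}\big)(X)$$
in the derived category $\mathcal{D}(\Q)$. By Lemma~\ref{lemmaA1finitedeRhamiszerochar0} the right term vanishes, which gives the desired equivalence. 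There is no real obstacle: the only point worth noting is that Lemma~\ref{lemmaA1finitedeRhamiszerochar0} is formulated on animated commutative rings, but since all three presheaves above are Zariski sheaves on qcqs derived schemes, the vanishing of $L_{\mathbb{A}^1} \mathbb{L}\Omega^{<i}_{-_{\Q}/\Q}$ on affines propagates to all qcqs derived schemes by Zariski descent.
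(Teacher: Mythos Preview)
Your proof is correct and follows exactly the same approach as the paper: write down the fibre sequence coming from the Hodge filtration, apply $L_{\mathbb{A}^1}$, and invoke Lemma~\ref{lemmaA1finitedeRhamiszerochar0} to kill the cofibre term. One small inaccuracy: Lemma~\ref{lemmaA1finitedeRhamiszerochar0} is already stated for the presheaf on all qcqs derived schemes (its proof reduces to animated rings by Zariski descent), so your final caveat is unnecessary.
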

	
	\begin{proof}
		There is a natural fibre sequence
		$$\big(L_{\mathbb{A}^1} \widehat{\mathbb{L}\Omega}^{\geq i}_{-_{\Q}/\Q}\big)(X) \longrightarrow \big(L_{\mathbb{A}^1} \widehat{\mathbb{L}\Omega}_{-_{\Q}/\Q}\big)(X) \longrightarrow \big(L_{\mathbb{A}^1} \mathbb{L}\Omega^{<i}_{-_{\Q}/\Q}\big)$$
		in the derived category $\mathcal{D}(\Q)$. The result is then a consequence of Lemma~\ref{lemmaA1finitedeRhamiszerochar0}.
	\end{proof}
	
	\begin{lemma}\label{lemmaA1syntomiccohoiszero}
		Let $p$ be a prime number. Then for every integer $i \geq 0$, the $\mathbb{A}^1$-localisation of the presheaf
		$$\F_p(i)^{\emph{BMS}}(-) : \emph{dSch}^{\emph{qcqs,op}} \longrightarrow \mathcal{D}(\F_p)$$
		is zero.
	\end{lemma}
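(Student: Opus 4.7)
The proof will closely follow the template of Lemma~\ref{lemmaA1finitedeRhamiszerochar0}, substituting the input describing how $\F_p(i)^{\text{BMS}}$ is presented by left Kan extension. First, by Zariski descent (Corollary~\ref{corollaryBMSsyntomiccohomologyhasquasisyntomicdescentandLKEfrompolynomialZalgebras}\,$(1)$ at the prime $p$, giving in particular Zariski descent) and the fact that $\mathbb{A}^1$-localisation commutes with colimits, it suffices to prove that the $\mathbb{A}^1$-localisation of the functor $\F_p(i)^{\text{BMS}}(-)$, viewed as a functor from animated commutative rings to $\mathcal{D}(\F_p)$, is zero.

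By Corollary~\ref{corollaryBMSsyntomiccohomologyhasquasisyntomicdescentandLKEfrompolynomialZalgebras}\,$(2)$, the functor $\F_p(i)^{\text{BMS}}$ on animated commutative rings is left Kan extended from polynomial $\Z$-algebras. The $\mathbb{A}^1$-localisation functor $L_{\mathbb{A}^1}$ commutes with colimits, hence with sifted colimits, so $L_{\mathbb{A}^1} \F_p(i)^{\text{BMS}}$ is also left Kan extended from polynomial $\Z$-algebras. By $\mathbb{A}^1$-invariance, the canonical map
$$L_{\mathbb{A}^1} \F_p(i)^{\text{BMS}}(\Z) \longrightarrow L_{\mathbb{A}^1} \F_p(i)^{\text{BMS}}(\Z[T_1,\ldots,T_n])$$
is an equivalence in $\mathcal{D}(\F_p)$ for every integer $n \geq 0$. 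Thus the restriction of the functor $L_{\mathbb{A}^1} \F_p(i)^{\text{BMS}}$ to polynomial $\Z$-algebras is constant, with value $V := L_{\mathbb{A}^1} \F_p(i)^{\text{BMS}}(\Z)$.

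It then remains to identify this common value $V$ as zero. For any animated commutative ring $R$, the left Kan extension formula expresses $L_{\mathbb{A}^1} \F_p(i)^{\text{BMS}}(R)$ as the colimit of a sifted (hence connected, when non-empty) diagram with constant value $V$; this colimit is therefore naturally identified with $V$ itself. Applied to the zero ring $R = 0$, for which $\F_p(i)^{\text{BMS}}(0) = 0$ (and hence $L_{\mathbb{A}^1} \F_p(i)^{\text{BMS}}(0) = 0$ since $\mathbb{A}^1$-localisation preserves zero objects), this gives $V = 0$, so the functor $L_{\mathbb{A}^1} \F_p(i)^{\text{BMS}}$ is zero on all animated commutative rings, and by Zariski descent on all qcqs derived schemes.

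There is essentially no obstacle: the only input specific to this lemma (as opposed to Lemma~\ref{lemmaA1finitedeRhamiszerochar0}) is the left Kan extension property for mod-$p$ BMS syntomic cohomology, which is already established in Corollary~\ref{corollaryBMSsyntomiccohomologyhasquasisyntomicdescentandLKEfrompolynomialZalgebras}\,$(2)$, together with the trivial observation that $\F_p(i)^{\text{BMS}}$ vanishes on the zero ring.
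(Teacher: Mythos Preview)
Your proof is correct and follows exactly the same approach as the paper: reduce to animated rings by Zariski descent, use that $\F_p(i)^{\text{BMS}}$ is left Kan extended from polynomial $\Z$-algebras (Corollary~\ref{corollaryBMSsyntomiccohomologyhasquasisyntomicdescentandLKEfrompolynomialZalgebras}\,$(2)$), note that $L_{\mathbb{A}^1}$ preserves this property, and conclude by constancy on polynomials plus vanishing on the zero ring. The paper's proof simply cites these inputs and defers to the argument of Lemma~\ref{lemmaA1finitedeRhamiszerochar0}, which you have spelled out in full.
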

	
	\begin{proof}
		The presheaf $\F_p(i)^{\text{BMS}}$ is a Zariski sheaf, and its restriction to animated commutative rings is left Kan extended from polynomial $\Z$-algebras (Corollary~\ref{corollaryBMSsyntomiccohomologyhasquasisyntomicdescentandLKEfrompolynomialZalgebras}). The result then follows by the same argument as in Lemma~\ref{lemmaA1finitedeRhamiszerochar0}.
	\end{proof}
	
	\begin{theorem}\label{theoremA1localmotiviccohomologymain}
		Let $X$ be a qcqs scheme. Then for every integer $i \geq 0$, the natural map
		$$\big(L_{\mathbb{A}^1} \Z(i)^{\emph{mot}}\big)(X) \longrightarrow \big(L_{\mathbb{A}^1} \Z(i)^{\emph{cdh}}\big)(X) \simeq \Z(i)^{\mathbb{A}^1}(X)$$
		is an equivalence in the derived category $\mathcal{D}(\Z)$.
	\end{theorem}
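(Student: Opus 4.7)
The plan is to show that the fibre $G(i) := \text{fib}\bigl(\Z(i)^{\text{mot}} \to \Z(i)^{\text{cdh}}\bigr)$ vanishes after $\mathbb{A}^1$-localisation. Since $L_{\mathbb{A}^1}$ is a left adjoint and so commutes with cofibres and with tensor products, this can be reduced to verifying that $L_{\mathbb{A}^1}(G(i))/p = 0$ for every prime $p$ and that $L_{\mathbb{A}^1}(G(i)) \otimes_{\Z} \Q = 0$. Indeed, the mod-$p$ vanishing forces the homotopy groups of $L_{\mathbb{A}^1} G(i)$ to be $\Z[\tfrac{1}{p}]$-modules for every prime $p$, hence $\Q$-modules; the rational vanishing then annihilates them.

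First, I would treat the mod-$p$ piece. By Corollary~\ref{corollarymainpadicstructureongradeds}, taking vertical fibres in the cartesian square defining $\F_p(i)^{\text{mot}}$ identifies $G(i)/p$ with $\text{fib}\bigl(\F_p(i)^{\text{BMS}} \to L_{\text{cdh}} \F_p(i)^{\text{BMS}}\bigr)$. Lemma~\ref{lemmaA1syntomiccohoiszero} supplies $L_{\mathbb{A}^1} \F_p(i)^{\text{BMS}} = 0$, and Lemma~\ref{lemmaA1cdh=A1cdhA1} then upgrades this to $L_{\mathbb{A}^1} L_{\text{cdh}} \F_p(i)^{\text{BMS}} \simeq L_{\mathbb{A}^1} L_{\text{cdh}} L_{\mathbb{A}^1} \F_p(i)^{\text{BMS}} = 0$. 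Hence $L_{\mathbb{A}^1}(G(i)/p) = 0$.

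Next, for the rational piece, Corollary~\ref{corollarycartesiansquarerational} identifies $G(i)_{\Q}$ with the fibre of the natural map $R\Gamma_{\text{Zar}}\bigl(-, \widehat{\mathbb{L}\Omega}^{\geq i}_{-_{\Q}/\Q}\bigr) \to R\Gamma_{\text{cdh}}\bigl(-, \widehat{\mathbb{L}\Omega}^{\geq i}_{-_{\Q}/\Q}\bigr)$. The Hodge cofibre sequence $\widehat{\mathbb{L}\Omega}^{\geq i}_{-_{\Q}/\Q} \to \widehat{\mathbb{L}\Omega}_{-_{\Q}/\Q} \to \mathbb{L}\Omega^{<i}_{-_{\Q}/\Q}$, combined with Lemma~\ref{lemmaA1finitedeRhamiszerochar0}, shows that $L_{\mathbb{A}^1}$ turns the first arrow into an equivalence; the analogous statement after cdh-sheafifying follows by combining Lemmas~\ref{lemmaA1cdh=A1cdhA1} and~\ref{lemmaA1finitedeRhamiszerochar0} to see that $L_{\mathbb{A}^1} L_{\text{cdh}} \mathbb{L}\Omega^{<i}_{-_{\Q}/\Q} = 0$. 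Since $\widehat{\mathbb{L}\Omega}_{-_{\Q}/\Q}$ is itself a cdh sheaf (on the graded-piece level of the HKR filtration, this is Proposition~\ref{propositionfilteredHPisacdhsheafinchar0}), both the Zariski- and cdh-sheafified versions of $\widehat{\mathbb{L}\Omega}^{\geq i}_{-_{\Q}/\Q}$ become canonically identified with $L_{\mathbb{A}^1} \widehat{\mathbb{L}\Omega}_{-_{\Q}/\Q}$ after $L_{\mathbb{A}^1}$, and the fibre vanishes.

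The hardest conceptual point to navigate is that $L_{\mathbb{A}^1}$ does not annihilate the full Hodge-complete complex $\widehat{\mathbb{L}\Omega}_{-_{\Q}/\Q}$ itself (it is already $\mathbb{A}^1$-invariant rather than $\mathbb{A}^1$-contractible), so one cannot hope to show $L_{\mathbb{A}^1} \Z(i)^{\text{TC}} = 0$ and conclude naively. The argument instead isolates the ``non-cdh-sheaf'' failure of $\widehat{\mathbb{L}\Omega}^{\geq i}_{-_{\Q}/\Q}$ inside the finite Hodge truncation $\mathbb{L}\Omega^{<i}_{-_{\Q}/\Q}$, for which $L_{\mathbb{A}^1}$-vanishing (Lemma~\ref{lemmaA1finitedeRhamiszerochar0}) applies, and then leverages the cdh-sheaf property of the full Hodge-complete complex in characteristic zero to match both sides of the rational fibre square.
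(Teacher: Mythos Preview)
Your proposal is correct and follows essentially the same route as the paper: split into rational and mod-$p$ pieces, and in each case identify the fibre $G(i)$ with the fibre of a map between a presheaf $F$ and $L_{\text{cdh}}F$ where $L_{\mathbb{A}^1}F=0$, then invoke Lemma~\ref{lemmaA1cdh=A1cdhA1}. The only cosmetic difference is that the paper cites Corollary~\ref{corollaryrationalmainresultongradedpieces} directly (which already isolates $\mathbb{L}\Omega^{<i}_{-_{\Q}/\Q}$), whereas you start from Corollary~\ref{corollarycartesiansquarerational} and unwind via the Hodge cofibre sequence together with the cdh-sheaf property of $\widehat{\mathbb{L}\Omega}_{-_{\Q}/\Q}$; these two packagings are equivalent.
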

	
	\begin{proof}
		It suffices to prove the result rationally, and modulo $p$ for every prime number $p$. By Lemma~\ref{lemmaA1finitedeRhamiszerochar0}, the object
		$$\big(L_{\mathbb{A}^1} R\Gamma_{\text{Zar}}\big(X,\mathbb{L}\Omega^{<i}_{-_{\Q}/\Q}\big)\big)(X)$$
		is zero in the derived category $\mathcal{D}(\Q)$. Lemma~\ref{lemmaA1cdh=A1cdhA1} then implies that the object
		$$\big(L_{\mathbb{A}^1} R\Gamma_{\text{cdh}}(-,\mathbb{L}\Omega^{<i}_{-_{\Q}/\Q}\big)\big)(X)$$
		is zero in the derived category $\mathcal{D}(\Q)$. In particular, the natural map
		$$\big(L_{\mathbb{A}^1} R\Gamma_{\text{Zar}}\big(-, \mathbb{L}\Omega^{<i}_{-_{\Q}/\Q}\big)\big)(X) \longrightarrow \big(L_{\mathbb{A}^1} R\Gamma_{\text{cdh}}(-,\mathbb{L}\Omega^{<i}_{-_{\Q}/\Q}\big)\big)(X)$$
		is an equivalence in the derived category $\mathcal{D}(\Q)$, which implies the desired result rationally by Corollary~\ref{corollaryrationalmainresultongradedpieces}. Similarly, for every prime number $p$, the syntomic complex $\F_p(i)^{\text{BMS}}$ vanishes after $\mathbb{A}^1$\nobreakdash-locali\-sation by Lemma~\ref{lemmaA1syntomiccohoiszero}, which implies the desired result modulo $p$ by Lemma~\ref{lemmaA1cdh=A1cdhA1} and Corollary~\ref{corollarymainpadicstructureongradeds}.
	\end{proof}
	
	\begin{remark}
		Let $X$ be a qcqs derived scheme. One can prove, using similar arguments and Corollary~\ref{corollarymainconsequenceBFSongradedpieces}, that there is a natural fibre sequence
		$$\big(L_{\mathbb{A}^1} \Z(i)^{\text{TC}}\big)(X) \longrightarrow R\Gamma_{\text{Zar}}\big(X,\widehat{\mathbb{L}\Omega}_{-_{\Q}/\Q}\big) \longrightarrow R\Gamma_{\text{Zar}}\big(X,\prod_{p \in \mathbb{P}} {}^{'} \underline{\widehat{\mathbb{L}\Omega}}_{-_{\Q_p}/\Q_p}\big)$$
		in the derived category $\mathcal{D}(\Z)$. Note that this implies Theorem~\ref{theoremA1localmotiviccohomologymain} on qcqs classical schemes, via the cdh descent results \cite[Lemma~$4.5$]{elmanto_motivic_2023} and Corollary~\ref{corollaryHPsolidallprimespisacdhsheafongradedpieces}.
	\end{remark}
	
	\begin{theorem}\label{theorem25A1localmotivicfiltrationisclassicalfiltration}
		Let $X$ be a smooth scheme over a mixed characteristic Dedekind domain. Then the natural map
		$$\emph{Fil}^\star_{\emph{cla}} \emph{K}(X) \longrightarrow \big(L_{\mathbb{A}^1} \emph{Fil}^\star_{\emph{mot}} \emph{K}\big)(X)$$
		is an equivalence of filtered spectra.
	\end{theorem}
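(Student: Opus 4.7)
The plan is to reduce the filtered equivalence to a statement on graded pieces, and then to invoke Theorem~\ref{theoremA1localmotiviccohomologymain} together with Bachmann's identification of the $\mathbb{A}^1$-invariant motivic complexes with the classical motivic complexes on smooth schemes over a mixed characteristic Dedekind domain.

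First, I would observe that the classical motivic filtration $\mathrm{Fil}^\star_{\mathrm{cla}} \mathrm{K}$ on $\mathrm{Sm}_B$ is $\mathbb{A}^1$-invariant by construction: it is defined (Definition~\ref{definitionclassicalmotivicfiltration}) as the image under $\omega^\infty : \mathrm{SH}(B) \to \mathrm{PSh}(\mathrm{Sm}_B, \mathrm{Sp})$ of the slice filtration on $\mathrm{KGL} \in \mathrm{SH}(B)$, and $\mathrm{SH}(B)$ is built from $\mathbb{A}^1$-invariant presheaves. Therefore the canonical map $\mathrm{Fil}^\star_{\mathrm{cla}} \mathrm{K}(X) \to L_{\mathbb{A}^1} \mathrm{Fil}^\star_{\mathrm{cla}} \mathrm{K}(X)$ is an equivalence, and the map in question is simply the $\mathbb{A}^1$-localisation of the filtered classical-motivic comparison map of Definition~\ref{definitionfilteredclassicalmotiviccomparisonmap}.

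Second, to reduce the statement to the level of graded pieces, I would argue that both filtrations are complete on each smooth $B$-scheme $X$. The classical motivic filtration has graded pieces $\mathbb{Z}(i)^{\mathrm{cla}}(X)[2i]$, which by \cite{geisser_motivic_2004} are Zariski-locally concentrated in cohomological degrees at most $i$, giving a connectivity estimate in terms of the Krull dimension of $X$ by Zariski descent; this yields completeness (as in the proof of Proposition~\ref{propositionmotivicfiltrationiscompleteonqcqsschemesoffinitevaluativedimension}). On the right-hand side, since $L_{\mathbb{A}^1}$ commutes with colimits, it preserves cofibre sequences, so the $i^{\mathrm{th}}$ graded piece of $L_{\mathbb{A}^1} \mathrm{Fil}^\star_{\mathrm{mot}} \mathrm{K}(X)$ is $L_{\mathbb{A}^1} \mathbb{Z}(i)^{\mathrm{mot}}(X)[2i]$, which by Theorem~\ref{theoremA1localmotiviccohomologymain} is $\mathbb{Z}(i)^{\mathbb{A}^1}(X)[2i]$, admitting the same type of connectivity bound. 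By a standard finitary-Zariski-descent reduction, it then suffices to check the graded-piece statement on smooth affine $B$-schemes of finite dimension.

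Third, on graded pieces, the claim becomes that the classical-motivic comparison map of Definition~\ref{definitionclassicalmotiviccomparisonmap} induces an equivalence
\[
\mathbb{Z}(i)^{\mathrm{cla}}(X) \xlongrightarrow{\sim} L_{\mathbb{A}^1} \mathbb{Z}(i)^{\mathrm{mot}}(X)
\]
for $X \in \mathrm{Sm}_B$. By Theorem~\ref{theoremA1localmotiviccohomologymain}, the target is naturally identified with $\mathbb{Z}(i)^{\mathbb{A}^1}(X)$. By Bachmann's theorem \cite{bachmann_very_2022} (which extends Spitzweck's computation to $\mathrm{SH}(B)$ for $B$ a mixed characteristic Dedekind domain), the slices of $\mathrm{KGL}_X$ represent Bloch's cycle complexes on smooth $B$-schemes, so there is a natural identification $\mathbb{Z}(i)^{\mathbb{A}^1}(X) \simeq \mathbb{Z}(i)^{\mathrm{cla}}(X)$. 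What remains is to verify that the composite $\mathbb{Z}(i)^{\mathrm{cla}}(X) \to L_{\mathbb{A}^1} \mathbb{Z}(i)^{\mathrm{mot}}(X) \simeq \mathbb{Z}(i)^{\mathbb{A}^1}(X) \simeq \mathbb{Z}(i)^{\mathrm{cla}}(X)$ is the identity.

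The main obstacle is the last compatibility check. Unwinding Definition~\ref{definitionfilteredclassicalmotiviccomparisonmap} and the cartesian square defining $\mathrm{Fil}^\star_{\mathrm{mot}} \mathrm{K}$ (Definition~\ref{definitionmotivicfiltrationonKtheoryofschemes}), the classical-motivic comparison map on graded pieces factors as $\mathbb{Z}(i)^{\mathrm{cla}} \to \mathbb{Z}(i)^{\mathrm{cdh}} \to L_{\mathbb{A}^1} \mathbb{Z}(i)^{\mathrm{mot}}$, where the first arrow is Bachmann--Elmanto--Morrow's map and the second comes from Remark~\ref{remarkcomparisontocdhlocalmotiviccohomology} and Theorem~\ref{theoremA1localmotiviccohomologymain}. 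The composite is, by construction of $\Z(i)^{\mathbb{A}^1}$ in \cite{bachmann_A^1-invariant_2024}, the canonical identification with $\mathbb{Z}(i)^{\mathbb{A}^1}$, which on smooth $B$-schemes coincides with Bachmann's identification by the uniqueness statement in \cite{bachmann_very_2022}. This closes the argument.
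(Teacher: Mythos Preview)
Your reduction to graded pieces has a genuine gap: you have not established that the filtration $L_{\mathbb{A}^1}\text{Fil}^\star_{\text{mot}}\text{K}(X)$ is complete. A connectivity bound on graded pieces alone is never enough to conclude completeness (the constant filtration has vanishing graded pieces), and the bound on $\text{Fil}^i_{\text{mot}}\text{K}$ from Proposition~\ref{propositionmotivicfiltrationiscompleteonqcqsschemesoffinitevaluativedimension} depends on the valuative dimension of the input, which increases along $X\mapsto\mathbb{A}^n_X$ and therefore does not survive the colimit defining $L_{\mathbb{A}^1}$. So your second step does not go through as written.

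The paper sidesteps this issue entirely by splitting into a rational and a mod~$p$ part. Rationally, Proposition~\ref{propositionrationalcomparisonclassicalmotivic} (via the Adams splitting of Theorem~\ref{theorem21'rationalmotivicfiltrationsplits}) already gives the filtered equivalence before $\mathbb{A}^1$-localising, so no completeness of the localised filtration is needed. Modulo $p$, the paper passes through the BMS filtration using Proposition~\ref{propositionpadicstructuremain}: the connectivity bound of Lemma~\ref{lemmaBMSfiltrationproductallprimesiscomplete} is on $\text{Fil}^i_{\text{BMS}}\text{TC}(-;\F_p)$ itself (not just its graded pieces) and is absolute on affines, hence stable under $L_{\mathbb{A}^1}$; this yields completeness of $L_{\mathbb{A}^1}\text{Fil}^\star_{\text{BMS}}\text{TC}(-;\F_p)$, whose graded pieces vanish by Lemma~\ref{lemmaA1syntomiccohoiszero}, and Lemma~\ref{lemmaA1cdh=A1cdhA1} handles the cdh-sheafified term.

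Your approach can be repaired without invoking completeness at all. Both filtrations are $\N$-indexed (Proposition~\ref{propositionmotivicfiltrationisexhaustive} for the target, preserved by $L_{\mathbb{A}^1}$), and on $\text{Fil}^0$ the map is the canonical $\text{K}(X)\to L_{\mathbb{A}^1}\text{K}(X)=\text{KH}(X)$, an equivalence since $X$ is regular. Once you know the map is an equivalence on $\text{Fil}^0$ and on every $\text{gr}^i$ (your steps~3--5), a finite induction on the fibre sequences $\text{Fil}^{i+1}\to\text{Fil}^i\to\text{gr}^i$ gives the equivalence on each $\text{Fil}^i$; no limit is required.
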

	
	\begin{proof}
        By the rational splitting of algebraic $K$-theory induced by Adams operations, this natural map is an equivalence rationally, so it suffices to prove that it is an equivalence modulo every prime number $p$. Let $p$ be a prime number. By \cite[Theorem~$9.1$]{bachmann_A^1-invariant_2024}, the natural map
		$$\text{Fil}^\star_{\text{cla}} \text{K}(X) \longrightarrow \big(L_{\mathbb{A}^1} \text{Fil}^\star_{\text{cdh}} \text{KH}\big)(X)$$
		is an equivalence of filtered spectra, so it suffices to prove that the natural map
		$$\big(L_{\mathbb{A}^1} \text{Fil}^\star_{\text{mot}} \text{K}\big)(X)/p \longrightarrow \big(L_{\mathbb{A}^1} \text{Fil}^\star_{\text{cdh}} \text{KH}\big)(X)/p$$
		is an equivalence of filtered spectra. By Proposition~\ref{propositionpadicstructuremain}, this is equivalent to the fact that the natural map
		$$\big(L_{\mathbb{A}^1} \text{Fil}^\star_{\text{BMS}} \text{TC}(-;\F_p)\big)(X) \longrightarrow \big(L_{\text{A}^1} L_{\text{cdh}} \text{Fil}^\star_{\text{BMS}} \text{TC}(-;\F_p)\big)(X)$$
		is an equivalence of filtered spectra. The filtered spectrum $\big(L_{\mathbb{A}^1} \text{Fil}^\star_{\text{BMS}} \text{TC}(-;\F_p)\big)(X)$ is complete by the connectivity bound of Lemma~\ref{lemmaBMSfiltrationproductallprimesiscomplete}, and its graded pieces are zero by Lemma~\ref{lemmaA1syntomiccohoiszero}, so it is zero. By Lemma~\ref{lemmaA1cdh=A1cdhA1}, the target $\big(L_{\text{A}^1} L_{\text{cdh}} \text{Fil}^\star_{\text{BMS}} \text{TC}(-;\F_p)\big)(X)$ of the previous map is then also zero, and this map is in particular an equivalence.
	\end{proof}

	\begin{corollary}\label{corollary26A1localmotiviccohomologyisclassicalmotiviccohomology}
		Let $X$ be a smooth scheme over a mixed characteristic Dedekind domain. Then for every integer $i \geq 0$, there is a natural equivalence
		$$z^i(X,\bullet)[-2i] \simeq \big(L_{\mathbb{A}^1} \Z(i)^{\emph{mot}}\big)(X)$$
		in the derived category $\mathcal{D}(\Z)$.
	\end{corollary}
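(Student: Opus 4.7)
The plan is to deduce this as a direct consequence of Theorem~\ref{theorem25A1localmotivicfiltrationisclassicalfiltration}, passing from a statement about filtered $K$-theory to a statement about graded pieces. By Definition~\ref{definitionclassicalmotiviccohomology}, the classical motivic complex is $\Z(i)^{\text{cla}}(X) = z^i(X,\bullet)[-2i]$, so it suffices to produce a natural equivalence $\Z(i)^{\text{cla}}(X) \simeq (L_{\mathbb{A}^1} \Z(i)^{\text{mot}})(X)$ in $\mathcal{D}(\Z)$.

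First, I would recall from Remark~\ref{remarkclassicalBlochrelatedtoslice} that $\Z(i)^{\text{cla}}(X)$ is naturally the $i$-th shifted graded piece of the classical motivic filtration $\text{Fil}^\star_{\text{cla}} \text{K}(X)$ on smooth $\Z$-schemes. Similarly, the motivic complex $\Z(i)^{\text{mot}}(X)$ is by Definition~\ref{definitionmotiviccohomologyofderivedschemes} the $i$-th shifted graded piece of $\text{Fil}^\star_{\text{mot}} \text{K}(X)$. Theorem~\ref{theorem25A1localmotivicfiltrationisclassicalfiltration} then provides a natural multiplicative equivalence of filtered spectra
\[
\text{Fil}^\star_{\text{cla}} \text{K}(X) \xlongrightarrow{\sim} \big(L_{\mathbb{A}^1} \text{Fil}^\star_{\text{mot}} \text{K}\big)(X).
\]

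Next, I would take $i$-th shifted graded pieces on both sides. On the left, this produces $\Z(i)^{\text{cla}}(X)$ by definition. On the right, I would use that the $\mathbb{A}^1$-localisation functor commutes with colimits (as stated in the Notation section), and in particular commutes with the cofibre sequences defining the graded pieces of a filtration. Thus the $i$-th shifted graded piece of $(L_{\mathbb{A}^1} \text{Fil}^\star_{\text{mot}} \text{K})(X)$ is naturally identified with $(L_{\mathbb{A}^1} \Z(i)^{\text{mot}})(X)$, yielding the desired equivalence.

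There is essentially no obstacle here: the corollary is a formal unwinding of the filtered statement Theorem~\ref{theorem25A1localmotivicfiltrationisclassicalfiltration} combined with the colimit-preservation of $L_{\mathbb{A}^1}$. The substantive content lies entirely in Theorem~\ref{theorem25A1localmotivicfiltrationisclassicalfiltration} (and behind it in Theorem~\ref{theoremA1localmotiviccohomologymain}, which handles the $p$-adic and rational pieces separately via the vanishing results Lemmas~\ref{lemmaA1finitedeRhamiszerochar0} and~\ref{lemmaA1syntomiccohoiszero}). Alternatively, and perhaps more directly, one could invoke Theorem~\ref{theoremA1localmotiviccohomologymain} to reduce the statement to the known identification $\Z(i)^{\mathbb{A}^1}(X) \simeq \Z(i)^{\text{cla}}(X)$ on smooth schemes over a mixed characteristic Dedekind domain, which is the combination of Bachmann's result \cite{bachmann_very_2022} identifying the zeroth slice of $\text{KGL}$ with Spitzweck's motivic cohomology spectrum, together with Spitzweck's representability of Bloch's cycle complexes \cite{spitzweck_commutative_2018}.
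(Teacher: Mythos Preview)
Your proposal is correct and takes essentially the same approach as the paper: the paper's proof is the single sentence ``This is a consequence of Theorem~\ref{theorem25A1localmotivicfiltrationisclassicalfiltration}'', and you have simply unpacked what that means---take shifted graded pieces of the filtered equivalence, using that $L_{\mathbb{A}^1}$ commutes with colimits. Your alternative route via Theorem~\ref{theoremA1localmotiviccohomologymain} and the Bachmann--Spitzweck identification is also valid and is indeed how the paper frames the result in the introduction (Corollary~E).
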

	
	\begin{proof}
		This is a consequence of Theorem~\ref{theorem25A1localmotivicfiltrationisclassicalfiltration}.
	\end{proof}

    \begin{remark}\label{remark27A1localisationshouldnotbenecessary}
        While writing these results, we expected that the $\mathbb{A}^1$-localisation in Theorem~\ref{theorem25A1localmotivicfiltrationisclassicalfiltration} and Corollary~\ref{corollary26A1localmotiviccohomologyisclassicalmotiviccohomology} was not necessary. This is now \cite[Theorem~A]{bouis_beilinson-lichtenbaum_2025}.
	\end{remark}

	
	
	
	
	\bibliographystyle{alpha}
	
	{\footnotesize
\bibliography{biblio.bib}
}

\end{document}